\documentclass{amsart}
\usepackage{amsmath, amsthm, amssymb}
\usepackage{verbatim}
\usepackage{tikz}
\usetikzlibrary{matrix}

\usepackage[mathscr]{eucal} 
\usepackage{mathrsfs} 
\usepackage{cancel}

\usepackage{hyperref}

\def\XXint#1#2#3{{\setbox0=\hbox{$#1{#2#3}{\int}$}
     \vcenter{\hbox{$#2#3$}}\kern-.5\wd0}}

\renewcommand{\d}{\partial}
\newcommand{\ddbar}{\sqrt{-1}\d\overline{\d}}
\newcommand{\dbar}{\overline{\d}}
\newcommand{\ii}{\sqrt{-1}}
\newcommand{\wt}[1]{\widetilde{#1}}
\newcommand{\lc}{\left<}
\newcommand{\rc}{\right>}

\newcommand{\wh}[1]{\widehat{#1}}

\newcommand{\eps}{\epsilon}
\newcommand{\veps}{\varepsilon}
\newcommand{\vphi}{\varphi}

\newcommand{\al}{\alpha} 
\newcommand{\ze}{\zeta} 
\newcommand{\be}{\beta}
\newcommand{\ga}{\gamma}
\newcommand{\de}{\delta}
\newcommand{\la}{\lambda}
\newcommand{\om}{\omega}
\newcommand{\te}{\theta}

\newcommand{\sig}{\sigma}

\newcommand{\ka}{\kappa}
\newcommand{\io}{\iota}

\newcommand{\tc}{{\tt c}}

\newcommand{\Ga}{\Gamma}
\newcommand{\Om}{\Omega}
\newcommand{\De}{\Delta}

\newcommand{\na}{\nabla}

\newcommand{\cE}{\mathcal{E}}

\newcommand{\cF}{\mathcal{F}}

\newcommand{\cL}{\mathcal{L}}

\newcommand{\supp}{\mbox{supp }}

\newcommand{\bD}{\mathbb{D}}

\newcommand{\bR}{\mathbb{R}}

\newcommand{\bC}{\mathbb{C}}

\newcommand{\cali}[1]{\mathscr{#1}}

\newcommand{\Ec}{\cali{E}}

\newcommand{\Kc}{\cali{K}}
\newcommand{\Lc}{\cali{L}}

\newtheorem{thm}{Theorem}
\newtheorem{prop}[thm]{Proposition}
\newtheorem{lem}[thm]{Lemma}
\newtheorem{cor}[thm]{Corollary}

\theoremstyle{definition}
\newtheorem{defn}[thm]{Definition}
\newtheorem{remark}[thm]{Remark}
\newtheorem{claim}[thm]{Claim}

\newtheorem{expl}[thm]{Example}

\numberwithin{thm}{section}
\numberwithin{equation}{section}

\renewcommand{\[}{\begin{equation}}
\renewcommand{\]}{\end{equation}}

\newcommand{\rv}{\;\rvert\;}

\newcommand{\wed}{\wedge}

\usepackage{scalerel}[2014/03/10]
\usepackage[usestackEOL]{stackengine}
\def\intavg{\,\ThisStyle{\ensurestackMath{%
    \stackinset{c}{0\LMpt}{c}{0\LMpt}{\SavedStyle-}{\SavedStyle\phantom{\int}}}%
    \setbox0=\hbox{$\SavedStyle\int\,$}\kern-\wd0}\int}

\title{Fine properties of  functions in complex Sobolev spaces}

 \author{Ngoc Cuong Nguyen} 
\address{Department of Mathematical Sciences, KAIST, 291 Daehak-ro, Yuseong-gu, Daejeon 34141, South Korea}
\email{cuongnn@kaist.ac.kr}
\date{\today}

\begin{document}

\maketitle
\begin{abstract} We study comprehensively local properties of functions in complex Sobolev spaces on a bounded open subset of $\bC^n$. The main tool is the corresponding functional capacity for  the space which is inspired by the global one due to Vigny \cite{Vigny}. An inequality between this capacity and the Bedford-Taylor capacity for plurisubharmonic functions is proved, which is sharp as far as the exponents are concerned. Moreover, it is shown that the functional capacity is a Choquet capacity. The Alexander-Taylor type inequality for the capacity is also proved.  This allows us to strengthen the results in  the works of Dinh, Marinescu and Vu \cite{DMV}, Vigny and Vu \cite{VV24}. Lastly, the Moser-Trudinger type inequality in this space is characterized by the volume-capacity inequality.
\end{abstract}

\tableofcontents

\section{Introduction}

Let $(X,\om)$ be a compact K\"ahler manifold.  A natural subspace of the Sobolev space $W^{1,2}(X)$, called the complex Sobolev space $W^*$, was introduced by Dinh and Sibony \cite{DS} in their work on complex dynamics. 
This new space turned out to be a good complex version of the former one as it takes into account the complex structure of manifolds. Many important families of functions which carry useful properties of underlying manifolds belong to the space. Among them are Lipschitz functions, bounded quasi plurisubharmonic (psh) functions or more general bounded delta quasi-psh function studied in  \cite{CW05}. Basic functional aspects of this space were studied by Vigny \cite{Vigny} where he showed that it is a Banach space with a corresponding norm $\| \cdot\|_*$, this norm  is stronger than the Sobolev norm. However, it is not reflexive and smooth functions are not dense in $W^*$ with respect to the strong topology of that norm.  He also defined a functional capacity for this space and showed that it is a capacity in sense of Choquet. Moreover, the functional capacity is qualitative comparable to the relative capacity of Bedford and Taylor in global pluripotential theory defined in  \cite{ko03}. 

Thanks to flexible properties this space has found many of applications in complex dynamic in high dimensions, complex Monge-Amp\`ere equations and other areas, see e.g., \cite{BiD23}, \cite{DKN}, \cite{DKW}, \cite{DNV25}, \cite{Vig15}, \cite{Vu20, Vu24, Vu26} and \cite{WZ}. Also a higher complex Sobolev space is proposed in \cite{DoN25}. 

An element in a Sobolev space is defined almost everywhere up to a set of Lebesgue measure zero which is unlike psh or quasi-psh functions. Its fine properties with respect to the Sobolev capacity are classical ones \cite{EG92}, \cite{KLV-book}. However, this capacity is not good enough to characterize pluripolar sets, therefore it is not suitable for studying functions in $W^*$. One needs to work with the corresponding capacity of $W^*$ which dominates the Sobolev one. However, the new capacity in complex Sobolev spaces is harder to analyze. A crucial difference is that we can no longer rely on maximal function techniques, which are very powerful, in these spaces (see, e.g \cite{KLV-book}). 

Recently, it has been showed in \cite{DMV}, \cite{Vigny} and \cite{VV24} that pluripotential theory is a suitable and powerful tool for studying $W^*$. As a result they made major  progresses on study local properties of functions in such spaces.  

We continue to study local complex Sobolev spaces which has been considered partially in \cite{DMV} and  \cite{Vigny}. Our goal is to develop further this approach by focusing on new and optimal inequalities between the functional capacity and classical capacities of Bedford and Taylor \cite{BT82} and of Alexander and Taylor \cite{AT84} in pluripotential theory.

Let $\Om \subset \bC^n$ be a bounded domain  and $\om = dd^c |z|^2$  the standard K\"ahler form in $\bC^n$.  Denote by $W^{1,2}(\Om, \bR)$ the usual Sobolev space. For  $f\in W^{1,2}(\Om, \bR)$ we define $\Ga_f$  to be the set of all positive closed $(1,1)$-current $T$ satisfying
\[\label{eq:test-currents}\notag
	df \wed d^c f \leq T \quad\text{weakly in } \Om.
\]
Here we use the normalization 
$$d^c = \frac{\ii}{2\pi} (\dbar -\d), \quad dd^c = \frac{\ii}{\pi}\d\dbar.
$$
Consider the subspace
\[ \label{eq:sobolev-space} \notag
W^*(\Om)= \left\{f \in W^{1,2}(\Om,\bR):   \text{there exists $T\in \Ga_f$ with } \|T\|_\Om <+\infty
\right\},
\]
where 
$
	 \|T \|_\Om := \int_\Om T \wed \om^{n-1}.
$
For $f \in W^*(\Om)$ one defines the $W^*$-norm
\[\label{eq:norm} \notag
	\|f\|_*^2 = \| f\|_{L^2(\Om)}^2 + \inf_{T\in \Ga_f} \|T\|_{\Om}.
\]
We then reprove basic functional results obtained in \cite{Vigny}. Namely, $(W^*(\Om), \|\cdot\|_*)$ is a Banach space (Theorem~\ref{thm:star-norm}) and  continuous functions are not dense in $W^*(\Om)$ with respect to strong topology of the norm $\|\cdot\|_*$ (Proposition~\ref{prop:density}). If $n=1$, then  $W^*(\Om) = W^{1,2}(\Om)$, otherwise $W^*(\Om)$ is not reflexive (Corollary~\ref{cor:reflexive}). It should be pointed out that there are some similarity of these properties with the ones of the space of delta psh functions studied by Cegrell and Wiklund \cite{CW05}.
 
Inspired by the functional capacity in \cite{Vigny} we define for a Borel subset $E\subset \Om$ the local capacity $\tc(E):= \tc(E,\Om)$  as follows:
\[\label{eq:intro-cap-defn}\notag
	{\tt c}(E) = \inf 
	\left\{ \| v \|_*^2 \;\rvert\;  v\in \Kc(E)
	\right\},
\] 
where
\[\label{eq:intro-ke-cap}\notag
 	\Kc(E) = \left\{ v \in W^*(\Om) \rv \{v \leq  -1\}^o \supset E \text{ and } v\leq 0 \right\}.
\]
Here $ \{v \leq  -1\}^o \supset E$ means $v\leq -1$ a.e in a neighborhood of $E$. 

The functional capacity in the Sobolev spaces is very well-understood which is also an effective tool to study fine properties of its element, see e.g. \cite[Chapters~4.7-4.8]{EG92}. In contrast, the use of $\tc(\cdot)$ for the complex Sobolev space is not effective so far. 
Our goal is to investigate systematically local properties of functions in $W^*(\Om)$ by using this capacity. This is a different perspective compared to \cite{Vigny} and \cite{DMV} who used the relative capacity for psh functions due to Bedford and Taylor \cite{BT82}. Namely, for a Borel set $E\subset \Om$, 
\[\label{eq:intro-BT-cap}\notag
 cap(E,\Om) = \sup\left\{ \int_E (dd^c u)^n :  u\in PSH(\Om), \;-1 \leq u \leq 0\right\}.
\]
Our first main result explains the reason why $cap(\cdot, \Om)$ could be effectively used in previous works. Roughly speaking they are equivalent to each other. 

\begin{thm} \label{thm:intro-cap-comp} 
Let $\Om \subset \subset \bC^n$ be a strictly pseudoconvex domain. 
\begin{itemize}
\item[(a)] There exists a constant $A>0$ such that for every Borel set $E\subset \Om$,
$$
	\tc(E) \leq A [cap(E,\Om)]^\frac{1}{n}.
$$
\item[(b)]
Assume $D \subset \subset \Om$ be a subdomain. There exists a constant $A'$ such that for every Borel set $E\subset D$,
$$
	\frac{1}{A'} cap(E,\Om) \leq \tc (E).
$$
\end{itemize}
Moreover, the above inequalities are sharp as far as the exponents are concerned.
\end{thm}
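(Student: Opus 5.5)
For part (a), I will use the relative extremal function. Fix a Borel set $E\subset\Om$. By outer regularity of $cap(\cdot,\Om)$, take an open $G\supset E$ with $cap(G,\Om)\le 2\,cap(E,\Om)$. The relative extremal function $u=u^*_{G,\Om}$ satisfies $-1\le u\le 0$ and is psh. It equals $-1$ on the open set $G$. On a strictly pseudoconvex domain it tends to $0$ at $\partial\Om$, and $\int_\Om (dd^c u)^n = cap(G,\Om)$.

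The candidate function is $v=-(-u)^{\alpha}$ for a suitable exponent $\alpha$. Then $v\le 0$ and $v=-1$ on $G$, so $v\in\Kc(E)$ as soon as $v\in W^*(\Om)$. A simpler variant is to use $u$ itself. Since $du\wed d^cu\le \tfrac12 dd^c u^2$ and $u^2$ is bounded, the current $T=\tfrac12 dd^c(u^2)$ is admissible, up to a mild correction making $u^2$ psh-like. More directly, $dd^c(-(-u)^{2\beta})$ dominates a multiple of $du\wed d^c u$ when $\beta\le 1/2$.

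The task then reduces to bounding $\int_\Om T\wed\om^{n-1}$ and $\|v\|^2_{L^2}$ by $C\,cap(G,\Om)^{1/n}$. The $L^2$ part is controlled by the volume of the set where $u<0$ substantially. Bounding it by $cap^{1/n}$ is routine only after choosing the power well; alternatively, I would use the known inequality $\|u\|_{L^1}\lesssim cap(G)^{1/n}$, which follows from the comparison $\int (-u)\,\om^n\lesssim \big(\int(dd^cu)^n\big)^{1/n}$.

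For the mass term, write $\om=dd^c\rho$ with $\rho$ a strictly psh defining function of $\Om$. Then integrate by parts repeatedly, trading one $\om$ at a time for $dd^c u$. This uses $u=0$ on $\partial\Om$ and boundedness, and is in the spirit of Cegrell's inequalities $\int dd^cu\wed\om^{n-1}\le (\int (dd^cu)^n)^{1/n}(\int(dd^c\rho)^n)^{(n-1)/n}$ for functions in $\Ec_0$. This Hölder-type mixed Monge--Ampère inequality yields exactly the exponent $1/n$, and I expect it to be the main technical step, together with justifying the integration by parts near $\partial\Om$.

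For part (b), take $v\in\Kc(E)$ with $\|v\|_*^2\le \tc(E)+\eps$ and $T\in\Ga_v$ nearly optimal. It suffices to bound $\int_K(dd^cw)^n$ for compact $K\subset E$ and psh $w$ with $-1\le w\le 0$. Since $v\le -1$ near $K$, the natural route is
\[
\int_K (dd^cw)^n\le \int_D (-v)^2 (dd^cw)^n .
\]
A Chern--Levine--Nirenberg-type estimate for $W^*$ functions, as in \cite{DMV,Vigny}, then gives
\[
\int_D v^2 (dd^cw)^n\le C\big(\|v\|_{L^2}^2+\|T\|_\Om\big).
\]
To obtain it, I would multiply by a cutoff $\chi$ supported in a neighborhood of $\bar D$ and integrate by parts one $dd^cw$ at a time. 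Each term $\int d(\chi v^2)\wed d^cw\wed\cdots$ is handled with Cauchy--Schwarz using $dv\wed d^cv\le T$ and $|w|\le1$. The boundedness issues of $v$ are resolved by truncating at $\max(v,-1)$, which does not increase $\|\cdot\|_*$. This gives $cap(E,\Om)\le A'\tc(E)$.

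For sharpness, I would use balls $E=B(0,r)$. In this case $cap(B_r,\Om)\sim (\log 1/r)^{-n}$, while $\tc(B_r)\sim(\log1/r)^{-1}$, using test functions $\max(\log|z|/\log(1/r),-1)$ together with a lower bound from part (b)'s argument applied in one complex direction. This shows that the exponent $1/n$ in (a) cannot be improved. For (b), a set such as a small piece of a complex hyperplane tends to have both quantities comparable, so the exponent $1$ there is optimal.
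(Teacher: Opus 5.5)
Your treatment of (a) and (b) follows the paper's Lemmas~\ref{lem:c-cap} and~\ref{lem:cap-c}. For (a) you test $\tc(G)$ with the relative extremal function $u_G$ of an open $G\supset E$, bound $\|u_G\|_{L^2}^2$ and the mass by $\int dd^c u_G\wedge (dd^c\rho)^{n-1}$, and apply Cegrell's inequality. For (b) you integrate by parts one $dd^c$ at a time and finish with CLN, where the paper uses the mass comparison Lemma~\ref{lem:CP-mass}. Several steps need repair, though:
\begin{itemize}
\item The inequality $du\wedge d^c u\le \frac12 dd^c u^2$ is false for $u\le 0$, since $\frac12 dd^c u^2 = du\wedge d^c u + u\, dd^c u$. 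The admissible current is $\frac12 dd^c (1+u)^2$, whose mass against $(dd^c\rho)^{n-1}\ge \om^{n-1}$ equals $\int dd^c u\wedge (dd^c\rho)^{n-1}$.
\item Discard the variant $-(-u)^{2\beta}$. Domination of $du\wedge d^c u$ requires $\beta<1/2$, and then $d^c(-(-u)^{2\beta}) = 2\beta (-u)^{2\beta-1} d^c u$ blows up at $\partial\Om$, so the mass is infinite.
\item Choose $G$ with $cap(G,\Om)\le cap(E,\Om)+\veps$ rather than $2\,cap(E,\Om)$, since nonempty open sets have positive capacity.
\item $u_G\to 0$ at $\partial\Om$ only when $G\Subset\Om$. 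For Borel sets reaching the boundary you need an extra step: apply the bound to $G\cap \Om_{1/j}$ and pass to the limit using Corollary~\ref{cor:w-compactness} and Proposition~\ref{prop:weak-conv-cp}. This is legitimate because $G$ is open; the paper instead invokes Theorem~\ref{thm:choquet-c}, see Remark~\ref{rmk:inner-reg}.
\item In (b), close the Cauchy--Schwarz step by absorption as in Lemma~\ref{lem:basic-grad}: keep $v^2$ with $dw\wedge d^c w$ and use only $|w|\le 1$. Estimating that term through $|v|\le 1$ and CLN yields only $cap\lesssim \tc^{1/2}$.
\item $v$ is defined only a.e.\ while $(dd^c w)^n$ may charge Lebesgue-null sets. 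Regularize $w$ (as the paper does) or $v$ before writing $\int_K (dd^c w)^n\le \int v^2 (dd^c w)^n$.
\end{itemize}

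The genuine gap is the sharpness claim.

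For (a), balls are the right test sets, but $\tc(B_r)\lesssim 1/\log(1/r)$ is just (a) again. What must be proved is $\tc(B_r)\gtrsim 1/\log(1/r)$, and the argument of (b) gives only $\tc(B_r)\ge cap(B_r,\Om)/A'\approx (\log 1/r)^{-n}$. The phrase ``applied in one complex direction'' does not supply the missing input; for instance, for $n\ge 2$ the measures $dd^c w\wedge\om^{n-1}$ with $-1\le w\le 0$ give $B_r$ mass only $O(r^{2n-2})$. Two ways to close this:
\begin{itemize}
\item Exponential integrability. Apply Corollary~\ref{cor:exp-int} (which rests on Proposition~\ref{prop:DMV}) to $v/\|v\|_*$ for $v\in \Kc(B_r)$. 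This gives $|B_r|\le A e^{-\al/\tc(B_r)}$, i.e.\ $\tc(B_r)\ge \al/(2n\log(1/r)+C)$.
\item A slicing version of your idea. Run the absorption estimate against $dd^c\max\{\log|z|/\log(1/r),-1\}\wedge (dd^c\log|z|)^{n-1}$, which puts mass $1/\log(1/r)$ on $\{|z|=r\}$. Then bound $\int \chi\, T\wedge (dd^c\log|z|)^{n-1}$ by the Lelong--Jensen formula.
\end{itemize}

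For (b), a piece of a complex hyperplane is pluripolar, so $cap=\tc=0$ on it and nothing is learned about exponents. Use instead thin polydiscs $P_\rho=\{|z_1|\le \rho,\ |z'|\le 1/2\}\subset B(0,1)$:
\begin{itemize}
\item $cap(P_\rho)\gtrsim 1/\log(1/\rho)$, tested with $\frac12\max\{\log|z_1|/\log(1/\rho),-1\}+\frac12(|z|^2-1)$;
\item $\tc(P_\rho)\lesssim 1/\log(1/\rho)$, tested with $\max\{\log|z_1|/\log(1/(2\rho)),-1\}$.
\end{itemize}

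The paper argues sharpness differently, through the Alexander--Taylor-type inequality of Lemma~\ref{lem:AT-type} combined with the sharp classical Alexander--Taylor inequality (Remark~\ref{rmk:sharp-exp}).
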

The statements of the theorem consist of Lemmas~\ref{lem:c-cap}, ~\ref{lem:cap-c} and Remarks~\ref{rmk:inner-reg},~\ref{rmk:sharp-exp}. 
On compact K\"ahler manifolds both inequalities  was obtained in \cite[Proposition~5.1]{DKN} where $\tc(E)$ is defined by another equivalent norm with $W^*$-norm, but the optimality was unknown. 
 A qualitative version  of the first inequality on such manifolds was obtained early in \cite{Vigny} and the second inequality was also proved there without showing that the exponents are sharp.

In the local setting the proofs are different. The key idea in the proof of the first inequality is making use of Cegrell's inequality \cite{Ce04} and the one for the second inequality comes from the comparison principle. On the other hand, the optimality of the exponents is extracted from the Alexander-Taylor type inequality which is proved in Lemma~\ref{lem:AT-type}. The latter inequality is of independent interest. One of its applications is the existence of an entire psh subextension for a given $f \in W^*(B(0,1))$ in Theorem~\ref{thm:subextension}. This psh subextension result is an important ingredient to derive the property that  the set Lebesgue points of such functions are of capacity zero. This result is due to Vigny and Vu \cite{VV24} where the bounded case was proved earlier in \cite{Vigny}. We improve the statement of that result and its  proof is also simplified in Theorem~\ref{thm:VV}.

An immediate consequence of Theorem~\ref{thm:intro-cap-comp} and \cite{BT82} is that a Borel set $P\subset \Om$ is pluripolar if and only if $\tc(P)=0$. Provided the equivalence of capacities  the results in \cite{DMV} can be restated in terms of $\tc (\cdot)$ where they are obtained previously for $cap(\cdot, \Om)$. However, as emphasized above we would take another route by using advantages of the functional capacity $\tc(\cdot)$ (see e.g., Lemma~\ref{lem:outer-reg}, Proposition~\ref{prop:ae-qe}, Lemma~\ref{lem:cap-sublevel-set}) and hence providing alternative  proofs for the ones obtained in \cite{DMV}. In particular,  using the ideas in \cite{DMV} and \cite{Vigny},  we  simplify  the proof of quasi-continuous representative result proved earlier in \cite[Theorem~2.10]{DMV} (see also \cite[Theorem~22]{Vigny}).

\begin{cor} \label{cor:intro-quasi-mod} Let  $f\in W^*(\Om)$. There exists a quasi-continuous modification $\wt f$ such that $\wt f = f$ a.e. Moreover, if $g$ is another quasi-continuous modification of $f$, then $g =\wt f$ outside a pluripolar set.
\end{cor}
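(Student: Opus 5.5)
The plan follows the classical Sobolev-space argument (as in \cite[Chapter~4.8]{EG92}), with the Sobolev capacity replaced by $\tc(\cdot)$. Two properties of $\tc$ will be used throughout: it is a Choquet capacity, countably subadditive and outer regular (Lemma~\ref{lem:outer-reg}). By Theorem~\ref{thm:intro-cap-comp}, smallness of $\tc$ on compact subsets of $\Om$ is equivalent to smallness of $cap(\cdot,\Om)$, and $\tc$-null sets are exactly the pluripolar sets.

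\emph{Step 1: a capacitary weak-type estimate.} The aim is a bound, for continuous $g\in W^*(\Om)$ (possibly after localizing with a cutoff), of the form
$$
\tc(\{|g|>t\}) \le \frac{C}{t^2}\|g\|_*^2 .
$$
To prove it, use $v=\max(-|g|/t,-1)$. Since $\{|g|>t\}$ is open, $v\le -1$ on a neighbourhood of that set, and $v\le 0$. Moreover $dv\wed d^cv\le t^{-2}dg\wed d^cg$, because $|g|$ and truncation do not increase $df\wed d^cf$ in the sense of currents. Hence $v\in\Kc(\{|g|>t\})$ with $\|v\|_*^2\le t^{-2}\|g\|_*^2$. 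If $g$ is only quasi-continuous, one removes an open set of small capacity and uses outer regularity.

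\emph{Step 2: approximation and construction.} The main obstacle is that continuous functions are not $\|\cdot\|_*$-dense (Proposition~\ref{prop:density}). I would instead use the weaker approximation from \cite{DMV,Vigny}. Take $f_j=f*\rho_{\eps_j}$ on relatively compact subdomains, first truncating $f$ to $\max(\min(f,k),-k)$, which does not increase $\|\cdot\|_*$. This gives $f_j\to f$ in $L^2$ with $df_j\wed d^cf_j\le T*\rho_{\eps_j}$, whose masses are uniformly bounded. That alone does not make $\|f_j-f_k\|_*$ small. The fix I would try is the standard trick: $d(f_j-f_k)\wed d^c(f_j-f_k)\le 2(T_j+T_k)$, so $\|f_j-f_k\|_*$ stays bounded, and then apply Step~1 to $(f_j-f_k)$ in a form that also exploits the small $L^2$ part. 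Alternatively, and more robustly, use the psh subextension and the Lebesgue-point result (Theorem~\ref{thm:VV}). These give that $\wt f(z):=\lim_{r\to0}\intavg_{B(z,r)}f$ exists outside a pluripolar set, with $\tc$-control on where the averages oscillate. Then I would show that $f_{\eps}=f*\rho_\eps$ converges to $\wt f$ locally uniformly outside open sets $G_k$ with $\tc(G_k)<2^{-k}$. This uses Step~1 applied to $f_{\eps_j}-f_{\eps_{j+1}}$, with a subsequence chosen so that the capacities of $\{|f_{\eps_j}-f_{\eps_{j+1}}|>2^{-j}\}$ are summable. The uniform limit is continuous off $G_k$, so $\wt f$ is quasi-continuous. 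It equals $f$ a.e. by Lebesgue's differentiation theorem.

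\emph{Step 3: uniqueness.} Let $g,\wt f$ be quasi-continuous with $g=\wt f$ a.e., and set $h=g-\wt f$. Given $\eps>0$, pick an open $G$ with $\tc(G)<\eps$ and $h|_{\Om\setminus G}$ continuous. Then $U=\{|h|>t\}\cup G$ is open, and $\{|h|>t\}\setminus G$ is relatively open in $\Om\setminus G$ of Lebesgue measure zero. The key lemma, which I would prove from the definition of $\tc$ using that a function in $\Kc$ is only required to be $\le -1$ a.e., is the following: if $U$ is open and $N$ is Lebesgue-null, then $\tc(U)=\tc(U\setminus N)$ (the a.e.-to-q.e.\ principle, Proposition~\ref{prop:ae-qe}). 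Combining this with subadditivity gives $\tc(\{|h|>t\})\le C\eps$ for every $\eps$. Hence $\{h\ne0\}$ has zero capacity and is pluripolar by Theorem~\ref{thm:intro-cap-comp}(b), applied on exhausting subdomains $D$ with a countable union.
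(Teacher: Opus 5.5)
Your Steps 1 and 3 are sound and match the paper. Step 1 is Lemma~\ref{lem:cap-sublevel-set}. Step 3 is the argument of Proposition~\ref{prop:ae-qe}; your lemma $\tc(U)=\tc(U\setminus N)$ holds because $U$ is open, which is how you use it. It is then followed by Lemma~\ref{lem:cap-c} on small balls.

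The gap is Step 2, which is the entire existence part. Applying Step 1 to $f_{\eps_j}-f_{\eps_{j+1}}$ only gives $\tc(\{|f_{\eps_j}-f_{\eps_{j+1}}|>2^{-j}\})\le 4^{j}\|f_{\eps_j}-f_{\eps_{j+1}}\|_*^2$, and the right-hand side need not be small. Mollifications converge in $W^{1,2}_{\rm loc}$ but not in $\|\cdot\|_*$ (this is Proposition~\ref{prop:density}); you only know that the current part of the norm stays bounded. Bounded $W^*$-norm together with small $L^2$-norm does not by itself force small capacity of superlevel sets. Already for $n=1$, take $g_j$ to be a sum of $j^2$ truncated logarithmic potentials of discs of radius $e^{-j^2}$ centred on a grid of mesh $1/j$. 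The energies stay bounded and $\|g_j\|_{L^2}\to 0$, yet $g_j=1$ on a set of capacity bounded below. So ``exploiting the small $L^2$ part'' needs an estimate you do not supply.

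Your fallback through Theorem~\ref{thm:VV} is circular. Its proof starts from a quasi-continuous modification $\wt f$: Lemma~\ref{lem:quasi-fine} applies Fuglede's theorem to $\wt f$. Its final assertion also uses the Cauchy-in-capacity property of the averages (Remark~\ref{rmk:cauchy}). Even granting convergence of the averages off a pluripolar set, uniform convergence off open sets of small capacity does not follow. $\tc$ is not continuous along decreasing sequences of non-compact sets, so there is no Egorov theorem for it. Likewise, the Choquet property you list is only proved (Theorem~\ref{thm:choquet-c}) after Theorem~\ref{thm:quasi-mod}, though your steps do not actually need it.

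The missing ingredient is Theorem~\ref{thm:cap-cauchy}: $f*\eta_{1/k}$ is Cauchy in $\tc$-capacity. The paper proves this by pluripotential estimates rather than through the norm.
\begin{enumerate}
\item Truncate $f$; Step 1 controls the truncation error since $\|f_j\|_*$ is bounded.
\item Localize to a ball where $df\wed d^cf\le dd^cu$ with $u\le 0$ psh.
\item Discard $\{u<-N\}$, whose Bedford--Taylor capacity is $O(1/N)$, and pass from $\tc$ to $cap$ by Lemma~\ref{lem:c-cap}.
\item Bound $cap(\{|f_j-f_k|>\de\}\cap K\cap\{u\ge -N+1\},\Om)$ by Markov's inequality via $\de^{-2}\sup_\phi\int_K u_N^2(f_j-f_k)^2(dd^c\phi)^n$.
\item Since $g_j=u_Nf_j$ satisfies $dg_j\wed d^cg_j\le dd^cv_j$ with bounded psh $v_j\searrow v$, Lemma~\ref{lem:wed-prod} replaces the arbitrary $(dd^c\phi)^n$ by the measure $(\om+2dd^c(v_j+v_k))^n$, uniformly in $\phi$.
\item Corollaries~\ref{cor:L1-cap-comparison-a} and~\ref{cor:L1-cap-comparison-b} then show these integrals tend to $0$, using $v_j\searrow v$ and $g_j-g_k\to 0$ in $L^2_{\rm loc}$.
\end{enumerate}
Only with this estimate does your summable-subsequence construction go through, and that construction is exactly the proof of Theorem~\ref{thm:quasi-mod}.
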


The most technical part of the proof  lies in delicate integral estimates. Along the way  we refine the ones obtained in \cite{DMV}. It turns out in  Theorem~\ref{thm:VV}  that the precise representative, in the sense of  Sobolev spaces \cite[Chapter~4]{EG92},
\[\label{eq:rep} f^*(x) \equiv
\begin{cases}
\lim_{r\to 0} \intavg_{B(x,r)} f \; d y &\quad \text{if this limit exits,} \\
=0 &\quad \text{otherwise},
\end{cases}	
\] 
is a quasi-continuous modification, where $d y$ is the Lebesgue measure on $\bC^n = \bR^{2n}$.

Thanks to the quasi-continuity result 
 we can define the integration of a function $g\in W^*(\Om)$ against  Monge-Amp\`ere measures associated with a bounded psh function (Definition~\ref{defn:L2-ma}). This result is pushed much further in Section~9 where we show that $W^*(\Om)$ is  naturally embedded  into a Dirichlet space associated with a closed positive  $(n-1,n-1)$ current $
\te = [dd^c (\phi + \de |z|^2)]^{n-1}$ where $\phi \in PSH \cap L^\infty(\Om)$ and $\de>0$ are given.
Therefore, the integral  $\int_K df \wed d^c g\wed \te$ for $f,g\in W^*(\Om)$ is well-defined on every compact set $K\subset \Om$. This aspect has been considered recently by Do, Nguyen and Vu \cite{DNV25}.

The second main result is as follows. 
\begin{thm}\label{thm:intro-choquet-c} Let $\Om$ be a bounded open set in $\bC^n$. The set function on Borel subsets $E\mapsto \tc (E)= \tc (E,\Om)$ is a Choquet capacity.
\end{thm}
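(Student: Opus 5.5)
The plan is to verify the three Choquet axioms for $\tc$: monotonicity, continuity along decreasing sequences of compact sets, and continuity along increasing sequences of arbitrary Borel sets. Monotonicity is immediate because $E\subset F$ implies $\Kc(F)\subset\Kc(E)$.

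\emph{Preliminary reductions.} Three elementary facts will be used throughout.
\begin{enumerate}
\item[(i)] Truncation does not increase the norm. If $v\in\Kc(E)$ and $T\in\Ga_v$, then $\max(v,-1)\in\Kc(E)$ and $T\in\Ga_{\max(v,-1)}$. Hence we may restrict to competitors with $-1\le v\le 0$.
\item[(ii)] Countable subadditivity. If $v_j\in\Kc(E_j)$ with $T_j\in\Ga_{v_j}$, then $\min(v_1,\dots,v_k)$ is admissible for $E_1\cup\dots\cup E_k$. Its gradient form is dominated by $\sum_j T_j$, and its $L^2$ norm is at most $\big(\sum_j\|v_j\|_{L^2}^2\big)^{1/2}$ because $|\min_j v_j|^2\le\sum_j|v_j|^2$ for nonpositive functions. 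Passing to the limit $k\to\infty$ gives $\tc(\bigcup E_j)\le\sum_j\tc(E_j)$. This limit uses the closedness property of the norm established in the proof of Theorem~\ref{thm:star-norm}: if $f_k\to f$ in $L^2$ and $T_k\in\Ga_{f_k}$ converge weakly to $T$, then $T\in\Ga_f$.
\item[(iii)] Outer regularity. If $v\in\Kc(E)$, then $v\le-1$ a.e.\ on some open $U\supset E$, so $v\in\Kc(U)$. Therefore $\tc(E)=\inf\{\tc(U): U\supset E \text{ open}\}$.
\end{enumerate}

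\emph{Decreasing compacts.} Let $K_j\downarrow K$ be compact. Fix an open $U\supset K$. By compactness, $K_j\subset U$ for all large $j$, so $\lim_j\tc(K_j)\le\tc(U)$. Taking the infimum over $U$ and using (iii) gives $\lim_j\tc(K_j)\le\tc(K)$. The reverse inequality is monotonicity.

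\emph{Increasing sequences (main obstacle).} Let $E_j\uparrow E$ and set $L=\lim_j\tc(E_j)$, which we may assume finite. Choose $v_j\in\Kc(E_j)$ with $-1\le v_j\le0$ and $\|v_j\|_*^2\le\tc(E_j)+2^{-j}$, together with currents $T_j\in\Ga_{v_j}$ of almost minimal mass. The sequence $(v_j)$ is bounded in $W^{1,2}$ and the masses of $T_j$ are bounded. Passing to a subsequence and applying Mazur's lemma to the tails $\{v_k\}_{k\ge j}$, we get convex combinations $w_j$ converging in $W^{1,2}$ and a.e.\ to some $v$, with the corresponding convex combinations of currents converging weakly to some $T$. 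By the closedness property above, $T\in\Ga_v$ and $\|v\|_*^2\le L$.

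The difficulty is that $w_j\le-1$ only a.e.\ near $E_j$, and these neighborhoods do not survive the limit. So we only know that $v\le-1$ a.e.\ on the set $\bigcup_j\{w_j\le-1\}^o\supset E$, not on a neighborhood of $E$. To overcome this I would pass to quasi-continuous representatives (Corollary~\ref{cor:intro-quasi-mod}) and use Proposition~\ref{prop:ae-qe} to upgrade the a.e.\ inequality to $\wt v\le-1$ quasi-everywhere on $E$. Then fix $\eps>0$ and choose an open set $G$ with $\tc(G)<\eps$ such that $\wt v|_{\Om\setminus G}$ is continuous and $\wt v\le-1$ on $E\setminus G$. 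By continuity, $\wt v<-1+\eps$ on a relative neighborhood of $E\setminus G$ in $\Om\setminus G$. Since $G$ is open, that neighborhood together with $G$ contains an open neighborhood of $E$. The function $(1-\eps)^{-1}\min(v,v_G)$, where $v_G\in\Kc(G)$ is almost optimal, then belongs to $\Kc(E)$. Combining this with (ii) gives
$$
\tc(E)\le(1-\eps)^{-2}\big(L+\eps+o(1)\big),
$$
and letting $\eps\to0$ yields $\tc(E)\le L$. The step I expect to need the most care is this last one: making the quasi-continuity argument produce a genuine neighborhood of $E$ while keeping the norm loss of order $\eps$.
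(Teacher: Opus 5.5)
\textbf{There is a genuine gap in the increasing-sequence step.}

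Your treatment of monotonicity, countable subadditivity, outer regularity and decreasing compacts is fine and matches Proposition~\ref{prop:basic} and Lemma~\ref{lem:outer-reg}. Your last step, which turns ``$\wt v\le -1$ q.e.\ on $E$'' into an honest competitor via a small open set $G$ and $\min(v,v_G)$, is essentially Lemma~\ref{lem:defn-cap-mod}. The gap is the sentence where you invoke Proposition~\ref{prop:ae-qe} to get $\wt v\le -1$ quasi-everywhere on $E$.

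That proposition upgrades an a.e.\ inequality to a q.e.\ one only on an \emph{open} set, and openness is essential: a Lebesgue-null set carries no a.e.\ information. What your construction actually gives is $v\le -1$ a.e.\ on $\liminf_j \{w_j\le -1\}^o=\bigcup_j\bigcap_{i\ge j}\{w_i\le -1\}^o$. It is not the union of these sets, since $v$ is a limit of all the $w_i$. This set contains $E$ but is only a $G_{\delta\sigma}$. The neighbourhoods of $E_i$ on which your near-optimal competitors are $\le -1$ may shrink, so the set can coincide with $E$ up to a Lebesgue-null set. If $E$ is Lebesgue-null but not pluripolar, e.g.\ $E\subset[0,1]^n\subset\bR^n\subset\bC^n$, the a.e.\ statement says nothing about $\wt v$ on $E$.

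Going through the approximants does not rescue this without new input. Proposition~\ref{prop:ae-qe} does give $\wt w_j\le -1$ q.e.\ on $E_j$, but you would then need $\wt w_j\to\wt v$ quasi-everywhere.
\begin{itemize}
\item Your Mazur step yields only strong convergence in $W^{1,2}$. That gives pointwise convergence only off a set of zero Sobolev ($W^{1,2}$-)capacity, and such sets need not be pluripolar: the cube above has finite $(2n-2)$-dimensional Hausdorff measure, hence zero $W^{1,2}$-capacity in $\bR^{2n}$, for $n\ge 2$.
\item Strong convergence in $\|\cdot\|_*$ would suffice, via Corollary~\ref{cor:cap-sublevel-gen} applied to $|w_j-v|$. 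But Mazur's lemma is unavailable there. The proof of Corollary~\ref{cor:reflexive} exhibits a $W^*$-bounded sequence converging weakly in $W^{1,2}$, none of whose convex combinations converge in $W^*$-norm.
\end{itemize}

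The missing ingredient is exactly Lemma~\ref{lem:qe-limit}: a weak limit of uniformly $W^*$-bounded functions equal to $-1$ q.e.\ on $E$ is again $-1$ q.e.\ on $E$. The paper proves it in three steps.
\begin{itemize}
\item It embeds $\chi W^*(\Om)$ into the Dirichlet spaces $\cF$ attached to $\te=(dd^c\phi+\de\om)^{n-1}$ (Theorem~\ref{thm:embedding}).
\item It applies Mazur's lemma in the Hilbert space $(\cF,\|\cdot\|_\te)$, so that convex combinations converge $\tc_\te$-quasi-everywhere.
\item It uses Corollary~\ref{cor:FO-polar}: a set that is $\tc_\te$-null for every such $\te$ is pluripolar.
\end{itemize}
The paper then proves the increasing-sequence axiom by taking a weak limit of near-extremal functions and applying this lemma on each $E_k$ via the tails; with the lemma in hand, your concluding paragraph also goes through.
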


This result is the local analogue of \cite{Vigny}. On compact K\"ahler manifolds the proof of  \cite[Theorem~30]{Vigny} employed the capacity notion in the Dirichlet spaces \cite[Lemma~23]{Vigny} and the characterization of pluripolar sets via a family of capacity associated with closed positive current \cite{FO84}.  Here we use a similar strategy. Notably, we are able to give a simpler proof of the characterization in Corollary~\ref{cor:FO-polar} by using only pluripotential theory. 

Another remarkable property of functions in the complex Sobolev space is the uniformly exponential integrability due to Dinh, Marinescu and Vu  \cite{DMV}, which is very close to the one of psh functions.  Namely, 
there exist positive constants $\al$ and $A$  such that for every $f \in W^*(B(0,1))$ with $\|f\|_*\leq 1$, 
\[\label{eq:DMV-intro}
	\int_{\bar B(0,\frac{1}{8})} e^{\al |f|^2} d x \leq A.
\]
This result should be compared with the well-known exponential integrability of psh functions due to H\"ormander \cite[Proposition~4.2.9]{Ho07} and Skoda \cite{Sk72}. In the proof of  \cite[Theorem~1.2]{DMV}  the authors used the induction argument in dimension and slicing theory for positive currents. We provide a new and simpler one in Proposition~\ref{prop:DMV} by reducing the inequality to one dimensional case. In this way, it is enough  to  work with smooth functions after taking  the standard convolution with smooth kernels. The constants can be explicitly computed though they  are suboptimal.  Additionally, we point out in Remark~\ref{rmk:sharp-DMV} that the exponent 2 of $|f|$ in \eqref{eq:DMV-intro} is the optimal for all $n\geq 1$, which was only known for $n=1$ in \cite{DMV}. 

The third main result fully characterizes the inequality \eqref{eq:DMV-intro} by the volume-capacity inequality of sublevel sets.
 
\begin{thm} \label{prop:equiv-est-intro} Let $K\subset\subset \Om$ be a compact subset.  The following statements hold and they are equivalent to each other.
\begin{itemize}
\item[(a)] 
There exist constants $A_1>0$ and $\al>0$ depending only on $K, \Om$ such that for every  Borel  set $E\subset K$ $$V_{2n}(E) \leq A_1 e^{-\al/\tc (E)}.$$

\item[(b)] There exist uniform constants $A_1>0$ and $\al>0$ depending only on $K, \Om$ such that for every  $f\in W^*(\Om)$ and $f\leq 0$,
 $$V_{2n}(\{f< -1\} \cap K) \leq A_1 e^{\frac{-\al}{\|f\|_*^{2}}}.$$
\item
[(c)]   There exist uniform constants $A_1>0$ and $\al>0$ depending only on $K, \Om$ such that for every $f\in W^*(\Om)$ and $f\leq 0$ whose norm $\|f\|_* \leq 
\ka$,
$$V_{2n} (\{f < -s\} \cap K) \leq A_1 e^{-\al s^2/\ka^2}, \quad \forall s>0.$$

\item 
[(d)]  There exist uniform constants $A_1>0$ and $\al>0$ depending only on $K, \Om$ such that for every $f\in W^*(\Om)$ and $f\leq 0$ with $\|f\|_* \leq 1$,
$$V_{2n} (\{f < -s\} \cap K) \leq A_1 e^{-\al s^2}, \quad \forall s>0.$$
\end{itemize}
\end{thm}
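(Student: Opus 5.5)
We will establish the chain of equivalences (a)$\Rightarrow$(b)$\Rightarrow$(c)$\Rightarrow$(d)$\Rightarrow$(a), and show that one of the statements actually holds. The natural candidate is (d), which is the Dinh--Marinescu--Vu exponential integrability \eqref{eq:DMV-intro}, reproved in Proposition~\ref{prop:DMV}. Chebyshev's inequality converts $\int_K e^{\al |f|^2}\le A$ into $V_{2n}(\{f<-s\}\cap K)\le A e^{-\al s^2}$. To pass from $B(0,1)$ and $\bar B(0,1/8)$ to a general pair $K\subset\subset\Om$, we cover $K$ by finitely many small balls $\bar B(a,r/8)$ with $B(a,r)\subset\Om$. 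We then rescale each ball: under the affine map $z\mapsto a+rz$, restriction and dilation change $\|f\|_*$ only by a constant depending on $r$, since the mass $\int T\wed\om^{n-1}$ scales by a power of $r$ and the $L^2$ norm likewise. Summing the finitely many estimates gives (d) with constants depending only on $K,\Om$.

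\emph{(d)$\Rightarrow$(c).} Apply (d) to $f/\ka$, which is nonpositive with $\|f/\ka\|_*\le 1$. The level set $\{f<-s\}$ equals $\{f/\ka<-s/\ka\}$, so (d) gives $A_1e^{-\al s^2/\ka^2}$.

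\emph{(c)$\Rightarrow$(b).} Take $\ka=\|f\|_*$ and $s=1$. If $\|f\|_*=0$ then $f=0$ a.e., the set $\{f<-1\}$ is null, and there is nothing to prove.

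\emph{(b)$\Rightarrow$(a).} Let $E\subset K$ be Borel. If $\tc(E)=\infty$ (for instance if $\Kc(E)=\emptyset$), the bound is trivial after enlarging $A_1$ so that $A_1\ge V_{2n}(K)$. Otherwise, for each $\veps>0$ pick $v\in\Kc(E)$ with $\|v\|_*^2\le \tc(E)+\veps$. Then $v\le 0$ and $v\le -1$ a.e. on a neighbourhood $U$ of $E$. Apply (b) to $(1+\eta)v$: since $E\subset U\cap K\subset \{(1+\eta)v<-1\}\cap K$ up to a null set, and $V_{2n}$ of a Borel set is at most the measure of any measurable superset, we get
\[
V_{2n}(E)\le A_1e^{-\al/((1+\eta)^2(\tc(E)+\veps))}.
\]
Letting $\eta,\veps\to0$ yields (a).

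\emph{(a)$\Rightarrow$(d).} This is the direction that uses the capacity, and we expect it to be the main obstacle. Fix $f\le0$ with $\|f\|_*\le1$ and $s>0$. We want a Borel set $E\subset K$ that agrees with $\{f<-s\}\cap K$ up to a Lebesgue null set and satisfies $\tc(E)\lesssim 1/s^2$. The difficulty is that $\{f<-s\}$ is only defined a.e. and need not be open, while membership in $\Kc(E)$ requires $v\le-1$ on a neighbourhood of $E$. We would use the quasi-continuous representative $\wt f$ of Corollary~\ref{cor:intro-quasi-mod} together with the sublevel estimate of Lemma~\ref{lem:cap-sublevel-set}, which should give $\tc(\{\wt f<-s\})\le C\|f\|_*^2/s^2$. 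This works because $\max(f/s,-1)$, suitably modified on small-capacity open sets, is an admissible competitor, using the outer regularity of Lemma~\ref{lem:outer-reg}. Since $\wt f=f$ a.e., the volumes coincide, and (a) gives
\[
V_{2n}(\{f<-s\}\cap K)\le A_1e^{-\al s^2/C}.
\]
If Lemma~\ref{lem:cap-sublevel-set} only yields a constant factor or requires $s\ge s_0$, then small $s$ are handled trivially by enlarging $A_1$ so that $A_1e^{-\al s_0^2}\ge V_{2n}(K)$. The truncation step should be harmless, since $\|\max(f/s,-1)\|_*\le\|f\|_*/s$ because $d\max\wed d^c\max\le s^{-2}df\wed d^cf$.
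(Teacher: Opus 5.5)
Your proposal is correct and is essentially the paper's argument (Corollary~\ref{cor:exp-int} for the validity of (d), plus the cycle in Proposition~\ref{prop:equiv-est}), run in the opposite direction. Your (b)$\Rightarrow$(a) plays the role of the paper's (d)$\Rightarrow$(a) and is slightly simpler, since testing on near-minimizers $(1+\eta)v$ with $v\in\Kc(E)$ avoids the reduction to open sets and the extremal function $\Phi_E$; your (a)$\Rightarrow$(d) is the paper's (a)$\Rightarrow$(b) via the quasi-continuous modification, and the step you leave hedged there is settled with constant $1$ by Lemma~\ref{lem:defn-cap-mod} applied to $h=f/s$ (as $\wt h=\wt f/s<-1$ on $\{\wt f<-s\}\cap K$ and $\wt h\le 0$ q.e.), giving $\tc(\{\wt f<-s\}\cap K)\le \|f\|_*^2/s^2$.
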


The characterization is partially inspired by the work of \AA hag, Cegrell, Ko\l odziej, Pham and Zeriahi \cite{ACKPZ09}. Lastly, we point out in Remark~\ref{rmk:DMV-holder} that the inequalities holds for a very large family  of Monge-Amp\`ere measures associated with  H\"older continuous psh functions.

\bigskip

{\em Acknowledgement.} I would like to thank G. Marinescu and D.-V. Vu for a kind invitation to visit University of Cologne in September 2025 and to whom I had many helpful discussions on the results of \cite{DMV}.  I am grateful to S. Ko\l odziej and D.-V. Vu for useful comments on the draft of the paper. It was written while the author  visited the Center for Complex Geometry (Daejeon). He would like to thank Jun-Muk Hwang and Yongnam Lee for their kind support and exceptional hospitality. He is also grateful to the institution for providing perfect working conditions.

\section{Definitions and elementary properties}

In this section we set out the notations and study basic functional properties.

\subsection{Complex Sobolev spaces}

Let $\Om$ be a bounded open subset in $\bC^n$ and $$\om = dd^c |z|^2$$ is the standard K\"ahler form in $\bC^n$. For $f\in W^{1,2}(\Om, \bR)$ let $\Ga_f$ denote the set of all positive closed $(1,1)$-currents $T$ satisfying
\[\label{eq:test-currents}
	df \wed d^c f \leq T \quad\text{weakly in } \Om.
\]
(This set $\Ga_f$ could be empty.) 
The total mass of a positive $(1,1)$-current $T$ on $\Om$ is given by
\[\notag
	 \|T \|_\Om := \int_\Om T \wed \om^{n-1}.
\]
Recall that we define the subspace 
\[ \label{eq:sobolev-space}
W^*(\Om)= \left\{f \in W^{1,2}(\Om,\bR):   \text{there exists $T\in \Ga_f$ with } \|T\|_\Om <+\infty
\right\}
\]
and  the $W^*$-norm for $f \in W^*(\Om)$ is given by
\[\label{eq:norm}
	\|f\|_*^2 = \| f\|_{L^2(\Om)}^2 + \inf_{T\in \Ga_f} \|T\|_{\Om}.
\]
At this point it is a quantity associated with  $f$.  After  studying its basic properties we shall  prove that it is indeed a norm in Theorem~\ref{thm:star-norm}.

By the weak compactness of closed positive $(1,1)$-currents with finite mass in $\Om$, there is a closed positive $(1,1)$-current $T_f$ such that 
\[ \label{eq:min-current}
	df \wed d^c f \leq T_f \quad\mbox{and}\quad \inf_{T \in \Ga_f} \|T\|_\Om = \|T_f\|_\Om.
\] 

\begin{defn} \label{defn:w-current} A closed positive $(1,1)$-current $T$ satisfying \eqref{eq:min-current} is called a minimum current of $f$ or the current realizing the $W^*$-norm of $f$.
\end{defn}

The Sobolev space with zero boundary value is denoted by  $W^{1,2}_0(\Om)$ and we also consider
\[\notag
	W_0^*(\Om) = W^*(\Om) \cap W^{1,2}_0(\Om).
\]
Note from \cite[Lemma~7.6]{GT} that if $f,g \in W^{1,2}(\Om)$, then for a.e. points 
\[\label{eq:grad-max}
	 \na \max\{f, g\} = \begin{cases}
	\na f \quad &\mbox{on } \{f> g\}, \\
	\na g \quad &\mbox{ on } \{ f \leq g \},
	\end{cases}
\]
where $\na f$ denotes the gradient of $f$ in $\bR^{2n}$. 
In the special case, $g=0$, we write $$f^+ = \max\{f,0\} \quad\text{and}\quad f^- = -\min\{f, 0\}.$$
A basic inequality is that for every $s, t\in \bR$ and $f, g\in W^{1,2}(\Om)$,  
$$d(sf+tg) \wed d^c (sf+tg) \geq 0$$ 
as a positive $(1,1)$-form whose coefficients belong to $L^1(\Om)$. Equivalently, we have
\[\label{eq:basic-ineq}
	s^2df \wed d^c f + t^2 dg\wed d^c g \geq st (df \wed d^c g + dg\wed d^c f).
\]
More generally, if $u, v \in L^\infty(\Om,\bR)$, then by the Cauchy-Schwarz inequality for every strongly positive test $(n-1, n-1)$-form of $\Phi$,
$$\begin{aligned}
	2 \int u v \;d f \wed d^c g \wed \Phi  &\leq 2  \left( \int u^2 df \wed d^c f \wed \Phi \right)^\frac{1}{2} \left(  \int v^2 dg \wed d^c g \wed \Phi \right)^\frac{1}{2} \\
&\leq \int u^2 df \wed d^c f \wed \Phi + \int v^2 dg \wed d^c g \wed \Phi.
\end{aligned}$$
Therefore,
\[\label{eq:basic-ineq-cs}
	2 uv \;df \wed d^c g \leq u^2 df \wed d^c f + v^2 dg \wed d^c g
\]
in the weak sense of currents.

\begin{lem}\label{lem:ineq-max-min} Let $f, g \in W^*(\Om)$. Let $T$ and $S$ be positive $(1,1)$-currents satisfying $df\wed d^cf \leq T$ and $dg\wed d^c g\leq S$. Then,
$$\begin{aligned}
	d \max\{f,g\} \wed d^c\max\{f,g\} \leq  	T +   S. \\
	d \min\{f,g\} \wed d^c\min\{f,g\} \leq  	T + S. 
\end{aligned}$$
Consequently, 
\[\notag	d |f| \wed d^c|f| \leq 2 T.
\]
\end{lem}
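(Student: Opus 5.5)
The plan is to use the pointwise formula \eqref{eq:grad-max} for the gradient of a maximum and then the positivity of $T$ and $S$. Set $h=\max\{f,g\}\in W^{1,2}(\Om)$. By \eqref{eq:grad-max}, almost everywhere $dh = \chi_{\{f>g\}}\, df + \chi_{\{f\le g\}}\, dg$. Hence, as $(1,1)$-forms with $L^1$ coefficients,
$$
	dh\wed d^c h = \chi_{\{f>g\}}\, df\wed d^c f + \chi_{\{f\le g\}}\, dg\wed d^c g .
$$
The cross terms vanish because $\chi_{\{f>g\}}\chi_{\{f\le g\}}=0$. Each of $df\wed d^cf$ and $dg\wed d^cg$ is a positive $(1,1)$-form with $L^1$ coefficients, and the indicator functions take values in $[0,1]$. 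It follows that
$$
	dh\wed d^c h \le df\wed d^c f + dg\wed d^c g
$$
pointwise a.e. as positive forms, and therefore weakly, by testing against strongly positive $(n-1,n-1)$ test forms $\Phi$.

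Combining this with the hypotheses $df\wed d^cf\le T$ and $dg\wed d^cg\le S$ in the weak sense, we get $dh\wed d^ch\le T+S$. For the minimum, apply the same argument to $\min\{f,g\}=-\max\{-f,-g\}$, noting that $d(-f)\wed d^c(-f)=df\wed d^cf$. Equivalently, one can use the analogue of \eqref{eq:grad-max} for the minimum directly.

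For the consequence, write $|f|=\max\{f,-f\}$ and apply the first inequality with $g=-f$ and $S=T$. This is admissible because $d(-f)\wed d^c(-f)=df\wed d^c f\le T$, and it yields $d|f|\wed d^c|f|\le 2T$. In fact the same computation gives the sharper bound $d|f|\wed d^c|f|=df\wed d^cf\le T$, but the stated bound suffices.

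The only delicate point is justifying that the identity for $dh$, which holds a.e., passes to the weak inequality of currents. This is immediate, since all forms involved have $L^1_{loc}$ coefficients, so pairing with a test form is integration of an a.e.-defined nonnegative function. The conclusion is then just the statement that $\int \chi\, df\wed d^cf\wed\Phi\le \int df\wed d^cf\wed \Phi\le T(\Phi)$ for $0\le\chi\le1$.
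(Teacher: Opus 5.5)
Your proof is correct and is essentially the paper's argument: the paper's entire proof is that these inequalities are direct consequences of the a.e.\ gradient formula \eqref{eq:grad-max} and the positivity of $T$ and $S$, and you have written out the details (vanishing cross terms, and passing from the a.e.\ inequality of $L^1$ forms to the weak inequality of currents). Your side remark that $d|f|\wed d^c|f| = df\wed d^c f$ a.e.\ is also correct, and it would sharpen the paper's later bound $\| |f| \|_*\le 2\|f\|_*$ to the equality $\| |f| \|_* = \|f\|_*$.
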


\begin{proof} These are direct consequences of \eqref{eq:grad-max} and positivity of currents $S$ and $T$.
\end{proof}

This lemma combined with the definition of  $\|\cdot\|_*$ yields

\begin{lem}\label{lem:norm-min-max} Let $f, g \in W^*(\Om)$. Then,  $\max\{f, g\}$ and $\min\{f,g\}$ belong to $W^*(\Om)$ and 
$$
	\|\max\{f,g\} \|_* \leq \|f\|_* + \|g\|_*, \quad
	\|\min\{f, g\}\|_* \leq \|f\|_* + \|g\|_*.
$$
Furthermore, for any constant $c\in \bR$,
\[ \label{eq:norm-max-min}
	\| \max\{f,c\}\|_* \leq \| f \|_*, \quad \|\min\{f,c\}\|_* \leq \|f\|_*
\]
\end{lem}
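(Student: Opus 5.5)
The plan is to work directly from the definition \eqref{eq:norm}, using Lemma~\ref{lem:ineq-max-min} for the current part and a pointwise bound for the $L^2$ part. The gradient part is handled by Lemma~\ref{lem:ineq-max-min}, so the real content of the proof is the $L^2$ bound in the constant case.

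\emph{Membership.} For $f,g\in W^*(\Om)$ we have $\max\{f,g\},\min\{f,g\}\in W^{1,2}(\Om)$ by \cite[Lemma~7.6]{GT}, i.e.\ by \eqref{eq:grad-max}. Let $T_f$ and $T_g$ be minimum currents of $f$ and $g$, which exist by \eqref{eq:min-current}. Then $T_f+T_g$ is a closed positive $(1,1)$-current. Lemma~\ref{lem:ineq-max-min} gives $d\max\{f,g\}\wed d^c\max\{f,g\}\le T_f+T_g$, so $T_f+T_g\in\Ga_{\max\{f,g\}}$. Its mass is $\|T_f\|_\Om+\|T_g\|_\Om<\infty$, hence $\max\{f,g\}\in W^*(\Om)$. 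The same argument applies to $\min\{f,g\}$.

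\emph{Norm bound.} Pointwise a.e.\ we have $|\max\{f,g\}|^2\le f^2+g^2$, and likewise for $\min$. Therefore
\[\notag
\|\max\{f,g\}\|_*^2\le \|f\|_{L^2}^2+\|g\|_{L^2}^2+\|T_f\|_\Om+\|T_g\|_\Om=\|f\|_*^2+\|g\|_*^2\le(\|f\|_*+\|g\|_*)^2 ,
\]
and taking square roots gives the first pair of inequalities.

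\emph{Constant case.} Since $\Om$ is bounded, constants lie in $W^{1,2}(\Om)$. By \eqref{eq:grad-max}, $\na\max\{f,c\}=\mathbf 1_{\{f>c\}}\na f$ a.e. Hence
\[\notag
d\max\{f,c\}\wed d^c\max\{f,c\}=\mathbf 1_{\{f>c\}}\,df\wed d^cf\le df\wed d^cf\le T_f ,
\]
as currents, because the forms have $L^1$ coefficients and multiplying a positive form by a function in $[0,1]$ keeps it dominated. So $T_f\in\Ga_{\max\{f,c\}}$, and the current part of the norm does not increase. The same holds for $\min\{f,c\}$.

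\emph{Main obstacle: the $L^2$ term.} One needs $|\max\{f,c\}|\le|f|$ a.e. On $\{f\ge c\}$ this is trivial. On $\{f<c\}$ the function equals $c$, and $|c|\le|f|$ holds there exactly when $c\le 0$. Symmetrically, $|\min\{f,c\}|\le|f|$ requires $c\ge0$. For constants of the other sign the pointwise comparison genuinely fails: $f\equiv0$, $c=1$ gives $\|\max\{f,c\}\|_*=V_{2n}(\Om)^{1/2}>0=\|f\|_*$. So \eqref{eq:norm-max-min} holds as stated only for $c\le0$ in the $\max$ case and $c\ge0$ in the $\min$ case. These are precisely the truncations used later, for instance in the capacity $\Kc(E)$ with $v\le0$.

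For a general constant $c$ I would instead record the weaker bound
\[\notag
\|\max\{f,c\}\|_*^2\le\|f\|_*^2+c^2V_{2n}(\Om),
\]
obtained from $|\max\{f,c\}|^2\le f^2+c^2$, and similarly for $\min\{f,c\}$. This still shows that both functions lie in $W^*(\Om)$ with controlled norm.
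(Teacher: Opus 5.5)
Your proof is correct and follows the paper's route: the paper simply combines Lemma~\ref{lem:ineq-max-min} with the definition of $\|\cdot\|_*$. Your pointwise bound $|\max\{f,g\}|^2\le f^2+g^2$ in fact yields $\|\max\{f,g\}\|_*^2\le \|f\|_*^2+\|g\|_*^2$ (and likewise for $\min$), and this squared form is what the paper actually invokes in the proof of Proposition~\ref{prop:basic}(c). Your caveat on constants is also right: \eqref{eq:norm-max-min} is false for arbitrary $c\in\bR$, since your example $f\equiv 0$, $c=1$ gives $\|\max\{f,c\}\|_*^2=V_{2n}(\Om)>0$. The restricted range $c\le 0$ for $\max$ and $c\ge 0$ for $\min$ covers every place the paper uses it ($\max\{v,-1\}$, $f^{\pm}$, $\min\{f_j,N\}$, $\min\{|f|,N\}$).
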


An immediate consequence which is very useful  is  as follows.

\begin{cor} If $f\in W^*(\Om)$, then $f^+ = \max\{f,0\}$ and $f^- = -\min\{f, 0\}$ belong to $W^*(\Om)$. Moreover,  $\|f^+\|_* \leq \|f\|_*$, $\|f^-\|_* \leq \|f\|_*$ and $\| |f| \|_* \leq 2 \|f\|_*$.
\end{cor}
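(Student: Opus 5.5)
The corollary follows from Lemma~\ref{lem:norm-min-max} by choosing the second function suitably. I will first dispose of membership and the bounds for $f^+$ and $f^-$, and then treat $|f|$ separately.

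For $f^+ = \max\{f,0\}$, apply \eqref{eq:norm-max-min} with $c=0$. This gives $f^+\in W^*(\Om)$ and $\|f^+\|_*\le \|f\|_*$. If one wants to see this concretely: by \eqref{eq:grad-max} we have $df^+\wed d^cf^+ = \mathbf 1_{\{f>0\}}\, df\wed d^cf \le T_f$, where $T_f$ is a minimum current of $f$ as in \eqref{eq:min-current}. Also $|f^+|\le |f|$ pointwise, so $\|f^+\|_{L^2}\le\|f\|_{L^2}$. Adding the two estimates gives the bound. The same argument applied to $\min\{f,0\}$ shows $\min\{f,0\}\in W^*(\Om)$ with norm at most $\|f\|_*$. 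The quantity $\|\cdot\|_*$ is invariant under $f\mapsto -f$, since $\Ga_{-f}=\Ga_f$ and the $L^2$ norm does not change. Hence $f^- = -\min\{f,0\}$ satisfies $\|f^-\|_*\le\|f\|_*$.

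For $|f|$, the last statement of Lemma~\ref{lem:ineq-max-min} gives $d|f|\wed d^c|f|\le 2T_f$, so $2T_f\in\Ga_{|f|}$. Together with $\||f|\|_{L^2}=\|f\|_{L^2}$ this yields
\[\notag
\||f|\|_*^2 \le \|f\|_{L^2}^2 + 2\|T_f\|_\Om \le 2\|f\|_*^2,
\]
hence $\||f|\|_*\le\sqrt2\,\|f\|_*\le 2\|f\|_*$. In fact \eqref{eq:grad-max} shows $d|f|\wed d^c|f| = df\wed d^cf$ a.e., which already gives $\||f|\|_*=\|f\|_*$. Alternatively, one can write $|f|=\max\{f,-f\}$ and apply the first inequality of Lemma~\ref{lem:norm-min-max} with $g=-f$, which gives $\||f|\|_*\le \|f\|_*+\|-f\|_* = 2\|f\|_*$ directly.

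There is no real obstacle here. The only point needing care is that $\|\cdot\|_*$ has not yet been shown to be a norm, so the triangle-type inequality in Lemma~\ref{lem:norm-min-max} is used as stated rather than derived from norm axioms. The sign invariance $\|-f\|_*=\|f\|_*$ is immediate from the definition.
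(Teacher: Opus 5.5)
Your proof is correct and is essentially the paper's: the bounds for $f^\pm$ come from \eqref{eq:norm-max-min} with $c=0$, and the bound for $|f|$ comes from $d|f|\wed d^c|f|\le 2T_f$ in Lemma~\ref{lem:ineq-max-min}. Keep your pointwise check $|f^\pm|\le|f|$: it is what controls the $L^2$ part, and the inequality $\|\max\{f,c\}\|_*\le\|f\|_*$ can fail for $c>0$ (e.g.\ $f=0$).

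Your side remark is also right and sharper than what the paper states. Since $\na|f|=\pm\na f$ a.e.\ (with $\na f=0$ a.e.\ on $\{f=0\}$), one has $d|f|\wed d^c|f|=df\wed d^cf$, hence $\Ga_{|f|}=\Ga_f$ and $\||f|\|_*=\|f\|_*$. This would make the sign assumption in Lemma~\ref{lem:cap-sublevel-set} unnecessary.
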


\subsection{Basic examples of functions}  We provide several examples of functions in the  new space.

\begin{expl} \label{expl:obv} Smooth functions with compact supports functions or  global Lipschitz functions in $\Om$ belong to $W^*(\Om)$.
\end{expl}

\begin{expl} \label{expl:basic} Let $\Om$ be an open set in $\bC^n$.
\begin{itemize}
\item[(a)]
If $u, v \in PSH(\Om) \cap L^\infty(\Om)$, then $u \pm  v \in W^*(\Om)$. 
\item[(b)] Let $u \in PSH(\Om) \cap L^1_{\rm loc}(\Om)$ and $u \leq -1$. Then, $-\log (-u) \in  PSH(\Om)$. A direct computation shows that $-\log (-u) \in W^*(\Om')$ for every $\Om'\subset\subset \Om.$ In general they are unbounded.
\item[(c)]
Let $\Om = B(0, 1/2)$. Then, $u(z_1, ...,z_n) = -(-\log |z_1|)^a \in W^*(\Om)$  if and only if $a<1/2.$ This function is also plurisubharmonic.
\end{itemize}
\end{expl}

The next result is due to Vigny \cite{Vigny} and Dinh, Marinescu and Vu \cite[Lemma~2.8]{DMV}.

\begin{lem} \label{expl:construction} Let $f\in W^*(\Om) \cap L^\infty(\Om)$. Assume  $df \wed d^c f \leq dd^c u$ for some negative function $u \in PSH(\Om) \cap C^0(\Om)$.  Denote by $N:= \|f\|_{L^\infty(\Om)}>0$ and we set $u_N := \max\{u, -N\}+N$. Then, 
$$
	d (u_N f) \wed d^c (u_N f) \leq N^2 dd^c (u_N^2 + u_{N+1}).
$$
Consequently, $u_N f \in W^*(\Om')$ for every $\Om'\subset\subset \Om$.
\end{lem}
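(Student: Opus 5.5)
We expand $d(u_Nf)\wed d^c(u_Nf)$ by the Leibniz rule and bound each term using the hypotheses together with the two pointwise facts $0\le u_N\le N$ and $|f|\le N$. On a fixed open set all functions involved are bounded. Moreover, $u_N=\max\{u,-N\}+N$ is continuous, psh and bounded, so $u_N\in W^{1,2}_{loc}$ by Example~\ref{expl:basic}(a) and the product $u_Nf$ lies in $W^{1,2}_{loc}$. We then have
$$
d(u_Nf)\wed d^c(u_Nf) = u_N^2\, df\wed d^cf + f^2\, du_N\wed d^cu_N + u_Nf\,(df\wed d^c u_N + du_N\wed d^c f).
$$
The cross term is handled with \eqref{eq:basic-ineq-cs}, applied with the bounded weights $u_N$ and $f$. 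This gives
$$
d(u_Nf)\wed d^c(u_Nf) \le 2u_N^2\, df\wed d^cf + 2f^2\, du_N\wed d^cu_N \le 2N^2\, df\wed d^c f + 2N^2\, du_N\wed d^c u_N .
$$

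Each of the two remaining terms is dominated by a psh current. For the second term we use the identity $dd^c(u_N^2)=2u_N\,dd^cu_N+2\,du_N\wed d^cu_N$, which holds for bounded continuous psh functions by Bedford--Taylor theory, or by smoothing and passing to the limit. Since $u_N\ge0$, this yields $2\,du_N\wed d^cu_N\le dd^c(u_N^2)$. For the first term we need $df\wed d^cf\le c\,dd^cu_{N+1}$. Here the point is that $f$ is bounded by $N$ while $u_{N+1}$ coincides with $u+N+1$ on the open set $\{u>-N-1\}$, so there $dd^cu_{N+1}=dd^cu\ge df\wed d^cf$.

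The main obstacle is the complementary region $\{u\le -N-1\}$, where $dd^cu_{N+1}$ may vanish. On that region we must exploit that $u_N\equiv0$ on $\{u\le -N\}$ (more precisely, $u_N$ vanishes identically on the open set $\{u<-N\}$). Hence $u_Nf\equiv0$ there, and its gradient vanishes a.e. So the estimate is only needed where $u\ge -N$. We therefore localize with the sharper bound before using $u_N\le N$: instead of $2u_N^2\,df\wed d^cf\le 2N^2 df\wed d^cf$, we keep the weight and note that $u_N^2\,df\wed d^cf$ is supported in $\{u\ge-N\}$. On that set we have $df\wed d^cf\le dd^cu=dd^cu_{N+1}$ in the sense of currents restricted to the open set $\{u>-N-1\}\supset\{u\ge -N\}$, which is open by continuity of $u$.

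Combining, we get
$$
d(u_Nf)\wed d^c(u_Nf)\le 2N^2\,dd^cu_{N+1}+N^2\,dd^c(u_N^2).
$$
The constants may need adjusting; to match the stated form exactly we will use the refined Cauchy--Schwarz split with weights $\epsilon$ and $1/\epsilon$, or else track the constants more carefully.

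For the consequence, the right-hand side $T:=N^2dd^c(u_N^2+u_{N+1})$ is a closed positive current of locally finite mass, since $u_N^2+u_{N+1}$ is a bounded psh function. By Chern--Levine--Nirenberg, $\|T\|_{\Om'}<\infty$ for $\Om'\subset\subset\Om$. Since $u_Nf$ is bounded, it lies in $L^2(\Om')$, and therefore $u_Nf\in W^*(\Om')$.
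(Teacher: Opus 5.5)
Your argument is the paper's own. You use \eqref{eq:basic-ineq-cs} to get $d(u_Nf)\wed d^c(u_Nf)\le 2u_N^2\,df\wed d^cf+2f^2\,du_N\wed d^cu_N$, and absorb the second term into $N^2dd^c(u_N^2)$. You handle the first term by covering $\Om$ with the open sets $\{u>-N-1\}$, where $dd^cu_{N+1}=dd^cu$ (this is where continuity of $u$ enters), and $\{u<-N\}$, where $u_N=0$. The resulting bound $N^2dd^c(u_N^2)+2N^2dd^cu_{N+1}$ is also all that the paper's proof delivers once its factor $2$ is carried through. The weighted split you propose cannot remove that $2$, because it always leaves a coefficient $1+s^2>1$ on $u_N^2\,df\wed d^cf$.

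In fact the constant as stated is false for small $N$. Take $n=1$, $\Om$ the unit disc, $f=N\,{\rm Re}\,z$ and $u=\frac{N^2}{4}(|z|^2-1)+(N-\frac{N^2}{2})({\rm Re}\,z-1)$. Then $u<0$ is continuous and psh, and $df\wed d^cf=dd^cu$. As $z\to 1$, the densities of the two sides with respect to $\frac{\ii}{\pi}dz\wed d\bar z$ tend to $N^4$ and $N^4(\frac34+\frac N2)$, so the claimed inequality fails when $N<\frac12$. For $N\ge 1$ the stated form can be recovered by using $u_N^2\,df\wed d^cf\le N u_N\,dd^cu_N$ to absorb everything into $N^2dd^c(u_N^2)$. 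In any case the constant plays no role in the consequence $u_Nf\in W^*(\Om')$, which you prove correctly.
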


\begin{proof} It follows from \eqref{eq:basic-ineq-cs} that
$$
	d (u_N f) \wed d^c (u_N f) \leq 2 u_N^2 df\wed d^c f + 2 f^2 du_N \wed d^c u_N
$$
as positive $(1,1)$-currents whose coefficients are in $L^1(\Om)$. It is easy to see that $du_N \wed d^c u_N \leq dd^c u_N^2$ in the weak sense as  $0\leq u_N \leq N$ and it is psh.    The continuity of $u$ implies
$$
	dd^c \max\{u, -N-1\} = dd^c u 
$$
on the open set $\{u>-N-1\} \supset \{u \geq -N\}$. Therefore,  $u_N^2 df\wed d^c f \leq N^2 dd^c u_{N+1}$. The proof follows easily by combining these facts.
\end{proof}

The continuity assumption of $u$ will not be needed in Lemma~\ref{expl:construction} if we invoke an approximation argument below (Remark~\ref{rmk:general-construction}).

\subsection{Approximation}
\label{ss:approx}

Let us fix the standard smoothing kernel  as follows. Define 
$ \eta(z) \in C^\infty(\bC^n)$ by 
$$
	\eta(z) 
= \begin{cases}  c_0 e^{\frac{1}{|z|^2-1}} &\quad \text{if }  |z| <1,\\
		0 &\quad \text{if } |z| \geq 1,
	\end{cases}
$$
where the constant $c_0$ is chosen such that $\int_{\bC^n} \eta (x) dx =1$. For each $\veps>0$, set 
\[ \label{eq:molifier}\eta_\veps = \frac{1}{\veps^n} \eta\left(\frac{|z|}{\veps} \right).
\]
This family is used repeatedly throughout the paper.  We also write
\[\label{eq:epsilon-dom}
	\Om_\veps := \{z\in \Om : {\rm dist} (z, \d\Om) >\veps\}.
\]

The following simple observation is very useful.
\begin{lem} \label{lem:ineq-weak} Let $f\in W^{1,2}(\Om)$ and $T = \sum T_{j\bar k}  \frac{\ii}{\pi}  dz_j \wed d\bar z_k$ a $(1,1)$-current of order zero. Then, 
$$
df \wed d^c f \leq T
$$ 
weakly 
if and only if   for every $\ze \in \bC^n$,
\[\label{eq:test-distribution}
	\sum_{j,k} \d_j f (z) \d_{\bar k} f (z) \ze^j \bar\ze^k \leq \sum_{j,k} T_{j\bar k} (z) \ze^j \bar\ze^k
\]
in the sense of distributions of zero order, where $\d_j f := \d f/\d z_j$ and $\d_{\bar k} f = \d f/\d\bar z_k$.
\end{lem}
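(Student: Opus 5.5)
The plan is to unwind both sides into statements about pairing with test forms, and then to reduce the comparison to one of Hermitian matrices of measures. Write $df\wed d^c f$ in coordinates. With the normalization $d^c = \frac{\ii}{2\pi}(\dbar-\d)$ and $f$ real, we have $df\wed d^c f = \frac{\ii}{\pi}\d f\wed \dbar f = \sum_{j,k} \d_j f\,\d_{\bar k} f\, \frac{\ii}{\pi} dz_j\wed d\bar z_k$. So $df\wed d^c f$ is a $(1,1)$-form with $L^1$ coefficients, namely the Hermitian matrix $(\d_j f\,\overline{\d_k f})$, since $\d_{\bar k} f = \overline{\d_k f}$. The inequality $df\wed d^c f\le T$ means that $S := T - df\wed d^c f$ is a positive $(1,1)$-current of order zero. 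Its coefficients are $S_{j\bar k} = T_{j\bar k} - \d_j f\,\d_{\bar k} f$, which are complex measures. Condition \eqref{eq:test-distribution} says precisely that $\sum_{j,k} S_{j\bar k}\ze^j\bar\ze^k \ge 0$ as a distribution, hence as a measure, for each fixed $\ze\in\bC^n$. The lemma therefore reduces to the standard fact that a $(1,1)$-current $S$ is positive if and only if, for every constant vector $\ze$, the measure $\sum S_{j\bar k}\ze^j\bar\ze^k$ is positive.

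For the direction ``positive current $\Rightarrow$ \eqref{eq:test-distribution}'', fix $\ze$ and a test function $\chi\ge 0$. Take the $(n-1,n-1)$-form $\Phi = \chi\,\beta_\ze$, where $\beta_\ze$ is the constant strongly positive $(n-1,n-1)$-form with $\frac{\ii}{\pi}dz_j\wed d\bar z_k\wed \beta_\ze = c_n\,\ze^j\bar\ze^k\, dV$. Concretely, apply a unitary change of coordinates so that $\ze$ becomes $|\ze| e_1$; then $\beta_\ze$ is, up to a positive constant, the wedge of $\frac{\ii}{\pi} dw_l\wed d\bar w_l$ over $l\ge 2$. This form is a product of positive $(1,1)$-forms, so it is strongly positive. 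Pairing $S$ with $\Phi$ gives $\int \chi \sum S_{j\bar k}\ze^j\bar\ze^k \ge 0$, which is the claim. (If one works in the conjugate convention, the roles of $\ze$ and $\bar\ze$ swap harmlessly.)

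For the converse, assume $\sum S_{j\bar k}\ze^j\bar\ze^k\ge0$ for all $\ze$. Strongly positive $(n-1,n-1)$-forms are, pointwise, convex combinations with nonnegative coefficients of the forms $\beta_\ze$ above. Concretely, any strongly positive test form can be written as $\Phi = \sum_m \chi_m\,\beta_{\ze_m}$ with $\chi_m\ge 0$ smooth; it suffices to approximate by such finite sums with a locally finite partition of unity, which is where the real work lies. Then $S\wed\Phi = \sum_m \chi_m \sum_{j,k} S_{j\bar k}\ze_m^j\bar\ze_m^k\, dV \ge 0$.

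The main obstacle is this decomposition or approximation step, which requires some care with measures. An alternative route is to mollify: $S_\veps = S*\eta_\veps$ has smooth coefficients, and the hypothesis passes to $S_\veps$ because convolution preserves positivity of each fixed-$\ze$ measure. For smooth coefficients, pointwise positive semidefiniteness of the Hermitian matrix $(S_{\veps,j\bar k}(z))$ for all $\ze$ is exactly positivity of the form $S_\veps$, which is trivial linear algebra. Letting $\veps\to0$, positivity passes to the weak limit $S$. I would follow this mollification route. Note also that \eqref{eq:test-distribution} is read with $\d_j f\,\d_{\bar k} f\in L^1$, so $S$ has order zero and all pairings make sense.
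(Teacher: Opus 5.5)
Your argument is correct and is essentially the paper's proof. Both reduce the lemma to positivity of $S = T - df\wedge d^c f$, and for the nontrivial direction both mollify: $\bigl(\sum_{j,k} S_{j\bar k}\zeta^j\bar\zeta^k\bigr)*\eta_\veps = \sum_{j,k}(S_{j\bar k}*\eta_\veps)\zeta^j\bar\zeta^k$ is a nonnegative smooth function, the pointwise characterization of positive $(1,1)$-forms then applies, and positivity passes to the weak limit. The only difference is that the paper also runs the easy direction through the mollifier, whereas you pair $S$ directly with the strongly positive test forms $\chi\beta_\zeta$, which is an equally valid shortcut; the partition-of-unity decomposition you mention is not needed.
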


\begin{proof}  Clearly, $df\wed d^c f\leq T$ holds if and only if 
$$
	\sum (T_{j\bar k} - \d_j f \;\d_{\bar k} f)  \ii dz^j \wed d\bar z^k = \sum S_{j\bar k}  \ii dz^j \wed d\bar z^k =: S \geq 0
$$
as a positive $(1,1)$-current, where $S_{j\bar k} = T_{j\bar k} - \d_j f \d_{\bar k}f$. Since $S*\eta_\veps$ converges weakly to $S$ as $\veps \to 0$, the above inequality is equivalent to $S *\eta_\veps \geq 0$ as (1,1)-forms. By a characterization of positive $(1,1)$-forms, this holds if and only if 
$$
	\sum (S_{j\bar k} *\eta_\veps) \ze^j \bar\ze^k = \sum (S_{j\bar k} \ze^j \bar\ze^k)*\eta_\veps \geq 0 \quad \forall \ze\in \bC^n.
$$
Therefore, $S\geq 0$ if and only if 
$
	\sum S_{j\bar k} \ze^j \bar\ze^k \geq 0 
$
weakly as distributions of order zero for every $\ze \in \bC^n$. The proof is completed. 
\end{proof}

It leads to a characterization 
\begin{prop}\label{prop:ineq-weak}  Let $f \in W^{1,2}(\Om)$ and $T$ is a positive $(1,1)$-current. Then,
$$
	df \wed d^c f \leq T
$$
in the weak sense of currents if and only if for every $ \veps>0$,
$$
	d (f*\eta_\veps) \wed d^c (f *\eta_\veps) \leq T*\eta_\veps 
$$
in the weak sense of currents (or point-wise) in $\Om_\veps$.
\end{prop}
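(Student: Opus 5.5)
The plan is to reduce everything to the scalar characterization of Lemma~\ref{lem:ineq-weak} and to use that convolution with $\eta_\veps$ is averaging against a probability density. Write $f_\veps := f*\eta_\veps$ and $T_\veps := T*\eta_\veps$; the latter has smooth coefficients $T_{j\bar k}*\eta_\veps$ on $\Om_\veps$. For a fixed $\ze\in\bC^n$ put
$$
g_\ze := \sum_j \d_j f\, \ze^j \in L^2_{\rm loc}(\Om), \qquad \mu_\ze := \sum_{j,k} T_{j\bar k}\,\ze^j\bar\ze^k .
$$
Since $T\geq 0$, $\mu_\ze$ is a positive measure. Lemma~\ref{lem:ineq-weak} says that $df\wed d^c f\leq T$ is equivalent to $|g_\ze|^2\leq \mu_\ze$ as measures for every $\ze$; the normalization of $d^c$ is the same on both sides, so no constants appear. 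The same lemma, applied to the smooth data $f_\veps, T_\veps$ on $\Om_\veps$, shows that the second condition is equivalent to the pointwise inequality $|\sum_j \d_j f_\veps\,\ze^j|^2 \leq \sum_{j,k} (T_{j\bar k}*\eta_\veps)\ze^j\bar\ze^k$ on $\Om_\veps$ for every $\ze$.

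\emph{($\Rightarrow$)} Differentiation commutes with convolution, so $\sum_j \d_j f_\veps\,\ze^j = g_\ze*\eta_\veps$, and similarly $\sum_{j,k}(T_{j\bar k}*\eta_\veps)\ze^j\bar\ze^k = \mu_\ze*\eta_\veps$ on $\Om_\veps$. Since $\eta_\veps\geq 0$ and $\int\eta_\veps=1$, Cauchy--Schwarz (Jensen) gives, for $z\in\Om_\veps$,
$$
|g_\ze*\eta_\veps(z)|^2 = \Big|\int g_\ze(z-y)\eta_\veps(y)\,dy\Big|^2 \leq \int |g_\ze|^2(z-y)\,\eta_\veps(y)\,dy = (|g_\ze|^2*\eta_\veps)(z).
$$
Convolution with a nonnegative kernel preserves the order of positive measures: $\mu_\ze-|g_\ze|^2\geq 0$ implies $(\mu_\ze-|g_\ze|^2)*\eta_\veps\geq 0$ pointwise on $\Om_\veps$. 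Hence $|g_\ze*\eta_\veps|^2\leq \mu_\ze*\eta_\veps$ pointwise on $\Om_\veps$ for all $\ze$, which is the desired inequality.

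\emph{($\Leftarrow$)} Fix a strongly positive test $(n-1,n-1)$-form $\Phi$ with support in $\Om$. For all small $\veps$ its support lies in $\Om_\veps$, and there
$$
\int df_\veps\wed d^c f_\veps\wed\Phi \leq \int T_\veps\wed\Phi .
$$
As $\veps\to0$, $\d_j f_\veps\to \d_j f$ in $L^2$ on ${\rm supp}\,\Phi$. So the coefficients $\d_j f_\veps\,\d_{\bar k}f_\veps$ converge in $L^1$ there, and the left side tends to $\int df\wed d^c f\wed\Phi$. Also $T_\veps\to T$ weakly, so the right side tends to $\int T\wed\Phi$. Passing to the limit gives $df\wed d^c f\leq T$.

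The only delicate point is the justification in ($\Rightarrow$) that the measure inequality survives convolution pointwise. This is handled by writing $(\mu_\ze-|g_\ze|^2)*\eta_\veps(z)$ as the pairing of a positive measure with the nonnegative function $y\mapsto\eta_\veps(z-y)$, whose support is compact in $\Om$ when $z\in\Om_\veps$. The rest is routine.
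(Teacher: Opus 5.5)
Your proof is correct and follows the paper's argument. For $(\Rightarrow)$ you combine Cauchy--Schwarz/Jensen on the mollified gradient with the scalar characterization of Lemma~\ref{lem:ineq-weak} and the positivity of convolution against $\eta_\veps$ at points of $\Om_\veps$; your $|g_\ze*\eta_\veps|^2\le |g_\ze|^2*\eta_\veps$ is just a compact form of the paper's symmetrized double-integral estimate. For $(\Leftarrow)$ you let $\veps\to 0$ exactly as the paper does, with a bit more justification of the limit.
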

\begin{proof} The sufficient condition is easy because if the second inequality holds for every $\veps>0$, then letting $\veps\to 0$ one gets the first inequality. It remains to prove
the necessary condition. Assume $df \wed d^c f \leq T$ weakly where $T = \sum T_{j\bar k}  \frac{\ii}{\pi}  dz_j \wed d\bar z_k$. 
Since
$$
	f*\eta_\veps(z) = \frac{1}{\veps^{2n}}\int f (z -x) \eta \left(\frac{x}{\veps} \right) dx = \int f(z-x) \eta_\veps(x)dx,
$$
we compute
$$ \frac{\d }{\d z_j } (f *\eta_\veps (z))  =  \int (\d_j f) (z-x) \eta_\eps(x) dx.
$$
By the Cauchy-Schwarz inequality for $\ze \in \bC^n$, 
$$\begin{aligned}
	&\sum \d_j f (z-x) \; \d_{\bar k} f (z-y) \ze^j \bar\ze^k \\
	&\leq \frac{1}{2}\sum \d_j f (z-x) \d_{\bar k} f (z-x) \ze^j \bar\ze^k +\frac{1}{2} \sum \d_j f(z-y) \d_{\bar k} f(z-y) \ze^j \bar\ze^k.
\end{aligned}$$
Since $\eta_\veps(x) \geq 0$, it follows that 
$$\begin{aligned}
&	2\iint  \sum \d_j f (z-x)  \d_{\bar k} f (z-y) \ze^j \bar\ze^k\eta_\veps(x) \eta_\eps(y) \,dxdy \\
& 	\leq  \iint \sum \d_j f (z-x) \d_{\bar k} f (z-x) \ze^j \bar\ze^k \eta_\veps(x) \eta_\eps(y)  \,dxdy  \\
&\quad	+ \iint \sum \d_j f (z-y) \d_{\bar k} f (z-y) \ze^j \bar\ze^k \eta_\veps(x) \eta_\eps(y)\,dxdy \\
&= 2 \int \sum \d_j f (z-x) \d_{\bar k} f (z-x) \ze^j \bar\ze^k \eta_\veps(x)  \,dx \\
&:=2 I,
\end{aligned}$$
where  where we used the fact that $\int \eta_\veps (x) dx =1$ for every $\veps>0$. 
The inequality \eqref{eq:test-distribution} in Lemma~\ref{lem:ineq-weak}  gives 
$$\begin{aligned}
I \leq	\int \sum T_{j\bar k} (z-x) \ze^j \bar\ze^k \eta_\veps(x)\, dx  
=   \sum T_{j\bar k} *\eta_\veps (z) \ze^j \bar\ze^k,
\end{aligned}$$
Therefore,
$$
	\sum \d_j (f*\eta_\veps) (z) \; \d_{\bar k}(f*\eta_\veps) (z) \ze^j \bar\ze^k \leq \sum (T_{j\bar k}*\eta_\veps) (z) \ze^j \bar\ze^k
$$
in the distributional (or point-wise) sense.
The left hand side is rewritten as
$$
	\sum [(\d f/\d z_j  ) *\eta_\veps] \; [(\d f/\d \bar z_{k}) *\eta_\veps] \ze^j \bar\ze^k. 
$$
In other words, 
$
	d f_\veps \wed d^c f_\veps \leq T_\veps$ in  $\Om_\veps$
by the characterization of positive $(1,1)$-forms, see e.g., Demailly's book \cite[Chapter~3, Corollary~1.7]{D-book}. 
\end{proof}

\begin{remark} \label{rmk:general-construction} Lemma~\ref{expl:construction} is valid for a general bounded psh function $u$, i.e., it needs not be continuous. In fact, let $f$ and $u$ are functions in the lemma, define $f_\veps=f*\eta_\veps$ and $u_\veps = u*\eta_\veps$. They are smooth and satisfy $df_\veps \wed d^c f_\veps \leq dd^c u_\veps$ by Proposition~\ref{prop:ineq-weak}. So, 
\[\label{eq:rmk-bdd}
	d (u_{N,\veps} f_\veps) \wed d^c (u_{N,\veps} f_\veps) \leq N^2 (dd^c u_{N,\veps}^2 + u_{N+1,\veps}),
\]
where $u_{N,\veps} = \max\{u_\veps, -N\} +N$. Since  $u_{N,\veps}$  are bounded psh functions decreasing to $u_N$ as $\veps \to 0$, it follows that the right hand side of \eqref{eq:rmk-bdd} converges weakly to $N^2 dd^c(u_N^2 + u_{N+1})$. On the other hand, the above inequality implies that   $\{v_\veps f_\veps\}_{\veps>0}$ has uniformly bounded norm  in $ W^{1,2} (\Om')$ for a given open subset $\Om'\subset \subset \Om$. So, it has a subsequence converges weakly to $w \in W^{1,2}(\Om')$. However, $u_{N,\veps} f_\veps \to u_N f$ in $L^1(\Om')$. Hence, $w = u_Nf$ in $L^1(\Om')$ by the uniqueness of weak limit in $W^{1,2}(\Om')$. We conclude that the left hand side in \eqref{eq:rmk-bdd}  converges weakly in $W^{1,2}(\Om')$ to $d(u_Nf) \wed d^c(u_Nf)$ as $\veps\to 0$.
\end{remark}

The following result is motivated by the one for functions of bounded variations \cite[Chapter~5]{EG92}.

\begin{prop}\label{prop:smoothing} Let $f \in W^*(\Om)$. Then,
$ f_\veps = f*\eta_\veps \in W^* \cap C^\infty(\Om_\veps)$ and 
$$\lim_{\veps\to 0}\|f_\veps\|_{W^*(\Om_\veps)} = \|f\|_{W^*(\Om)}.$$
Similarly, for every $\Om'\subset\subset \Om$, $\lim_{\veps \to 0} \|f_\veps\|_{W^*(\Om')} = \|f\|_{W^*(\Om')}$. Moreover, for $0< \veps < {\rm dist}(\d\Om, \Om')$,  $$\|f_\veps\|_{W^*(\Om')} \leq \|f\|_{W^*(\Om)}.$$
\end{prop}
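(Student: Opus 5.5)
Let $T$ be a minimum current of $f$ on $\Om$, as in Definition~\ref{defn:w-current}, so that $df\wed d^cf\le T$ and $\|T\|_\Om=\inf_{\Ga_f}\|\cdot\|_\Om$. By Proposition~\ref{prop:ineq-weak}, on $\Om_\veps$ we have $df_\veps\wed d^cf_\veps\le T_\veps:=T*\eta_\veps$. The current $T_\veps$ is a smooth closed positive $(1,1)$-form on $\Om_\veps$, because convolution commutes with $d$ and preserves positivity of forms with constant-coefficient tests. Hence $T_\veps\in\Ga_{f_\veps}$ on $\Om_\veps$, and it remains to estimate its mass and the $L^2$ norm. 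Since $\om^{n-1}$ has constant coefficients, Fubini gives
$$\int_{\Om_\veps}T_\veps\wed\om^{n-1}=\int \eta_\veps(x)\Big(\int_{\Om_\veps}T(\cdot-x)\wed\om^{n-1}\Big)dx\le \|T\|_\Om .$$
Here we use that $\Om_\veps-x\subset\Om$ for $|x|<\veps$; strictly, one pairs against cutoffs and passes to a supremum, using that $T$ is a positive measure. Likewise, Young's (or Jensen's) inequality gives $\|f_\veps\|_{L^2(\Om_\veps)}\le\|f\|_{L^2(\Om)}$. Therefore $f_\veps\in W^*\cap C^\infty(\Om_\veps)$ and $\|f_\veps\|_{W^*(\Om_\veps)}\le\|f\|_{W^*(\Om)}$. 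Applying the same computation with $\Om'$ in place of $\Om_\veps$, valid whenever $\veps<{\rm dist}(\d\Om,\Om')$ since then $\Om'-x\subset\Om$, yields the final inequality $\|f_\veps\|_{W^*(\Om')}\le\|f\|_{W^*(\Om)}$.

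For the limit we need the reverse inequality $\liminf_{\veps\to0}\|f_\veps\|_{W^*(\Om_\veps)}\ge\|f\|_{W^*(\Om)}$, which is a lower semicontinuity statement. Choose minimum currents $S_\veps$ for $f_\veps$ on $\Om_\veps$. Then $\|S_\veps\|_{\Om_\veps}\le\|T\|_\Om$, so every compact set sees uniformly bounded masses. By weak compactness and a diagonal argument over an exhaustion of $\Om$, some subsequence $S_{\veps_j}$ converges weakly on $\Om$ to a closed positive current $S$, and for every $\Om''\subset\subset\Om$,
$$\|S\|_{\Om''}\le\liminf\|S_{\veps_j}\|_{\Om_{\veps_j}}.$$
Taking the supremum over $\Om''$ gives $\|S\|_\Om\le\liminf\|S_{\veps_j}\|$. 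Moreover $f_\veps\to f$ in $W^{1,2}_{\rm loc}$ strongly, so $df_\veps\wed d^cf_\veps\to df\wed d^cf$ in $L^1_{\rm loc}$. Passing to the limit in $df_{\veps_j}\wed d^cf_{\veps_j}\le S_{\veps_j}$ then gives $df\wed d^cf\le S$, i.e. $S\in\Ga_f$. Together with $\|f_\veps\|_{L^2(\Om_\veps)}\to\|f\|_{L^2(\Om)}$, which follows from $L^2$ convergence on compacts plus monotone exhaustion, we get $\|f\|_{W^*(\Om)}^2\le\|f\|^2_{L^2}+\|S\|_\Om\le\liminf\|f_\veps\|^2_{W^*(\Om_\veps)}$. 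Applying this along an arbitrary subsequence proves the limit.

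The statement for a fixed $\Om'\subset\subset\Om$ follows by the same two steps. The upper bound is now $\limsup\|f_\veps\|_{W^*(\Om')}\le\|f\|_{W^*(\Om')}$, and it requires more care because $T$ is only defined on $\Om'$. For this we take a minimum current $T'$ of $f$ on a slightly larger set $\Om'_{-\de}=\{{\rm dist}(\cdot,\Om')<\de\}$. Convolution gives $\|f_\veps\|_{W^*(\Om')}\le\|f\|_{W^*(\Om'_{-\de})}$ for $\veps<\de$. So the main obstacle is showing $\|f\|_{W^*(\Om'_{-\de})}\to\|f\|_{W^*(\Om')}$ as $\de\to0$. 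This is not automatic: a minimum current on $\Om'$ need not extend, and mass could concentrate near $\d\Om'$. I would first try to prove this continuity by lower semicontinuity applied to minimum currents on the sets $\Om'_{-\de}$ decreasing to $\overline{\Om'}$, using that their masses decrease and their limit lies in $\Ga_f$ on $\Om'$. If that fails, I would fall back on establishing the limit only up to boundary mass, or under the assumption that $\d\Om'$ carries no mass of minimum currents.
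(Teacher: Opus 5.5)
Your proofs of the first identity and of the final inequality are correct and follow the paper. You convolve a minimum current of $f$ on $\Om$ using Proposition~\ref{prop:ineq-weak} and Young's inequality, then pass to a weak limit of minimum currents of $f_\veps$ using $W^{1,2}_{\rm loc}$ convergence. You handle the $L^2$ term and the diagonal extraction more carefully than the paper does. The lower bound $\liminf_{\veps\to0}\|f_\veps\|_{W^*(\Om')}\ge\|f\|_{W^*(\Om')}$ for a fixed $\Om'$ also goes through as you say.

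\textbf{The gap.} The upper bound $\limsup_{\veps\to0}\|f_\veps\|_{W^*(\Om')}\le\|f\|_{W^*(\Om')}$ is left open, so the identity for fixed $\Om'$ is not proved.

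Your reduction to $\lim_{\de\to0}\|f\|_{W^*(\Om'_{-\de})}=\|f\|_{W^*(\Om')}$, with $\Om'_{-\de}=\{{\rm dist}(\cdot,\Om')<\de\}$, is a valid sufficient condition. However, the argument you propose for it only gives the trivial direction. Restricting minimum currents on $\Om'_{-\de}$ to $\Om'$ and taking a weak limit produces an element of $\Ga_f$ on $\Om'$ with mass at most $\lim_\de(\cdots)$. That is, it shows $\|f\|_{W^*(\Om')}\le\lim_{\de\to0}\|f\|_{W^*(\Om'_{-\de})}$, which already follows from monotonicity under restriction.

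What you need is the reverse inequality, an upper semicontinuity statement. You must produce closed positive currents that dominate $df_\veps\wed d^cf_\veps$ on all of $\Om'$ with mass close to the $\Om'$-optimal mass. This includes the layer $\{z\in\Om':{\rm dist}(z,\d\Om')<\veps\}$, where $f_\veps$ sees values of $f$ outside $\Om'$. Weak compactness never produces such competitors, and closed positive currents on $\Om'$ do not in general extend across $\d\Om'$.

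Moreover, the reduced statement is false for irregular $\Om'$. Take $n=1$, where $\inf_{T\in\Ga_g}\|T\|_U=\int_U dg\wed d^cg$. The mass part of $\|f\|^2_{W^*(\Om'_{-\de})}$ then tends to $\int_{\overline{\Om'}}df\wed d^cf$. This strictly exceeds $\int_{\Om'}df\wed d^cf$ when $\d\Om'$ has positive Lebesgue measure and $df\neq0$ there. Yet the proposition's identity still holds in that case, because $f_\veps\to f$ in $W^{1,2}(\Om')$. Your fallback ("up to boundary mass") proves a different statement.

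For comparison, the paper disposes of this case with ``proved similarly'', which covers the lower bound but not the upper bound. Running its upper-bound step on $\Om'$ with a minimum current $T$ of $f$ on $\Om$ gives only $\limsup_{\veps\to0}\|f_\veps\|^2_{W^*(\Om')}\le\|f\|^2_{L^2(\Om')}+\int_{\overline{\Om'}}T\wed\om^{n-1}$, and $T|_{\Om'}$ need not be minimal on $\Om'$. So you have correctly located a step the paper does not spell out, but your proposal does not supply the missing construction either.
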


\begin{proof}
Let $T$ be a closed positive $(1,1)$-current realizing  $W^*$-norm of $f$ (Definition~\ref{defn:w-current}). Since 
$$
	\int_\Om T \wed\om^{n-1} = \int_\Om \sum_{j=1}^n T_{j\bar j} <+\infty,
$$
and each coefficient $T_{j \bar j}$  is a positive Radon measure, so we may extend them to $\bC^n$ as zero outside $\Om$ by setting $\wt T:= {\bf 1}_{\Om} \cdot T$ (it is still a positive current, but it is no longer closed). Hence, $(\wt T)_\veps\wed \om^{n-1}$ is defined in $\bC^n$ where $(\wt T)_\veps = (\wt T)*\eta_\veps$. Moreover, we have $(\wt T)_\veps = T*\eta_\veps =: T_\veps$ on $\Om_\veps$.

By Proposition~\ref{prop:ineq-weak} we have
\[\label{eq:smoothing-ineq}\begin{aligned}
	\|f_\veps\|_{W^*(\Om_\veps)}^2 
&\leq \int_{\Om_\veps} |f_\veps|^2 dx + \int_{\Om_\veps} T_\veps \wed \om^{n-1}\\
&\leq \int_\Om |f|^2 dx + \int_\Om T \wed \om^{n-1},
\end{aligned}\]
where in the second inequality we used the Young's and Minkowski's inequalities for the convolutions with $\eta_\veps$ of functions in $L^2(\Om)$ and of positive Radon measures in $\bC^n$ (see, e.g., \cite[Propositions~8.9, ~8.49]{Fd99}). It follows that $$\limsup_{\veps \to 0} \|f_\veps\|_{W^*(\Om_\veps)} \leq \|f\|_*.$$ 
On the other hand, for each $\veps>0$ let $S_\veps$ be a closed positive $(1,1)$-current which realize the norms of $f_\veps$ in $\Om_\veps$. They have uniformly bounded mass on compact subsets. Hence,  
there exists a closed positive $(1,1)$-current $S$ on $\Om$ such that
$$
	S_\veps \to S  
$$ 
weakly in subsequence (we still denote by the same sequence for simplicity).  Clearly, $df \wed d^c f \leq S$ weakly in $\Om$ as $f_\veps \to f$ in $W^{1,2}_{\rm loc}(\Om)$. 
Next, we observe that $\|S\wed \om^{n-1}\|_{\Om} <+\infty$. 
In fact, since $S_\veps \to S$ weakly, it follow that for an open set $D\subset\subset \Om$, 
$$
 \liminf_{\veps\to 0} \int_{\Om_\veps} S_\veps \wed \om^{n-1} \geq \liminf_{\veps \to 0}	\int_{D} S_\veps \wed \om^{n-1}\geq \int_{D} S \wed \om^{n-1}.
$$
Letting $D \uparrow \Om$, we obtain
$
	\liminf_{\veps \to 0} \int_{\Om_\veps} S_\veps \wed \om^{n-1}  \geq \int_\Om S\wed \om^{n-1}.
$
Therefore,
$$
	\|f\|_{W^*(\Om)}^2 \leq \int_\Om |f|^2 dx + \int_\Om S\wed \om^{n-1} \leq \liminf_{\veps\to 0} \|f_\veps\|^2_{W^*(\Om_\veps)}.
$$
In conclusion we have $\liminf_{\veps \to 0} \|f_\veps\|_{W^*(\Om_\veps)} \geq \|f\|_* \geq \limsup_{\veps \to 0} \|f_\veps\|_{W^*(\Om_\veps)}$. 

Next, the second identity is proved similarly to the first one but it is easier without change of the domain of integration. Finally, $\|f_\veps\|_{W^*(\Om')} \leq \|f_\veps\|_{W^*(\Om_\veps)}$ and  \eqref{eq:smoothing-ineq} implies the last inequality in the proposition.
\end{proof}

We record here a useful fact that Lemma~\ref{lem:ineq-weak}, Proposition~\ref{prop:ineq-weak} and Proposition~\ref{prop:smoothing} are valid for other positive families of approximation of unity.  In particular, the family
$$
	\chi_r = \frac{{\bf 1}_{B(0,r)}}{|B(0,r)|}, \quad r>0
$$
and 
\[\label{eq:mean}
\begin{aligned}
	(f)_{x,r}:= f * \chi_r (x) &=  \frac{1}{|B(x,r)|} \int_{B(x,r)} f(y) dy.
\end{aligned}\]
Here $B(z,r)$ is the ball of radius $r$ and center at the $x$ and $|B(x,r)|$ is its Lebesgue measure.
Then, $(f)_r(x) = (f)_{x,r}$ belongs to $W^* \cap C^0(\Om_r)$.
The same is true for  
$$
	(|f|)_{x,r} = |f|*\chi_r(x).
$$

For simplicity we denote the average of $f$ over a general Borel set $E \subset \bC^n$ by
\[\label{eq:avg-int}
	\intavg_E f dx = \frac{1}{|E|} \int f dx,
\]
where $|E|$ is the Lebesgue measure of $E$ in $\bC^n$.  A classical theorem in  Evans and Gariepy's book \cite[Sect. 1.7, Theorem~1]{EG92}) says that  for $f\in L^1_{\rm loc}(\Om)$, we have
$$
	\lim_{r\to 0}	\intavg_{B(x,r)} f dy = f(x) 
$$
for $x\in \Om \setminus F $ for some Borel set $F$ with $|F| =0$. Furthermore, its stronger version \cite[Section. 1.7, Corollary~1]{EG92} also holds 
\[\label{eq:LP}
	\lim_{r\to 0} \intavg_{B(x,r)} |f-f(x)| dy =0 \quad x\in \Om\setminus F.
\] 

Next, we recall a useful consequence of this fact whose  proof  is derived from \cite[Theorem~2.35]{KLV-book}. 
\begin{prop}  \label{prop:weak-conv-cp} Let $1<p<\infty$ and $E \subset \Om$ be a Borel set.  Let $\{f_j\} \subset L^p(\Om, \bR)$ and $\{g_j\} \subset L^p(\Om,\bR)$ be such that  $f_j \to f \in L^p(\Om,\bR)$ weakly and $g_j \to g\in L^p(\Om,\bR)$ weakly. 
If $|f_j(x)| \leq g_j(x)$ for a.e. $x\in E$, then $|f(x)| \leq g(x)$ a.e. $x\in E$. 
\end{prop}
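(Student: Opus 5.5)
The plan is to reduce the pointwise inequality to a statement about integrals against nonnegative test functions, which is stable under weak convergence. We may assume $|E|>0$. It suffices to show that for every $\phi \in L^{p'}(\Om)$ with $\phi\geq 0$ and $\phi = 0$ outside $E$ (where $1/p+1/p'=1$) we have
$$\int_\Om \phi\, f\, dx \leq \int_\Om \phi\, g\, dx \quad\text{and}\quad -\int_\Om \phi\, f\, dx \leq \int_\Om \phi\, g\, dx .$$
Indeed, these two inequalities give $\int_E \phi\,(g-f)\geq 0$ and $\int_E\phi\,(g+f)\geq 0$ for all such $\phi$.

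To pass from these integral inequalities to a.e.\ information, choose $\phi = {\bf 1}_{F}$ for bounded Borel sets $F = E\cap\{g-f<0\}\cap B$, or $F=E\cap\{g+f<0\}\cap B$, with $B$ a ball. This $\phi$ lies in $L^{p'}$ because $\Om$ is bounded. With the first choice we get $\int_F (g-f)\,dx \geq 0$ while the integrand is negative on $F$, so $|F|=0$. Hence $f\leq g$ a.e.\ on $E$, and likewise $-f\leq g$ a.e.\ on $E$. Together these give $|f|\leq g$ a.e.\ on $E$.

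The integral inequalities follow from the hypotheses at level $j$. Since $|f_j|\leq g_j$ a.e.\ on $E$ and $\phi\geq 0$ vanishes off $E$, we have $\pm\phi f_j \leq \phi g_j$ a.e.\ in $\Om$. Integrating gives $\pm\int \phi f_j \leq \int\phi g_j$. Because $\phi\in L^{p'} = (L^p)^*$, weak convergence in $L^p$ lets us pass to the limit on both sides, and this yields the claim.

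There is no genuine obstacle here. The only point needing care is that the absolute value is not weakly continuous, so we cannot simply pass to the limit in $|f_j|$. That is exactly why the argument splits $|f_j|\leq g_j$ into the two linear inequalities $\pm f_j\leq g_j$, each of which does survive weak limits. This linear-inequality argument is the content of the cited \cite[Theorem~2.35]{KLV-book}.
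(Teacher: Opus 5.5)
Your proof is correct, but it takes a different route from the paper's.

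\textbf{The paper's route.} The paper first replaces $f_j, g_j$ by ${\bf 1}_E f_j, {\bf 1}_E g_j$ to reduce to the case $E=\Om$. It then tests against $\psi = |B(x,r)|^{-1}{\bf 1}_{B(x,r)}$ multiplied by the sign of the ball average $(f)_{x,r}$. This gives $|(f)_{x,r}|\le (g)_{x,r}$ for every small ball. Finally it lets $r\to 0$ at common Lebesgue points, so it relies on the Lebesgue differentiation theorem.

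\textbf{Your route.} You test directly against the indicators of the bad sets $E\cap\{g-f<0\}$ and $E\cap\{g+f<0\}$. Because these test functions are already supported in $E$, you need neither the reduction to $E=\Om$ nor any differentiation theorem. The strict sign of the integrand on the bad set forces it to be null.

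\textbf{Comparison.} Both arguments handle the non-linearity of $|\cdot|$ the same way: they fix a sign, i.e.\ they use the linear inequalities $\pm f_j\le g_j$. Yours is shorter and more elementary. It is essentially the weak closedness of the convex, strongly closed set $\{(u,w): |u|\le w \text{ a.e. on } E\}$, made explicit. The paper's version gives slightly more precise information: the inequality holds at every common Lebesgue point of ${\bf 1}_E f$ and ${\bf 1}_E g$. However, only the a.e.\ statement is used later in the paper, so your argument would serve equally well there.
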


\begin{proof}  
Define $\wh f_j = {\bf 1}_{E} f_j$ and $\wh g_j = {\bf 1}_{E} g_j$. Then,
$|\wh f_j| \leq  \wh g_j$  a.e. in  $\Om.$ We also have both
$\wh f_j \to \wh f$ and $\wh g_j \to \wh g$ weakly in $L^p(\Om)$.
Thus, by replacing the sequences we may assume that $E= \Om$. 
Let $x \in \Om$ be a Lebesgue point of both $g$ and $f$. Let $0<r <{\rm dist} (x, \d\Om)$. 

We assume first that $(f)_{x,r}=\intavg_{B(x,r)} f dy \neq 0$.
Define
$$
	\psi (y)= \frac{{\bf 1}_{B(x,r)}}{|B(x,r)|} (y)\times \frac{(f)_{x,r}}{|(f)_{x,r}|}.
$$
By the Cauchy-Schwarz inequality and the assumptions on weak convergence,
$$\begin{aligned}
	\left| \intavg_{B(x,r)} f dy\right| 
&= \int_\Om f(y) \psi(y) dy 
= \lim_{j\to \infty} \int_\Om f_j \psi dy \\
&\leq \liminf_{j\to \infty} \int_\Om |f_j| |\psi| dy \\
&\leq \liminf_{j\to \infty} \int_\Om g_j |\psi| dy \\
&= \intavg_{B(x,r)} g dy.
\end{aligned}$$
Therefore, 
$$ \left| \intavg_{B(x,r)} f dy\right| \leq \intavg_{B(x,r)} g dy.
$$
This inequality obviously holds when $(f)_{x,r}=0.$ Now letting $r\to 0$ we will get the conclusion as the limit exists almost everywhere in $\Om$ by \eqref{eq:LP}.
\end{proof}

\subsection{Compactness and completeness} We first show that the complex Sobolev space enjoys good compactness properties under a uniform bound of $W^*$-norm. Then, using this we prove the basic functional properties of the space.  

Below we use several times the weak convergence in $W^{1,2}(\Om,\bR)$. A useful criterion for such a convergence is as follows.  
By the Riesz representation theorem, a sequence $f_j \to f$ weakly in $W^{1,2}(\Om,\bR)$ if and only if $f_j \to f$ weakly in $L^2(\Om)$ and $\na f_j \to \na f$ weakly in $L^2(\Om, \bR^{2n})$.

\begin{lem} \label{lem:weak-compactness} Let $\{f_j\}_{j\geq 1} \subset W^*(\Om)$. Assume that $\| f_j \|_* \leq A$ for every $j\geq 1$. If $f_j \to f$ weakly in $W^{1,2}(\Om)$, then $f \in W^*(\Om)$ with $\|f\|_* \leq A$.
\end{lem}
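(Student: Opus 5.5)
Let $T_j$ be a minimum current of $f_j$ (Definition~\ref{defn:w-current}), so $df_j\wed d^c f_j\leq T_j$ and $\|f_j\|_{L^2(\Om)}^2+\|T_j\|_\Om\leq A^2$. The masses $\|T_j\|_\Om$ are uniformly bounded, so by weak compactness of positive closed currents we may pass to a subsequence with $T_j\to T$ weakly in $\Om$. Here $T$ is closed and positive, and, arguing exactly as in the proof of Proposition~\ref{prop:smoothing} (integrate over $D\subset\subset\Om$ and let $D\uparrow\Om$), $\|T\|_\Om\leq\liminf_j\|T_j\|_\Om$. Weak lower semicontinuity of the $L^2$ norm gives $\|f\|_{L^2}\leq\liminf_j\|f_j\|_{L^2}$. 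We choose the subsequence so that it realizes $\liminf_j \|f_j\|_*^2$, and after a further extraction we may assume $\|f_j\|_{L^2}^2$ and $\|T_j\|_\Om$ converge separately. Then
\[
\|f\|_{L^2}^2+\|T\|_\Om\leq\lim_j\left(\|f_j\|_{L^2}^2+\|T_j\|_\Om\right)\leq A^2 .
\]
It therefore remains to show $T\in\Ga_f$, i.e. $df\wed d^c f\leq T$.

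This is the main obstacle. Weak convergence in $W^{1,2}$ does not give $df_j\wed d^c f_j\to df\wed d^c f$; the quadratic expressions only converge weakly to some defect measure. What we need is lower semicontinuity of a positive quadratic form, and we will get it through convexity. Fix $\ze\in\bC^n$ and a test function $0\leq\chi\in C^\infty_c(\Om)$. Put $h_j=\sum_k \d_{\bar k}f_j\,\bar\ze^k$, which converges weakly in $L^2$ to $h=\sum_k\d_{\bar k}f\,\bar\ze^k$ because $\na f_j\to\na f$ weakly. Since $f_j$ is real, $\d_j f_j=\overline{\d_{\bar j}f_j}$, so the left side of \eqref{eq:test-distribution} equals $|h_j|^2$. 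By Lemma~\ref{lem:ineq-weak}, the hypothesis $df_j\wed d^cf_j\leq T_j$ reads $\int\chi|h_j|^2\leq\langle T_j^\ze,\chi\rangle$, where $T_j^\ze=\sum T_{j,k\bar l}\ze^k\bar\ze^l$. The functional $h\mapsto\int\chi|h|^2$ is convex and continuous on $L^2$, hence weakly lower semicontinuous, so
\[
\int\chi|h|^2\leq\liminf_j\int\chi|h_j|^2\leq\liminf_j\langle T_j^\ze,\chi\rangle=\langle T^\ze,\chi\rangle .
\]
Applying Lemma~\ref{lem:ineq-weak} once more yields $df\wed d^cf\leq T$.

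An alternative for this step is Proposition~\ref{prop:weak-conv-cp}, but that only applies after expressing $T$ as a function, which it need not be. So we use the convexity argument, or equivalently we expand $\int\chi|h_j-h|^2\geq0$, which gives $\int\chi|h_j|^2\geq 2\mathrm{Re}\int\chi h_j\bar h-\int\chi|h|^2\to\int\chi|h|^2$. This completes the proof: $f\in W^*(\Om)$ with $\|f\|_*\leq A$.
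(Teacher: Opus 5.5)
Your proof is correct and follows the same route as the paper. You take minimum currents $T_j$, extract a weak limit $T$ with $\|T\|_\Omega\le\liminf_j\|T_j\|_\Omega$, use weak lower semicontinuity of the $L^2$ norm, and get $df\wedge d^c f\le T$ by expanding a nonnegative square and letting $j\to\infty$. The only difference is cosmetic: you first pass to the scalar forms $|h_j|^2$ through Lemma~\ref{lem:ineq-weak}, whereas the paper expands $\langle d(f-f_j)\wedge d^c(f-f_j),\phi\rangle\ge 0$ directly against positive test forms $\phi$.
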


\begin{proof} Let $T_j$ be a minimum current of $ f_j$, $j\geq 1$. Since $\|T_j \|_\Om \leq A$, there exists a subsequence $T_{j_k}$ converging weakly to a closed positive (1,1)-current $T$ and $\|T\|_\Om \leq \liminf \|T_{j_k}\|_\Om$. We still write  $ f_j, T_j$ instead of $f_{j_k}, T_{j_k}$ for simplicity. Let $\phi$ be a positive test form in $\Om$. Then,
$$
	\lc d f_{j} \wed d^c f_{j}, \phi\rc \leq T_{j} (\phi).
$$
Note also that
$$\begin{aligned}
0&\leq	\lc d (f- f_j) \wed d^c (f- f_j), \phi\rc  \\
&= \lc d f \wed d^c f, \phi \rc + \lc d f_j \wed d^c  f_j , \phi\rc - 2 \lc d f \wed d^c  f_j,\phi \rc \\
&\leq \lc d f \wed d^c f, \phi \rc + T_j(\phi) -  2 \lc d f\wed d^c  f_j,\phi \rc.
\end{aligned}$$
Letting $j \to \infty$, we get from weak convergence in $W^{1,2}(\Om)$ and weak convergence of currents that  $\lc d f \wed d^c f,  \phi \rc \leq T (\phi)$. So, 
$f \in W^*(\Om)$. Moreover, if we write $\lc f, g\rc_\Om = \int_\Om fg dx$ for the inner product in the Hilbert space $L^2(\Om)$, then
$$
	0 \leq \lc f -  f_j, f - f_j \rc_\Om =  \|f\|^2_{L^2(\Om)}+ \| f_j\|^2_{L^2(\Om)} - 2 \lc f,  f_j\rc_\Om.
$$
The semi-continuity of norm in weak convergence of $ f_j\to f$ in $W^{1,2}(\Om)$ (see e.g., \cite[Theorem~2.11]{LL-book}) implies $$  \|f\|^2_{L^2(\Om)} \leq  \liminf_{j\to \infty} \| f_j\|^2_{L^2(\Om)}.$$ Thus, $\|f\|_*\leq A$.
\end{proof}

\begin{cor} \label{cor:w-compactness}  Let $\{f_j \}_{j\geq 1} \subset W^*(\Om)$ be such that $\|f_j \|_* \leq A$ for every $j\geq 1$. There exists a subsequence $\{f_{j_k} \}$ converging weakly in $W^{1,2}(\Om)$ to $f \in W^*(\Om)$  and $\|f\|_* \leq \liminf_{j\to \infty} \|f_j\|_*$.
\end{cor}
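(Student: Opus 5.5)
The plan is to combine the weak sequential compactness of bounded sets in the Hilbert space $W^{1,2}(\Om)$ with Lemma~\ref{lem:weak-compactness}. First, the hypothesis $\|f_j\|_*\le A$ already gives a uniform bound in $W^{1,2}(\Om)$. Indeed, $\|f_j\|_{L^2(\Om)}^2\le A^2$. Moreover, if $T_j$ is a minimum current of $f_j$ (Definition~\ref{defn:w-current}), pairing $df_j\wed d^cf_j\le T_j$ with $\om^{n-1}$ gives
\[\notag
\int_\Om df_j\wed d^c f_j\wed\om^{n-1}=c_n\int_\Om|\na f_j|^2\,dx\le \|T_j\|_\Om\le A^2,
\]
where $c_n>0$ is a dimensional constant coming from the normalization of $d^c$. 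One small point needs care here: the pairing is against the non-compactly supported form $\om^{n-1}$. To handle it, test against $\chi\om^{n-1}$ with $0\le\chi\le1$ a cutoff in $C^\infty_c(\Om)$, and then let $\chi\uparrow{\bf 1}_\Om$ by monotone convergence. Hence $\|f_j\|_{W^{1,2}(\Om)}\le C(n)A$.

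Next, since $W^{1,2}(\Om)$ is a Hilbert space, bounded sequences have weakly convergent subsequences. Pick a subsequence realizing the liminf, that is, $\|f_{j_k}\|_*\to L:=\liminf_j\|f_j\|_*$. Passing to a further subsequence, which we still denote $f_{j_k}$, we get $f_{j_k}\to f$ weakly in $W^{1,2}(\Om)$ for some $f\in W^{1,2}(\Om)$.

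Finally, apply Lemma~\ref{lem:weak-compactness} with constant $L+\de$ for arbitrary $\de>0$. This is legitimate because for $k$ large the $\|f_{j_k}\|_*\le L+\de$, and discarding finitely many terms does not affect the weak limit. The lemma yields $f\in W^*(\Om)$ and $\|f\|_*\le L+\de$; letting $\de\to0$ gives $\|f\|_*\le\liminf_j\|f_j\|_*$.

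There is no genuine obstacle. The only step needing care is the first one: the uniform $W^{1,2}$ bound obtained by integrating the current inequality against $\om^{n-1}$ over all of $\Om$.
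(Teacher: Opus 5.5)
Your proof is correct and is essentially the paper's argument. The paper deduces the corollary directly from the bound $\|f\|_{W^{1,2}(\Om)}\le\|f\|_*$, which you verify by testing $df_j\wed d^cf_j\le T_j$ against $\chi\om^{n-1}$ (rightly tracking the dimensional constant), together with weak compactness in $W^{1,2}(\Om)$ and Lemma~\ref{lem:weak-compactness}. Your extra step is a welcome detail the paper leaves implicit: you first pass to a subsequence realizing $\liminf_j\|f_j\|_*$ and then apply the lemma with bound $L+\de$, which is needed because the lemma as stated only gives $\|f\|_*\le A$.
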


\begin{proof} It follows immediately from $\|f\|_{W^{1,2}(\Om)} \leq \|f\|_*$ for every $f\in W^*(\Om)$ and Lemma~\ref{lem:weak-compactness}.
\end{proof}

The following basic functional result is due to Vigny\cite[Proposition~1]{Vigny}.
\begin{thm}\label{thm:star-norm} Let $\Om$ be a bounded domain in $\bC^n$.
\begin{itemize}
\item[(a)]  $\| \cdot\|_*$ is a norm in $W^*(\Om)$ and $\| \cdot \|_{W^{1,2}(\Om)} \leq \|\cdot \|_*$.
\item[(b)] $W^*(\Om)$ and $W^*_0(\Om)$ are Banach spaces with this norm.
\end{itemize}
\end{thm}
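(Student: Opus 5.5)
For part (a), I will first establish the comparison $\|f\|_{W^{1,2}(\Om)}\le \|f\|_*$. If $T\in\Ga_f$, then taking the trace of $df\wed d^cf\le T$ against $\om^{n-1}$ gives $c_n|\na f|^2\,dx \le T\wed\om^{n-1}$ as measures, for a dimensional constant $c_n$. With the normalization $d^c=\frac{\ii}{2\pi}(\dbar-\d)$ one gets $df\wed d^cf\wed\om^{n-1}$ equal to a fixed multiple of $|\na f|^2 dV$. I will either check that this constant is $\ge 1$ or absorb it into the convention for the $W^{1,2}$ norm. Integrating and taking the infimum over $T$ then gives the inequality. Definiteness follows immediately, since $\|f\|_*=0$ forces $\|f\|_{L^2}=0$.

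Homogeneity is clear because $d(\la f)\wed d^c(\la f)=\la^2 df\wed d^cf$, so $\Ga_{\la f}=\la^2\Ga_f$. For the triangle inequality I take $T\in\Ga_f$ and $S\in\Ga_g$, and for $t>0$ I use \eqref{eq:basic-ineq} with $s=\sqrt t$ and the other weight $1/\sqrt t$:
$$d(f+g)\wed d^c(f+g)\le (1+t)\,df\wed d^cf+(1+t^{-1})\,dg\wed d^cg\le (1+t)T+(1+t^{-1})S.$$
This gives $\inf_{\Ga_{f+g}}\|\cdot\|\le (1+t)\|T\|+(1+1/t)\|S\|$. Minimizing over $t$ yields $(\sqrt{\|T\|}+\sqrt{\|S\|})^2$. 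Writing $a=\|f\|_{L^2}$, $b=\|T\|^{1/2}$, $a'=\|g\|_{L^2}$, $b'=\|S\|^{1/2}$, the Minkowski inequality in $\bR^2$ gives
$$\|f+g\|_*^2\le (a+a')^2+(b+b')^2\le \Big(\sqrt{a^2+b^2}+\sqrt{a'^2+b'^2}\Big)^2.$$
Taking infima over $T$ and $S$ finishes (a).

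For part (b), I use the criterion that a normed space is complete if and only if every absolutely convergent series converges. Take $\sum\|f_k\|_*<\infty$ and the partial sums $F_N=\sum_{k\le N}f_k$. By (a), $(F_N)$ is Cauchy in $W^{1,2}$, so $F_N\to F$ strongly in $W^{1,2}(\Om)$, and $\|F_N\|_*\le \sum\|f_k\|_*$. Lemma~\ref{lem:weak-compactness} therefore gives $F\in W^*(\Om)$. The main point, and the one requiring care, is convergence in the $*$-norm rather than merely in $W^{1,2}$. For this I apply the same argument to the tails: for fixed $N$, the partial sums $F_M-F_N=\sum_{N<k\le M}f_k$ converge to $F-F_N$ in $W^{1,2}$ as $M\to\infty$, and they have $\|\cdot\|_*\le \sum_{k>N}\|f_k\|_*$. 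Lemma~\ref{lem:weak-compactness}, applied to these tails, yields $\|F-F_N\|_*\le\sum_{k>N}\|f_k\|_*\to 0$. This shows $W^*(\Om)$ is Banach.

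Finally, $W^*_0(\Om)$ is the intersection of $W^*(\Om)$ with $W^{1,2}_0(\Om)$, which is closed in $W^{1,2}$. Since $\|\cdot\|_*$ dominates the $W^{1,2}$ norm, $W^*_0(\Om)$ is a closed subspace of $W^*(\Om)$, hence also complete. The only step I expect to need checking is the normalization constant in the trace identity in (a).
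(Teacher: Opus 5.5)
Your proof is correct and follows essentially the paper's route. The triangle inequality comes from \eqref{eq:basic-ineq} with an optimized weight; the paper applies the same weight to the $L^2$ part as well and normalizes $\|g\|_*=1$, where you split off the $L^2$ part and use Minkowski in $\bR^2$. Completeness uses the same lower-semicontinuity argument, which the paper carries out by hand through weak limits of minimal currents of $f_j-f_k$ rather than citing Lemma~\ref{lem:weak-compactness} on tails.

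As for the constant you flagged, with this normalization one has $df\wedge d^cf\wedge\omega^{n-1}=\frac{(n-1)!}{4}\bigl(\frac{2}{\pi}\bigr)^n|\nabla f|^2\,dx$. This is $<1$ for small $n$; for example it equals $\frac{1}{2\pi}|\nabla f|^2\,dx$ when $n=1$. So you must take your second option, normalizing the $W^{1,2}$-norm accordingly or carrying a dimensional constant, and this changes nothing in (b).
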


\begin{proof} Let $f, g \in W^*(\Om)$ and let $T,  S$ be the minimum currents of $f, g$, respectively. Choosing $s,t >0$ with $st =1$ in \eqref{eq:basic-ineq} we get 
$$\begin{aligned}
	d(f+g)\wed d^c (f+g) 
&	= df \wed d^c f + dg \wed d^c g + [df\wed d^c g + dg\wed d^c f]  \\
&	\leq (1+ s^2) df \wed d^c f + (1+ t^2) dg \wed d^c g \\
&	\leq (1+ s^2) T + (1+t^2) S.
\end{aligned}$$
It follows that 
\[ \label{eq:triangle-parameter}
	\|f+g\|_*^2 \leq  \min_{s>0} \left\{ (1+ s^2) \|f\|_*^2 + (1+ 1/s^{2}) \|g\|_*^2\right\}.
\]
We now verify the triangle inequality. Without loss of generality,  we may assume that $g \neq 0$. Moreover,  we may assume that $\|g\|_* =1$ by dividing by a constant. We need to show
$$
	\|f+g\|_* \leq \|f\|_* +1.
$$
If $f =0$, then the inequality holds. Otherwise, 
applying \eqref{eq:triangle-parameter} for $s^2 = 1/ \|f\|_* >0$,
$$\begin{aligned}
	\|f+g\|_*^2
&	\leq  (1+s^2) \|f\|_*^2 + 1 + 1/s^2  \\
&	= (1+ \|f\|_*^2) + s^2 \|f\|_*^2 + 1/s^2 \\
&	= (1+ \|f\|_*)^2.
\end{aligned}$$
This proves the triangle inequality.

Next, we  show that it is a complete space with this norm. In fact, let $\{f_j\}$ be a Cauchy sequence with respect to $\|\cdot\|_*.$ Since the $W^{1,2}$-norm is dominated by the $W^*$-norm, the sequence is Cauchy in $W^{1,2}(\Om)$. By Lemma~\ref{lem:weak-compactness} it converges to $f \in W^*(\Om)$ with respect to $W^{1,2}$-norm. Let $S_{jk}$ be the minimum current for $f_j - f_k$. We have
$$
	d(f_j -f_k) \wed d^c (f_j - f_k) \leq S_{jk}.
$$
Let $\veps>0$. There exists $k_0$ such that for  $j, k \geq k_0$,
$$
	\|S_{jk} \|_{\Om} \leq \|f_j - f_k\|_* \leq \veps. 
$$
By weak compactness for (1,1)-currents, for each fixed $j$ there exists a subsequence $\{S_{j k_s}\}_{k_s\geq 1}$ converging weakly to  a closed positive (1,1)-current $\tau_j$ and $\|\tau_j\|_\Om \leq \veps$. 
We have for a positive test form $\phi$,
$$
	 d (f_j - f_{k_s}) \wed d^c (f_j - f_{k_s}) (\phi)   \leq S_{j k_s} (\phi).
$$
Letting $k_s\to \infty$, the left hand side  converges to $d(f_j - f) \wed d^c (f_j -f) (\phi)$ by the Lebesgue domination convergence theorem and the right hand side converges to $\tau_j(\phi)$. Therefore,
$ \| f_j - f\|_* \leq 2\veps$ for $j \geq k_0$. In other words, $f_j$ converges to $f$ with respect to the $W^*$-norm.
\end{proof}

\begin{prop} Let $f, g\in W^*(\Om) \cap L^\infty(\Om)$. 
\begin{itemize}
\item
[(a)] $fg\in W^*(\Om)\cap L^\infty(\Om)$.
\item
[(b)] If in addition $f\in W^*_0(\Om)$, then $fg\in W^*_0(\Om)$.
\end{itemize}
\end{prop}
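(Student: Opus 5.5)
Let $N:=\max\{\|f\|_{L^\infty(\Om)},\|g\|_{L^\infty(\Om)}\}$ and let $T$, $S$ be minimum currents of $f$ and $g$ (Definition~\ref{defn:w-current}).

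\textbf{Step 1: $fg\in W^{1,2}(\Om)$.} Since $f,g\in W^{1,2}(\Om)\cap L^\infty(\Om)$, the product rule for Sobolev functions gives $fg\in W^{1,2}(\Om)$ with $d(fg)=g\,df+f\,dg$ a.e. One can also see this by approximating with $f_\veps g_\veps$ on $\Om'\subset\subset\Om$ and passing to the limit, using that the $L^2$ bounds on the gradients are uniform.

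\textbf{Step 2: the pointwise current estimate.} Expand
$$d(fg)\wed d^c(fg)=g^2 df\wed d^c f+f^2 dg\wed d^c g+fg\,(df\wed d^c g+dg\wed d^c f).$$
Apply \eqref{eq:basic-ineq-cs} with $u=g$, $v=f$ (both bounded) to the cross term, so it is bounded by $g^2df\wed d^cf+f^2dg\wed d^cg$. This yields, weakly as currents,
$$d(fg)\wed d^c(fg)\le 2g^2\,df\wed d^cf+2f^2\,dg\wed d^cg\le 2N^2(df\wed d^cf+dg\wed d^cg)\le 2N^2(T+S).$$
The second inequality uses that $df\wed d^cf$ is a positive $(1,1)$-form with $L^1$ coefficients, so multiplying it by the bounded function $N^2-g^2\ge 0$ keeps it positive. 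The current $2N^2(T+S)$ is closed and positive with mass $2N^2(\|T\|_\Om+\|S\|_\Om)<\infty$. Since also $\|fg\|_{L^2}\le N\|f\|_{L^2}$, we get $fg\in W^*(\Om)\cap L^\infty(\Om)$ and the bound
$$\|fg\|_*^2\le N^2\|f\|^2_{L^2}+2N^2(\|f\|_*^2+\|g\|_*^2).$$
This proves (a).

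\textbf{Step 3: zero boundary values, part (b).} By (a), it only remains to show $fg\in W^{1,2}_0(\Om)$. Take $\phi_j\in C_c^\infty(\Om)$ with $\phi_j\to f$ in $W^{1,2}(\Om)$. We may truncate so that the $\phi_j$ stay uniformly bounded: replace $\phi_j$ by $\theta(\phi_j)$, where $\theta$ is a smooth bounded function equal to the identity on $[-N,N]$ and with $\theta(0)=0$. This still converges to $f$ in $W^{1,2}$ by the chain rule and dominated convergence.

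Then $\phi_j g$ is a compactly supported $W^{1,2}$ function, hence lies in $W^{1,2}_0(\Om)$. Moreover $\phi_j g\to fg$ in $L^2$, and
$$d(\phi_j g)-d(fg)=g\,d(\phi_j-f)+(\phi_j-f)\,dg.$$
The first term tends to $0$ in $L^2$ because $g$ is bounded. For the second, pass to a subsequence with $\phi_j\to f$ a.e.; since $|\phi_j-f|\,|dg|\le C|dg|\in L^2$, dominated convergence gives convergence to $0$. Since $W^{1,2}_0(\Om)$ is closed, $fg\in W^{1,2}_0(\Om)$, and therefore $fg\in W^*_0(\Om)$.

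\textbf{Expected obstacle.} The main technical point is this last dominated-convergence step, which needs the approximants $\phi_j$ to be uniformly bounded. That is why the truncation $\theta$ is used. The current inequality in Step 2 is routine once \eqref{eq:basic-ineq-cs} is available.
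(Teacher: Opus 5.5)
Your proof is correct and follows the paper's route: the $W^{1,2}(\Om)$ and $W^{1,2}_0(\Om)$ memberships of $fg$ are the classical Sobolev facts, which the paper simply cites from Brezis and you prove directly with bounded $C^\infty_c$ approximants. The current bound is the same in both: \eqref{eq:basic-ineq-cs} gives $d(fg)\wed d^c(fg)\le 2g^2df\wed d^cf+2f^2dg\wed d^cg$, which is dominated by a constant multiple of the sum of the minimum currents. One small point in your favour: you bound by $N^2(df\wed d^cf+dg\wed d^cg)$ before passing to currents, which avoids the paper's step of multiplying the currents $T_f,T_g$ by functions defined only Lebesgue-a.e.
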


\begin{proof} The statements (a) and (b) for the Sobolev spaces $W^{1,2}(\Om) \cap L^\infty(\Om)$ and $W^{1,2}_0(\Om)$ are classical, see e.g., \cite[Proposition~9.4, ~9.18]{Bz11}. On the other hand, the basic inequality \eqref{eq:basic-ineq-cs} implies
$$\begin{aligned}
d(fg)\wed d^c (fg) &\leq 2 f^2 dg\wed d^c g + 2g^2 df \wed d^c f \\
&\leq 2 f^2 T_f + 2 g^2 T_g,
\end{aligned}$$
where $T_f, T_g$ are the minimum currents of $f,g$, respectively. 
Since $f,g$ are bounded, the last current is dominated by a closed positive current $A (T_f + T_g) \in \Ga_{fg}$.
\end{proof}

Recall that a function $f\in L^1_{\rm loc}(\Om)$  belongs to ${\rm BMO}(\Om)$ (bounded mean oscillation) if there exists a constant $A>0$ such that
\[\label{eq:bmo}
	 \frac{1}{|B|} \int_{B} |f - f_B | dx \leq A
\]
holds for all balls $B\subset \Om$, where as in \eqref{eq:avg-int}, 
$$f_B =  \frac{1}{|B|}\int_B f dx.$$
Moreover, a function $f\in {\rm BMO}(\Om)$ belongs to ${\rm VMO}(\Om)$ (vanishing mean oscillation) if 
\[ \label{eq:vmo}
	\lim_{|B| \to 0} \frac{1}{|B|} \int_B |f - f_B | dx  =0
\]
holds for every balls $B\subset \Om$.

A well-known fact is that if a psh function $u$ belongs to ${\rm VMO}(\Om)$, then its Lelong number vanishes everywhere. The converse also holds true thanks to a recent work by Biard and Wu \cite{BW24}.

\begin{expl}\label{expl:VMO} Let $n\geq 2$ and assume $0\in \Om\subset \bC^n$. For $1\leq \ell \leq n$, the function 
$$u(z_1,...,z_n) = \frac{|z_\ell|^2}{\sum_{i =1}^n |z_i|^2}$$
belongs to $W^*(\Om) \cap {\rm BMO}(\Om)$ but not to ${\rm VMO}(\Om)$.
In fact, the sequence 
$$ u_\veps = \frac{|z_\ell|^2}{\veps+ \sum_{i =1}^n |z_i|^2}
$$
are smooth functions in $W^*(\Om)$ whose $W^*$-norms are uniformly bounded. Also $u_\veps \uparrow u$ in $W^{1,2}(\Om)$. Hence, $u\in W^*(\Om)$ by Lemma~\ref{lem:weak-compactness}. Using the homogeneity of $u$ we can easily see that $u\in {\rm BMO}(\Om)$ but not in ${\rm VMO}(\Om)$.
\end{expl}

We have the following  BMO estimate for $W^*(\Om)$ similar to psh functions.

\begin{lem} Every function in $W^*(\Om)$ belongs to ${\rm BMO}(\Om')$, where $\Om'\subset \subset \Om$.
\end{lem}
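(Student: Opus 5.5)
To show $f\in {\rm BMO}(\Om')$ I will combine a Poincaré inequality on each small ball with a mass estimate for the current $T_f$ on balls, mirroring the classical argument for psh functions. By Proposition~\ref{prop:smoothing} I may replace $f$ by $f_\veps=f*\eta_\veps$, which is smooth on a neighborhood of $\bar\Om'$ and has $\|f_\veps\|_{W^*(\Om'')}\le\|f\|_*$ on an intermediate domain $\Om'\subset\subset\Om''\subset\subset\Om$. Its minimum current $T_\veps$ has mass bounded by $\|f\|_*^2$. A BMO bound for $f_\veps$ that is uniform in $\veps$ then passes to $f$, since $f_\veps\to f$ in $L^1_{\rm loc}$ and the mean oscillation over a fixed ball is continuous under $L^1$ convergence.

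Fix a ball $B=B(a,r)\subset\Om'$ with $r$ small, say $r\le r_0:=\tfrac12{\rm dist}(\Om',\d\Om'')$; balls with $r>r_0$ are handled trivially by $\int_B|f-f_B|\le 2\|f\|_{L^1}$ and $|B|\ge c r_0^{2n}$. By the Poincaré and Cauchy--Schwarz inequalities,
$$\frac{1}{|B|}\int_B|f-f_B|\,dx\le C r\Big(\frac{1}{|B|}\int_B|\na f|^2dx\Big)^{1/2}\le C\Big(r^{2-2n}\int_B df\wed d^cf\wed\om^{n-1}\Big)^{1/2}.$$
It therefore suffices to prove
$$\int_{B(a,r)} T\wed\om^{n-1}\le C\, r^{2n-2}\,\|T\|_{\Om''}$$
for a closed positive $(1,1)$-current $T$ on $\Om''$, uniformly for $a\in\Om'$ and $r\le r_0$.

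This last estimate is the key step, and it is the standard monotonicity of the projective mass (Lelong's monotonicity). The function $r\mapsto r^{2-2n}\int_{B(a,r)}T\wed\om^{n-1}$ is nondecreasing in $r$ for closed positive $T$. Hence it is bounded by its value at $r_0$, which is at most $r_0^{2-2n}\|T\|_{\Om''}$. If one prefers to avoid Lelong numbers, the same bound follows by writing $\om=dd^c|z-a|^2$ and integrating by parts against the cutoff $\max\{|z-a|^2,r^2\}$, which is the usual proof of monotonicity.

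Combining the two displays gives a mean oscillation bounded by $C r_0^{1-n}\|f\|_*$ for all small balls, uniformly in $\veps$, and hence $f\in{\rm BMO}(\Om')$. The main point to check carefully is that closedness of $T$ is really used: a merely positive current would only give $r^{2n-2}$ replaced by $r^0$ and no bound. This is exactly why the argument requires $f\in W^*$ rather than just $W^{1,2}$.
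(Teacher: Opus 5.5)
Your proof is correct and is essentially the paper's argument: Poincar\'e on each ball plus Cauchy--Schwarz bounds the mean oscillation by $C\bigl(r^{2-2n}\int_{B(x,r)}T\wedge\omega^{n-1}\bigr)^{1/2}$, and Lelong's monotonicity bounds this uniformly. Your smoothing step and separate treatment of large balls are harmless but unnecessary, since the Poincar\'e inequality and $df\wedge d^cf\wedge\omega^{n-1}\le T\wedge\omega^{n-1}$ hold directly for $f\in W^{1,2}$.

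One small correction to your optional aside: the integration-by-parts proof of monotonicity needs the logarithmic cutoff $\max\{\log|z-a|^2,\log r^2\}$, because with $\max\{|z-a|^2,r^2\}$ Stokes' formula only returns a tautology.
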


\begin{proof} Let $h\in W^*(\Om)$ and assume $dh\wed d^c h \leq T$, where $T$ is a closed positive current with $\|T\|_\Om <+\infty$. Let $B(x,r) \subset \Om$ be a ball. Recall that 
$$ (h)_{x,r} := \intavg_{B(x,r)} h(y) dy. $$
It follows from the Poincar\'e inequality \cite[Eq (7.45), page 164]{GT} in balls for $h \in W^{1,2}(\Om)$ that 
$$\begin{aligned}
	\int_{B(x,r)} |h - (h)_{x,r}|^2 dy 
&\leq 	A  r^2 \int_{B(x,r)} |\na h|^2 dy \\
&\leq		A r^2 \int_{B(x,r)} dh \wed d^c h \wed \om^{n-1},
\end{aligned}$$
where $A = A (n)$.
 Then,   by H\"older inequality
\[\label{eq:lelong-number}\begin{aligned}
	\intavg_{B(x,r)} |h - (h)_{x,r}| dy 
&\leq 	\left( \intavg_{B(x,r)} |h - (h)_{x,r}|^2 dy \right)^\frac{1}{2} \\ 
&\leq  	A' \left( \frac{1}{r^{2n-2}} \int_{B(x,r)} T\wed \om^{n-1} \right)^\frac{1}{2}\\
&\leq  A' \sqrt{\nu (T, x,r)},
\end{aligned} 
\]
where $\nu(T,x,r)$ is an increasing function of $r \in [0,r_0]$ and $\lim_{r\to 0} \nu(T,x,r) = \nu(T,x)$ the (finite) Lelong number of $T$ at $x$. Hence, $h\in {\rm BMO}(\Om')$.
\end{proof}

The following result shows that there are differences and difficultly to deal with non-smooth functions in the complex Sobolev spaces in higher dimension. This can be anticipated by Example~\ref{expl:basic} as the space contains singular psh functions.

The next result is contained in \cite[Proposition~7]{Vigny}.
\begin{prop} \label{prop:density} Let $\Om\subset \bC^n$ be a bounded domain. 
\begin{itemize}
\item[(a)] If $n=1$, then $W^*(\Om) = W^{1,2}(\Om)$ together with its Sobolev norm. 
\item[(b)] If $n\geq 2$, then the continuous functions in $C^0(\Om) \cap W^*(\Om)$ are not dense in $W^*(\Om)$ with respect to $W^*$-norm. 
\end{itemize}
\end{prop}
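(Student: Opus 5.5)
For part (a) I will argue that when $n=1$ the form $df\wed d^c f$ is already a positive closed current of top degree. Indeed, in dimension one every $(1,1)$-current is closed, and $df\wed d^c f = \frac{1}{4\pi}|\na f|^2\, dx$ is a positive measure (with the paper's normalization of $d^c$) whose mass is $\frac{1}{4\pi}\|\na f\|^2_{L^2(\Om)}$, which is finite for $f\in W^{1,2}(\Om)$. So $T=df\wed d^cf\in\Ga_f$ and it is clearly minimal, since any $T\in \Ga_f$ satisfies $T\geq df\wed d^cf$ and has at least this mass. Hence $W^*(\Om)=W^{1,2}(\Om)$ and $\|f\|_*^2=\|f\|_{L^2}^2+c\|\na f\|_{L^2}^2$. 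This is the Sobolev norm up to the harmless constant $c$ coming from the normalization, so the two norms are equivalent; since $\om^{n-1}=1$ here, the equivalence is exactly what the statement asserts.

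For part (b) the plan is to exhibit a function $f\in W^*(\Om)$ that stays at a fixed positive $W^*$-distance from every continuous function. Without loss of generality $0\in\Om$ and $B(0,1/2)\subset\Om$ after translation and scaling. I will work with an unbounded psh-type function whose singular part cannot be seen by continuous functions in the $W^*$-norm. A natural candidate is $f=\chi\cdot u$ with $u=-(-\log|z_1|)^{a}$, $a<1/2$ (Example~\ref{expl:basic}(c)), where $\chi$ is a cutoff; the cutoff keeps $f$ in $W^*(\Om)$ thanks to the product estimate in the proof of Lemma~\ref{expl:construction}. I expect, however, the cleaner choice to be the bounded function of Example~\ref{expl:VMO}, $u=|z_1|^2/|z|^2$, possibly multiplied by a cutoff. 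Its discontinuity at $0$ is reflected in a positive Lelong-number-type quantity for every current dominating $du\wed d^cu$.

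The key estimate I will aim for is the following. If $g\in C^0(\Om)\cap W^*(\Om)$ and $T$ is a minimum current of $f-g$, then $\nu(T,0)\geq c_0>0$, with $c_0$ independent of $g$. The step $\|f-g\|_*^2\geq \|T\|_{B(0,r)}\geq \pi^{n-1}r^{2n-2}\nu(T,0)$ is elementary, via the monotonicity of $\nu(T,0,r)$. For the lower bound on the Lelong number I will use the inequality \eqref{eq:lelong-number} in reverse form. Since $g$ is continuous at $0$, its mean oscillation $\intavg_{B(0,r)}|g-(g)_{0,r}|\,dy\to 0$ as $r\to0$. By homogeneity, the mean oscillation of $u$ on $B(0,r)$ is a constant $\kappa>0$ independent of $r$, and this constant is exactly why $u\notin{\rm VMO}$. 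Hence for small $r$,
$$
\frac{\kappa}{2}\leq \intavg_{B(0,r)}|(f-g)-(f-g)_{0,r}|\,dy\leq A'\sqrt{\nu(T,0,r)},
$$
and letting $r\to0$ gives $\nu(T,0)\geq \kappa^2/(4A'^2)$. Then $\|f-g\|_*^2\geq \int_{\Om}T\wed\om^{n-1}\geq \lim_{r\to 0}\pi^{n-1}r^{2n-2}\nu(T,0,r)$. This limit is useless in that form, so I must be more careful here. Instead of a single point I will place the singularity along a set. Take $f(z)=\sum_k 2^{-k}$-weighted copies of $u$ centered at a dense sequence of points, or more simply use $f(z)=|z_1|^2/(|z_1|^2+|z_2|^2)$. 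This function is singular along the whole complex line $\{z_1=z_2=0\}$ (a codimension-two set when $n\geq 3$, and a point when $n=2$). The problem is the same in both cases: a single point carries no mass.

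This is the main obstacle: turning a positive Lelong number into a positive lower bound on mass. My plan is to use a singular set $Z$ of complex dimension $n-1$, i.e.\ a hypersurface-like set. By Siu's theorem $T\geq c_0[Z]$, which has mass at least $c_0\,{\rm vol}_{2n-2}(Z\cap\Om)>0$. To get a positive Lelong number at \emph{every} point of a hypersurface, I will take $f$ to be a weighted series of translates of $|z_1|^2/|z|^2$ whose singular points fill out a dense subset of a complex line, with weights chosen so that $f$ remains in $W^*$. If $n=2$, that line $\{z_2=0\}$ is a curve. For $n>2$ I will use a dense set of points in a hyperplane. Applying the oscillation argument at each singular point and passing to the Lelong upper-level sets (which are closed analytic) gives $\nu(T,\cdot)\geq c$ on a dense subset of the hyperplane, hence on all of it. This yields $\|f-g\|_*\geq c'>0$ for every continuous $g$.
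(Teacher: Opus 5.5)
Your part (a) is fine; with the paper's normalization one gets $df\wed d^cf=\frac{1}{2\pi}|\na f|^2dx$, but the constant is immaterial. Part (b) has a genuine gap. You discard a correct argument because of an obstacle that does not exist, and you replace it with one that cannot work.

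\textbf{Your first argument already works.} It is the paper's proof, stated quantitatively. For $h=f-g$ and any $T\in\Ga_h$ you correctly obtain $\kappa-o(1)\leq\intavg_{B(0,r)}|h-(h)_{0,r}|\,dy\leq A'\sqrt{\nu(T,0,r)}$, where $\nu(T,0,r)=r^{2-2n}\int_{B(0,r)}T\wed\om^{n-1}$. The error is sending $r\to0$ in the mass estimate as well. Instead, fix $r_0$ with $B(0,r_0)\subset\Om$. Since $r\mapsto\nu(T,0,r)$ is nondecreasing, $\nu(T,0,r)\leq\nu(T,0,r_0)$ for $r\leq r_0$. Letting $r\to0$ only on the oscillation side gives
$$\|f-g\|_*^2\geq\|T\|_{B(0,r_0)}\geq r_0^{2n-2}\,\kappa^2/A'^2 .$$
Here $T$ is a minimum current of $f-g$, and the bound is uniform in $g\in C^0(\Om)\cap W^*(\Om)$. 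The slogan ``a single point carries no mass'' is wrong in this context. A Lelong number $\nu$ at $0$ forces mass at least of order $\nu\, r_0^{2n-2}$ in the fixed ball $B(0,r_0)$; think of $dd^c\log|z|$. The paper states the same thing contrapositively: a small $\|f-g\|_*$ gives a small $\nu(T,0)$, hence vanishing mean oscillation of $f$ at $0$, contradicting $f\notin{\rm VMO}$.

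\textbf{The construction you end with fails.} For $\sum_k w_k u(\cdot-p_k)$ to lie in $W^*(\Om)$ you need summable weights. Then the oscillation argument at $p_k$ only gives $\nu(T,p_k)\gtrsim w_k^2\kappa^2/A'^2\to0$. There is no uniform $c$ on a dense subset of the hyperplane, so the closedness of $\{\nu(T,\cdot)\geq c\}$ and Siu's theorem give you nothing.

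No choice of $f$ can rescue this route. Since $dh\wed d^ch$ has $L^1$ coefficients, it puts no mass on a hypersurface. So if $T\in\Ga_h$ and ${\bf 1}_ZT=\lambda[Z]$ is its part along an irreducible hypersurface $Z$, then $T-\lambda[Z]={\bf 1}_{\Om\setminus Z}T$ is still closed, positive, and dominates $dh\wed d^ch$. Run the oscillation bound with this smaller current. Its upper-level sets $\{\nu\geq c\}$, $c>0$, meet $Z$ only in a proper analytic subset. Hence the mean oscillation of an element of $W^*$ can never stay above a fixed positive constant on a dense subset of a hypersurface.

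Drop the detour and keep your first argument with the radius frozen at $r_0$.
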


\begin{proof}  (a) is obvious. Next, we prove (b).  The proof used an idea from  \cite[Proposition~7]{Vigny}. Namely, 
in Example~\ref{expl:VMO} there are plenty of functions in $W^*(\Om)$ which does not belong to ${\rm VMO}(\Om)$.  Let $f$  be  such a function. We show that it cannot be approximated by functions in $W^*(\Om)\cap C^0(\Om)$. 

In fact, if $g \in C^0(\Om)$, then 
$
	\lim_{r\to 0} \intavg_{B(x,r)} |g - (g)_{x,r}| dy =0
$.  Hence,  for $h = f -g$ with $g\in W^*(\Om)\cap C^0(\Om)$, 
$$\begin{aligned} 
& \lim_{r\to 0} \intavg_{B(x,r)} |f - (f)_{x,r}|dy  \\
&= \lim_{r\to 0}	\left( \intavg_{B(x,r)} |f - (f)_{x,r}| dy - \intavg_{B(x,r)} |g- (g)_{x,r}| dy \right) \\&\leq  \lim_{r\to 0}\intavg_{B(x,r)} |h-(h)_{x,r}| dy.
\end{aligned}$$
Since $h \in W^*(\Om)$, there exists a positive closed $(1,1)$-current such that  $d h \wed d^c h \leq T$. The inequality \eqref{eq:lelong-number} applied for $h$ combined with the inequality above gives
$$
 \lim_{r\to 0} \intavg_{B(x,r)} |f - (f)_{x,r}|dy  \leq  A' \sqrt{\nu (T, x)},
$$
where $\nu(T, x)$ is the Lelong number of $T$ at the point $x$. This means that if $C^0(\Om) \cap W^*(\Om)$ is dense in $W^*(\Om)$ with respect to $W^*$-norm, then we can find such a closed positive current with arbitrary small mass $\|T\|_{B(x,r)}$. In particular, 
$$
	\limsup_{r\to 0} \intavg_{B(x,r)} |f - (f)_{x,r}| dy =0, \quad x\in \Om.
$$
This means $f\in {\rm VMO}(\Om)$ which is impossible.
Thus, $W^*(\Om)\cap C^0(\Om)$ cannot be dense in $W^*(\Om)$ with respect to $W^*$-norm.
\end{proof}

This leads to a consequence \cite[Corollary~8]{Vigny} which is similar to one of the space of $\de$-psh functions studied by  Cegrell and Wiklund \cite{CW05}. We give the detail proof for the reader's convenience. 

\begin{cor} \label{cor:reflexive} Let $\Om$ be an open set in $\bC^n$,  $n\geq 2$. Then, $W^*(\Om)$ is not reflexive.
\end{cor}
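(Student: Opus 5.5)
We argue by contradiction, using the classical fact that a reflexive Banach space has the property that every closed subspace of it is reflexive, together with the standard fact that a separable Banach space whose dual is separable... but a cleaner route, following \cite[Corollary~8]{Vigny} and \cite{CW05}, is the following. Suppose $W^*(\Om)$ is reflexive. Then bounded sequences have weakly convergent subsequences, and, crucially, the closed unit ball is weakly compact. We will show that reflexivity would force the continuous functions $C^0(\Om)\cap W^*(\Om)$ (or even smooth ones) to be dense in $W^*(\Om)$ with respect to $\|\cdot\|_*$, contradicting Proposition~\ref{prop:density}(b) since $n\geq 2$.

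Concretely, fix $f\in W^*(\Om)$ and choose $\Om'\subset\subset\Om$ exhaustion if needed; to keep the domain fixed, first use a cut-off-free approach: take $f_j:=\max\{\min\{f,j\},-j\}$ (bounded, with $\|f_j\|_*\le\|f\|_*$ by Lemma~\ref{lem:norm-min-max}) and then smooth approximants $g_{j,\veps}$ of these in $W^*(\Om)\cap C^0(\Om)$ with $\|g_{j,\veps}\|_*$ uniformly bounded and $g_{j,\veps}\to f$ in $W^{1,2}$. The step of producing continuous functions on all of $\Om$ with uniformly bounded $W^*(\Om)$-norm is handled via Proposition~\ref{prop:smoothing} combined with a partition of unity in a Whitney-type decomposition, or, more simply, by observing that the argument of Proposition~\ref{prop:density}(b) is purely local, so it suffices to derive a contradiction on a small ball $B\subset\subset\Om$ centered at the singular point of the non-VMO function of Example~\ref{expl:VMO}, where Proposition~\ref{prop:smoothing} directly gives $f_\veps\in C^\infty\cap W^*(B)$ with $\|f_\veps\|_{W^*(B)}\le\|f\|_{W^*(\Om)}$ and $f_\veps\to f$ in $W^{1,2}(B)$.

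Given such a bounded sequence $g_k\in C^0\cap W^*(B)$ converging to $f$ in $W^{1,2}(B)$, reflexivity of $W^*(B)$ yields a subsequence converging weakly in $W^*(B)$ to some $h$; since $W^*\hookrightarrow W^{1,2}$ continuously, the weak limits agree, so $h=f$. By Mazur's lemma, convex combinations of the $g_k$ converge to $f$ in $\|\cdot\|_*$; these combinations are still continuous. Hence $f$ lies in the $\|\cdot\|_*$-closure of $C^0\cap W^*(B)$, which by the proof of Proposition~\ref{prop:density}(b) forces $f\in{\rm VMO}$ near the origin, contradicting Example~\ref{expl:VMO}. The same reasoning on $\Om$ itself (so that reflexivity of $W^*(\Om)$, not $W^*(B)$, is used) requires the global approximation step; the main obstacle is exactly this: producing continuous approximants with uniformly bounded $W^*(\Om)$-norm up to the boundary. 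I would handle it by taking $f$ to be the Example~\ref{expl:VMO} function multiplied by a smooth cut-off $\chi$ supported near $0$ (still in $W^*(\Om)$ by the product proposition, still not VMO at $0$), whose mollifications $\chi f*\eta_\veps$ are compactly supported smooth functions with $\|\cdot\|_*$ uniformly bounded by Proposition~\ref{prop:smoothing}, so the domain issue disappears.
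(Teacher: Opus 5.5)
Your final argument is essentially the paper's proof. You take the cut-off $\chi u$ of the non-VMO function from Example~\ref{expl:VMO}, mollify it to get smooth functions with uniformly bounded $\|\cdot\|_*$, and use reflexivity to extract a subsequence converging weakly in $W^*(\Om)$. You are more explicit than the paper that, for Mazur's lemma, this weak convergence must be in $W^*(\Om)$, and you identify the limit through the continuous embedding into $W^{1,2}$. Mazur's lemma then contradicts Proposition~\ref{prop:density}(b). The truncation and local-ball detours in your first two paragraphs are unnecessary.

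One point needs a more precise justification. Compact support alone does not give the uniform bound on $\|(\chi u)*\eta_\veps\|_{W^*(\Om)}$, since Proposition~\ref{prop:smoothing} only controls norms on $\Om_\veps$ or on $\Om'\subset\subset\Om$. The bound does follow by applying that proposition on a ball containing $\bar\Om$. This is legitimate because $u$ is defined on all of $\bC^n$. The paper instead mollifies $u$ before multiplying by $\chi$, which relies on the same global definition of $u$.
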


\begin{proof} We follow closely the one in \cite[Corollary~8]{Vigny}. Suppose that the space is reflexive. Then, the closed unit ball in $W^*$-norm is weakly compact with respect to the dual topology. Let $0\leq u \leq 1$ be a function in Example~\ref{expl:VMO} and $0 \leq \chi \leq 1$ be a cut-off function in $\Om$. Clearly, $\chi u \in W^*(\Om)$. Define $u_k = u*\eta_{1/k}$.  It follows from Proposition~\ref{prop:smoothing} that $f_k=\chi u_k \in W^*(\Om) \cap C^\infty(\Om)$ and $\|f_k\|_* \leq A$ for a constant $A$ independent of $k$.
By the weak compactness (Corollary~\ref{cor:w-compactness})  there exists a subsequence $f_j \to g$ weakly in $W^{1,2}(\Om) $ and $g\in W^*(\Om)$. So, $g =f $ by uniqueness of a weak limit in $W^{1,2}(\Om)$. Mazur's lemma (see, e.g., \cite[Theorem~2, page 120]{Yo80}) implies that $g$ is a limit of  convex sums  of $\{f_j\}$ in $W^*$-norm. This is impossible due to the previous proposition. 
\end{proof}

\subsection{Bedford-Taylor capacity for plurisubharmonic functions}
\label{ss:BT-cap}
Recall that for a Borel set $E\subset \Om$,
\[\label{eq:BT-cap}
 cap(E,\Om) = \sup\left\{ \int_E (dd^c u)^n :  u\in PSH(\Om), \;-1 \leq u \leq 0\right\}.
\]
We know   $cap (\cdot,\Om)$ is both inner and outer regular by a classical result in \cite{BT82}. The capacity of a given Borel set is realizable by its relative extremal function. Namely,  assume $\Om$ is a strictly pseudoconvex and  $E\subset \Om$ a Borel set. Define
\[\label{eq:rel-ext-fct}
	u_E(z) = \sup\left\{ u(z): u\in PSH(\Om), \; u \leq -1 \text{ on } E,\; u \leq 0 \right\}
\]
and $u_E^*$ is its upper semicontinuous regularization. Then,
\[\label{eq:BT-cap-id}
	cap(E, \Om) = \int_\Om (dd^c u_E^*)^n.
\] 
We refer the reader to \cite[Proposition~6.5]{BT82} for a general formula where the left hand side is the outer capacity applied for the sets  which are  not necessarily Borel.

We also need the comparison between total mass integrals of 
a closed positive $(1,1)$-current. This is a consequence of the weak convergence of measures.  We state it in the simplest form.

\begin{lem} \label{lem:CP-mass} Let $0\leq k\leq n-1$.  Let $T$ be a closed positive $(1,1)$-current in an open subset $\Om \subset \subset \bC^n$. 
Assume $\vphi,\psi \in PSH \cap L^\infty(\Om)$ satisfy  $\vphi \leq \psi \leq 0$ in $\Om$ and $\lim_{z\to \d\Om} \vphi(z) =0$. Then, 
\[\label{eq:CP-mass}
	\int_\Om T  \wed (dd^c \psi)^k \wed \om^{n-k-1} \leq \int_\Om T  \wed (dd^c \vphi)^k \wed \om^{n-k-1}. 
\]
\end{lem}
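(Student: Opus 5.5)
The plan is a Bedford–Taylor comparison argument in which the exhaustion $\vphi$ is perturbed so that it is strictly below $\psi$ away from a compact set. Then integration by parts only ever sees a region compactly contained in $\Om$. First I would reduce to the case where $\vphi, \psi$ are continuous, or at least the case where the key step is stable under decreasing limits. The standard route works directly with bounded psh functions using the Bedford–Taylor definition of $T\wed(dd^c u)^k$, defined inductively as $dd^c(u\, T\wed (dd^c u)^{k-1})$. This definition is continuous along decreasing sequences, so no reduction is actually needed.

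Fix $\veps>0$ and set $\vphi_\veps := (1+\veps)\vphi$ (or $\vphi - \veps$ type modifications). Consider $w := \max\{\psi, \vphi_\veps\}$, or better $\max\{\psi,\vphi+\veps'\}$ with a small constant. Since $\lim_{z\to\d\Om}\vphi(z)=0$ and $\psi\le 0$, the set $\{\vphi + \veps' \geq \psi\}$ contains $\Om\setminus K_{\veps'}$ with $K_{\veps'} := \{\vphi\le -\veps'\}\subset\subset\Om$. Thus $w=\vphi+\veps'$ near $\d\Om$, while $w = \psi$ wherever $\vphi+\veps'<\psi$. Because $\vphi\le\psi$, the set $\{\psi < \vphi + \veps'\}$ is relatively open only in the quasi-continuous sense. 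I avoid that by the inequality route below rather than by set identities.

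The key steps in order:
\begin{enumerate}
\item Show $\int_\Om T\wed(dd^c w)^k\wed\om^{n-k-1} = \int_\Om T\wed(dd^c\vphi)^k\wed\om^{n-k-1}$. Both currents coincide near $\d\Om$, where $w=\vphi+\veps'$. Choose a cut-off $\chi\in C^\infty_c(\Om)$ equal to $1$ on a neighborhood of $K_{\veps'}$. Then
\[
\int T\wed\big[(dd^c w)^k-(dd^c\vphi)^k\big]\wed\om^{n-k-1}
= \int \chi\, T\wed dd^c(w-\vphi)\wed \sum_{j} (dd^c w)^j\wed(dd^c\vphi)^{k-1-j}\wed\om^{n-k-1}.
\]
Integrating by parts moves $dd^c$ onto $\chi$, and $w-\vphi=\veps'$ is constant on $\supp dd^c\chi$. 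The boundary term becomes $\veps'\int dd^c\chi\wed(\cdots)=0$ by closedness.
\item Show the left side of \eqref{eq:CP-mass} is at most $\liminf$ of the mass of $T\wed(dd^c w_{\veps'})^k\wed\om^{n-k-1}$ as $\veps'\to 0$. Here $w_{\veps'} \downarrow \max\{\psi,\vphi\}=\psi$ as $\veps'\downarrow0$, so $T\wed(dd^c w_{\veps'})^k\to T\wed(dd^c\psi)^k$ weakly by the Bedford–Taylor monotone convergence theorem for currents, valid for the wedge with a closed positive $T$. Lower semicontinuity of mass on open sets under weak convergence then gives, for every $D\subset\subset\Om$, $\int_D T\wed(dd^c\psi)^k\wed\om^{n-k-1}\le\liminf \int_\Om T\wed(dd^c w_{\veps'})^k\wed\om^{n-k-1}$. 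Letting $D\uparrow\Om$ finishes.
\end{enumerate}
Combining the two steps yields \eqref{eq:CP-mass}.

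The main obstacle is step 1. The integration by parts needs the Bedford–Taylor wedge products $T\wed dd^c u_1\wed\cdots\wed dd^c u_k$ with bounded psh $u_i$ to be closed and multilinear, and the identity $dd^c(w-\vphi)\wed S = 0$ to hold near $\d\Om$, where $w-\vphi$ is constant. Both are standard, since the products are local in the plurifine topology and $w=\vphi+\veps'$ on an honest open neighborhood of $\Om\setminus K_{\veps'}$ by continuity of the limit condition. To handle this cleanly, I would first smooth $\psi$ and $\vphi$ by decreasing sequences $\psi_j\downarrow \psi$ and $\vphi_j\downarrow\vphi$ on a slightly smaller domain. On a smaller domain, however, the boundary condition is lost. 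So instead I would simply rely on the Bedford–Taylor calculus for bounded psh functions, which gives closedness and the integration by parts formula against $dd^c\chi$ directly.
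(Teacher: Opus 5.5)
Your proposal is correct and is essentially the paper's proof: the paper also compares with $\max\{\vphi+\veps,\psi\}$, which equals $\vphi+\veps$ near $\d\Om$, so integration by parts gives equal total masses. It then lets $\veps\to0$, using the decreasing-sequence convergence theorem (Demailly, Theorem~1.7) together with lower semicontinuity of mass on open sets. Your Step~1 only spells out the integration by parts, and for that the cut-off should be $\chi\equiv1$ near a compact $L\subset\Om$ with $\vphi>-\veps'$ on $\Om\setminus L$ (which the boundary condition provides). This is the right reading of your ``honest open neighborhood'', since $\{\vphi>-\veps'\}$ itself need not be open.
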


\begin{proof}
In fact, for $\veps>0$,  $\max\{\vphi+\veps, \psi\} = \vphi+\veps$ near $\d\Om$. Hence, by integration by parts,
$$
	\int_\Om (dd^c \vphi)^k \wed T \wed \om^{n-k-1} = \int_\Om (dd^c\max\{\vphi+\veps, \psi\})^k \wed T \wed \om^{n-k-1}.
$$
Let $\veps \to 0$ and observe that the integrand on the right hand side converges weakly to $(dd^c \psi)^k \wed T\wed \om^{n-k-1}$ by  Demailly \cite[Theorem~1.7]{De89}. Hence, the desired inequality follows.
\end{proof}

\section{Basic integral inequalities}
\label{sec:int}
Throughout this section we assume $\Om \subset \subset \bC^n$  is a  strictly pseudoconvex domain. Let $\rho$ be a strictly psh  defining function of $\Om$ which is smooth in a neighborhood of $\bar\Om$ and satisfies
$$ 
	dd^c \rho \geq \om \quad \text{on }\bar \Om. 
$$
Let $-1 \leq \phi \leq 0$ be a psh function in $\Om$. We define
\[\notag
	\ga:= (dd^c\phi)^{n-1}.
\]
Let us also consider a non-negative smooth cut-off function
\[\notag
	\chi \in C^\infty_c(\Om), \quad \chi \geq 0.
\]
We have a list of results for the estimates of the $L^2$-norm of a smooth function with respect to the Monge-Amp\`ere measure $(dd^c \phi)^n$ and $d\chi \wed d^c \chi \wed \ga$ as follows.

\begin{lem}\label{lem:basic-grad} Let $ g \in C_c^\infty(\Om,\bR)$. Then,
$$
	\int g^2 (dd^c \phi)^n \leq 10 \int dg \wed d^c g\wed (dd^c\phi)^{n-1}.
$$
\end{lem}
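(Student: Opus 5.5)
Since $g$ has compact support in $\Om$, we may integrate by parts against $dd^c\phi$. The plan is to move one $dd^c\phi$ off the Monge--Amp\`ere measure onto $g^2$, and then absorb the resulting cross term by Cauchy--Schwarz, using $-1\le\phi\le 0$. By regularization (a decreasing sequence of smooth psh $\phi_j$ on a neighborhood of $\operatorname{supp} g$, with Bedford--Taylor continuity of $(dd^c\phi_j)^{n-1}$ and $(dd^c\phi_j)^n$ along decreasing sequences, tested against the smooth compactly supported forms $g^2$ and $dg\wed d^cg$), we may assume $\phi$ is smooth. Write $\ga=(dd^c\phi)^{n-1}$, which is closed and positive.

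First step: Stokes' theorem gives
\[
	\int g^2\, dd^c\phi\wed \ga = \int \phi\, dd^c(g^2)\wed\ga = -\int d(g^2)\wed d^c\phi\wed\ga ,
\]
with no boundary terms since $g$ has compact support. To control the derivative of $\phi$, it is convenient to use the function $\psi=(\phi+1)^2$ (or simply $\phi^2$), since $d\phi\wed d^c\phi\le \tfrac12 dd^c\phi^2$ on $\{-1\le\phi\le0\}$.

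Second step: write $d(g^2)\wed d^c\phi = 2g\,dg\wed d^c\phi$. Apply the inequality \eqref{eq:basic-ineq-cs} (Cauchy--Schwarz with weights) to get
\[
	\Bigl|\int 2g\, dg\wed d^c\phi\wed\ga\Bigr| \le 2\Bigl(\int g^2 d\phi\wed d^c\phi\wed\ga\Bigr)^{1/2}\Bigl(\int dg\wed d^cg\wed \ga\Bigr)^{1/2}.
\]
It remains to bound $\int g^2 d\phi\wed d^c\phi\wed\ga$ by a multiple of $\int g^2 (dd^c\phi)^n$ plus a multiple of $\int dg\wed d^c g\wed\ga$. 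For this we use $d\phi\wed d^c\phi = \tfrac12 dd^c\phi^2 - \phi\, dd^c\phi \le \tfrac12 dd^c(\phi^2) + dd^c\phi$, where the last bound uses $-\phi\le 1$. Integrating by parts once more,
\[
	\int g^2\, dd^c(\phi^2)\wed\ga = -\int 2g\,dg\wed 2\phi\, d^c\phi\wed\ga ,
\]
which by Cauchy--Schwarz and $|\phi|\le1$ is at most $4X^{1/2}Y^{1/2}$, where
\[
	X=\int g^2 d\phi\wed d^c\phi\wed\ga, \qquad Y=\int dg\wed d^cg\wed\ga .
\]
With $M=\int g^2(dd^c\phi)^n$, this yields $X\le 2X^{1/2}Y^{1/2} + M$, and from the first two steps $M\le 2X^{1/2}Y^{1/2}$.

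Third step (the main bookkeeping obstacle): close the system. From $X\le 2\sqrt{XY}+M$ we get $X\le 2X/4\cdot\ldots$; more precisely, $2\sqrt{XY}\le X/2+2Y$ gives $X\le 4Y+2M$. Substituting into $M\le 2\sqrt{XY}\le 2\sqrt{(4Y+2M)Y}$ and squaring gives $M^2\le 16Y^2+8MY$. Hence $M\le (4+4\sqrt2)Y\le 10Y$, which is the claimed constant.

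The delicate points are justifying the identities for merely bounded $\phi$, which the smoothing handles, and checking the constant $4+4\sqrt2<10$.
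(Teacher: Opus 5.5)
Your proposal is correct and follows essentially the same route as the paper: integrate by parts to get $M=-2\int g\,dg\wedge d^c\phi\wedge\gamma$, apply Cauchy--Schwarz, and control $X=\int g^2\, d\phi\wedge d^c\phi\wedge\gamma$ by a second integration by parts together with $|\phi|\le 1$, which gives $X\le 4Y+2M$ (your identity $d\phi\wedge d^c\phi=\tfrac12 dd^c\phi^2-\phi\,dd^c\phi$ produces exactly the paper's formula). The only difference is cosmetic: you close the system through the quadratic inequality $M^2\le 16Y^2+8MY$, obtaining $M\le(4+4\sqrt2)\,Y\le 10\,Y$, whereas the paper applies Young's inequality with weight $\tfrac18$ from the outset and absorbs linearly to reach the constant $10$ directly.
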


\begin{proof} Since $(dd^c\phi *\eta_\veps)^n$ converges weakly to $(dd^c\phi)^n$ as $\veps\to 0$, we may assume that $\phi$ is smooth in $\Om$. By integration by parts,
\[\label{eq:IBP1}
	\frac{1}{2}\int g^2 (dd^c\phi)^n = -\int g dg \wed d^c \phi \wed \ga. \\
\]
The Cauchy-Schwarz inequality gives
$$\begin{aligned}	
 -\int g dg \wed d^c \phi \wed \ga   &\leq  \left( \int dg \wed d^c g \wed \ga \right)^\frac{1}{2} \left( \int g^2 d\phi\wed d^c\phi \wed\ga \right)^\frac{1}{2} \\
&	\leq 2\int dg \wed d^c g \wed \ga + \frac{1}{8} \int g^2 d\phi \wed d^c\phi \wed\ga.
\end{aligned}$$
Another integration by parts gives
$$\begin{aligned}
	\frac{1}{8}\int g^2 d\phi \wed d^c\phi \wed\ga &= -\frac{1}{4}\int g \phi dg\wed d^c\phi \wed\ga - \frac{1}{8}\int g^2 \phi (dd^c\phi)^n \\
&\leq		\frac{1}{4}\left| \int g \phi dg\wed d^c\phi \wed\ga \right| + \frac{1}{8}\int g^2 (dd^c\phi)^n.
\end{aligned}$$
Using the Cauchy-Schwarz inequality once more for the first integral, we obtain
$$\begin{aligned}
\frac{1}{4}\left| \int g \phi dg\wed d^c\phi \wed\ga \right|  &\leq \frac{1}{4}\int \phi^2 dg \wed d^c g \wed \ga + \frac{1}{16}\int g^2 d\phi \wed d^c \phi \wed\ga \\
&\leq		\frac{1}{4} \int dg \wed d^c g \wed \ga +  \frac{1}{16}\int g^2 d\phi \wed d^c \phi \wed\ga.
\end{aligned}$$
Combining the last two inequalities we arrive at
$$
	\frac{1}{16}\int g^2 d\phi \wed d^c \phi \wed \ga   \leq   \frac{1}{4}   \int dg\wed d^c g\wed \ga+ \frac{1}{8}  \int g^2 (dd^c\phi)^n .
$$
Therefore,
$$
	 -\int g dg \wed d^c \phi \wed \ga   \leq  \frac{5}{2} \int dg \wed d^c g\wed \ga + \frac{1}{4} \int g^2 (dd^c\phi)^n. 
$$
This and the  identity \eqref{eq:IBP1} imply the desired inequality.
\end{proof}

\begin{lem}\label{lem:extra-grad} Let $f\in C^\infty(\Om)$ and $0\leq \chi \in C^\infty_c(\Om)$. Then
$$
	\int f^2 d\chi \wed d^c \chi \wed \ga \leq 4 \int \chi^2 df \wed d^c f \wed \ga +  2A_0\int \chi f^2 \om\wed\ga, 
$$
where $A_0$ is a positive constant such that $-A_0 \om \leq dd^c\chi \leq A_0\om$.
\end{lem}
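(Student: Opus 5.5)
The plan is to express $d\chi\wed d^c\chi$ through $dd^c(\chi^2)$, integrate by parts, and absorb the resulting cross term by Cauchy--Schwarz. As in Lemma~\ref{lem:basic-grad}, I would first reduce to $\phi$ smooth. Replace $\phi$ by $\phi*\eta_\veps$ on a neighbourhood of $\supp\chi$; then $\ga_\veps=(dd^c\phi_\veps)^{n-1}\to\ga$ weakly. All the integrands $f^2 d\chi\wed d^c\chi$, $\chi^2 df\wed d^cf$ and $\chi f^2\om$ are smooth forms with compact support in $\Om$, so both sides pass to the limit. Each integral is finite, and $\ga$ is a closed positive $(n-1,n-1)$-form.

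Write $I:=\int f^2 d\chi\wed d^c\chi\wed\ga$. The starting point is the identity
$$
dd^c(\chi^2)=2\chi\, dd^c\chi+2\,d\chi\wed d^c\chi ,
$$
which gives
$$
I=\frac12\int f^2\, dd^c(\chi^2)\wed\ga-\int \chi f^2\, dd^c\chi\wed\ga .
$$
For the second term I use $\chi\ge0$, $f^2\ge0$, the positivity of $\ga$ and $-dd^c\chi\le A_0\om$. This gives
$$
-\int\chi f^2\,dd^c\chi\wed\ga\le A_0\int\chi f^2\,\om\wed\ga .
$$
For the first term I integrate by parts, which is legitimate because $\chi^2$ has compact support and $\ga$ is closed:
$$
\frac12\int f^2\,dd^c(\chi^2)\wed\ga=-\frac12\int d(f^2)\wed d^c(\chi^2)\wed\ga=-2\int f\chi\, df\wed d^c\chi\wed\ga .
$$

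The cross term is then controlled by the Cauchy--Schwarz inequality for the positive form $\ga$, as in \eqref{eq:basic-ineq-cs}, with weights $u=\chi$ and $v=f$:
$$
2\left|\int f\chi\,df\wed d^c\chi\wed\ga\right|\le 2\Big(\int\chi^2 df\wed d^cf\wed\ga\Big)^{1/2} I^{1/2}\le 2\int\chi^2 df\wed d^cf\wed\ga+\frac12 I .
$$
Combining the three estimates yields
$$
I\le 2\int\chi^2df\wed d^cf\wed\ga+\frac12 I+A_0\int\chi f^2\om\wed\ga .
$$
Since $I<\infty$, the term $\frac12 I$ can be absorbed into the left-hand side, which gives exactly the constants $4$ and $2A_0$.

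The only delicate point is bookkeeping rather than substance. One has to check that the mixed form $df\wed d^c\chi\wed\ga$ gives a real integral whose symmetrization is controlled as above; this holds because $df\wed d^c\chi\wed\ga=d\chi\wed d^cf\wed\ga$ as top-degree forms. One also has to check that $I$ is finite before absorbing it, which is clear in the smooth setting.
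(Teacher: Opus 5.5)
Your proposal is correct and follows the paper's own proof. You use the same integration by parts, giving $I=-2\int f\chi\, df\wedge d^c\chi\wedge\ga-\int\chi f^2\,dd^c\chi\wedge\ga$, then the same Cauchy--Schwarz bound $2\int\chi^2 df\wedge d^cf\wedge\ga+\frac12 I$ and the same absorption, which yields the constants $4$ and $2A_0$; your preliminary smoothing of $\phi$ is harmless but not needed, since $\ga$ is closed and every other form in the integrals is smooth with compact support.
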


\begin{proof} An integration by parts gives
$$\begin{aligned}
	\int f^2 d\chi \wed d^c\chi \wed\ga &= -2 \int f \chi df\wed d^c\chi \wed\ga - \int f^2 \chi dd^c\chi \wed \ga\\
&\leq		2 \left| \int f \chi df\wed d^c\chi \wed\ga \right| + A_0 \int f^2  \chi \om \wed\ga,
\end{aligned}$$
where we used the fact $dd^c \chi \leq A_0\om$.
Next, the  Cauchy-Schwarz inequality implies 
$$\begin{aligned}
2\left| \int f \chi df\wed d^c\chi \wed\ga \right|  &\leq 2 \int \chi^2 df \wed d^c f \wed \ga + \frac{1}{2}\int f^2 d\chi \wed d^c \chi \wed\ga. \\
\end{aligned}$$
Combining the last two inequalities we arrive at
$$
	\frac{1}{2}\int f^2 d\chi \wed d^c \chi \wed \ga   \leq  2 \int \chi^2 d f\wed d^c f\wed \ga + A_0 \int f^2  \chi \om \wed\ga.
$$
This is equivalent to our desired estimate.
\end{proof}

\begin{lem} \label{lem:L2-cap-grad} Let $f\in C^\infty(\Om)$ and let $0\leq \chi \in C^\infty_c(\Om)$. 
$$
	\int (\chi f)^2 (dd^c \phi)^n \leq  100 \int \chi^2 df \wed d^c f\wed (dd^c\phi)^{n-1} + 40A_0 \int \chi f^2 \om \wed (dd^c\phi)^{n-1}
$$
\end{lem}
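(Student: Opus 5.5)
Since $\chi f$ is smooth with compact support in $\Om$, we apply Lemma~\ref{lem:basic-grad} to $g=\chi f\in C^\infty_c(\Om,\bR)$:
$$
	\int (\chi f)^2 (dd^c\phi)^n \leq 10 \int d(\chi f)\wed d^c(\chi f)\wed \ga .
$$
We then expand $d(\chi f) = \chi\, df + f\, d\chi$. The basic inequality \eqref{eq:basic-ineq} (equivalently \eqref{eq:basic-ineq-cs} with $s=t=1$) gives, as positive $(1,1)$-forms,
$$
	d(\chi f)\wed d^c(\chi f) \leq 2\chi^2 df\wed d^c f + 2 f^2 d\chi\wed d^c\chi .
$$
Wedging with the positive closed current $\ga=(dd^c\phi)^{n-1}$ preserves this inequality. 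After approximating $\phi$ by smooth psh functions as in the proof of Lemma~\ref{lem:basic-grad}, $\ga$ is a smooth strongly positive form. We obtain
$$
	\int (\chi f)^2 (dd^c\phi)^n \leq 20\int \chi^2 df\wed d^c f\wed\ga + 20 \int f^2 d\chi\wed d^c\chi\wed\ga .
$$

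The remaining term is controlled by Lemma~\ref{lem:extra-grad}:
$$
	\int f^2 d\chi\wed d^c\chi\wed\ga \leq 4\int \chi^2 df\wed d^c f\wed\ga + 2A_0\int \chi f^2\om\wed\ga .
$$
Substituting, the coefficient of the gradient term becomes $20+80=100$ and the coefficient of the last term becomes $40A_0$. This is exactly the claimed inequality.

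The only point needing care is the legitimacy of the integrations by parts and of the wedge products when $\phi$ is merely bounded psh. Lemmas~\ref{lem:basic-grad} and \ref{lem:extra-grad} are stated for general $-1\le\phi\le 0$, and all integrands have compact support in $\operatorname{supp}\chi$. So one may either apply those lemmas directly, or first take $\phi_\veps=\phi*\eta_\veps$ on a neighborhood of $\operatorname{supp}\chi$ and pass to the limit. The passage to the limit uses the weak convergence $(dd^c\phi_\veps)^{k}\to(dd^c\phi)^k$ (Bedford--Taylor) tested against the smooth compactly supported forms $(\chi f)^2$, $\chi^2df\wed d^cf$ and $\chi f^2\om$. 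I do not expect a genuine obstacle; the argument is bookkeeping of the constants.
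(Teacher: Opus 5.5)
Your proposal is correct and is exactly the paper's proof: apply Lemma~\ref{lem:basic-grad} to $g=\chi f$, bound $d(\chi f)\wed d^c(\chi f)\leq 2\chi^2 df\wed d^c f+2f^2 d\chi\wed d^c\chi$, and absorb the second term with Lemma~\ref{lem:extra-grad}. The constants come out as $20+20\cdot 4=100$ and $20\cdot 2A_0=40A_0$. Your remark on smoothing $\phi$ matches the approximation step the paper uses in Lemma~\ref{lem:basic-grad}.
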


\begin{proof} Applying Lemma~\ref{lem:basic-grad} for $g = \chi f$ we get
$$\begin{aligned}
	\int (\chi f)^2 (dd^c\phi)^n 
&\leq 10 \int d(\chi f) \wed d^c(\chi f) \wed \ga \\
&\leq 20 \int [ \chi^2 df \wed d^c f + f^2 d\chi \wed d^c \chi] \wed \ga.
\end{aligned}$$
Now the desired conclusion follows by applying Lemma~\ref{lem:extra-grad} for the second term in the last integral.
\end{proof}

In other words, in order to get the inequality in Lemma~\ref{lem:L2-cap-grad} we  replace the integrand $(\chi f)^2 dd^c\phi$ by $100\chi^2 df \wed d^c f + 40A_0 \chi f^2 \om$.
We can continue this process  for the second integrals on the right hand side with $
\chi := (\chi_1)^2$ where $\chi_1$ is again a cut-off function. After repeating this process $n$-times we obtain easily 

\begin{cor}\label{cor:L2-cap-grad}  Let $f\in C^\infty(\Om)$ and $0\leq \chi \in C^\infty_c(\Om)$. Then,
$$\begin{aligned}
	\int \chi^{2^{n}} f^2 (dd^c\phi)^n &\leq 100 \int df \wed d^c f \wed \left( \sum_{k=1}^{n-1} A_k \;\chi^{2^{k}}  (dd^c \phi)^{k} \wed \om^{n-1-k} \right) \\ &\quad + (40A_0)^n \int \chi f^2 \om^n.
\end{aligned}$$
where $A_k =[40(1+ A_0)]^{n-1-k}$.
\end{cor}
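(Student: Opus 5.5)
The plan is an induction that applies Lemma~\ref{lem:L2-cap-grad} repeatedly, each time lowering by one the power of $dd^c\phi$ in the \emph{zero-order} term. Each step costs squaring the cut-off and adding one gradient term. To make the induction work, I would first prove a mixed version of Lemmas~\ref{lem:basic-grad}, \ref{lem:extra-grad} and \ref{lem:L2-cap-grad}, with $\ga=(dd^c\phi)^{n-1}$ replaced by $\ga_k:=(dd^c\phi)^{k-1}\wed\om^{n-k}$ for $1\le k\le n$. The proofs carry over verbatim. They use only that $\ga_k$ is a closed positive $(n-1,n-1)$-form, integration by parts against a compactly supported function, the Cauchy--Schwarz inequality \eqref{eq:basic-ineq-cs}, and $|\phi|\le 1$. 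The resulting inequality is
$$\int (\chi g)^2 (dd^c\phi)^k\wed\om^{n-k} \le 100\int \chi^2 dg\wed d^c g\wed \ga_k + 40A_0\int \chi g^2\om\wed\ga_k,$$
for $g\in C^\infty(\Om)$ and $0\le\chi\in C^\infty_c(\Om)$. As in Lemma~\ref{lem:basic-grad}, one first smooths $\phi$ by convolution and then passes to the limit in the Monge--Amp\`ere products, or simply assumes $\phi$ smooth.

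Next I would set $\chi_k:=\chi^{2^{k}}$ and write $I_k:=\int \chi^{2^k} f^2(dd^c\phi)^k\wed\om^{n-k}$. Applying the mixed inequality with cut-off $\chi^{2^{k-1}}$ and $g=f$ gives
$$I_k \le 100\int \chi^{2^k} df\wed d^cf\wed (dd^c\phi)^{k-1}\wed\om^{n-k} + 40A_0^{(k)} I_{k-1}.$$
Here $A_0^{(k)}$ bounds $|dd^c(\chi^{2^{k-1}})|$. One point needs care: the constant depends on the cut-off. With $\chi$ replaced by a power, $dd^c\chi^{m}=m\chi^{m-1}dd^c\chi+m(m-1)\chi^{m-2}d\chi\wed d^c\chi$ is bounded by a constant depending on $A_0$, $\|\chi\|_{C^1}$ and $m$, not just $A_0$. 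So I would either normalize $0\le\chi\le1$ and absorb the constants, or redo Lemma~\ref{lem:extra-grad} with the factor $\chi^{2^{k-1}}\ge\chi^{2^k}$. The crude constant $[40(1+A_0)]$ per step is then meant to cover this. Iterating from $k=n$ down to $k=0$, where $I_0\le\int\chi f^2\om^n$ since $\chi^{1}$ is what remains, one collects the gradient terms $\int\chi^{2^{k}}df\wed d^cf\wed(dd^c\phi)^{k}\wed\om^{n-1-k}$. Each carries the product of the constants from the later steps, giving the geometric factors $A_k=[40(1+A_0)]^{n-1-k}$, and the final zero-order term has factor $(40A_0)^n$.

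The main obstacle is the bookkeeping of the cut-off powers and the constants. Specifically, one has to check that at each step the zero-order term involves $\chi^{2^{k-1}}$ rather than $\chi^{2^{k}}$, so that the next application is legitimate. One also has to check that the Hessian bound of the powered cut-off is controlled uniformly; I would assume $0\le\chi\le1$ and enlarge $A_0$ if necessary. The sum in the statement starts at $k=1$, while the $k=0$ term is $\int \chi^{2}df\wed d^cf\wed\om^{n-1}$. I would include it and then bound it by the stated expression, possibly after adjusting the indexing; I suspect the intended sum runs over $0\le k\le n-1$.
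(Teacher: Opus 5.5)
Your iteration is the paper's own argument. Lemma~\ref{lem:L2-cap-grad} is applied repeatedly with $(dd^c\phi)^{n-1}$ replaced by the closed positive forms $(dd^c\phi)^{k-1}\wed\om^{n-k}$ and the cut-off replaced by $\chi^{2^{k-1}}$ (the paper's ``$\chi:=\chi_1^2$''), and the mixed versions of Lemmas~\ref{lem:basic-grad}, \ref{lem:extra-grad} and \ref{lem:L2-cap-grad} do go through verbatim. Your remark on constants is also correct. The iteration gives the stated $A_k$ and $(40A_0)^n$ only if $A_0$ is read as a common bound for $\pm dd^c(\chi^{2^i})$, $0\le i\le n-1$. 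In addition, $0\le\chi\le 1$ is needed to replace the factor $\chi^{2^{k+1}}$ that actually multiplies $(dd^c\phi)^k$ by $\chi^{2^k}$.

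Where you should not hedge is the $k=0$ term. Your first option, bounding it by the printed right-hand side, fails, because the inequality with $\sum_{k=1}^{n-1}$ is false.

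\emph{Counterexample.} Take $\Om=B(0,1)$, $0<r<1/(2\sqrt n)$ and $\phi=\max\{\log|z_1|,\dots,\log|z_n|,\log r\}/\log(1/r)$, so that $-1\le\phi\le 0$.
\begin{itemize}
\item $dd^c\phi$ is carried by the closed, Lebesgue-null set $H$ where at least two of these $n+1$ functions realise the maximum; $H$ is a finite union of real hypersurfaces.
\item $(dd^c\phi)^n$ equals $(\log(1/r))^{-n}$ times the normalised Haar measure of the torus $\{|z_1|=\cdots=|z_n|=r\}\subset H$.
\end{itemize}
Fix $0\le\chi\le 1$ with $\chi=1$ on $B(0,1/2)$. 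Let $f_\de\in C^\infty(\Om)$, $0\le f_\de\le 1$, be $\equiv 1$ on a neighbourhood of $H$ and $0$ outside the $\de$-neighbourhood of $H$.
\begin{itemize}
\item Since $df_\de=0$ near the support of $(dd^c\phi)^k$ for $k\ge 1$, every term of $\sum_{k=1}^{n-1}$ vanishes.
\item The zero-order term satisfies $(40A_0)^n\int\chi f_\de^2\om^n=O(\de)$.
\item The left-hand side equals $(\log(1/r))^{-n}$ for every $\de$.
\end{itemize}
For $n=1$ the sum is empty, and the same example (with $H=\{|z|=r\}$) works.

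The only term able to compensate is $\int\chi^2 df_\de\wed d^cf_\de\wed\om^{n-1}\sim\de^{-1}$, which is exactly what your last step produces. So the statement your argument proves, and the one the paper's iteration actually yields, has the sum over $0\le k\le n-1$. This correction is harmless for the later uses in Proposition~\ref{prop:L2-cap} and Lemma~\ref{lem:cap-c}. There the extra term is at most $\int T\wed\om^{n-1}=\|T\|_\Om$, which is already part of the bound.
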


Note that the inequality of the corollary holds for $-1 \leq \phi \leq 0$ being a general bounded psh function and $f$ is smooth. However, its right hand side involves only gradient of $f$ and its $L^2$-norm, we can  use a simple approximation argument to  exchange the regularity assumptions on $\phi$ and $f$ as follows.

\begin{cor}\label{cor:L2-cap-general} Let $f\in W^{1,2}(\Om)$. Assume further $\phi$ is smooth in a neighborhood of $\supp\chi$.  Then,
$$\begin{aligned}
	\int \chi^{2^{n}} f^2 (dd^c\phi)^n &\leq 100 \int df \wed d^c f \wed \left( \sum_{k=1}^{n-1} A_k \;\chi^{2^{k}}  (dd^c \phi)^{k} \wed \om^{n-1-k} \right) \\ &\quad + (40A_0)^n \int \chi f^2 \om^n.
\end{aligned}$$
where $A_k =[40(1+ A_0)]^{n-1-k}$.
\end{cor}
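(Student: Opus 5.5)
The plan is to derive Corollary~\ref{cor:L2-cap-general} from Corollary~\ref{cor:L2-cap-grad} by approximating $f$ with smooth functions. The regularity of $\phi$ near $\supp\chi$ lets us pass to the limit on the left-hand side. The right-hand side converges because it only involves $\na f$ and $f$ in $L^2$ against forms with bounded coefficients.

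First I localize. Fix an open set $U$ with $\supp\chi\subset U\subset\subset\Om$ such that $\phi$ is smooth on a neighborhood of $\bar U$. Every integrand in the statement carries a positive power of $\chi$, or the factor $\chi$, so all integrals are taken over $\supp\chi$. Let $f_\veps = f*\eta_\veps$, defined on $\Om_\veps\supset \bar U$ for small $\veps$. Since $f_\veps\in C^\infty(\Om_\veps)$ and the proof of Corollary~\ref{cor:L2-cap-grad} uses only integration by parts against the compactly supported $\chi$, the corollary applies to $f_\veps$. It gives
$$
\int \chi^{2^n} f_\veps^2 (dd^c\phi)^n \leq 100\int df_\veps\wed d^c f_\veps \wed \Theta + (40A_0)^n\int \chi f_\veps^2\om^n,
$$
where $\Theta := \sum_{k=1}^{n-1} A_k\chi^{2^k}(dd^c\phi)^k\wed\om^{n-1-k}$.

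Next I pass to the limit $\veps\to 0$. We have $f_\veps\to f$ in $W^{1,2}(U)$. Because $\phi$ is smooth on $\bar U$, the measure $(dd^c\phi)^n$ has a bounded density with respect to Lebesgue measure on $\supp\chi$. Likewise, $\Theta$ is a smooth form with bounded coefficients supported in $\supp\chi$. Hence $\int \chi^{2^n}f_\veps^2(dd^c\phi)^n\to\int\chi^{2^n}f^2(dd^c\phi)^n$, since $f_\veps^2\to f^2$ in $L^1(U)$. Similarly, $df_\veps\wed d^cf_\veps\wed\Theta$ has coefficients that are bilinear in $\na f_\veps$ times bounded functions. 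So $\int df_\veps\wed d^cf_\veps\wed\Theta\to\int df\wed d^cf\wed\Theta$ by the strong $L^2$ convergence of the gradients, and the last term converges for the same reason. The inequality survives the limit.

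The only delicate point is the first step: checking that Corollary~\ref{cor:L2-cap-grad} really applies to functions smooth only on a neighborhood of $\supp\chi$ rather than on all of $\Om$. One also has to check that the implicit smoothing of $\phi$ in Lemma~\ref{lem:basic-grad} is harmless here. Since $\phi$ is already smooth near $\supp\chi$, that smoothing is not needed. The integration by parts in Lemmas~\ref{lem:basic-grad} and~\ref{lem:extra-grad} involves only $g=\chi f$ or $\chi$, which are compactly supported. Alternatively, one can replace $f_\veps$ by $\tilde\chi f_\veps$ with $\tilde\chi\equiv1$ near $\supp\chi$, which gives a function in $C^\infty(\Om)$ that agrees with $f_\veps$ where it matters.
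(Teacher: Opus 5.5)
Your proposal is correct and is essentially the paper's argument. The paper only remarks that Corollary~\ref{cor:L2-cap-grad} holds for smooth $f$ and bounded psh $\phi$, and that a simple approximation exchanges the regularity assumptions; that is exactly your scheme of applying it to $\tilde\chi f_\veps$ and letting $\veps\to0$, using that $(dd^c\phi)^n$ and $\Theta$ have bounded coefficients near $\supp\chi$. Since Corollary~\ref{cor:L2-cap-grad} already allows a general bounded $\phi$, your worry about smoothing $\phi$ is unnecessary. The smoothness of $\phi$ is needed only in the limit step, and you use it correctly there.
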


We are ready to state the main application of the above estimates. This strengthens \cite[Lemma~2.5]{DMV} which says roughly that continuous functions in $W^*(\Om)$ is locally uniformly $L^2$-integrable with respect to  Monge-Amp\`ere measures of bounded psh functions. More precisely, 

\begin{prop} \label{prop:L2-cap} Let $f\in W^*(\Om) \cap C^0(\Om)$. Let $D\subset \subset \Om$ be a subdomain. Then, for every  Borel set $K\subset D$,
$$
	\int_K  f^2 (dd^c \phi)^n \leq A \|f\|_*^2,
$$
where $A$ depends only on $D, \Om$.
\end{prop}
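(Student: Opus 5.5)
The plan is to reduce to Corollary~\ref{cor:L2-cap-general} with a fixed cut-off, and then control the mixed gradient terms $df\wedge d^cf\wedge (dd^c\phi)^k\wedge\om^{n-1-k}$ by the minimum current $T$ of $f$. Here $\phi$ is the psh function with $-1\le\phi\le 0$ fixed in this section.

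First fix $0\le\chi\le 1$ in $C^\infty_c(\Om)$ with $\chi=1$ on $\bar D$. Since $K\subset D$,
$$
\int_K f^2(dd^c\phi)^n\le \int \chi^{2^n}f^2(dd^c\phi)^n .
$$
To apply the corollary we need $\phi$ smooth near $\supp\chi$. So we replace $\phi$ by $\phi_\veps=\phi*\eta_\veps$, which is defined near $\supp\chi$ for small $\veps$ and satisfies $-1\le \phi_\veps\le 0$ there. We also modify it near $\d\Om$: take $\wt\phi_\veps=\max\{\phi_\veps, M\rho\}$, with $M$ large so that $M\rho<-1$ on a neighborhood of $\supp\chi$. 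After rescaling, this gives a psh function with values in $[-1,0]$ that is smooth near $\supp\chi$ and tends to $0$ at $\d\Om$. The corollary bounds $\int\chi^{2^n}f^2(dd^c\wt\phi_\veps)^n$ by
$$
100\sum_k A_k\int \chi^{2^k}\,df\wedge d^cf\wedge(dd^c\wt\phi_\veps)^k\wedge\om^{n-1-k}+(40A_0)^n\|f\|_{L^2}^2 .
$$

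Next, in each gradient term we use $df\wedge d^cf\le T$, which is legitimate because $\chi^{2^k}(dd^c\wt\phi_\veps)^k\wedge\om^{n-1-k}$ is a smooth positive form with compact support. Each term is then at most $\int_\Om T\wedge(dd^c\wt\phi_\veps)^k\wedge\om^{n-1-k}$. To compare this with $\|T\|_\Om$, let $\psi=(M/m)\rho$ with $m=\min\rho$ on $\supp\chi$ (normalized so that $\psi\le\wt\phi_\veps$ on the relevant set), and apply Lemma~\ref{lem:CP-mass} with the roles of $\wt\phi_\veps$ and the defining function. This gives
$$
\int T\wedge(dd^c\wt\phi_\veps)^k\wedge\om^{n-1-k}\le C^k\int_\Om T\wedge (dd^c\rho)^k\wedge\om^{n-1-k}\le C'\|T\|_\Om ,
$$
since $\rho$ is smooth on $\bar\Om$. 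The constant depends only on $D,\Om$, because $\wt\phi_\veps\ge -1\ge c\rho$ for a suitable $c$.

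I expect this mass comparison to be the main obstacle. Lemma~\ref{lem:CP-mass} is stated for the whole $\Om$ and for $T$ closed with possibly infinite interior behaviour near the boundary. One has to check the boundary condition $\vphi\to0$ for $\vphi=c\rho$ and make sure the cut-off truncation is harmless. If that is delicate, I would first pass to $\Om'$ with $\supp\chi\subset\Om'\subset\subset\Om$ strictly pseudoconvex (a sublevel set of $\rho$) and apply the lemma there.

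Finally, let $\veps\to0$. Since $\wt\phi_\veps$ decreases to a function equal to $\phi$ near $\supp\chi$, the measures $(dd^c\wt\phi_\veps)^n$ converge weakly to $(dd^c\phi)^n$ there. Because $f$ is continuous, $\chi^{2^n}f^2$ is a compactly supported continuous function, so the left-hand sides converge. This yields $\int\chi^{2^n}f^2(dd^c\phi)^n\le A\|f\|_*^2$. Continuity of $f$ is used exactly in this limit, and the bound is uniform in $K$.
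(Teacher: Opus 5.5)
Your proposal is correct and follows the paper's proof essentially step for step. Both use the cut-off $\chi$, regularize $\phi$ and replace it by its maximum with a large multiple of $\rho$, apply the $L^2$ estimate of Corollary~\ref{cor:L2-cap-grad}/\ref{cor:L2-cap-general}, bound $df\wedge d^cf$ by $T$, and then use Lemma~\ref{lem:CP-mass} against $M\rho$ before letting the regularization go. The only real difference is that you keep $f\in W^{1,2}$ and invoke Corollary~\ref{cor:L2-cap-general} rather than also smoothing $f$. For the mass comparison, simply take $\vphi=M\rho$: you already have $\wt\phi_\veps\ge M\rho$ by construction, whereas your justification ``$-1\ge c\rho$'' fails near $\d\Om$, and the $(M/m)\rho$ normalization has the wrong sign.
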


\begin{proof} We take $0\leq \chi \leq 1$ a smooth cut-off function such that $\chi \equiv 1$ on $D$ and $\supp \chi \subset\subset \Om$.  It is enough to show that
$$	J:=\int \chi^{2^n}  f^2 (dd^c \phi)^n \leq A\|f\|_*^2.
$$
By weak convergence of $[dd^c (\phi*\eta_\veps)]^n$ to $(dd^c\phi)^n$ and the continuity of $f$, we may assume that $\phi$ is smooth. Since $\phi$ is bounded we can modify it outside the support of $\chi$ by considering $\max\{\phi, A_0 \rho\}$, where $A_0$ depends only on $\rho$. So, $\phi \geq A_0 \rho$ and $\phi= A_0\rho$ near $\d\Om$. Moreover, we assume further that $f$ is smooth by the smoothing property in Proposition~\ref{prop:smoothing}.

Now, let $T$ be a closed positive current such that $df \wed d^c f\leq T$. 
Corollary~\ref{cor:L2-cap-grad} implies that 
$$
	J \leq A_1 \int T \wed \sum_{k=1}^{n-1} (dd^c\phi)^k \wed \om^{n-1-k} + A_1 \|f\|_{L^2(\Om)}^2.
$$
Since $A_0 \rho \leq \phi \leq 0$ in $\Om$, the mass comparison  (Lemma~\ref{lem:CP-mass}) implies that
$$\begin{aligned}
	\int T \wed (dd^c\phi)^k \wed \om^{n-1-k} 
&\leq \int T \wed [dd^c (A_0\rho)]^k \wed \om^{n-1-k} \\
&\leq A_2 \int T\wed \om^{n-1}.
\end{aligned}$$
Therefore, $J \leq A_1A_2 (\|f\|^2_{L^2(\Om)} + \|T\|_\Om)$. Taking infimum over all $T$ in $\Ga_f$ we complete the proof.
\end{proof}

\begin{remark}
At the moment the continuity of $f$ is needed in Proposition~\ref{prop:L2-cap} so that the integral on the left hand side is well-defined. However, we will show in Theorem~\ref{thm:quasi-mod} below that $f$ admits a quasi-continuous representative $\wt f \in W^*(\Om)$ with respect to the Bedford-Taylor capacity. So, the integral can be defined  for a general $f$ in Definition~\ref{defn:L2-ma}. 
Furthermore, in Section~\ref{sec:dirichlet}, we will see that all estimates in this section hold for a general $f\in W^*(\Om)$ and a bounded psh function $\phi$.
\end{remark}

Next result is the quantitative version of second part in \cite[Lemma~2.5]{DMV}.

\begin{lem} \label{lem:wed-prod} Let $f\in W^*(\Om) \cap C^0(\Om)$ be such that $|f| \leq 1$. Let $\chi\in C^\infty_c(\Om)$ be such that  $0\leq \chi \leq 1$. Assume $df \wed d^c f \leq dd^c u$ for some $u \in PSH\cap L^\infty(\Om)$. Then,
$$
	\int \chi^{2^n} f^2 (dd^c\phi)^n \leq  A \left[ \int \chi f^2 (\om + dd^c u)^n \right]^\frac{1}{2^n},
$$
where $A$ is a uniform constant depending on $\supp \chi$, $\Om$ and $\|u\|_{L^\infty}$. More precisely, 
$$ 
	A = A_0 \|u\|_{L^\infty}^{a}, \quad \text{where }  
	a = \sum_{i=1}^{n-1} \frac{i-1}{2^{i-1}}
$$
and $A_0$ is a constant depending only on $\chi, \om$.
\end{lem}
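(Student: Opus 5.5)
The plan is to peel off the factors $dd^c\phi$ one at a time by integrating by parts. Each step uses the Cauchy--Schwarz inequality, which costs a square root. After $n$ steps this produces the exponent $\frac{1}{2^n}$, and the $n$-fold halving of the cut-off exponent $2^n\to 2^{n-1}\to\cdots\to 1$ is accounted for in the same way.

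\emph{Reduction.} As in the proof of Proposition~\ref{prop:L2-cap}, I first reduce to the case where $\phi$, $u$ and $f$ are smooth. I regularize by $\eta_\veps$, use Proposition~\ref{prop:ineq-weak} to keep $df_\veps\wed d^cf_\veps\le dd^c u_\veps$, and use the continuity of $f$ to pass to the limit in the Monge--Amp\`ere integrals. I then work with the more general quantity
$$
I(\chi,\psi_1,\dots,\psi_k;S)=\int \chi^{2^k} f^2\, dd^c\psi_1\wed\cdots\wed dd^c\psi_k\wed S,
$$
where $\psi_i$ are psh with $-2\le\psi_i\le 0$ (after normalization) and $S=(\om+dd^cu)^{n-k}$. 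The goal is to prove by induction on $k$ that $I\le A_k\,[\int\chi f^2(\om+dd^cu)^n]^{1/2^k}$; the case $k=n$ with all $\psi_i=\phi$ is the lemma.

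\emph{One step of the induction.} Write $R=dd^c\psi_2\wed\cdots\wed dd^c\psi_k\wed S$. Integrating by parts gives
$$
\int \chi^{2^k}f^2dd^c\psi_1\wed R=-\int d(\chi^{2^k}f^2)\wed d^c\psi_1\wed R .
$$
Put $h=\chi^{2^{k-1}}f$. Since $d(\chi^{2^k}f^2)=2h\,dh$, Cauchy--Schwarz bounds the right-hand side by
$$
2\Big(\int dh\wed d^ch\wed R\Big)^{1/2}\Big(\int h^2\, d\psi_1\wed d^c\psi_1\wed R\Big)^{1/2}.
$$
For the first factor I use $dh\wed d^ch\le 2\chi^{2^k}dd^cu+2f^2\,d\chi^{2^{k-1}}\wed d^c\chi^{2^{k-1}}$ together with $|f|\le 1$. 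Lemma~\ref{lem:CP-mass}, after the usual modification $\max\{\cdot,A\rho\}$ near $\d\Om$, then bounds this factor by a constant times a power of $\|u\|_{L^\infty}$. For the second factor I use
$$
d\psi_1\wed d^c\psi_1\le \tfrac12 dd^c(\psi_1+2)^2,
$$
which holds because $\psi_1+2\ge0$ is psh. The new potential $\tfrac12(\psi_1+2)^2-2$ is again bounded psh, so the second factor has the form $I$ with cut-off $\chi^{2^{k}}$, but with the previous $dd^c\psi_1$ exchanged for a bounded potential. I repeat the same manipulation on this potential: I absorb the weight into $\om$ through $dd^c$ of the cut-off (Lemma~\ref{lem:extra-grad} type terms), and replace one factor of $R$ by $\om+dd^cu$, which is the step that gives an honest induction on $k$. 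In total this yields $I_k\le C\,I_{k-1}^{1/2}+C\,(\text{terms }\int\chi^{2^{k-1}}f^2\om\wed R)$.

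\emph{Lower-order terms and bookkeeping.} The lower-order terms are of the same form with fewer $dd^c\psi$ factors and are bounded by a constant, since $|f|\le1$. An elementary inequality $x\le Cx^{1/2^j}$ for bounded $x$ lets me put all terms on the scale $[\int\chi f^2(\om+dd^cu)^n]^{1/2^k}$. At the end every factor has been replaced by $\om$ or $dd^cu$, and $\chi^{2^n}$ has become $\chi$. The power $a=\sum\frac{i-1}{2^{i-1}}$ of $\|u\|_{L^\infty}$ comes from tracking how many $dd^cu$ factors each gradient factor carries at stage $i$ and the square root taken there.

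The main obstacle is the inductive step. After Cauchy--Schwarz the second factor carries $d\psi_1\wed d^c\psi_1$ rather than $(\om+dd^cu)$, and it must be genuinely reduced to the lower-$k$ quantity rather than merely bounded by a constant, since a mere constant bound loses all the smallness. I would first try the $\tfrac12dd^c(\psi_1+2)^2$ trick combined with a second integration by parts that moves $dd^c$ of the new potential onto $\chi^{2^k}f^2$, in the same spirit as Lemma~\ref{lem:basic-grad}. That integration by parts again produces a gradient term, controlled by $dd^cu$, and a term carrying $f^2$ against one fewer singular factor, which is what feeds the induction.
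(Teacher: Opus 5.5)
\textbf{There is a genuine gap.} Your scheme (peel off one $dd^c\phi$ per step by integration by parts and Cauchy--Schwarz, paying one square root and halving the power of $\chi$) is the paper's. However, the Cauchy--Schwarz pairing you chose breaks it, and the obstacle you flag at the end is never resolved.

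Writing $d(\chi^{2^k}f^2)=2h\,dh$ with $h=\chi^{2^{k-1}}f$ puts the whole weight $f^2$ on the $d\psi_1\wedge d^c\psi_1$ factor. The other factor, $\int dh\wedge d^ch\wedge R$, contains $\chi^{2^k}df\wedge d^cf$ with no $f^2$ in front, so it is only $O(1)$. All smallness would then have to come from a \emph{linear} bound $\int h^2 d\psi_1\wedge d^c\psi_1\wedge R\lesssim J$, where $J$ is the integral on the right-hand side. This bound is false. Take $n=1$, $\chi\equiv1$ near $0$, $\psi=\max\{\log|z|/\log(2/r),-1\}$, and $f(z)=r\,\theta(|z|/r)$ with $\theta$ a bump supported in $[1,2]$ and equal to $1$ on $[1.2,1.8]$. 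Let $u$ be the logarithmic potential of $df\wedge d^cf$, a measure of mass $\approx r^2$, so $\|u\|_{L^\infty}\lesssim r^2\log(1/r)$. Then $\int f^2d\psi\wedge d^c\psi\gtrsim r^2/\log^2(1/r)$, whereas $\int f^2(\omega+dd^cu)\lesssim r^4$.

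Your remedy does not fix this. Apply $d\psi_1\wedge d^c\psi_1\le\frac12dd^c(\psi_1+2)^2$ and integrate by parts a second time. If you keep the pairing $h\,dh$, you just get back $\int dh\wedge d^ch\wedge R=O(1)$. If you pair $f\,df$ against $d^c\psi_1$, the second factor is $\lesssim J^{1/2}$, hence $I_k\lesssim J^{1/4}$, and the final exponent is $4^{-n}$ rather than $2^{-n}$. The recursion $I_k\le CI_{k-1}^{1/2}+\cdots$ is asserted, not derived.

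The missing idea is to pair the other way already in the first step, as the paper does. With $\gamma=(dd^c\phi)^{n-1}$,
$$
\int(\chi f)^2dd^c\phi\wedge\gamma=-2\int\chi f\,d(\chi f)\wedge d^c\phi\wedge\gamma\le 2\Big(\int f^2d(\chi f)\wedge d^c(\chi f)\wedge\gamma\Big)^{1/2}\Big(\int\chi^2d\phi\wedge d^c\phi\wedge\gamma\Big)^{1/2}.
$$
\begin{itemize}
\item The unweighted factor is now the energy of $\phi$. It is $O(1)$ by Chern--Levine--Nirenberg, and no smallness is needed there.
\item The weighted factor stays linear in the target. We have $f^2d(\chi f)\wedge d^c(\chi f)\le 2\chi^2f^2dd^cu+2f^4d\chi\wedge d^c\chi$. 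Use $f^4\le f^2$ (this is where $|f|\le1$ enters). Then apply Lemma~\ref{lem:extra-grad} to $f^2$, or simply $d\chi\wedge d^c\chi\le C\chi\,\omega$ for $0\le\chi\in C^\infty_c$. This gives a bound $\lesssim\int f^2[\chi^2dd^cu+A_0\chi\omega]\wedge\gamma$.
\end{itemize}
So $\chi^2dd^c\phi$ is traded for $\chi^2dd^cu+A_0\chi\omega$ at the cost of exactly one square root. Iterating $n$ times from $\chi^{2^n}$, with CLN constants of size $\|u\|_{L^\infty}^k$ at each stage, gives the lemma with exponent $2^{-n}$.
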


\begin{proof} Recall that $\ga= (dd^c\phi)^{n-1}$ and by replacing $\phi :=\phi+1$ we have $0\leq \phi \leq 1$. By weak convergence of approximation sequences we may assume first $\phi$ is smooth. Define $f_\veps = f*\eta_\veps$ and $u_\veps= u*\eta_\veps$.  Next, invoking Proposition~\ref{prop:ineq-weak}, we get
$$
	d f_\veps \wed d^c f_\veps \leq dd^c u_\veps.
$$
Since $f_\veps \to f$ uniformly on $\supp \chi$, we have $f_\veps^2 (\om+dd^c u_\veps)^n \to f^2 (\om+dd^cu)^n$ weakly in $\Om$.  Therefore, it is enough to prove the inequality for $\phi, u$ and $f$ which are smooth. Its proof is done via using repeatedly the integration by parts and the Cauchy-Schwarz inequality. The first step  is as follows:
\[\label{eq:L2-1st-a}\begin{aligned}
	\int (\chi f)^2 dd^c\phi \wed\ga
&=-2 \int \chi f d(\chi f) \wed d^c \phi \wed\ga \\
&\leq  2\left(\int f^2 d(\chi f) \wed d^c(\chi f) \wed \ga \right)^\frac{1}{2}\left( \int \chi^2 d\phi \wed d^c\phi \wed \ga\right)^\frac{1}{2}.
\end{aligned}\]
The second integral on the right hand side  is bounded by
$$	
	 \int \chi^2 d\phi \wed d^c\phi \wed \ga \leq \int \chi^2 dd^c\phi^2 \wed \ga \leq A_1',
$$
 where we used the Chern-Levine-Nirenberg (CLN) inequality in the last  inequality for    a constant $A_1'>0$ depending only on support of $\supp \chi$. Next, for the other integrand of \eqref{eq:L2-1st-a},
$$\begin{aligned}
	f^2 d(\chi f) \wed d^c (\chi f) 
&\leq 2 f^2 (\chi^2 df \wed d^c f + f^2 d\chi \wed d^c\chi) \\
&\leq 2 f^2[\chi^2 dd^c u +2 d\chi \wed d^c\chi],
\end{aligned}$$
where we used the fact $|f|\leq 1$ for the second term in the bracket. Integrating both sides and 
using Lemma~\ref{lem:extra-grad} for the second term in the bracket  we derive
\[\label{eq:L2-1st-b}\begin{aligned}
	\int f^2 [\chi^2 dd^c\phi ]\wed\ga  \leq A_1 \left( \int  f^2 [ \chi^2  dd^c u + A_0 \chi \om]  \wed \ga\right)^\frac{1}{2},
\end{aligned}\]
where $A_1 = 2\sqrt{18} A_1'$.
Thus, we have been replacing one factor $\chi^2 dd^c \phi$ on the left hand side of \eqref{eq:L2-1st-b} by the factor $\chi^2 dd^c u + A_0 \chi \om $ on the right hand side where a square root is taken on the integral and multiplied with the uniform constant $A_1$.

We can repeat this replacing $\chi^2 dd^c \phi$ process for each  integral obtained on the right hand side. In order to keep the exponent of $\chi$ at least one in the final integrals without the factor $dd^c \phi$, we need to start with  $\chi^{2^n}$ at the beginning. Clearly, the constants possibly increase at each step.  

Namely, together with an elementary inequality $(x+y)^k \leq 2^k (x^k + y^k)$ for $x,y\geq 0$ and a positive integer $k$, we can rewrite the sequence of inequalities as follows:
\[\notag\begin{aligned}
	\left(\int f^2 [\chi^{2^n} dd^c\phi] \wed \ga\right)^{2^n} 
&\leq A_1 \left( \int f^2 \chi^{2^{n}} dd^c u  \wed \ga \right)^{2^{n-1}}  \\
&\quad + A_1 \left( \int f^2 \chi^{2^{n-1}} \om \wed \ga \right)^{2^{n-1}} \\
&\leq \cdots  \\
&\leq A_n  \sum_{k=1}^n  \int f^2 \chi^{2^{k}} (dd^cu)^k \wed \om^{n-k}. 
\end{aligned}\]
This is the desired estimate once we take into account the fact  $0 \leq \chi^{2^k} \leq \chi$ for every integer $1\leq k\leq n$ as $0\leq \chi \leq 1$.
Here the constants $A_k$ comes from the use of the CLN inequality
$$
	\int \chi dd^c\phi^2 \wed (dd^c u)^k \wed \om^{\ell} \wed (dd^c\phi)^{n-k-\ell} \leq A_k \|u\|_{L^\infty}^k,
$$
and they depend only on $\chi$ and $\Om$.
\end{proof}

\begin{lem} \label{lem:L1-cap-comparison} Let $v, w \in PSH \cap L^\infty (\Om)$. Let $0\leq f\in W^*(\Om) \cap C^0(\Om)$. Then,
$$\begin{aligned}
	\int \chi^2 f (\om+ dd^c w)^n &\leq \int \chi^2 f  (\om+ dd^c v)^n \\ 
&\quad +\left( \int \chi d(v-w)\wed d^c (v-w) \wed \tau \right)^\frac{1}{2}  (I^\frac{1}{2} + J^\frac{1}{2}) ,
\end{aligned}$$
where 
$$
\begin{aligned}
\tau &= \sum_{k=1}^{n-1} (\om+ dd^cv)^k \wed (\om+ dd^cw)^{n-1-k},\\
 I &= \int \chi f^2 d\chi \wed d^c \chi \wed \tau,\quad 
J = \int \chi^2 df \wed d^cf \wed \tau.
\end{aligned}$$ 
\end{lem}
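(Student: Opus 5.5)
The plan is to reduce everything to a telescoping identity followed by one integration by parts and two applications of the Cauchy--Schwarz inequality, in the same spirit as the proofs of Lemmas~\ref{lem:basic-grad} and \ref{lem:extra-grad}. Throughout, $0\le\chi\le 1$ is the cut-off function of this section.

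\emph{Step 1: smooth case, telescoping and integration by parts.} Assume first that $v,w$ and $f$ are smooth near $\supp\chi$. Write $\om_v=\om+dd^c v$ and $\om_w=\om+dd^c w$. Telescoping gives
$$\om_w^n-\om_v^n=dd^c(w-v)\wed\tau, \qquad \tau=\sum_{k}\om_v^{k}\wed\om_w^{n-1-k}.$$
For this identity the sum must run over all $n$ mixed terms $0\le k\le n-1$, so $\tau$ has to be read with that range. Since $\tau$ is closed and $\chi$ has compact support, integration by parts gives
$$\int\chi^2 f\,\om_w^n-\int\chi^2 f\,\om_v^n=\int d(\chi^2 f)\wed d^c(v-w)\wed\tau .$$
Next expand $d(\chi^2 f)=2\chi f\,d\chi+\chi^2 df$.

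\emph{Step 2: Cauchy--Schwarz on each piece.} Since $\tau$ is a positive closed $(n-1,n-1)$-form, the Cauchy--Schwarz inequality \eqref{eq:basic-ineq-cs} applies. For the first piece, with weights $\chi^{1/2}f$ and $\chi^{1/2}$,
$$\Big|\int 2\chi f\,d\chi\wed d^c(v-w)\wed\tau\Big|\le 2\Big(\int\chi f^2 d\chi\wed d^c\chi\wed\tau\Big)^{1/2}\Big(\int\chi\, d(v-w)\wed d^c(v-w)\wed\tau\Big)^{1/2}.$$
For the second piece, using $\chi^2\le\chi$,
$$\Big|\int \chi^2 df\wed d^c(v-w)\wed\tau\Big|\le \Big(\int\chi^2 df\wed d^c f\wed\tau\Big)^{1/2}\Big(\int \chi\, d(v-w)\wed d^c(v-w)\wed\tau\Big)^{1/2}.$$
Adding the two bounds gives the claimed inequality with the factor $(I^{1/2}+J^{1/2})$, up to a harmless numerical constant in front of $I^{1/2}$. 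That constant can be absorbed by rescaling the Cauchy--Schwarz weights, or simply recorded as a constant.

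\emph{Step 3: approximation, the main obstacle.} Here I would first take $f$ smooth, as a stand-in for $f_\veps=f*\eta_\veps$. I would replace $v,w$ by smooth psh functions $v_j\downarrow v$ and $w_j\downarrow w$ obtained by convolution on a neighbourhood of $\supp\chi$.
\begin{itemize}
\item The left-hand side and the first term on the right converge by the Bedford--Taylor monotone convergence theorem, since $f$ is continuous.
\item The terms $I$ and $J$ converge because the mixed Monge--Amp\`ere currents in $\tau_j$ converge weakly and $f^2 d\chi\wed d^c\chi$, $df\wed d^cf$ are smooth.
\item For the gradient term $d(v-w)\wed d^c(v-w)\wed\tau$, I would use the identity $du\wed d^c u=\tfrac12 dd^c u^2-u\,dd^c u$ for bounded psh $u$ and expand the cross terms polarised in the same way. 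Each resulting term is a Bedford--Taylor product of bounded psh functions, possibly multiplied by a bounded psh function, so it converges weakly under decreasing sequences; this yields convergence of the gradient term.
\end{itemize}
Finally, to pass from $f_\veps$ to a general $f\in W^*\cap C^0$, the left-hand side converges by uniform convergence of $f_\veps$ on $\supp\chi$. The delicate point is the term $J$, where $df\wed d^c f\wed\tau$ must be understood for $f\in W^*$ and a non-smooth $\tau$. I would define it as the limit or liminf along $f_\veps$, using Proposition~\ref{prop:ineq-weak} to control $df_\veps\wed d^cf_\veps$ by $T*\eta_\veps$. This gives the inequality with $J$ interpreted accordingly, consistent with the later treatment in Section~\ref{sec:dirichlet}. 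I expect this last limiting step to be the main technical obstacle.
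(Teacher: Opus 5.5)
Your Steps 1--2 are exactly the paper's argument: the telescoping identity $\om_w^n-\om_v^n=dd^c(w-v)\wed\tau$, one integration by parts against $d(\chi^2f)=2\chi f\,d\chi+\chi^2df$, and Cauchy--Schwarz on each piece. Your reading of $\tau$ with $0\le k\le n-1$ is the correct one, and your Step~3 only makes explicit a regularization that the paper leaves implicit.

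One correction: the factor $2$ in front of $I^{1/2}$ cannot be absorbed by rescaling the Cauchy--Schwarz weights, because the product form of Cauchy--Schwarz is scale-invariant. The factor is in fact sharp. Take $f\equiv1$, $w=A|z|^2$, $v=w+c\chi$ with $c>0$, $A$ large and $\chi$ nonconstant. Then the left side exceeds $\int\chi^2f\,\om_v^n$ by exactly $2cI$, while $\bigl(\int\chi\,d(v-w)\wed d^c(v-w)\wed\tau\bigr)^{1/2}(I^{1/2}+J^{1/2})=cI$. So the lemma must be read with $2I^{1/2}$, which is also what the paper's own computation actually yields, and this is harmless for Corollary~\ref{cor:L1-cap-comparison-a}.
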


\begin{proof} We write $(\om+ dd^cw)^n - (\om+ dd^c v)^n =dd^c(w-v)\wed \tau$ and 
denote $h=w-v$. By integration by parts
$$
	\int \chi^2 f dd^c h \wed \tau = - \int [2\chi f d\chi + \chi^2  df] \wed d^c h \wed \tau.
$$
We will use the Cauchy-Schwarz inequality for each integral on the right hand side. For the first one we get
$$\begin{aligned}
	\left( \int \chi fd\chi \wed d^c h \wed \tau \right)^2 \leq \int \chi f^2 d\chi \wed d^c \chi \wed \tau  \int \chi  dh\wed d^c h \wed \tau.
\end{aligned}$$
The second integral on the right hand side is bounded by
$$\begin{aligned}
\left( \int \chi^2  df \wed d^c h \wed \tau \right)^2 
&\leq \int \chi^2 df\wed d^c f \wed \tau \int \chi^2  dh \wed d^c h\wed \tau\\
\end{aligned}$$
Combining these inequalities and the fact that $0\leq \chi \leq 1$ we get the proof of the lemma.
\end{proof}

Applying the localization principle we have 

\begin{cor}\label{cor:L1-cap-comparison-a} Let $v, w, f$ and $\tau$ be  as in Lemma~\ref{lem:L1-cap-comparison}. Assume that $w\geq v$ in a neighborhood $D \subset \subset \Om$ of $\supp\chi$. Then,
$$\begin{aligned}
	&\left|\int \chi^2 f (\om+ dd^c w)^n - \int \chi^2 f  (\om+ dd^c v)^n \right| \\ 
&\quad\quad\leq   A \|f\|_* \left( \int_D  (w-v)(\om+dd^c v)^n \right)^\frac{1}{2},
\end{aligned}$$
where $A$ is a uniform constant depending on $\|v\|_{L^\infty}$, $\supp\chi$ and $D$.
\end{cor}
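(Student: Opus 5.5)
The plan is to start from Lemma~\ref{lem:L1-cap-comparison}, used in both directions (with $(v,w)$ and with $(w,v)$, which gives the absolute value since $\tau$ is symmetric in $v,w$), and bound the three factors $I$, $J$ and $\int \chi\, d(v-w)\wed d^c(v-w)\wed \tau$ separately. Since $f\ge 0$ appears only linearly on the left, I will also normalize $\|f\|_*=1$ by homogeneity. By the usual regularization (convolution of $v,w$ and Proposition~\ref{prop:smoothing} for $f$, together with the continuity of $f$), I may assume $v,w,f$ are smooth near $\bar D$ when performing integrations by parts. The limits are then taken at the end using weak convergence of the mixed Monge--Amp\`ere measures.

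\emph{The terms $I$ and $J$.} Write $\tau$ as a sum of mixed products $(\om+dd^cv)^k\wed(\om+dd^cw)^{n-1-k}$. Expanding, each is a combination of $\om^{a}\wed (dd^cv)^b\wed(dd^cw)^c$ with $a+b+c=n-1$.
\begin{itemize}
\item For $J$, take $T\in\Ga_f$ with $\|T\|_\Om\le 2\|f\|_*^2$. Then $J\le \int\chi^2 T\wed\tau$. By the Chern--Levine--Nirenberg inequality (or Lemma~\ref{lem:CP-mass} after modifying $v,w$ near $\d\Om$ by $\max\{\cdot, A\rho\}$ as in Proposition~\ref{prop:L2-cap}), this is bounded by $C\|T\|_\Om$, with $C$ depending on $\supp\chi$ and on the sup norms of $v,w$.
\item For $I$, bound $d\chi\wed d^c\chi\le A_0\om$. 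Then $I\le A_0\int_{\supp\chi} f^2\,\om\wed\tau$. Each term is $\int f^2\om\wed(dd^c\phi_1)\wed\cdots\wed(dd^c\phi_{n-1})$ with $\phi_i\in\{|z|^2,v,w\}$. By polarization (writing the mixed measure as a combination of $(dd^c\psi)^n$ with $\psi$ positive combinations of the $\phi_i$, normalized to lie in $[-1,0]$), Proposition~\ref{prop:L2-cap} bounds it by $A\|f\|_*^2$.
\end{itemize}
Together these give $(I^{1/2}+J^{1/2})\le A\|f\|_*$.

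\emph{The energy term $\int\chi\, dh\wed d^c h\wed\tau$, $h=w-v$.} This is the main obstacle, since integration by parts against the cut-off $\chi$ produces a boundary term with $h^2\, d\chi\wed d^c\chi$. That term is not controlled by $\int_D h(\om+dd^cv)^n$. I would first try the localization principle. Choose a smooth $\te\le 0$ with $\te=0$ on a neighborhood of $\supp\chi$ and $\te\le -1$ near $\d D$, and set
\[
\wt w=\max\{v,\ w+M\te\},\qquad M>\sup_D|w-v|.
\]
This has the following properties:
\begin{itemize}
\item $\wt w=w$ near $\supp\chi$, because $w\ge v$ on $D$ and $\te=0$ there;
\item $\wt w=v$ near $\d D$;
\item $0\le \wt h:=\wt w-v\le h$ on $D$.
\end{itemize}
Since $dd^c\te\ge -C\om$, the function $w+M\te$ is only quasi-psh. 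I will absorb this by working with $B\om$ in place of $\om$ for a large $B$ depending on $M$ and $\te$ (hence on the bounds of $v,w$ and on $D$), and note that replacing $\om$ by $B\om$ in $\tau$ only changes constants.

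Then, with $\wt\tau$ built from $v,\wt w$, we have $\wt\tau=\tau$ near $\supp\chi$, so
$$
\int\chi\, dh\wed d^ch\wed\tau\le\int_D d\wt h\wed d^c\wt h\wed\wt\tau .
$$
Integrating by parts on $D$ with no boundary term (since $\wt h=0$ near $\d D$), and using $dd^c\wt h\wed\wt\tau=(B\om+dd^c\wt w)^n-(B\om+dd^cv)^n$, gives
$$
\int_D d\wt h\wed d^c\wt h\wed\wt\tau=-\int_D\wt h\,dd^c\wt h\wed\wt\tau\le\int_D\wt h\,(B\om+dd^cv)^n\le C\int_D (w-v)(\om+dd^cv)^n,
$$
where the first inequality uses $\wt h\ge0$ and the positivity of $(B\om+dd^c\wt w)^n$.

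Plugging the three bounds into Lemma~\ref{lem:L1-cap-comparison} yields the corollary, with $A$ depending on $\supp\chi$, $D$ and the sup norms of $v$ and $w$. The dependence on $w$ should reduce to dependence on $v$ (and on $\sup_D(w-v)$, which enters only through $M$) by the hypothesis $w\ge v$ together with the normalization of the bounded psh functions. This bookkeeping is the point I would double-check.
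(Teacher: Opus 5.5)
Your argument is correct. For $I$ and $J$ it is essentially the paper's argument. The paper controls $J\le\int T\wed\tau$ by mass comparison (Lemma~\ref{lem:CP-mass}), and controls $I$ by $d\chi\wed d^c\chi\wed\tau\le A_0M^n(dd^c\phi)^n$ with $\phi=(v+w+\rho)/M$ together with Proposition~\ref{prop:L2-cap}. Your CLN bound and your polarization do the same job. Polarization can in fact be replaced by the plain domination of each mixed term by $(dd^c\phi)^n$ for such a sum $\phi$.

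The real difference is how you localize the energy term.
\begin{itemize}
\item The paper replaces $v,w$ by $\max\{v,A_0\rho\}$ and $\max\{w,A_0\rho\}$, then integrates by parts over all of $\Om$. This keeps everything plurisubharmonic with the original $\om$, and one normalization serves all three terms. But such maxima only make $v$ and $w$ agree near $\d\Om$. So the step $\int(w-v)(\om_v^n-\om_w^n)\le\int(w-v)\om_v^n$ really uses $w\ge v$ on all of $\Om$, and the Monge--Amp\`ere measure of the modified $v$. That is exactly what holds in the application in Theorem~\ref{thm:cap-cauchy}, where $w=2(v_j+v_k)\ge 4v$ everywhere.
\item Your gluing $\wt w=\max\{v,w+M\te\}$ needs $w\ge v$ only on $D$. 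It produces $B^n\int_D(w-v)(\om+dd^cv)^n$ with the original $v$, which is literally the stated right-hand side. The price is that $w+M\te$ is only quasi-psh, since $\te$ cannot be psh. This costs only constants, because $\tau\le\tau_B$ and $(B\om+dd^cv)^n\le B^n(\om+dd^cv)^n$ for $B\ge 1$.
\end{itemize}

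The bookkeeping you flagged is harmless. Subtracting a common constant from $v$ and $w$ changes neither side, so, as in the paper, you may take $v\le w\le 0$ on $D$. Then $\sup_D(w-v)\le\|v\|_{L^\infty}$, so $M$ and $B$ depend only on $\|v\|_{L^\infty}$, $D$ and $\supp\chi$. Some such normalization is necessary: for $v$ constant and $w=v+t|z|^2$, the left side grows like $t^n$ while the right side grows like $t^{1/2}$.

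One technical caution if you regularize. The inequality $w_\eps\ge v_\eps$ holds only on a slightly smaller domain, so do the gluing there. Then pass to the limit in the compactly supported quantity $\int\wt h_\eps\,(B\om+dd^cv_\eps)^n$, using Bedford--Taylor convergence along decreasing sequences, before using $\wt h\le h$. Do not pass to the limit in $\int_D(w_\eps-v_\eps)(\om+dd^cv_\eps)^n$.
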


\begin{proof} By subtracting the same constant  from $v,w$ we may assume that $v\leq w \leq 0$. The integrals on the left hand side  do not change if we modify $v,w$ outside  $\supp\chi$. Therefore, replacing $v, w$ by $\max\{v, A_0\rho\}$ and $\max\{w, A_0 \rho\}$ we may assume that they are equal $A_0\rho$ outside $\bar D$, in particular, they are equal on  a neighborhood of $\d \Om$. Now, applying Lemma~\ref{lem:L1-cap-comparison} we obtain 
$$\begin{aligned}
&\left| \int \chi^2 f (\om+ dd^c w)^n -\int \chi^2 f (\om+ dd^c v)^n \right| \\ 
&\quad\quad \leq \left( \int \chi d(v-w)\wed d^c (v-w) \wed \tau \right)^\frac{1}{2}  (I^\frac{1}{2} + J^\frac{1}{2}).
\end{aligned}$$
By integration by parts and $w \geq v$ in $\Om$  we have 
$$\begin{aligned}
 \int  d(v-w)\wed d^c (v-w) \wed \tau &= \int (w -v) 	(\om_v^n - \om_w^n)\\ &\leq \int (w - v) \om_v^n.
\end{aligned}$$
It remains to estimate $I$ and $J$ which is very similar to the proof of Proposition~\ref{prop:L2-cap}. In fact, let $M = 2(2\|v\|_{L^\infty} + \|\rho\|_{L^\infty})$ and define $$\phi := \frac{v+w + \rho}{M}.$$
We have $\tau \leq M^{n-1} (dd^c\phi)^{n-1}$.
If $T\in \Ga_f$, then 

$$J \leq \int T \wed \tau \leq  M^{n-1}\int T \wed (dd^c\phi)^{n-1}.$$ 
As $A_0\rho \leq \phi \leq 0$, the mass comparison (Lemma~\ref{lem:CP-mass}) implies
$$	
 \int T \wed (dd^c\phi)^{n-1} \leq \int T\wed (dd^c A_0\rho)^{n-1} \leq A_1 \|T\|_\Om.
$$  
Taking infimum over $T\in \Ga_f$ we get $$J \leq M^{n-1} A_1 \|f\|_*^2.$$ 
To estimate $I$ we observe that $d\chi \wed d^c\chi \leq A_0 dd^c \rho$ and thus
$$
	d\chi \wed d^c \chi \wed \tau \leq  A_0 dd^c \rho \wed \tau \leq  A_0M^{n} (dd^c \phi)^{n}.
$$
Thus, by Proposition~\ref{prop:L2-cap},
$$
	I \leq A_2 \int \chi f^2 (dd^c\phi)^n \leq A_3 \|f\|_*^2.
$$
The proof is completed.
\end{proof}

\begin{cor}\label{cor:L1-cap-comparison-b} Let $v\in PSH(\Om)\cap L^\infty(\Om)$ be such that $v\leq 0$. Assume $\supp\chi \subset \subset D \subset \subset \Om$ and denote $v_j = v*\eta_j$. Then, for $0\leq f \in W^*(\Om)\cap C^0(\Om)$,
$$
	\left| \int \chi^2 f (\om+ dd^c v_j)^n -\int \chi^2 f (\om+ dd^c v)^n \right| \leq  A \|f\|_* \left[\int_{D} (v_j -v) (\om+ dd^c v)^n\right]^\frac{1}{2}
$$
where $A$ is a uniform constant depending on $\|v\|_{L^\infty}, \supp\chi$ and $D$. 
\end{cor}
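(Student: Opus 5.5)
The plan is to deduce the statement from Corollary~\ref{cor:L1-cap-comparison-a}, applied with $w = v_j$. The hypothesis to check is that $w \geq v$ on a neighborhood of $\supp\chi$.

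First I would fix a subdomain $D'$ with $\supp\chi \subset\subset D' \subset\subset D$. Here $v_j = v*\eta_j$ is understood as the standard regularization along a sequence $\veps_j \to 0$. For $j$ large, $\veps_j < {\rm dist}(D, \d\Om)$, so $v_j$ is a smooth psh function on a neighborhood of $\bar D$. Since $v$ is psh and $\eta$ is radial, the sub-mean value property gives $v_j \geq v$ there. Also $v \leq v_j \leq \sup_\Om v \leq 0$, so $\|v_j\|_{L^\infty(D)} \leq \|v\|_{L^\infty}$. This uniform bound matters because the constant in Corollary~\ref{cor:L1-cap-comparison-a} depends on $\|v\|_{L^\infty}$ (through $M$ and Proposition~\ref{prop:L2-cap}) and must not depend on $j$.

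The one technical point is that $v_j$ is only defined on $\Om_{\veps_j}$, whereas Lemma~\ref{lem:L1-cap-comparison} and its corollary use functions psh on all of $\Om$. I would handle this by the same gluing as in the proof of Corollary~\ref{cor:L1-cap-comparison-a}. Choose $A_0$ large enough, depending on $\|v\|_{L^\infty}$ and $\rho$, that $A_0\rho < -\|v\|_{L^\infty} - 1$ on $\bar D'$. Then set
\[\notag
\wt v = \max\{v, A_0\rho\}, \qquad \wt v_j = \max\{v_j, A_0\rho\}.
\]
These agree with $v$ and $v_j$ on $D'$. Near $\d D$ they equal $A_0\rho$, which is $\geq v$ since $\rho\to 0$ at $\d\Om$ as long as $D$ is close enough to $\Om$. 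One may need to enlarge $D$ slightly, or simply glue along a level set of $\rho$ inside $\Om\setminus \bar D'$. With this, $\wt v_j$ extends as a bounded psh function on $\Om$ with $\wt v_j \geq \wt v$ everywhere and $\|\wt v_j\|_{L^\infty} \leq C(\|v\|_{L^\infty},\rho)$.

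Next I would apply Corollary~\ref{cor:L1-cap-comparison-a} to $(\wt v, \wt w = \wt v_j)$, with $D'$ as the neighborhood of $\supp\chi$. The left-hand side is unchanged, because $\wt v = v$ and $\wt v_j = v_j$ near $\supp\chi$. The right-hand side is
\[\notag
A\|f\|_*\Big(\int_{D'} (\wt v_j-\wt v)(\om+dd^c\wt v)^n\Big)^{1/2} \leq A\|f\|_*\Big(\int_{D} (v_j-v)(\om+dd^c v)^n\Big)^{1/2},
\]
since on $D'\subset D$ the tilde functions coincide with the originals and the integrand is nonnegative. For the finitely many small $j$, where $\veps_j$ is not yet small enough, the inequality follows from a crude bound obtained by enlarging $A$. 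Alternatively, one can note that the statement implicitly concerns $j$ large.

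The main obstacle is making the gluing compatible with the exact hypotheses of Corollary~\ref{cor:L1-cap-comparison-a}. That corollary's proof itself replaces the functions by $\max\{\cdot, A_0\rho\}$, so I would mostly reproduce that step. The key is to keep every constant dependent only on $\|v\|_{L^\infty}$, $\supp\chi$ and $D$, and in particular independent of $j$.
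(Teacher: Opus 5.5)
Your proposal is correct and is essentially the paper's proof, which consists only of applying Corollary~\ref{cor:L1-cap-comparison-a} with $w=v_j$. Your observations that $v_j\ge v$ (sub-mean value property) and that $\|v_j\|_{L^\infty}\le\|v\|_{L^\infty}$ (so the constant does not depend on $j$) are exactly what that one-line argument uses implicitly. Two details should be tightened: $\max\{v_j,A_0\rho\}$ equals $A_0\rho$ near $\d\Om$ only after you first replace $v,v_j$ by $v-1,v_j-1$, which changes neither side of the inequality, and the ``crude bound for small $j$'' is unjustified and unnecessary, since the statement only makes sense once $v_j$ is defined on a neighborhood of $\bar D$.
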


\begin{proof} It is a consequence of Corollary~\ref{cor:L1-cap-comparison-a} for $w= v_j$.
\end{proof}

\section{A functional capacity}

We start with basic definitions and properties of capacity. Let $\Om \subset \subset \bC^n$ be an open subset.   Let $E \subset \Om$ be a Borel set. Define
 \[\label{eq:ke-cap}
 	\Kc(E) = \left\{ v \in W^*(\Om) \rv \{v \leq  -1\}^o \supset E \text{ and } v\leq 0 \right\}.
\]
If $E=G$ is an open set, then its interior $E^o = E$ and 
\[\notag
		\Kc(G) = \left\{ v \in W^*(\Om) \;\rvert\; v \leq  -1 \text{ in $G$ and } v\leq 0 \right\}.
\]
We define the capacity by
\[\label{eq:cap-defn}
	{\tt c}(E) = \inf 
	\left\{ \| v \|_*^2 \;\rvert\;  v\in \Kc(E)
	\right\}.
\] 
In the definition of capacity both $\Kc(E)$ and $\tc(\cdot)$ depends on $\Om$. However, in most cases the domain $\Om$ is fixed, then for simplicity we write
$$
		\tc(E) := \tc(E,\Om).
$$
Note that if $v \in \Kc(E)$, then $\max\{v,-1\}\in \Kc(E)$ and$ \|\max\{v, -1\} \|_* \leq \|v\|_*$ by  \eqref{eq:norm-max-min}. Therefore, we may consider candidates  in $\Kc(E)$ such that 
\[ \label{eq:reduction} 
	-1\leq v \leq 0, \quad \text{and } v=-1  \text{ on an open set } G\supset E.
\] Moreover, we have for a compact set $K \subset \Om$, 
\[ \label{eq:c-compact} \tc(K) <+\infty.
\] In fact, let $\chi$ be a cut-off function which is 1 on $K$ and $0$ outside a neighborhood of $K$. Then, $-\chi \in W^*(\Om)$ because it is a smooth function. Thus,
$$
	\tc (K) \leq \| \chi \|_*.
$$

We list basic properties whose proofs are rather standard (see e.g. \cite{AH96}). However, we include the details for the reader convenience. 

\begin{prop}\label{prop:basic} Let $E, F$ and  $E_j$, $j\geq 1$, be Bore sets in $\Om$.
\begin{itemize}
\item
[(a)] If $E \subset F \subset \Om' \subset \Om$, then $\tc(E) \leq 
\tc(F)$ and
$\tc(E, \Om') \leq \tc (E,\Om)$ where $\tc(E,\Om')$ is  defined with respect the subdomain $\Om'$.
\item
[(b)] Let $\{E_j\}_{j\geq1} $ is an increasing sequence and $E= \cup_{j\geq 1} E_j$. Then $\lim_j \tc(E_j) \leq \tc(E)$.
\item
[(c)] For every sequence $\{E_j\}_{j\geq 1}$, $E = \cup_{j=1}^\infty E_j$, we have $\tc(E) \leq \sum_{j=1}^\infty \tc(E_j)$.
\item
[(d)] Let $K_j$ be a decreasing sequence of compact sets and $K = \cap_j K_j$. Then, $\tc(K) = \lim_j \tc(K_j)$.
\end{itemize}
\end{prop}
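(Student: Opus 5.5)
I will prove the four items in order. The tools are the definition \eqref{eq:cap-defn}, the reduction \eqref{eq:reduction}, Lemma~\ref{lem:norm-min-max}, and the weak compactness of Corollary~\ref{cor:w-compactness}.

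For (a), monotonicity in $E$ is immediate: if $E\subset F$, then $\Kc(F)\subset\Kc(E)$. For the domain comparison, restrict candidates. If $v\in\Kc(E,\Om)$, then $v|_{\Om'}\in\Kc(E,\Om')$. Moreover, any $T\in\Ga_v$ restricts to a current in $\Ga_{v|_{\Om'}}$ with $\|T\|_{\Om'}\le\|T\|_\Om$, and the $L^2$ norm decreases under restriction. Hence $\|v|_{\Om'}\|_{*,\Om'}\le\|v\|_{*,\Om}$, and taking the infimum gives $\tc(E,\Om')\le\tc(E,\Om)$.

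For (c), I would use $\max$ in place of sums, since we need squared norms to add. Assume $\sum_j\tc(E_j)<\infty$ and fix $\veps>0$. Choose $v_j\in\Kc(E_j)$ with $-1\le v_j\le 0$ and $\|v_j\|_*^2\le\tc(E_j)+\veps2^{-j}$. Put $w_k=\min\{v_1,\dots,v_k\}$. Iterating Lemma~\ref{lem:ineq-max-min} gives
\[
dw_k\wed d^cw_k\le \sum_{j\le k}T_j, \qquad w_k^2\le\sum_{j\le k}v_j^2 ,
\]
where the second bound uses $-1\le v_j\le 0$. Hence $\|w_k\|_*^2\le\sum_j\|v_j\|_*^2$. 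The sequence $w_k$ decreases to $w=\inf_j v_j$, and $w\le -1$ on the open set $\bigcup_j\{v_j\le-1\}^o\supset E$. By Corollary~\ref{cor:w-compactness}, a subsequence converges weakly in $W^{1,2}$; the limit is $w$, since $w_k\to w$ in $L^2$ by monotone convergence. This gives $w\in W^*(\Om)$ with $\|w\|_*^2\le\sum_j\tc(E_j)+\veps$, and also $w\in\Kc(E)$. The delicate point is that ``$\le-1$ a.e. in a neighborhood'' survives the limit. It does here, because each neighborhood is fixed and the open sets only increase.

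Part (b) is just monotonicity from (a): $\lim_j\tc(E_j)$ exists and is at most $\tc(E)$.

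For (d), monotonicity gives $\tc(K)\le\lim_j\tc(K_j)$. For the reverse inequality, take $v\in\Kc(K)$ with $v\le-1$ a.e.\ on an open set $G\supset K$. By compactness, $K_j\subset G$ for all large $j$, so $v\in\Kc(K_j)$ and $\lim_j\tc(K_j)\le\|v\|_*^2$. Taking the infimum over $v$ finishes the argument. The main obstacle overall is (c), specifically ensuring that the limit $w$ lies in $W^*$ with the norm bound. The lower semicontinuity statement in Corollary~\ref{cor:w-compactness} handles this.
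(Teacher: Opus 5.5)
Your proof is correct and follows essentially the same route as the paper. Parts (a), (b) and (d) match its arguments, and for (c) you likewise take $w_k=\min_{j\le k}v_j$ and extract a weak limit via Corollary~\ref{cor:w-compactness}, with two small improvements: you identify the limit directly as the pointwise infimum $w=\inf_j v_j$, so $w\le -1$ a.e.\ on each $\{v_j\le -1\}^o$ is immediate without Proposition~\ref{prop:weak-conv-cp}, and you justify $\|w_k\|_*^2\le\sum_j\|v_j\|_*^2$ via Lemma~\ref{lem:ineq-max-min} together with $w_k^2\le\sum_j v_j^2$, which is more accurate than the paper's appeal to Lemma~\ref{lem:norm-min-max}, since that lemma only bounds the unsquared norm.
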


\begin{proof} (a) follows easily from the definition of capacity \eqref{eq:cap-defn} and (b) is a consequence of (a). To prove the subadditivity property (c) we may assume that $A:= \sum_{j} \tc (E_j)<+\infty$, otherwise it trivially holds.  Let $\veps>0$ be fixed. For each $j$, there exists $-1\leq v_j \leq 0$ in $\Kc(E_j)$ such that  $\tc (E_j) \geq \|v_j\|_*^2 -2^{-j}\veps$ and $v_j = -1$ a.e on an open set $G_j \supset E_j$. Put $u_k = \min_{j\leq k} v_j$. The sequence $u_k$ is decreasing. It follows from Lemma~\ref{lem:norm-min-max} that
$$ \|u_k\|_*^2 \leq \sum_{j=1}^k \|v_j\|_*^2 \leq \sum_{j=1}^\infty \tc (E_j) -\veps = A+\veps.$$
By Corollary~\ref{cor:w-compactness} there exists a weak limit $u\in W^*(\Om)$ of $\{u_k\}_{k\geq 1}$ satisfying $\|u\|_*^2 \leq A+\veps$. For a fixed $j$, we have  ${\bf 1}_{G_j} \cdot u_k \to {\bf 1}_{G_j} \cdot u$ weakly in $L^2(\Om)$ as $k\to \infty$.  It follows from Proposition~\ref{prop:weak-conv-cp} that $u \leq -1$ in $G_j$ for $j=1,2...$. Then, $u \leq -1$ on $G:= \cup_j G_j \supset E$ and $u \in \Kc(E)$.
Hence, $u \in \Kc(E)$ and $\tc (E)\leq \|u\|_*^2 \leq A+\veps$. Since $\veps>0$ is arbitrary the proof of (c) follows. 

Finally we prove (d). By the monotonicity in (a), $\tc(K) \leq \lim_{j} \tc(K_j)$. Let $\veps>0$. There exists $v\in \Kc(K)$ such that $v\leq -1$ on an open set $G\supset K$ and $\tc (K) \geq \|v\|_*^2 -\veps$. Since $K_j$ be compact sets, $K_j \subset G$ for $j \geq 1$ large enough. Thus, $v \in \Kc(K_j)$ and $\tc (K_j) \leq \|v\|_* \leq \tc(K) +\veps$. In other words, $\lim_j \tc (K_j) \leq \tc (K) +\veps$. Let $\veps \to 0$, the proof of (d) follows.
\end{proof}

\begin{remark} \label{rmk:c-equiv} \mbox{}
\begin{enumerate}
\item[(a)] Let $E \subset E'$ be Borel sets in $\Om$. If $V_{2n}(E'\setminus E)=0$, then $\tc(E) = \tc(E')$. It is  non-decreasing under holomorphic maps, i.e., if  $h: G \to \Om$ is a holomorphic map, then
$$
	\tc (E,G) \leq \tc (h(E), \Om). 
$$
\item[(b)] Let $D\subset\subset \Om' \subset \Om$. 
We do not know if  there exists a constant $A>0$ depending only on $\Om$ and $\Om'$ such that  for every Borel set $ E\subset D$, 
\[\label{eq:c-comparablity}
	 \tc(E,\Om) \leq A\; \tc(E, \Om').
\]
However, we will have a weaker version with some exponent on the right hand side later 
in Corollary~\ref{cor:dom-equiv-c} for the new capacity $\tc (\cdot, \Om)$.
\end{enumerate}
\end{remark}

One of our main goals is to show that $\tc (\cdot, \Om)$ is a capacity in the Choquet sense. The inner regularity is harder which will be proved in Theorem~\ref{thm:choquet-c}, however the outer regularity is readily obtained as follows. 

\begin{lem}[outer regularity] \label{lem:outer-reg} For a Borel subset $E\subset \Om$,
\[\notag	\tc(E) = \inf\{ \tc(G) : E \subset G,\; G \text{ is open}\}.
\]
\end{lem}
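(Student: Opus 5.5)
The inequality ``$\leq$'' in one direction is immediate from monotonicity, so the plan is to prove the other direction by exploiting the fact that the definition of $\Kc(E)$ already builds in an open neighborhood of $E$. By Proposition~\ref{prop:basic}(a), for every open $G\supset E$ we have $\tc(E)\le \tc(G)$. Taking the infimum over all such $G$ gives
$$\tc(E)\le \inf\{\tc(G): E\subset G,\ G \text{ open}\}.$$

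For the reverse inequality, we may assume $\tc(E)<+\infty$, since otherwise there is nothing to prove. Fix $\veps>0$. By the definition \eqref{eq:cap-defn}, there is $v\in\Kc(E)$ with $\|v\|_*^2\le \tc(E)+\veps$. By the reduction \eqref{eq:reduction}, which replaces $v$ by $\max\{v,-1\}$ and does not increase the norm by \eqref{eq:norm-max-min}, we may assume $-1\le v\le 0$. Since $v\in\Kc(E)$, the set $\{v\le -1\}^o$ contains $E$, so there is an open set $G\supset E$ with $v\le -1$ a.e. on $G$.

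This same $v$ is then admissible for $G$. Indeed, $G$ is open, so $G^o=G$, and the condition $\{v\le -1\}^o\supset G$ holds: $v\le -1$ a.e. on the open set $G$, which is a neighborhood of $G$. Hence $v\in \Kc(G)$, and therefore
$$\tc(G)\le \|v\|_*^2\le \tc(E)+\veps.$$
Consequently the infimum over open $G\supset E$ is at most $\tc(E)+\veps$, and letting $\veps\to0$ gives equality.

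The only point needing care is the interpretation of ``$\{v\le -1\}^o\supset E$'' as ``$v\le -1$ a.e. on some open neighborhood of $E$''. With this reading, the neighborhood produced for $E$ serves directly as a competitor for $G$, so I expect no real obstacle. This is exactly why outer regularity is easy here, while inner regularity is not.
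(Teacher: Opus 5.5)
Your proof is correct and matches the paper's argument: monotonicity from Proposition~\ref{prop:basic}(a) gives one inequality, and a near-optimal $v\in\Kc(E)$ is itself admissible for the open neighborhood $G$ on which $v\le -1$ a.e., which gives the other. Your preliminary reduction to $-1\le v\le 0$ is harmless but not needed.
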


\begin{proof} By the monotonicity in Proposition~\ref{prop:basic}-(a), 
$$
	\tc(E) \leq \inf\{ \tc(G) : E \subset G,\; G \text{ is open}\}.
$$
To prove the reverse inequality we may assume $\Kc(E)$ is non empty. Let $\veps>0$. There exist $v\in \Kc(E)$ with 
$
	\|v\|_*^2 \leq \tc(E) +\veps
$
and  an open set $G_\veps$ such that $v(x)\leq - 1$ a.e $x\in G_\veps$. Therefore, 
$$
	\inf\{ \tc(G) : E \subset G,\; G \text{ is open}\} \leq \tc(G_\veps) \leq \|v\|_*^2 \leq \tc(E) +\veps.
$$
The lemma follows by letting $\veps\to 0$.
\end{proof}

A similar argument shows the Lebesgue volume is dominated by the capacity.

\begin{cor}\label{cor:vol-c} For a Borel set $E\subset \Om$, 
$
	|E| \leq \tc(E).
$ In particular, every set of capacity zero has Lebesgue measure zero.
\end{cor}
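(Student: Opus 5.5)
The proof will be a direct comparison between the volume of $E$ and the $L^2$-norm of any admissible competitor. Fix $E\subset\Om$ Borel; if $\Kc(E)$ is empty then $\tc(E)=+\infty$ and there is nothing to prove. Otherwise take any $v\in\Kc(E)$. By definition there is an open set $G\supset E$ on which $v\leq -1$ almost everywhere. Hence $v^2\geq 1$ a.e. on $G$.

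This gives the chain
$$|E| \leq |G| \leq \int_G v^2\,dx \leq \|v\|_{L^2(\Om)}^2 \leq \|v\|_*^2 .$$
The first inequality uses monotonicity of Lebesgue measure, the second uses $v^2\geq 1$ a.e. on $G$, and the last follows directly from the definition \eqref{eq:norm} of the $W^*$-norm, since the current term $\inf_{T\in\Ga_f}\|T\|_\Om$ is nonnegative.

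Taking the infimum over all $v\in\Kc(E)$ then gives $|E|\leq \tc(E)$ by \eqref{eq:cap-defn}. The second assertion is immediate: if $\tc(E)=0$ then $|E|=0$.

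There is no real obstacle. The only subtle point is that the condition $E\subset\{v\le -1\}^o$ is understood as holding almost everywhere on an open neighborhood $G$. That is exactly what is needed, because the integral estimate only requires the inequality $v\le -1$ a.e. on a set containing $E$.
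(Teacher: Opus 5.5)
Your proof is correct and is essentially the paper's argument: both bound $|E|\le|G|\le\int_G v^2\,dx\le\|v\|_{L^2(\Om)}^2\le\|v\|_*^2$ for a competitor $v\in\Kc(E)$, with $G\supset E$ an open set on which $v\le -1$ a.e. The only difference is phrasing: you take the infimum over $\Kc(E)$ directly, while the paper chooses an $\veps$-near-optimal $v$ and lets $\veps\to 0$.
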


\begin{proof} We may assume $\tc(E)<\infty$, otherwise, the inequality is true. Let $\veps>0$ and let $v\in \Kc(E)$ with $\|v\|_*^2 \leq \tc(E) +\veps$.
There is an open set $G\supset E$ such that $v(x)\leq -1$ for a.e $x\in G$. This implies
$$
	|E| \leq |G| \leq \int_G |v|^2 dx \leq \|v\|_{L^2(\Om)}^2 \leq \|v\|_*^2 \leq \tc(E)+\veps.
$$
The desired inequality follows by letting $\veps\to 0$.
\end{proof}

We have seen in above examples that bounded psh functions belong to $W^*(\Om)$. For such functions the Bedford-Taylor capacity $cap(\cdot,\Om)$ in Section~\ref{ss:BT-cap} is very  suitable for studying their fine properties. Dinh, Marinescu and Vu \cite{DMV} also used $cap(\cdot,\Om)$ to study some fine properties of functions in $W^*(\Om)$. In fact, 
we are going to show that the two capacities $\tc (\cdot,\Om)$ and $cap(\cdot,\Om)$ are  comparable to each other.   

The first part of Theorem~\ref{thm:intro-cap-comp}-(a) reads as follows.

\begin{lem}[dominated by Bedford-Taylor capacity]  \label{lem:c-cap} Assume $\Om$ is strictly pseudoconvex. There exists a constant $A>0$ such that for every Borel set $E\subset\subset \Om$,
$$
	\tc(E) \leq A \left[cap(E,\Om)\right]^\frac{1}{n}.
$$
\end{lem}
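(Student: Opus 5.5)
The plan is to build an explicit competitor in $\Kc(E)$ from the relative extremal function of an open neighbourhood of $E$. Then we bound its $W^*$-norm by a mixed Monge--Amp\`ere mass, which Cegrell's inequality \cite{Ce04} controls by $cap(\cdot,\Om)^{1/n}$.

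\emph{Step 1: reduction to an open set.} Since $cap(\cdot,\Om)$ is outer regular \cite{BT82} and $E\subset\subset\Om$, for every $\veps>0$ there is an open set $G$ with $E\subset G\subset\subset\Om$ and $cap(G,\Om)\le cap(E,\Om)+\veps$. By monotonicity (Proposition~\ref{prop:basic}-(a)) it is enough to prove $\tc(G)\le A\,[cap(G,\Om)]^{1/n}$ for such $G$, with $A$ independent of $G$. We may also take $G$ inside a fixed compact set of $\Om$, for instance a fixed neighbourhood of $\bar E$.

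\emph{Step 2: the competitor.} Let $v=u_G$ be the relative extremal function \eqref{eq:rel-ext-fct}. For open $G$ it is psh, satisfies $-1\le v\le 0$, equals $-1$ on $G$, and tends to $0$ at $\d\Om$ because $\Om$ is strictly pseudoconvex and $G\subset\subset\Om$. By \eqref{eq:BT-cap-id}, $\int_\Om(dd^cv)^n=cap(G,\Om)$. We have $v\in\Kc(G)$ once $v\in W^*(\Om)$. For the dominating current take $T=\tfrac12 dd^c (v+1)^2$. It is closed, and it is positive because $v+1\ge0$ is psh. Moreover
$$T=(v+1)dd^cv+dv\wed d^cv\ge dv\wed d^cv,$$
which gives $T\in\Ga_v$ (after the usual smoothing via Proposition~\ref{prop:ineq-weak} if needed).

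\emph{Step 3: mass estimates.} Using $0\le v+1\le 1$ and $\om\le dd^c\rho$, we get
$$\|T\|_\Om\le\int_\Om dd^cv\wed(dd^c\rho)^{n-1}+\int_\Om dv\wed d^cv\wed(dd^c\rho)^{n-1}.$$
Integration by parts, justified since $v\to0$ on $\d\Om$ (approximate by $\max\{v,k\rho\}$ and apply Lemma~\ref{lem:CP-mass}), gives
$$\int dv\wed d^cv\wed(dd^c\rho)^{n-1}=\int(-v)\,dd^cv\wed(dd^c\rho)^{n-1}\le\int dd^cv\wed(dd^c\rho)^{n-1}.$$
For the $L^2$ part we use $v^2\le -v$ and $\om^n\le(dd^c\rho)^n$:
$$\int v^2\om^n\le\int(-v)(dd^c\rho)^n=\int(-\rho)\,dd^cv\wed(dd^c\rho)^{n-1}\le\|\rho\|_\infty\int dd^cv\wed(dd^c\rho)^{n-1}.$$
Thus $\|v\|_*^2\le C\int_\Om dd^cv\wed(dd^c\rho)^{n-1}$.

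\emph{Step 4: Cegrell's inequality.} Both $v$ and $\rho$ (after modifying $\rho$ to be bounded with zero boundary values) lie in Cegrell's class $\mathcal E_0$. Cegrell's inequality then gives
$$\int dd^cv\wed(dd^c\rho)^{n-1}\le\Big(\int(dd^cv)^n\Big)^{1/n}\Big(\int(dd^c\rho)^n\Big)^{(n-1)/n}=C'\,cap(G,\Om)^{1/n}.$$
Combining Steps 1--4 and letting $\veps\to0$ yields the lemma.

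The main obstacle I expect is the boundary justification in Steps 3 and 4. The integrations by parts need $v,\rho\in\mathcal E_0$ with vanishing boundary values, and Cegrell's inequality must be applicable in that class. If this is delicate, I would replace $v$ by $\max\{v,k\rho\}$ for large $k$, which equals $v$ near $\bar G$ when $G$ lies in a fixed compact set, and then pass to the limit using Lemma~\ref{lem:CP-mass}.
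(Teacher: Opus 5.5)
Your proposal is correct and follows essentially the same route as the paper: you take the relative extremal function $u_G$ of an open neighbourhood $G$, dominate $du_G\wedge d^c u_G$ by the current $\tfrac12 dd^c(1+u_G)^2$, and integrate by parts against $(dd^c\rho)^{n-1}$ to control both the $L^2$ term and the mass term by $\int dd^c u_G\wedge(dd^c\rho)^{n-1}$, which Cegrell's inequality then bounds by $C\,[cap(G,\Om)]^{1/n}$. The differences are cosmetic. You bound the mass by $2\int dd^c u_G\wedge(dd^c\rho)^{n-1}$, whereas the paper shows by the same integration by parts that it equals $\int dd^c u_G\wedge(dd^c\rho)^{n-1}$ exactly. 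Your Step 1 uses only outer regularity of $cap(\cdot,\Om)$ together with monotonicity of $\tc$, which is all the reduction actually needs.
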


\begin{remark}\label{rmk:inner-reg} The relative compactness condition of $E$ will be removed later after proving that $\tc(\cdot,\Om)$ is a capacity in the sense of Choquet (Theorem~\ref{thm:choquet-c}).
\end{remark}

\begin{proof} Let  $u_E^*$ denote  the relative extremal for $E$ in \eqref{eq:rel-ext-fct}.
Since both capacities are outer regular (Lemma~\ref{lem:outer-reg} and \cite{BT82}), we may assume that $E=G$ is an open set and hence $u_G = u_G^*$. Then, $-1\leq u_G \leq 0$ and $u_G =-1$ on $G$.  It follows that
\[\label{eq:BT-dom-a}
	\tc(G) \leq \|u_G\|_*^2.
\]
Let $\rho$ be a strictly psh defining function for $\Om$. We may assume that $dd^c \rho \geq \om$. Since $G\subset \subset \Om$, it implies that $u_G =0$ on $\d\Om$ (see, e.g., \cite[Properties (13.11)]{De89}). Using  $-1 \leq u_G \leq 0$ and integration by parts,
$$
	\|u_G\|_{L^2}^2 = \int  (u_G)^2 (dd^c \rho)^n \leq \int - u_G (dd^c\rho)^n = \int -\rho dd^c u_G \wed (dd^c\rho)^{n-1}.
$$
Here and in the proof below we skip writing $\Om$ in the integral symbols 
for simplicity. The last integral is less than 
$$
	\|\rho\|_{L^\infty(\Om)} \int dd^c u_G \wed (dd^c\rho)^{n-1}.
$$
Since $0\leq u_G+1 \in PSH(\Om)$, we have
$d u_G \wed d^c u_G \leq \frac{1}{2} dd^c (1+u_G)^2.$
Hence, 
\[\label{eq:BT-dom-b}\begin{aligned}
	\|u_G\|_*^2 
&\leq \int (u_G)^2 (dd^c \rho)^n + \frac{1}{2}\int dd^c (1+u_G)^2 \wed (dd^c \rho)^{n-1} \\
&\leq A_1 \int dd^c u_G \wed (dd^c\rho)^{n-1} + \int dd^c u_G \wed (dd^c \rho)^{n-1},
\end{aligned}\]
where $A_1 = \|\rho\|_{L^\infty(\Om)}$ and we used the fact that 
$$\begin{aligned}
&	\frac{1}{2}\int dd^c (1+u_G)^2 \wed (dd^c \rho)^{n-1} \\
&=\int du_G \wed d^c u_G \wed (dd^c\rho)^{n-1} + \int (1+u_G) dd^c u_G \wed (dd^c \rho)^{n-1} \\
	&=	\int dd^c u_G \wed (dd^c \rho)^{n-1}.
\end{aligned}
$$
Invoking an inequality of Cegrell  \cite{Ce04} we have
\[\label{eq:BT-dom-c}\begin{aligned}
	\int dd^c u_G \wed (dd^c \rho)^{n-1}& \leq \left[\int (dd^c\rho)^n\right]^\frac{n-1}{n} \left[\int (dd^c u_G)^n\right]^\frac{1}{n} \\ &=  A_0 [cap(G,\Om)]^\frac{1}{n},
\end{aligned}\]
where $A_0$ is a uniform constant as $\rho$ is smooth on $\bar\Om$ and we used the formula \eqref{eq:BT-cap-id} for the last identity.

Combining \eqref{eq:BT-dom-a}, \eqref{eq:BT-dom-b} and \eqref{eq:BT-dom-c} we get $\tc (G) \leq A_0(1 + A_1) [cap(G,\Om)]^\frac{1}{n}.
$
\end{proof}

We are ready to state Theorem~\ref{thm:intro-cap-comp}-(b) which is the reverse inequality for the two capacities.

\begin{lem} \label{lem:cap-c} Let $D \subset \subset \Om$. There exists  a constant $A'$ depending  only on $D$ and $\Om$ such that for every Borel set $E \subset D$,
$$	cap(E,\Om) \leq A'\, \tc(E)
$$
\end{lem}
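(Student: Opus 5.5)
Fix a Borel set $E\subset D$ and a competitor $v\in\Kc(E)$. By \eqref{eq:reduction} we may assume $-1\le v\le 0$ and $v=-1$ a.e. on an open set $G\supset E$. Let $u\in PSH(\Om)$ with $-1\le u\le 0$. The goal is the uniform bound
$$\int_E (dd^c u)^n \le A'\|v\|_*^2 .$$
Taking the supremum over $u$ and the infimum over $v$ then gives the lemma.

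\textbf{Reduction to smooth $v$.} We want to test $v^2$ against $(dd^cu)^n$, using that $v^2\ge 1$ on $G$.
\begin{itemize}
\item By outer regularity of $cap(\cdot,\Om)$ we may replace $E$ by a compact $K\subset G\cap D$, since both sides are monotone and $cap$ is inner regular. Shrinking, we may take $K\subset G$ with ${\rm dist}(K,\d G)>\de$ for some $\de>0$.
\item Consider $v_\veps=v*\eta_\veps$. For $\veps<\de$ we have $v_\veps=-1$ on a neighborhood of $K$, by averaging of $v=-1$ a.e. on $G$. Also $v_\veps$ is smooth and $-1\le v_\veps\le 0$.
\item Fix an intermediate domain $D\subset\subset D'\subset\subset\Om$. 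By Proposition~\ref{prop:smoothing}, $\|v_\veps\|_{W^*(D')}\le \|v\|_{W^*(\Om)}$ for small $\veps$.
\end{itemize}

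\textbf{Main estimate.} Apply Proposition~\ref{prop:L2-cap}, on the pair $D\subset\subset D'$, to the continuous function $v_\veps\in W^*(D')\cap C^0(D')$ and to $\phi=u$. Since $v_\veps^2=1$ on $K$, this gives
$$\int_K (dd^cu)^n\le \int_K v_\veps^2 (dd^c u)^n \le A\|v_\veps\|_{W^*(D')}^2\le A\|v\|_*^2 .$$
The constant $A$ depends only on $D$ and $D'$, hence only on $D$ and $\Om$.

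\textbf{Points to check.}
\begin{itemize}
\item Proposition~\ref{prop:L2-cap} is stated on the strictly pseudoconvex domain $\Om$. For this reason I would rather take $D'$ to be a strictly pseudoconvex domain, for instance a sublevel set $\{\rho<-c\}$ containing $\bar D$, which is strictly pseudoconvex for small $c$.
\item Alternatively, one can avoid $D'$ entirely by working on $\Om$ with $v_\veps$ multiplied by a cut-off. One then checks that the cut-off product has $W^*$-norm controlled by $\|v\|_*$, via Lemma~\ref{lem:extra-grad}-type estimates and $|v_\veps|\le 1$.
\item The restriction of $u$ to $D'$ is still psh with $-1\le u\le0$, as the proposition requires.
\end{itemize}
Taking the supremum over $u$ yields $cap(K,\Om)\le A\|v\|_*^2$. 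Then take the supremum over compact $K\subset E$ (inner regularity of $cap$) and the infimum over $v\in\Kc(E)$.

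\textbf{Main obstacle.} The delicate step is justifying the use of Proposition~\ref{prop:L2-cap}, which requires continuity, in place of $v$ itself. It amounts to checking that mollification preserves both the constraint $v_\veps=-1$ near $K$ and a uniform norm bound on a domain strictly containing $D$. The compact shrinking inside the open set $G$ handles the first requirement, and Proposition~\ref{prop:smoothing} handles the second.
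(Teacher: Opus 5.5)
Your proof is correct, and it is organized differently from the paper's.

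\textbf{The paper's route.} It reduces to an open set $G$ by outer regularity and takes the relative extremal function $u_G$, modified to equal $A_0\rho$ near $\d\Om$. It then regularizes the \emph{potential}, $u_j=u_G*\eta_j$, and applies Corollary~\ref{cor:L2-cap-general}, which allows a non-continuous $f\in W^{1,2}$ once $\phi$ is smooth. The mass comparison Lemma~\ref{lem:CP-mass} and the limit $j\to\infty$ finish the argument.

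\textbf{Your route.} You keep the psh function $u$ arbitrary and regularize the \emph{competitor} instead. Working on a compact $K\subset E\subset G$ makes $v_\veps\equiv-1$ near $K$. Proposition~\ref{prop:L2-cap} is then used as a black box on an intermediate strictly pseudoconvex $D'$, and Proposition~\ref{prop:smoothing} gives $\|v_\veps\|_{W^*(D')}\le\|v\|_*$. The constant there is uniform in $u$, so taking the supremum over $u$ is legitimate.

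\textbf{What each buys.} Your argument needs neither $u_G$ nor \eqref{eq:BT-cap-id}. It only uses pseudoconvexity near $\bar D$; covering $\bar D$ by finitely many balls would even drop that hypothesis on $\Om$. It also avoids a delicate point in the paper's version. Since $u_G\equiv-1$ on the open set $G$, the measure $(dd^cu_G)^n$ is carried by $\d G$. So the step $cap(G,\Om)\le\liminf_j\int_G(dd^cu_j)^n$ is not a consequence of weak convergence. For $G=B(0,r)$ in the unit ball one finds $\int_G(dd^cu_j)^n\to 2^{-n}cap(G,\Om)$. Passing first to compact subsets, as you do, is what makes this step clean. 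In return, the paper's route needs no smoothing of $f$ and no auxiliary domain.

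\textbf{Two minor points.}
\begin{itemize}
\item The passage to compact $K$ uses inner regularity, not outer regularity as your first bullet says. For fixed $u$ this is just inner regularity of the Radon measure $(dd^cu)^n$.
\item The alternative cut-off route in your second bullet is unnecessary and harder to justify. Since $T*\eta_\veps$ lives only on $\Om_\veps$, there is no evident closed positive current on all of $\Om$ dominating $d(\chi v_\veps)\wed d^c(\chi v_\veps)$. Keep the $D'$ argument.
\end{itemize}
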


\begin{proof} Since both capacities are outer regular, 
again it is enough to prove the inequality for $E=G$ an open subset in $D$. Let $u= u_G=u_G^*$ be the relative extremal function of $G$ in $\Om$ and consider $u:= \max\{u, A_0 \rho\}$ we may assume $u= u_G$ on $\Om$ and it is defined in a neighborhood of $\bar\Om$. We can furthermore increase $A_0$ (depending on $D,\Om$) such that 
\[\label{eq:capc-rho} A_0\rho \leq u_G \quad \text{in }\Om.
\]
Denote $u_j:= u*\eta_j$ the convolution with the smooth kernel family in Section~\ref{ss:approx}. These functions  are defined on $\Om$ for $j$  large enough.

Let $f \in \Kc(G)$ be such that $f \leq -1$ a.e. on $G$ and assume also that $-1\leq f\leq 0$ in $\Om$.  Let $0\leq \chi \leq 1$ belong to $C^\infty_c(\Om)$  such that $\chi =1$ on $D$ and $\supp \chi \subset\subset \Om$. Since $(dd^c u_j)^n$ converges  to $(dd^c u)^n$ weakly, the formula \eqref{eq:BT-cap-id} implies that
\[\label{eq:capc-dominate}
	cap(G,\Om) \leq \liminf_{j\to \infty} \int_G (dd^c u_j)^n \leq \liminf_{j\to \infty} \int_D  \chi^{2^n} f^2 (dd^c u_j)^n.
\]

Fix an index $j$, thus $u_j$ smooth. Applying Corollary~\ref{cor:L2-cap-general} for $\phi = u_j$ to get
$$\begin{aligned}
	\int \chi^{2^{n}} f^2 (dd^cu_j)^n &\leq \int df \wed d^c f \wed \left( \sum_{k=1}^{n-1} 100A_k \;\chi^{2^{k}}  (dd^c u_j)^{k} \wed \om^{n-1-k} \right) \\ &\quad + (40A_0)^n \int \chi f^2 \om^n,
\end{aligned}$$
where $A_k =[40(1+ A_0)]^{n-1-k}$. 
If $T \in \Ga_f$, i.e., $d f \wed d^c f \leq T$ in $\Om$, then 
$$\begin{aligned}
	\int \chi^{2^{n}} f^2 (dd^cu_j)^n &\leq \int T \wed \left( \sum_{k=1}^{n-1} 100A_k \;\chi^{2^{k}}  (dd^c u_j)^{k} \wed \om^{n-1-k} \right) \\ &\quad + (40A_0)^n \int \chi f^2 \om^n \\
	&=: J_1 + J_2.
\end{aligned}$$
We first consider $J_2$. Clearly, 
\[\label{eq:capc-j2}
	J_2
 \leq A_1' \|f\|_{L^2(\Om)}^2.
\]
Next we estimate $J_1 = J_1 (j)$. 
  By letting $j\to +\infty$, the weak convergence  implies
$$\begin{aligned}
	\limsup_{j\to\infty} J_1 &\leq A_0'\int\chi T \wed \sum_{k=1}^{n-1} (dd^cu)^{k}\wed\om^{n-1-k} \\&\leq A_0'\int_\Om T\wed \sum_{k=1}^{n-1} (dd^cu)^{k}\wed \om^{n-1-k}.
\end{aligned}$$
Thanks to \eqref{eq:capc-rho} we invoke the comparison of mass in Lemma~\ref{lem:CP-mass} to get that
 \[\label{eq:capc-j1} \begin{aligned}
\limsup_{j\to \infty} J_1
&\leq A_0' \int_{\Om}  T \wed \sum_{k=1}^{n-1} [dd^c(A_0\rho)]^{k} \wed \om^{n-1-k}\\ 
&\leq A_1'  \int_\Om T\wed \om^{n-1},
  \end{aligned}\]
where the last inequality used the fact $dd^c \rho \leq A \om$.

Combining \eqref{eq:capc-dominate}, \eqref{eq:capc-j2} and \eqref{eq:capc-j1}  together  we arrive at
$$
	 cap(G,\Om) \leq \limsup_{j\to \infty} (J_1+J_2) \leq A_1' ( \|f\|^2_{L^2(\Om)} +  \|T\|_\Om).
$$
Taking infimum over all currents $T\in \Ga_f$ we get
$
	cap(G,\Om) \leq A'_1 \| f\|_*^2.
$
This holds for every $f\in \Kc(G)$ and therefore, $cap(G,\Om) \leq A_1' \tc (G)$.
\end{proof}

As a consequence of the above comparison of two capacities 
we are able to state now a weak equivalence property of $\tc (\cdot, \Om)$ mentioned in Remark~\ref{rmk:c-equiv}-(b) when we consider it on a smaller domain.

\begin{cor}\label{cor:dom-equiv-c} Let $D \subset \subset \Om' \subset \Om$. Then, there exists a constant $A = A(D,\Om)$ such that for every Borel set $E \subset D$,
$$
	\tc (E, \Om') \leq \tc (E, \Om) \leq A \left[\tc (E,\Om')\right]^\frac{1}{n}.
$$
\end{cor}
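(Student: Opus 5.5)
The first inequality $\tc(E,\Om') \leq \tc(E,\Om)$ is the monotonicity in Proposition~\ref{prop:basic}-(a). For any $v\in \Kc(E)$ relative to $\Om$, the restriction $v|_{\Om'}$ lies in $W^*(\Om')$. It is still $\leq 0$ and still $\leq -1$ a.e. in a neighborhood of $E$. Moreover $\|v|_{\Om'}\|_{W^*(\Om')} \leq \|v\|_{W^*(\Om)}$: the $L^2$ part only decreases, and any $T\in\Ga_v$ restricts to a current in $\Ga_{v|_{\Om'}}$ with smaller mass. Taking the infimum gives the claim.

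For the second inequality I will chain the two comparison lemmas, passing through the Bedford--Taylor capacity. Lemma~\ref{lem:c-cap} and Lemma~\ref{lem:cap-c} require strictly pseudoconvex domains, so the first step is to insert auxiliary strictly pseudoconvex domains. I choose a strictly pseudoconvex domain $B$ with $D\subset\subset B\subset\subset \Om'$; a finite union of balls does not suffice, so I take a sublevel set of a smooth strictly psh exhaustion, or simply a small ball if $D$ is small. I also choose a strictly pseudoconvex $\wt\Om\supset \Om$, for instance a large ball containing $\Om$.

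The chain then runs as follows.
\begin{itemize}
\item[(i)] By monotonicity of $\tc(\cdot,\cdot)$ in the domain, $\tc(E,\Om)\leq \tc(E,\wt\Om)$. A candidate $v$ on $\wt\Om$ restricts to $\Om$ with no larger norm, exactly as in the first paragraph.
\item[(ii)] Lemma~\ref{lem:c-cap} on $\wt\Om$ gives $\tc(E,\wt\Om)\leq A\,[cap(E,\wt\Om)]^{1/n}$. Here $E\subset D\subset\subset\wt\Om$, so the relative compactness hypothesis holds.
\item[(iii)] Since $B\subset\wt\Om$, every competitor $u$ for $cap(\cdot,\wt\Om)$ restricts to a competitor on $B$. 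Hence $cap(E,\wt\Om)\leq cap(E,B)$.
\item[(iv)] Lemma~\ref{lem:cap-c} on $B$, with $D\subset\subset B$, gives $cap(E,B)\leq A'\,\tc(E,B)$.
\item[(v)] By monotonicity once more, $\tc(E,B)\leq \tc(E,\Om')$, since $B\subset\Om'$.
\end{itemize}
Combining (i)--(v) yields $\tc(E,\Om)\leq A(A')^{1/n}[\tc(E,\Om')]^{1/n}$.

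The main obstacle is making this work without pseudoconvexity of $\Om$ or $\Om'$. The constants must depend only on $D,\Om$ as stated. Yet in (iv)--(v) the domain $B$ depends on $\Om'$, through the requirement $D\subset\subset B\subset\Om'$.

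To control this I will use outer regularity (Lemma~\ref{lem:outer-reg}) to reduce to an open $E$. I will then try to pick $B$ as a finite union of small balls of fixed radius $r$, with $r$ tied to ${\rm dist}(D,\d\Om')$. On each ball I apply Lemma~\ref{lem:cap-c} and then countable subadditivity of $cap$, which gives constants depending on $D$ and that distance. Alternatively, if the dependence on $\Om'$ is acceptable, a single strictly pseudoconvex $B$ chosen as above suffices. This is the step where I expect the most care about uniformity.
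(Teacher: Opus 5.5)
Your chain (i)--(v) is the paper's argument. The paper writes $\tc(E,\Om)\le A[cap(E,\Om)]^{1/n}\le A[cap(E,\Om')]^{1/n}\le A'[\tc(E,\Om')]^{1/n}$, applying Lemma~\ref{lem:c-cap} on $\Om$ and Lemma~\ref{lem:cap-c} on $\Om'$ directly, which tacitly assumes both domains are strictly pseudoconvex. Your first-inequality argument is correct, and so are steps (i), (ii), (iii) and (v). Inserting a large ball $\wt\Om\supset\Om$ is a sensible way to drop the pseudoconvexity assumption on $\Om$.

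The step that fails as written is the choice of a single strictly pseudoconvex $B$ with $D\subset\subset B\subset\subset\Om'$. A smooth strictly psh exhaustion of $\Om'$ exists only if $\Om'$ is pseudoconvex, and in general no such $B$ exists. For instance, let $\Om'\subset\bC^2$ be a Hartogs figure and $D$ a slightly shrunken Hartogs figure. By the Kontinuit\"atssatz, every pseudoconvex open set containing $\bar D$ contains points of the envelope that lie outside $\Om'$.

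The ball covering you mention at the end is the right argument, and it repairs existence, not only uniformity. Cover $\bar D$ by finitely many balls $B_i'\subset\subset B_i\subset\subset\Om'$. Then
$$cap(E,\wt\Om)\le\sum_i cap(E\cap B_i',\wt\Om)\le\sum_i cap(E\cap B_i',B_i)\le\sum_i A_i'\,\tc(E\cap B_i',B_i)\le\Big(\sum_i A_i'\Big)\tc(E,\Om').$$
The steps use, in order:
\begin{itemize}
\item subadditivity of $cap$;
\item monotonicity of $cap$ when the domain shrinks;
\item Lemma~\ref{lem:cap-c} on each ball $B_i$;
\item Proposition~\ref{prop:basic}-(a), i.e.\ monotonicity of $\tc$ in both the set and the domain.
\end{itemize}
Combined with (i)--(ii), this proves the second inequality with no pseudoconvexity hypothesis on $\Om$ or $\Om'$.

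The resulting constant depends on the cover, hence on $\Om'$ through ${\rm dist}(D,\d\Om')$, and you should not expect to remove this dependence with this method. The paper's proof has the same feature. It applies Lemma~\ref{lem:cap-c} on $\Om'$, which gives a constant depending on $(D,\Om')$, even though the proof asserts that $A'$ depends only on $\Om$ and $D$. In the only application, Theorem~\ref{thm:cap-cauchy}, the domain $\Om'$ is fixed, so this dependence is harmless.
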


\begin{proof} The first inequality follows immediately from Proposition~\ref{prop:basic}-(a). It remains to verify the second one. For the Bedford-Taylor capacity we know that
$cap(E, \Om) \leq cap(E,\Om')$. Therefore, using Lemmas~\ref{lem:c-cap} and \ref{lem:cap-c}, we get
$$
	\tc(E, \Om) \leq A [cap(E, \Om)]^\frac{1}{n} \leq A [cap(E,\Om')]^\frac{1}{n} \leq A' [\tc (E,\Om')]^\frac{1}{n},
$$
where $A'$ depends only on $\Om$ and $D$.
\end{proof}

\section{Quasi-continuity}

In this section we will show that a function in $W^*(\Om)$ admits a quasi-continuous representation with respect to the capacity $\tc(\cdot,\Om)$. Let us make it precise by defining the terminologies. 

\begin{defn} \label{defn:c-quasi-c} A function $u$ is called quasi-continuous (q.c) in $\Om$ if there exists for every $\veps> 0$ an open set $G \subset \Om$ such that $\tc (G)<\veps$ and the restriction $u_{|_{\Om\setminus G}}$  is continuous  (with respect to the induced topology on $\Om\setminus G$).
\end{defn}

A statement is said to hold {\em quasi-everywhere (q.e)} if there exists a set $P$ of capacity zero, i.e., $\tc (P) =0$, such that the statement is true for every $x\notin P$.

\begin{remark}\label{rmk:no-conf} The Bedford-Taylor capacity $cap(\cdot, \Om)$ and $\tc (\cdot, \Om)$  are equivalent to each other by Lemmas~\ref{lem:c-cap}, \ref{lem:cap-c} (also Theorem~\ref{thm:choquet-c} below). Hence, these quasi-continuity and quasi-everywhere notions coincide with the classical ones in pluripotential theory.  Thus, there is no confusion if we use these properties for psh functions which belong to $W^*(\Om)$.
\end{remark}

The following statement is classical in potential theory \cite[Lemma~2.14]{FOT94}. 
\begin{prop} \label{prop:ae-qe}  Let $D\subset \Om$ be an open set and $f$ is quasi-continuous on $D$. If $f\geq 0$ a.e. on $D$, then $f\geq 0$ quasi-everywhere on $D$.
\end{prop}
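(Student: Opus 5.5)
We follow the classical argument of \cite[Lemma~2.14]{FOT94}, using the outer regularity of $\tc(\cdot)$ in Lemma~\ref{lem:outer-reg} and the countable subadditivity in Proposition~\ref{prop:basic}-(c). Since the problem is local, it suffices to show that the Borel set $N:=\{x\in D: f(x)<0\}$ has capacity zero. Fix $\veps>0$. By quasi-continuity there is an open set $G\subset\Om$ with $\tc(G)<\veps$ such that $f|_{\Om\setminus G}$ is continuous. The key claim is
$$
	N \subset G \cup \big(N\cap (D\setminus G)\big), \qquad \tc\big(N\cap (D\setminus G)\big)\leq \tc(G).
$$
If we prove this, then $\tc(N)\leq 2\tc(G)<2\veps$ by subadditivity. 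Letting $\veps\to0$ gives $\tc(N)=0$.

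To prove the claim, observe that by continuity of $f$ on the relatively closed set $D\setminus G$, the set $N\cap(D\setminus G)=\{f<0\}\cap (D\setminus G)$ is relatively open in $D\setminus G$. So there is an open set $U\subset D$ with $U\cap(D\setminus G)=N\cap (D\setminus G)$. Then $U\setminus G\subset N$. Since $f\geq0$ a.e. on $D$, the set $N$ has Lebesgue measure zero, and hence so does $U\setminus G$. Therefore $U\subset G$ up to a Lebesgue null set.

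Now take any $v\in\Kc(G)$, so $v\leq -1$ a.e. on an open set containing $G$, which we may take to be $G$ itself since $G$ is open. Then $v\leq -1$ a.e. on $G\cup U$, because $|U\setminus G|=0$. The set $G\cup U$ is open and contains $N\cap(D\setminus G)$. Hence $v\in\Kc(N\cap (D\setminus G))$, and taking the infimum gives the claimed inequality. In fact this shows $\tc(G\cup U)=\tc(G)$, in the spirit of Remark~\ref{rmk:c-equiv}-(a), so one can more simply write $N\subset G\cup U$ and $\tc(N)\leq\tc(G\cup U)=\tc(G)<\veps$.

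The only delicate point is this step. The definition of $\Kc(E)$ requires $v\leq-1$ a.e. on an \emph{open} neighbourhood, so we must produce an open set that covers $N$ and differs from $G$ only by a null set. The relative openness from quasi-continuity supplies exactly such a set. Measurability of $N$ is not an issue, since one can work with $G\cup U$ directly and use the monotonicity of $\tc$ from Proposition~\ref{prop:basic}-(a).
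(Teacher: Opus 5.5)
Your argument is correct and is essentially the paper's: in both, the small-capacity open set $G$ is enlarged by a Lebesgue-null set to an open set with the same $\tc$-capacity, which works because $\Kc(\cdot)$ only involves a.e.\ inequalities, and then continuity of $f$ off $G$ is combined with $f\geq 0$ a.e. The difference is cosmetic: the paper's enlargement does not depend on $f$ (it replaces $D\setminus G_k$ by the support $F_k'$ of ${\bf 1}_{D\setminus G_k}\,dV_{2n}$ and shows by contradiction that $f\geq 0$ on $F_k'$), whereas you adjoin the relatively open null set $\{f<0\}\setminus G$ directly; also note you should use continuity of $f|_{D\setminus G}$ rather than $f|_{\Om\setminus G}$, since $f$ is only assumed quasi-continuous on $D$.
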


\begin{proof} For each $k\geq 1$, there exists $G_k$ open such that $\tc (G_k) <1/k$ and $f_{|_{F_k}}$ is continuous with $F_k = D\setminus G_k$. Define $F_k' = \supp ({\bf 1}_{F_k} \cdot dV_{2n}$) and $G_k' = D\setminus F_k'$. Then, $G_k\subset G_k'$ and $V_{2n} (G_k' \setminus G_k) =0$. By definition  $\tc (G_k') = \tc (G_k)$. It is enough to prove $f(x)\geq 0$ for every $x\in \cup_k F_k'$.

Assume $f(x)<0$ for some $x\in F_k'$. Then, by the continuity, there exists a neighborhood $U_x$ such that $f(y)<0$ for every $y\in F_k' \cap U_x$. Since $V_{2n} (F_k' \cap U_x)>0$,  it would contradict to the assumption that $f\geq 0$ a.e on $D$. We conclude that $f(x) \geq 0$ for $x\in \cup_k F_k'$.
\end{proof}

Unlike psh functions, the functions in $W^*(\Om)$ are defined up to a set of Lebesgue measure zero. To get a good presentative in each class of a given function we need the following notion.

\begin{defn}
Let $f,g$ be functions in $\Om$. Then,  $g$ is said to be a quasi-continuous modification of $f$ if $g$ is quasi-continuous and $g= f$ a.e. We denote $g$ by $\wt f$ in this case.
\end{defn}

Thanks to the above proposition two quasi-continuous modifications are equal quasi-everywhere. 
Our goal is to show that every element in $W^*(\Om)$ admits a quasi-continuous modification.

\begin{lem}\label{lem:cap-sublevel-set} Let $f\in W^*(\Om) \cap C^0(\Om)$.  Assume either $f\geq 0$ or $f\leq 0$ on $\Om$. Denote $E_s = \{z\in \Om: |f(z)| > s\}$ for $s > 0$. Then,
\[\label{eq:c-sublevel-set-est}
	\tc (E_s) \leq \frac{1}{s^2} \|f\|_*^2.
\]
\end{lem}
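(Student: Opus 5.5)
The natural candidate is the rescaled function itself. I would treat the case $f\le 0$ first and then reduce the case $f\ge 0$ to it by replacing $f$ with $-f$. This reduction is harmless: $\|-f\|_*=\|f\|_*$ because $\Ga_{-f}=\Ga_f$ and the $L^2$-norms agree, and the set $E_s=\{|f|>s\}$ does not change.

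So assume $f\le 0$ and $f$ continuous. For $s>0$ put $v:=f/s$. Then $v\in W^*(\Om)$ and $v\le 0$. Moreover $dv\wed d^cv=s^{-2}df\wed d^cf$, so $T\in\Ga_f$ if and only if $s^{-2}T\in\Ga_v$, and therefore
$$\|v\|_*^2=\frac{1}{s^2}\Big(\|f\|_{L^2(\Om)}^2+\inf_{T\in\Ga_f}\|T\|_\Om\Big)=\frac{\|f\|_*^2}{s^2}.$$

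It remains to check that $v\in\Kc(E_s)$. Since $f\le 0$, we have $E_s=\{f<-s\}=\{v<-1\}$, and this set is open because $f$ is continuous. On the open set $E_s$ we have $v<-1$ pointwise, hence $v\le -1$ a.e. on an open neighborhood of $E_s$, namely $E_s$ itself. Thus $\{v\le -1\}^o\supset E_s$, and together with $v\le 0$ this gives $v\in\Kc(E_s)$.

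By the definition \eqref{eq:cap-defn} we then obtain $\tc(E_s)\le\|v\|_*^2=s^{-2}\|f\|_*^2$, which is \eqref{eq:c-sublevel-set-est}. There is no real obstacle in this argument. The only points that need care are that continuity makes $E_s$ open, which is exactly the interior condition in $\Kc(E_s)$, and that the sign hypothesis guarantees $v\le 0$, which the admissible class $\Kc$ requires.
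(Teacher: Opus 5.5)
Your proposal is correct and is essentially the paper's proof. The paper uses the single test function $-|f|/s \in \Kc(E_s)$, which equals your $f/s$ when $f\le 0$ and your $-f/s$ when $f\ge 0$; the sign hypothesis is there only so that this function has norm $\|f\|_*/s$ rather than the weaker bound $2\|f\|_*/s$ coming from $\| |f| \|_*\le 2\|f\|_*$.
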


\begin{proof} Since $f$ is continuous, $E_s = \{z\in \Om : |f|/s>1\}$ is open and $-|f|/s \in \Kc(E_s)$. The conclusion follows easily from the definition of $\tc (E_s)$. The only reason we add the assumption that $f$ does not change sign is  to avoid the constant 4 would appear on the right hand side as in general $\| |f| \|_* \leq 2 \|f\|_*$.
\end{proof}

\begin{defn} A sequence of functions $\{f_k\}_{k\geq 1}$ in $\Om$ is called a Cauchy sequence with respect to the capacity $\tc(\cdot,\Om)$ if for every compact set $K\subset \Om$ and $\de>0$, 
$$
	\limsup_{j,k\to \infty} \tc( \{|f_j - f_k| >\de\} \cap K) =0.
$$
\end{defn}

We are ready to state a key ingredient that helps to select a good representative of functions in $W^*(\Om)$. This result is inspired by the results of Dinh, Marinescu and Vu \cite[Lemma~2.9, Theorem~2.10]{DMV} where they used the Bedford-Taylor capacity. We have an advantage of  the comparison between two capacities. Therefore, we use the suitable capacity to handle  separate parts of super-level sets.

\begin{thm}\label{thm:cap-cauchy} Let $f\in W^*(\Om)$.  Denote $f_k= f* \eta_{
1/k}$, where $k=1,2...$. Then, $\{f_k \}$ is a Cauchy sequence in capacity.
\end{thm}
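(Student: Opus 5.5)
The plan is to measure the sets $\{|f_j-f_k|>\de\}\cap K$ with the Bedford--Taylor capacity and only at the end pass to $\tc$ via Lemma~\ref{lem:c-cap}, in the form $\tc(E)\le A[cap(E,\Om)]^{1/n}$ (this needs $E$ relatively compact, which holds since we intersect with $K$). Direct use of Lemma~\ref{lem:cap-sublevel-set} is hopeless: $\|f_j-f_k\|_*$ need not tend to zero, by Proposition~\ref{prop:density}. The argument is organised in three reductions followed by one key estimate.

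\emph{Reductions.} By Proposition~\ref{prop:basic}-(a) and Corollary~\ref{cor:dom-equiv-c} we may shrink the domain. So we fix $K\subset\subset D\subset\subset\Om'\subset\subset\Om$ with $\Om'$ strictly pseudoconvex, and work with $k,j$ large enough that $f_k\in W^*\cap C^\infty(\Om')$ with $\|f_k\|_{W^*(\Om')}\le\|f\|_*$ (Proposition~\ref{prop:smoothing}).
\begin{itemize}
\item[(i)] Writing $f=f^+-f^-$ and using $(f^\pm)*\eta_{1/k}$, it suffices to treat $f\ge0$.
\item[(ii)] Next we truncate. For $M>0$ put $g=\min\{f,M\}$, so $\|g\|_*\le\|f\|_*$ by \eqref{eq:norm-max-min}. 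Since $f_k\ge g_k$ and both are continuous and nonnegative, we have $\{f_k\ne g_k\}\subset\{f_k>M\}$ up to the obvious inclusion. Lemma~\ref{lem:cap-sublevel-set} applied on $\Om'$ gives $\tc(\{f_k>M\}\cap K)\le \|f\|_*^2/M^2$, and the same bound holds for $g_k$. By subadditivity (Proposition~\ref{prop:basic}-(c)) the error of replacing $f$ by $g$ is $O(M^{-2})$, uniformly in $j,k$. So we may assume in addition $0\le f\le M$.
\end{itemize}

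\emph{Key estimate for bounded $f$.} For $h_{jk}=|f_j-f_k|$ and $-1\le u\le 0$ psh we have
$$\int_{K}\mathbf 1_{\{h_{jk}>\de\}}(dd^cu)^n\le \frac1\de\int\chi^2 |f_j-f_k|\,(\om+dd^cu)^n .$$
We control the right-hand side by comparing both $\int\chi^2 f_j(\om+dd^cu)^n$ and $\int\chi^2 f_k(\om+dd^cu)^n$ with a common reference quantity. To avoid the absolute value, we use $|a-b|=2\max\{a,b\}-a-b$ and note that $\max\{f_j,f_k\}$ is again continuous, nonnegative, and in $W^*(\Om')$ with norm $\le2\|f\|_*$ (Lemma~\ref{lem:norm-min-max}).

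The comparison itself will use Corollary~\ref{cor:L1-cap-comparison-b} (equivalently Corollary~\ref{cor:L1-cap-comparison-a}), applied to $u$ and its regularisation $u_l=u*\eta_{1/l}$. This converts the oscillation against $(\om+dd^cu)^n$ into one against the smooth measures $(\om+dd^cu_l)^n$, at a cost of
$$A\|f\|_*\Big(\int_D(u_l-u)(\om+dd^cu)^n\Big)^{1/2}.$$
Against smooth measures, Fubini moves the convolution: $\int \chi^2 f_j\,\mu=\int f\,(\chi^2\mu)*\eta_{1/j}$. Since $(\om+dd^cu_l)^n$ has a density with $C^1$-bounds depending only on $l$ (the bounds of $u$ are fixed), $\int\chi^2|f_j-f_k|(\om+dd^cu_l)^n\le C_l\|f_j-f_k\|_{L^1(\Om')}\to0$ as $j,k\to\infty$, uniformly in $u$. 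Taking the supremum over $u$ then gives
$$\limsup_{j,k}cap(\{h_{jk}>\de\}\cap K,\Om')\le \frac{A}{\de}\sup_u\Big(\int_D(u_l-u)(\om+dd^cu)^n\Big)^{1/2},$$
and it remains to let $l\to\infty$.

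\emph{Main obstacle.} The step I expect to be hardest is showing that
$$\sup_{-1\le u\le0}\int_D(u*\eta_{1/l}-u)(\om+dd^cu)^n\to0 .$$
My first attempt will be integration by parts. Writing $u_l-u$ against $(dd^cu)^k\wed\om^{n-k}$ and moving one $dd^c$, one gets terms like $\int d(u_l-u)\wed d^c(u_l-u)\wed\cdots$ controlled by CLN, together with $\int(u_l-u)\,dd^c\rho\wedge\cdots$. The latter is bounded by $\int(u_l-u)\,\om\wedge(\ldots)$, where one MA factor has been replaced by $\om$. Iterating down to $\int(u_l-u)\om^n\le C l^{-2}\int_{D'}\De u\le C'l^{-2}$, uniformly by CLN, and tracking square roots at each step in the manner of Lemma~\ref{lem:wed-prod}, I expect a uniform bound of the form $C\,l^{-2^{1-n}}$.

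If this fails, the fallback is to avoid the supremum over $u$ altogether. We would fix $u=u^*_{E}$ for $E=\{h_{jk}>\de\}\cap K$, which is open, and use Lemma~\ref{lem:wed-prod} with $f_j-f_k$ bounded. This gives quasi-uniform control in terms of $\int\chi|f_j-f_k|^2(\om+dd^cu)^n$, which should then be treated as above.
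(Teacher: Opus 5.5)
\textbf{There is a genuine gap.} The whole argument rests on the uniform statement
$$\sup_{-1\le u\le 0}\int_D(u*\eta_{1/l}-u)(\om+dd^cu)^n\longrightarrow 0 \quad (l\to\infty),$$
and this is false, already for $n=1$.

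Take a grid $a_1,\dots,a_N$ of mesh $\approx N^{-1/2}$ in a fixed disc $\Delta\subset\subset D$, and set $u=\frac cN\sum_{i=1}^N\max\{\log|z-a_i|,-N\}-C_1c$. Because the points are spread out, $\frac1N\sum_i\max\{\log|z-a_i|,-N\}$ is bounded above and below on $\Om$ independently of $N$. Hence $-1\le u\le 0$ for a small $c>0$ and a constant $C_1$, both independent of $N$. The measure $dd^cu$ puts mass $c/N$ on each circle $\{|z-a_i|=e^{-N}\}$. On that circle, the terms $j\ne i$ contribute $\ge 0$ to $u*\eta_{1/l}-u$. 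The function $\log|\cdot|*\eta_{1/l}$ is radial subharmonic, hence minimal at $0$, so the $i$-th term contributes at least $\frac cN(N-\log l-C_\eta)$, with $C_\eta$ depending only on $\eta$. Therefore
$$\int_D(u*\eta_{1/l}-u)\,dd^cu\ \ge\ c^2\Bigl(1-\frac{\log l+C_\eta}{N}\Bigr),$$
which tends to $c^2$ as $N\to\infty$ for each fixed $l$. Letting $u$ depend on $z_1$ only gives the same conclusion in every dimension. Indeed, $(\om+dd^cu)^n\ge n\,dd^cu\wed\om^{n-1}$, and $u*\eta_{1/l}$ is then a radial convolution in $z_1$ at scale $1/l$.

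Your integration-by-parts iteration breaks exactly here. The terms $\int d(u_l-u)\wed d^c(u_l-u)\wed\cdots$ are bounded by CLN but are not small uniformly in $u$, so only the last term $\int(u_l-u)\om^n$ decays. The supremum over test functions in $cap(\cdot,\Om)$ cannot be handled by a smoothing estimate on the test function. It has to be removed using the structure of $f$.

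\textbf{The fallback does not repair this.} In Lemma~\ref{lem:wed-prod} the arbitrary $\phi$ sits on the left. The bounded psh function $u$ on the right must satisfy $d(f_j-f_k)\wed d^c(f_j-f_k)\le dd^cu$. The relative extremal function $u_E^*$ has no such property, and $E$ moves with $j,k$ anyway.

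What is missing is the paper's device:
\begin{itemize}
\item Localize to small balls where $T=dd^cu$ with $u\le 0$ psh but unbounded.
\item Discard $\{u<-N\}$, whose Bedford--Taylor capacity is $\le C\|u\|_{L^1}/N$.
\item On the rest, use $\mathbf 1_{\{u\ge -N+1\}}\le u_N=\max\{u,-N\}+N$ and pass to $g_j=u_Nf_j$. By Lemma~\ref{expl:construction} and Remark~\ref{rmk:general-construction}, $dg_j\wed d^cg_j\le dd^cv_j$, where $v_j=N^2(u_N^2+\max\{u_j,-N-1\})$ is bounded and decreases to a single function $v$.
\item Lemma~\ref{lem:wed-prod} then eliminates $\sup_\phi$, at the price of a $2^n$-th root.
\item Corollaries~\ref{cor:L1-cap-comparison-a} and~\ref{cor:L1-cap-comparison-b} are then needed only for this fixed $v$, and for $v*\eta_{1/\ell}$ with one fixed $\ell$. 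There monotone convergence gives the smallness you were trying to obtain uniformly.
\end{itemize}

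\textbf{Two smaller corrections.} First, the inclusion $\{f_k\ne g_k\}\subset\{f_k>M\}$ is false for $g=\min\{f,M\}$. Indeed, $(f-M)^+*\eta_{1/k}(x)>0$ as soon as $f>M$ on a non-null part of $B(x,1/k)$, even if $f_k(x)\le M$. Truncate after convolving, i.e. use $\min\{f_k,M\}$, as the paper does. Second, $K$ need not lie in a strictly pseudoconvex $\Om'\subset\subset\Om$, so you should localize to balls and use subadditivity.
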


\begin{proof} Let us fix a compact set $K\subset \Om$ and $\de>0$. Let $\veps>0$, we wish to show that  there exists $j_0>0$ such that for $j, k>j_0$,
\[\label{eq:cc-goal}
	\tc (\{ |f_j -f_k| >\de\} \cap K,\Om) <\veps.
\]
Strictly speaking since functions $f_j$ are defined on a bit smaller domain $\Om'\subset\subset \Om$ where $K \subset \subset \Om'$, we need to consider $\tc (\cdot, \Om')$ instead. On the other hand $K$ is fixed,  by  Corollary~\ref{cor:dom-equiv-c} we may assume $f\in W^*(\wt\Om)$ for some neighborhood of $\bar\Om$  without loss of generality. Thus, we can still write $\tc(E) = \tc(E,\Om)$ in what follows.

Now, by writing $f = f^+ - f^-$, we may assume that $f\geq 0$. 
Then, we reduce the estimate to the case $\{f_j\}$ is uniformly bounded as follows. 
Fix a large $N>0$ and consider $\wh f_j = \min\{f_j, N\}$. Then,
$$
	\{| f_j - f_k |>\de\} \subset \{|\wh f_j -\wh f_k|>\de\} \cup E_{jk}(N),
$$
where
$E_{jk}(N) = \{ f_j > N\} \cup \{f_k > N\}.$
Using Lemma~\ref{lem:cap-sublevel-set} we get that for $N > j_1$,
\[\label{eq:cc-j1}
	\tc (E_{jk}(N)) \leq  \frac{1}{N^2} (\|f_j\|_*^2 + \|f_k\|_*^2) \leq \veps/4.
\]
Thus, 
by considering $\wh f_{j}$ we may assume from now on that the sequence is uniformly bounded by $N$. For simplicity we use again the same notation $f_j$. These functions satisfy 
\[\label{eq:cc-smooth-ineq}
	d f_k \wed d^c f_k \leq T*\eta_{1/k}.
\] 

Next, by covering $K$ by finitely many small balls and using the subadditivity of capacity, we reduce the problem to the case  $df \wed d^c f \leq T = dd^c u$ in the ball $\Om = B(0, 2r)$ and $K \subset B(0,r)$ for some $r>0$, where $u\in PSH(B(0,3r))$ is negative. Hence, if we define  $u_k:= u*\eta_{1/k}$, then it follows from \eqref{eq:cc-smooth-ineq} that
$$
	d f_k \wed d^c f_k \leq dd^c u_k  \quad \text{in } B(0, 3r/2).
$$
We use now the Bedford-Taylor capacity which satisfies 
\[\notag
	cap (\{u<-N\} \cap B(0,r) , \Om) \leq \frac{1}{N} \|u\|_{L^1(\Om)}
\]
by the CLN inequality (see e.g. \cite[Proposition~1.10]{ko05}). 
Hence, for $N>j_2$,
\[\label{eq:cc-BT-j2}
	cap (\{u<-N\} \cap B(0,r) , \Om) \leq (\veps/4)^n/A,
\]
 where $A$ is the constant appeared in Lemma~\ref{lem:c-cap}.
We divide 
$$ K \subset  (K \cap \{u\geq - N\}) \cup \{u<- N\}.$$
By \eqref{eq:cc-BT-j2} it remains to estimate $cap( \{| f_j - f_k| > \de\} \cap K \cap \{u \geq -N\}, \Om)$. It follows from the definition of Bedford-Taylor capacity and Markov's inequality that this part is dominated by
\[\label{eq:cc-BT-markova}
 \sup \left\{ \frac{1}{\de^2}\int_{K \cap \{u\geq -N\}}  (f_j -f_k)^2 (dd^c \phi)^n \right\},
\]
 where the supremum is taken over all $0\leq \phi \leq 1$ is psh function in $\Om$. 
 By an elementary inequality
$$
	{\bf 1}_{\{u\geq -N+1\}} \leq \max \{u, -N\} +N:= u_N
$$
we get
 $$
 	\int_{K\cap \{u\geq -N+1\}} (f_j-f_k)^2(dd^c\phi)^n \leq \int_K  u_N^2 (f_j-f_k)^2 (dd^c\phi)^n.
$$

\begin{claim} \label{cl:quasi-c} We have $g_j := u_N f_j \in W^*(\Om')$ for every $\Om'\subset\subset \Om$ and 
$$
	d g_j \wed d^c g_j \leq N^2 dd^c (u_N^2 + u_{j, N+1}),
$$
where $u_{j,N+1} = \max\{u_j, -N-1\}$. Moreover, 
$$
	v_j:=N^2(u_N^2 + u_{j,N+1})  \searrow  N^2(u_N^2 + u_{N+1})=:v.
$$
\end{claim}

\begin{proof}[Proof of Claim~\ref{cl:quasi-c}] The first statement follows immediately from Example~\ref{expl:construction} and Remark~\ref{rmk:general-construction}, while the second part follows from the its formula.
\end{proof}

Next, let $0\leq \chi \leq 1$ be a smooth cut-off function such that $\chi =1$ on a neighborhood of $K$ and $\supp\chi \subset \subset \Om$.
We are ready to apply Lemma~\ref{lem:wed-prod} for $g_j-g_k = u_N (f_j-f_k)$ which satisfies $$d(g_j-g_k) \wed d^c (g_j-g_k) \leq 2 dd^c (v_j+v_k).$$ Here, $0\leq v_j = N^2(u_N^2 + u_{j,N+1}) \leq N^4$ is a psh function for $j\geq 1$. Then,
\[\label{eq:cc-BT-markovb} \begin{aligned}
&	\int_K (g_j-g_k)^2 (dd^c\phi)^n \\
&\leq \quad A_0 N^{4a} \left(  \int \chi (g_j-g_k)^2 \left[\om + 2dd^c (v_j+v_k) \right]^n\right)^\frac{1}{2^n}. 
\end{aligned}\]
The last integral on the right hand side is taken care by Corollary~\ref{cor:L1-cap-comparison-a} as follows:
$$\begin{aligned}
	\int \chi (g_j-g_k)^2 [\om &+ 2dd^c(v_j+v_k)]^n \\
&\leq \int \chi (g_j-g_k)^2 (\om +4dd^c v)^n \\ 
&\quad + A_1 \|g_j-g_k\|_* \int_\Om \left[2(v_j+v_k) - 4v\right] (\om+ 4dd^cv)^n \\
&=: I +J.
\end{aligned}$$
By Lebesgue monotone convergence, for $j,k \geq j_3$, 
\[\label{eq:cc-j3}
	J \leq \frac{1}{A_0N^{4a}}\left(\de^{2}/A^n \right)^{2^n} (\veps/4)^n =: \veps'.
\] To estimate $I$,  we
apply again once  more Corollary~\ref{cor:L1-cap-comparison-b} with  another independent index $\ell$,
$$\notag\begin{aligned}
I 
&\leq  \int \chi (g_j-g_k)^2 [\om +4dd^c (v*\eta_{1/\ell})]^n  \\
&\quad + A_2 \|g_j-g_k\|_* \int_\Om 4(v*\eta_{1/\ell} -v ) (\om+ 4dd^cv)^n.
\end{aligned}$$
Therefore, we can choose first $\ell = \ell_0$ so large that the second integral is less than $\veps'/2$ 
and next choose $j,k > j_5> \max\{j_1,j_2, j_3, j_4\}$ so that the first integral is less than $\veps'/2$ 
Therefore, for $j,k>j_5$,
\[\label{eq:cc-j4}
I  \leq  \veps'.
\]

Combining \eqref{eq:cc-j1}, \eqref{eq:cc-BT-j2}, \eqref{eq:cc-BT-markova}, \eqref{eq:cc-BT-markovb}, \eqref{eq:cc-j3},  \eqref{eq:cc-j4} and Lemma~\ref{lem:c-cap} we get that for $j, k>j_5$, $$\tc (\{|f_j-f_k|>\de\} \cap K) <\veps.$$ This proved  \eqref{eq:cc-goal}. 
\end{proof}

\begin{remark} \label{rmk:cauchy} The above argument is applicable for  every sequence  $f_k = f*\eta_{\veps_k}$, where  $\veps_k \searrow 0$ and every sequence $f_{r_j} = f * \chi_{r_j}$ with $r_j \to 0$ defined as in \eqref{eq:mean}. Therefore, the statement of the theorem still holds for such sequences.
\end{remark}

We are ready to prove the existence of a quasi-continuous modification in Corollary~\ref{cor:intro-quasi-mod} which  is a reformulation of Dinh, Marinescu and Vu \cite[Theorem~2.10]{DMV} in our setting. The proof given below is standard, we follow closely the one in  \cite[Theorem~2.13]{FOT94}.

\begin{thm} \label{thm:quasi-mod} Let  $f\in W^*(\Om)$. There exists a quasi-continuous modification $\wt f$.
\end{thm}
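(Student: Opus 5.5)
We use Theorem~\ref{thm:cap-cauchy} and the standard argument of \cite[Theorem~2.13]{FOT94}, which builds a quasi-continuous function as a quasi-uniform limit of continuous ones. Take an exhaustion of $\Om$ by compact sets $K_1\subset K_2\subset\cdots$ with $K_m\subset K_{m+1}^o$ and $\bigcup_m K_m=\Om$. Set $f_k=f*\eta_{1/k}$; each $f_k$ is smooth on a neighborhood of $K_m$ once $k$ is large relative to $m$.

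\textbf{Step 1: choose a subsequence.} By Theorem~\ref{thm:cap-cauchy}, for each $i$ there is an index $k_i$, increasing in $i$, such that for all $j,k\ge k_i$,
$$\tc(\{|f_j-f_k|>2^{-i}\}\cap K_i)<2^{-i}.$$
Put $F_i=f_{k_i}$, and let $G_i$ be the open set $\{|F_{i+1}-F_i|>2^{-i}\}\cap K_i^o$. This set is open because $F_i$ and $F_{i+1}$ are continuous near $K_i$, and $\tc(G_i)<2^{-i}$.

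\textbf{Step 2: uniform convergence off small open sets.} Set $U_p=\bigcup_{i\ge p}G_i$. By the countable subadditivity in Proposition~\ref{prop:basic}(c), $\tc(U_p)\le 2^{1-p}$. Fix a point $x\in K_m^o\setminus U_p$. For every $i\ge\max(p,m)$ we have $x\in K_i^o$ and $x\notin G_i$, so $|F_{i+1}(x)-F_i(x)|\le 2^{-i}$. Hence $F_i$ converges uniformly on $K_m^o\setminus U_p$, and the limit is continuous there.

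\textbf{Step 3: define $\wt f$ and check quasi-continuity.} Define $\wt f=\lim_i F_i$ wherever the limit exists, and $\wt f=0$ elsewhere. The exceptional set lies in $\bigcap_p U_p$, which has capacity zero. Because the sets $K_m^o$ are open and exhaust $\Om$, continuity of $\wt f$ on each $K_m^o\setminus U_p$ gives continuity of $\wt f|_{\Om\setminus U_p}$. Since $\tc(U_p)\to 0$, $\wt f$ is quasi-continuous in the sense of Definition~\ref{defn:c-quasi-c}.

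\textbf{Step 4: $\wt f=f$ almost everywhere.} Since $f_k\to f$ in $L^2_{\rm loc}$, a further subsequence converges to $f$ almost everywhere, and therefore $\wt f=f$ a.e.

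\textbf{Main obstacle.} The delicate point is the domain issue. The functions $f_k$ are defined only on $\Om_{1/k}$, so Theorem~\ref{thm:cap-cauchy} must be applied compact by compact. This is why the exhaustion and the diagonal choice of $k_i$ are used. We also need each $G_i$ to be a genuine open set, so that $U_p$ is open as Definition~\ref{defn:c-quasi-c} requires; continuity of $F_i$ near $K_i$ is what guarantees this. Capacities of subsets of $K_i$ computed in $\Om$ versus a subdomain are compared via Corollary~\ref{cor:dom-equiv-c} where needed.
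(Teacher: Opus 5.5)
Your proof is correct and is essentially the paper's argument. Theorem~\ref{thm:cap-cauchy} yields a fast subsequence of the mollifications $f*\eta_{1/k}$ that converges uniformly off open sets of small capacity, and an exhaustion of $\Om$ then globalizes this, as in \cite[Theorem~2.13]{FOT94}.

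Your single-index bookkeeping, which controls the $i$-th increment only on $K_i$ and checks continuity locally on the open sets $K_m^o$, is cleaner than the paper's double-indexed sets $E_k(\ell)$ with $F_j=\bigcap_\ell F_j(\ell)$. As written, that set is contained in $D_1$, so its complement does not have small capacity. The only detail to add is in Step~4: $\bigcap_p U_p$ is Lebesgue-null by Corollary~\ref{cor:vol-c}. Hence the pointwise limit of the whole sequence agrees a.e.\ with the a.e.\ limit of a sub-subsequence.
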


\begin{proof} Let $D \subset \subset \Om$ be a subdomain. Let $f_k = f*\eta_{1/k}$ with $k=1,2,...$. Define 
$$
	E_k = \{x\in D: |f_k - f_{k+1}| >1/2^{k}\}, \quad
	G_j = \cup_{k\geq j} E_k.
$$ 
By choosing a subsequence we can assume that $\tc (E_k) \leq 2^{-k}$. Hence, $\tc(G_j) \leq 2^{-j}$ and 
$$
	\tc \left( \bigcap_{j=1}^\infty G_j \right) =0.
$$
If $x\in D\setminus G_j$, then 
$$
	|f_{N+s}(x) - f_N(x)| \leq \sum_{k\geq N} |f_{k+1} -f_k| \leq 1/2^N.
$$
for all $s \geq 1$ and $N\geq j$. 
This means $\{f_{k}\}$ restricted to $D\setminus G_j$ is uniformly convergent as $k\to \infty$. Let
$\wt f(x) :=\lim_{k\to \infty} f_k(x)$  for $x\in \cup_j (D\setminus G_j)$. Then, it is  a quasi-continuous function and $\wt f = f$ almost everywhere on $D$. 

Now we prove the quasi-continuity of $\wt f$ in $\Om$. Let $D_\ell$ be a sequence of increasing relative compact domains that $\Om = \cup_\ell D_\ell$.
Define 
$$
	E_k(\ell) = \{x\in D_\ell: |f_k - f_{k+1}| >1/2^{k}\}, \quad
	G_j (\ell)= \cup_{k\geq j} E_k(\ell).
$$ 
By diagonal argument we can pass to a subsequence satisfying  $\tc (E_k(\ell)) \leq 2^{-\ell - k}$ for every $\ell\geq 1$. Hence, $\tc(G_j(\ell)) \leq 2^{-\ell- j}$ and 
$$
	\tc \left( \bigcap_{j=1}^\infty G_j(\ell) \right) =0.
$$
If $x\in D_\ell\setminus G_j(\ell)$, then 
$$
	|f_{N+s}(x) - f_N(x)| \leq \sum_{k\geq N} |f_{k+1} -f_k| \leq 1/2^N.
$$
for all $s \geq 1$ and $N\geq j$. 
This means $\{f_{k}\}$ restricted on $D_\ell\setminus G_j(\ell)$ is uniformly convergent as $k\to \infty$. Let
\[\label{eq:quasi-continuity-def} \wt f(x) :=\lim_{k\to \infty} f_k(x) \quad   \text{for }
x\in \bigcup_{\ell\geq 1} \cup_{j\geq 1}[ D_\ell\setminus G_j (\ell)]. \]
Then, $\wt f = f$ almost everywhere on $\Om$. We verify the quasi-continuity.
The function restricted to 
$F_j(\ell) := D_\ell\setminus G_j(\ell)$ is continuous. Define $F_j = \cap_{\ell =1}^\infty F_j(\ell)$. We have 
$$
	\tc (\Om \setminus F_j) \leq \sum_{\ell=1}^\infty \tc (G_j(\ell)) \leq 1/2^{j}.
$$
Thus, the proof of theorem is completed.
\end{proof}

Finally we conclude the quasi-continuous modification statement in the introduction.

\begin{proof}[Proof of Corollary~\ref{cor:intro-quasi-mod}] The existence follows from Theorem~\ref{thm:quasi-mod} while the uniqueness up to a pluripolar set follows from Proposition~\ref{prop:ae-qe}.
\end{proof}

Next we define
\[ \label{eq:space-quasi-c} \wt W^*(\Om) = \{ g \in W^*(\Om): g \text{ is quasi-continuous}\}.
\]
It follows from Theorem~\ref{thm:quasi-mod} and Proposition~\ref{prop:ae-qe} that the equivalence class of $\wt W^*(\Om)$ in the sense of a.e. with respect to the Lebesgue measure is identical with the equivalence class in the sense q.e.  We have a generalization  of  $\eqref{eq:c-sublevel-set-est}$  as follows.

\begin{cor}\label{cor:cap-sublevel-gen}  Let $f \in \wt W^*(\Om)$ be such that either $f\geq 0$ or $f\leq 0$. Assume $\Om' \subset\subset \Om$. Define $E_s' = \{z\in \Om' : |f(z)|>s\}$ for $s>0$. Then,
$$
	\tc (E_s' , \Om') \leq \frac{1}{s^2} \|f\|_{W^*(\Om')}^2.
$$
\end{cor}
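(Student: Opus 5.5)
The plan is to reduce to Lemma~\ref{lem:cap-sublevel-set} (the continuous case) by smoothing, and to use the construction in the proof of Theorem~\ref{thm:quasi-mod} to handle the exceptional sets. We may assume $f\geq 0$; the case $f\leq 0$ follows by replacing $f$ with $-f$. Since $E_s'=\bigcup_m E'_{s+1/m}$ is an increasing union, Proposition~\ref{prop:basic}-(b) does not apply directly, because it only gives an inequality in the wrong direction. So for $s'<s$ we instead aim to prove $\tc(E_s',\Om')\leq \|f\|^2_{W^*(\Om')}/s'^2$ and then let $s'\uparrow s$.

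First we smooth. Set $f_k=f*\eta_{\veps_k}$. These are smooth and nonnegative on a neighborhood of $\bar D$ for any $D\subset\subset\Om'$. By Proposition~\ref{prop:smoothing}, $\limsup_k\|f_k\|_{W^*(D_k)}\leq \|f\|_{W^*(\Om')}$, where $D_k=\Om'_{\veps_k}$. Lemma~\ref{lem:cap-sublevel-set}, applied on $D_k$, gives $\tc(\{f_k>s'\}\cap D_k, D_k)\leq \|f_k\|^2_{W^*(D_k)}/s'^2$.

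The hard step is passing from the domain $D_k$ to $\Om'$ without losing a constant. Remark~\ref{rmk:c-equiv}-(b) warns that no comparison is known in general, and Corollary~\ref{cor:dom-equiv-c} loses an exponent. To avoid this, I would instead use the candidate function $v_k=-\min\{f_k/s',1\}$ directly on $\Om'$, after cutting off near $\d\Om'$. That cutoff changes the norm, so a cleaner route is to work with $f$ itself. Take the test function $v=-\min\{f/s',1\}\in W^*(\Om')$, which satisfies $\|v\|_{W^*(\Om')}\leq\|f\|_{W^*(\Om')}/s'$ by \eqref{eq:norm-max-min}. The issue is that $v\leq -1$ only on $E'_{s'}$, which need not be open, so $v\notin\Kc(E_s')$ in general. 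To fix this, use the quasi-continuity of $f$. Given $\veps>0$, choose an open set $G$ with $\tc(G,\Om')<\veps$ such that $f|_{\Om'\setminus G}$ is continuous. Then $E'_{s}\setminus G$ is relatively open in $\Om'\setminus G$, so $E'_{s}\cup G\supset U$ for some open set $U\supset E_s'$. Pick $w\in\Kc(G,\Om')$ with $\|w\|_*^2<\veps$, and set $u=\min\{v,w\}$. Then $u\leq -1$ a.e. on $(E'_{s'}\cap U)\cup G$, and this set contains an open neighborhood of $E_s'$ once we shrink $U$ to $U\cap(\{f>s'\}\cup G)$. Here $\{f>s'\}\cup G$ is open by the same relative-openness argument, and $f>s'$ holds pointwise on it outside $G$.

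It remains to bound the norm of $u=\min\{v,w\}$. Lemma~\ref{lem:norm-min-max} gives only $\|v\|_*+\|w\|_*$. That suffices here: $\tc(E_s',\Om')\leq(\|f\|_{W^*(\Om')}/s'+\sqrt\veps)^2$. We finish by letting $\veps\to0$ and then $s'\uparrow s$.

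The main obstacle I expect is precisely the openness bookkeeping. We must check that quasi-continuity, as given by Definition~\ref{defn:c-quasi-c}, is with respect to $\tc(\cdot,\Om)$ rather than $\tc(\cdot,\Om')$. This is harmless because Proposition~\ref{prop:basic}-(a) gives $\tc(G,\Om')\leq\tc(G\cap\Om',\Om)$. We must also check that the "a.e. $\leq -1$" requirement holds, using that $\tc$-null and $G$-parts do not spoil it, by Proposition~\ref{prop:ae-qe}.
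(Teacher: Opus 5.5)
Your proof is correct, but it takes a different route from the paper.

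\textbf{The paper's route.} The paper argues by approximation. Because $\Om'\subset\subset\Om$, the mollifications $f_k=f*\eta_{1/k}$ lie in $W^*(\Om')\cap C^0(\Om')$ on all of $\Om'$ for $k$ large. So the passage from $D_k$ to $\Om'$ that worries you in your second paragraph never arises. The paper then:
\begin{itemize}
\item applies Lemma~\ref{lem:cap-sublevel-set} to the open sets $\{f_k>s-\delta\}$;
\item uses the construction of Theorem~\ref{thm:quasi-mod}, together with $f=\wt f$ q.e., to get an open $G$ with $\tc(G)<\veps$ off which $f_k\to f$ uniformly, so that $E_s'\subset\{f_k>s-\delta\}\cup G$;
\item uses Proposition~\ref{prop:smoothing} to get $\|f_k\|_{W^*(\Om')}\to\|f\|_{W^*(\Om')}$.
\end{itemize}

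\textbf{Your route.} You use the quasi-continuity of $f$ directly. If $f|_{\Om\setminus G}$ is continuous, then $E_s'\cup(G\cap\Om')$ is open. Take $w$ nearly extremal for $G\cap\Om'$. Then $\min\{-\min\{f/s,1\},w\}$ is an admissible competitor for $E_s'$. Its norm is at most $\|f\|_{W^*(\Om')}/s+\sqrt{\veps}$ by Lemma~\ref{lem:norm-min-max} and \eqref{eq:norm-max-min}. The latter is legitimate here because $f\geq 0$ and the truncation level is positive.

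This is the device of Lemma~\ref{lem:defn-cap-mod} specialized to the present setting. It needs only the definition of quasi-continuity, the min-lemma and Proposition~\ref{prop:basic}-(a). It avoids Theorems~\ref{thm:cap-cauchy} and~\ref{thm:quasi-mod} and Propositions~\ref{prop:ae-qe} and~\ref{prop:smoothing}. The paper's proof instead recycles the approximation machinery it has just built, and implicitly has to cover the null set $\{f\neq\wt f\}$ by a small open set, which you never need.

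\textbf{Simplifications.}
\begin{itemize}
\item The auxiliary level $s'<s$ is unnecessary. The function $-\min\{f/s,1\}$ already equals $-1$ at every point of $E_s'$, and $E_s'\cup(G\cap\Om')$ is itself open.
\item Proposition~\ref{prop:ae-qe} is not needed for the a.e.\ requirement. You have $u\le -1$ pointwise on $E_s'\setminus G$, and $u\le w\le -1$ a.e.\ on $G\cap\Om'$.
\item The capacity comparison should read $\tc(G\cap\Om',\Om')\le\tc(G\cap\Om',\Om)\le\tc(G,\Om)$, with $w$ chosen in $\Kc(G\cap\Om')$ defined relative to $\Om'$.
\end{itemize}
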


\begin{proof} We may assume $f\geq 0$. Let $f_k = f*\eta_{1/k}$ and $\wt f$ as in the proof of Theorem~\ref{thm:quasi-mod}. We can choose $k\geq 1$ large enough so that all $f_k$ belong to $W^*(\Om') \cap C^0(\Om')$. For a Borel set we write now $\tc (E) = \tc(E,\Om')$ in this proof.

Let $\veps>0$. Since $f = \wt f$ q.e., there is an open set $G \subset \Om'$ such that $\tc(G) <\veps$ and $f_k \to f$ uniformly on $\Om'\setminus G$. Hence, we have for $s>\de>0$, there exists $k_0 = k_0(\de)$ such that  for $k\geq k_0$,
$$E_s'= \{z\in\Om': f > s \} \subset \{z\in \Om': f_k > s -\de \} \cup G.$$ Applying \eqref{eq:c-sublevel-set-est} for $f_k$ we get
$$ \tc (E_s') \leq 
\tc (\{f_k >s-\de\}) + \veps  \leq \|f_k\|_{W^*(\Om')}^2 / (s - \de)^2 +\veps.
$$  Letting $k\to \infty$, $\de\to$ and $\veps\to 0$ in this order and using Proposition~\ref{prop:smoothing} we get the desired inequality.
\end{proof}

Another nice corollary is local $L^2$-integrability of functions in $W^*(\Om)$ with respect to Monge-Amp\`ere measures of bounded psh functions. 
Let $\phi \in PSH(\Om)$ be such that $0\leq \phi \leq 1$. 
Define $\mu: = (dd^c\phi)^n$.
Let $f \in W^*(\Om)$ and let $\wt f$ be a quasi-continuous modification. Let $D \subset \subset \Om$ be a subdomain. By Theorem~\ref{thm:quasi-mod},  there exists a sequence $f_k:=f* \eta_{1/k}$ converges q.e. to $\wt f$ in $D$. If $g$ is another quasi-continuous modification, then $g = \wt f$ q.e. by Propostion~\ref{prop:ae-qe}. If $|f| \leq N$, then $|f_k| \leq N$ and $|\wt f| \leq N$ quasi-everywhere in $D$. By Lebesgue's dominated convergence theorem and Proposition~\ref{prop:L2-cap} we obtain for $K\subset D$
\[\label{eq:uni-bound-L2-defn}	
	\int_K (\wt f)^2 d\mu = \lim_{k\to \infty} \int_K f_k^2 d\mu \leq A \|f\|_{W^*(\Om)}^2,
\]
where $A>0$ is independent of $f$. 
Therefore, 
\[
	\int_K f^2 d\mu := \int_K (\wt f)^2 d\mu 
\]
is well-defined for a bounded function $f \in W^*(\Om)$. 
In general, we consider $f_N := \min\{|f|, N\}$ for $N>0$ and define
$$
	\int_K f^2 d\mu = \lim_{N\to +\infty} \int_K \wt f^2_N d\mu.
$$
The limit on the right hand side exists because it is an increasing sequence in $N$ and uniformly bounded by \eqref{eq:uni-bound-L2-defn}. Thus, the following integrals are well-defined.

\begin{defn}
\label{defn:L2-ma} Let $\phi \in PSH\cap L^\infty(\Om)$.
Define $\mu: = (dd^c\phi)^n$. Let $g \in W^*(\Om)$ and $K \subset \subset \Om$ be a compact subset.  If $g\geq 0$, then
$$
	\int_K g d\mu := \int_K \wt g d\mu,
$$
where $\wt g$ is a quasi-continuous modification. 
Generally, 
$$
	\int_K g d\mu := \int_K g^+ d\mu - \int_K g^- d\mu.
$$
\end{defn}

\section{Capacitability}

Let $\Om$ be a bounded open set in $\bC^n$. Our goal is to show that $\tc(\cdot, \Om)$ is a capacity in the sense of Choquet.  The first one is inspired by  an analogue result of \cite[Corollary~31]{Vigny} on compact K\"ahler manifolds. Additionally, we obtain the uniqueness of the extremal function in the local setting. Following \cite[Lemma~2.1.1]{FO87} we assume first the openness of the set and remove this condition later. 

\begin{lem} \label{lem:extr-funciton-o} Assume $E\subset\Om$ is an open subset. There is a unique $\Phi_E \in W^*(\Om)$ such that 
$\tc (E) = \|\Phi_E\|_*^2$. Moreover, $-1\leq \Phi_E \leq 0$ a.e. in $\Om$ and $\Phi_E=-1$ q.e. on $E$.
\end{lem}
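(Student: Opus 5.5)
The plan is to obtain $\Phi_E$ by the direct method of the calculus of variations, using the weak compactness already established in Corollary~\ref{cor:w-compactness}, and then to get uniqueness from a parallelogram-type convexity argument for $\|\cdot\|_*^2$. First, $\tc(E)<+\infty$: since $\Om$ is bounded, the constant $-1$ belongs to $W^*(\Om)$ (take $T=0$), and $-1\in\Kc(E)$. Take a minimizing sequence $v_j\in\Kc(E)$ with $\|v_j\|_*^2\to\tc(E)$. By \eqref{eq:reduction} and \eqref{eq:norm-max-min} we may replace $v_j$ by $\max\{v_j,-1\}$, so that $-1\le v_j\le 0$ and $v_j=-1$ a.e.\ on $E$; here we use that $E$ is open, so $\Kc(E)$ just requires $v\le -1$ a.e.\ on $E$. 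By Corollary~\ref{cor:w-compactness} a subsequence converges weakly in $W^{1,2}(\Om)$ to some $\Phi\in W^*(\Om)$ with $\|\Phi\|_*^2\le\liminf\|v_j\|_*^2=\tc(E)$.

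Next I must check that $\Phi\in\Kc(E)$, which gives $\|\Phi\|_*^2=\tc(E)$. This is the closedness step, and it follows from Proposition~\ref{prop:weak-conv-cp}. Apply it on the Borel set $E$ with $f_j={\bf 1}_E(v_j+1)=0$ and $g_j=0$ to get $\Phi=-1$ a.e.\ on $E$. Similarly, on $\Om$ the pointwise bounds $-1\le v_j\le 0$ pass to the weak limit, for instance by applying the proposition to $f_j=v_j+\frac12$ and $g_j=\frac12$. So $-1\le\Phi\le0$ a.e., and since $E$ is open, $\Phi\in\Kc(E)$.

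For uniqueness, suppose $f,g\in\Kc(E)$ both realize $\tc(E)$. Then $h=(f+g)/2\in\Kc(E)$, because the conditions are convex. Let $T_f,T_g$ be minimum currents. By \eqref{eq:basic-ineq} with $s=t=1$,
$$dh\wed d^c h\le \tfrac12\,(df\wed d^c f+dg\wed d^c g)\le \tfrac12(T_f+T_g),$$
so $\inf_{\Ga_h}\|T\|_\Om\le\frac12(\|T_f\|_\Om+\|T_g\|_\Om)$. The $L^2$ part satisfies the parallelogram identity
$$\|h\|_{L^2}^2=\tfrac12\|f\|_{L^2}^2+\tfrac12\|g\|_{L^2}^2-\big\|\tfrac{f-g}{2}\big\|_{L^2}^2.$$
Adding these gives $\tc(E)\le\|h\|_*^2\le\tc(E)-\frac14\|f-g\|_{L^2}^2$, hence $f=g$ a.e.; that is, uniqueness holds as elements of $W^*(\Om)$. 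The point to watch here is that the mass part is only convex, not strictly convex, so strictness has to come entirely from the $L^2$ term. That is why the argument works with $\|\cdot\|_*^2$ rather than with the norm itself.

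Finally, for the quasi-everywhere statement, I will identify $\Phi_E$ with its quasi-continuous modification $\wt\Phi$ from Theorem~\ref{thm:quasi-mod}. The function $-1-\wt\Phi$ is quasi-continuous and $\ge0$ a.e.\ on the open set $E$, so Proposition~\ref{prop:ae-qe} gives $\wt\Phi\le-1$ q.e.\ on $E$. Applying the same proposition on $\Om$ to $\wt\Phi+1\ge0$ gives $\wt\Phi\ge-1$ q.e. Together these yield $\wt\Phi=-1$ q.e.\ on $E$. I expect the only delicate point to be the passage of the constraint $\Phi\le -1$ on $E$ to the weak limit, and Proposition~\ref{prop:weak-conv-cp} handles it precisely because $E$ is open. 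This openness is also why the general Borel case must be treated later, as the paper indicates.
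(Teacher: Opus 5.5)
Your proposal is correct and follows essentially the same route as the paper. The paper likewise takes a minimizing sequence truncated to $-1\le v_j\le 0$, and uses weak compactness (Corollary~\ref{cor:w-compactness}) together with Proposition~\ref{prop:weak-conv-cp} to keep the limit in $\Kc(E)$; it is openness that turns ``$=-1$ a.e.\ on $E$'' into membership in $\Kc(E)$. It then applies Proposition~\ref{prop:ae-qe} to the quasi-continuous modification for the q.e.\ statement, and gets uniqueness among minimizers in $\Kc(E)$ via the midpoint $\tfrac12(f+g)$, with strictness coming only from the $L^2$ term; your estimate $\|h\|_*^2\le \tc(E)-\tfrac14\|f-g\|_{L^2}^2$ is a cleaner form of the paper's inequality $2\int u_1u_2\ge\int u_1^2+\int u_2^2$.
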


\begin{proof} We first prove the existence. Let $\{u_j\}_{j\geq 1}$ and $-1\leq u_j \leq 0$ be a sequence in $\Kc(E)$ such that $\tc(E) = \lim_{j\to \infty} \|u_j\|_*^2$. It follows from Corollary~\ref{cor:w-compactness} that by passing to a subsequence $u_j$ converges weakly in $W^{1,2}(X)$ to  $u \in W^*(\Om)$ and $\|u\|_* \leq \liminf_j \|u_j\|_*$.  By Proposition~\ref{prop:weak-conv-cp} we have $u =-1$ a.e. on $E$ and $-1\leq u\leq 0$ in $\Om$.  The proof of this proposition used only the properties of Lebesgue points of functions in $L^p(\Om)$. Since $E$ is open,  $u\in \Kc(E)$ (this is the place where the openness is used) and $\tc(E) = \|u\|_*^2$. Of course we can take $\wt u$ in the place of $u$.  By Proposition~\ref{prop:ae-qe} and the openness of $E$, we have $\wt u = -1$ q.e. 

Next,  we show the uniqueness. Suppose $u_1$ and $u_2$ are such two functions. Let $T_1$ and $T_2$ are the minimum closed positive currents of $u_1$ and $u_2$, respectively. Hence,
$$
	\tc(E) = \|u_1\|_{L^2(\Om)}^2 + \|T_1\|_\Om = \|u_2\|_{L^2(\Om)}^2 + \|T_2\|_\Om.
$$  
Define $v = (u_1+u_2)/2$. Clearly, 
$
	dv \wed d^c v \leq (T_1 +T_2)/2
$
and therefore, 
$$\begin{aligned}
	\tc(E) &\leq \|v\|_{L^2(\Om)}^2 + \|T\|_\Om \\&=\frac{1}{4} (\|u_1\|_{L^2(\Om)}^2 + \| u_2\|_{L^2(\Om)}^2) + \frac{1}{2} \|u_1u_2\|_{L^1(\Om)} + \frac{1}{2} (\|T_1\|_\Om + \|T_2\|_\Om).
\end{aligned}$$
This implies that 
$$
	2\int_\Om u_1 u_2  dx \geq \int_\Om u_1^2 \;dx + \int_\Om u_2^2 \; dx.
$$
Thus, $u_1 = u_2$ a.e. 
\end{proof}

It is easy to see that if $v\in W^*(\Om)$ and $v\leq -1$ a.e in a neighborhood of $E$, then $\wt v\in \wt W^*(\Om)$ and $\wt v\leq -1$ q.e on $E$. We show that the infimum in the definition of the capacity can be considered in this bigger space.  

\begin{lem}\label{lem:defn-cap-mod} For a Borel set $E$, 
 $$
 	\Kc'(E) := \left\{ v \in W^*(\Om) \;\rvert\; \wt v \leq  -1 \text{ q.e. on $E$ and } \wt v\leq 0 \text{ q.e.}\right\}.
$$
Then,
$$
	\tc(E) = \inf\{\|v\|_*^2 : v \in \Kc'(E)\}.
$$
\end{lem}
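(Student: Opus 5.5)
The inequality $\inf\{\|v\|_*^2 : v\in \Kc'(E)\} \leq \tc(E)$ is the easy half. If $v\in\Kc(E)$, then $v\leq -1$ a.e. on an open set $G\supset E$ and $v\leq 0$ a.e. Applying Proposition~\ref{prop:ae-qe} on $G$ to $-1-\wt v$, and on $\Om$ to $-\wt v$, gives $\wt v\leq -1$ q.e. on $G\supset E$ and $\wt v\leq 0$ q.e. So $\Kc(E)\subset \Kc'(E)$, and taking infima gives this half.

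For the reverse inequality, fix $v\in \Kc'(E)$. Replacing $v$ by $\max\{v,-1\}$ does not increase the norm by \eqref{eq:norm-max-min}, and it keeps $v$ in $\Kc'(E)$. So we may assume $-1\leq \wt v\leq 0$ q.e. Let $P\subset E$ be the set where $\wt v>-1$; then $\tc(P)=0$. By subadditivity (Proposition~\ref{prop:basic}-(c)), $\tc(E)\leq \tc(E\setminus P)$, so it suffices to show $\tc(E\setminus P)\leq \|v\|_*^2$.

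Fix $\veps>0$ and $0<\de<1$. By quasi-continuity of $\wt v$ (Theorem~\ref{thm:quasi-mod}) there is an open set $G$ with $\tc(G)<\veps$ such that $\wt v|_{\Om\setminus G}$ is continuous. Define
$$U:=\{x\in \Om\setminus G: \wt v(x)<-1+\de\}\cup G .$$
The first set is relatively open in $\Om\setminus G$, i.e. it equals $O\cap(\Om\setminus G)$ for some open $O$; hence $U=O\cup G$ is open. Moreover $U\supset E\setminus P$. Now choose $g\in \Kc(G)$ with $\|g\|_*^2<\veps$, and put
$$u:=\min\{v/(1-\de),\, g\}.$$
On $U\setminus G$ we have $\wt v/(1-\de)<-1$ pointwise, hence a.e.; here we use that q.e. statements hold a.e. by Corollary~\ref{cor:vol-c}. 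On $G$ we have $g\leq -1$ a.e. Also $u\leq 0$ a.e., since $g\leq 0$. Therefore $u\in \Kc(E\setminus P)$.

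Lemma~\ref{lem:norm-min-max} gives $\|u\|_*\leq \|v\|_*/(1-\de)+\sqrt{\veps}$, and hence
$$\tc(E\setminus P)\leq \bigl(\|v\|_*/(1-\de)+\sqrt\veps\bigr)^2 .$$
Letting $\veps,\de\to 0$ and then taking the infimum over $v\in\Kc'(E)$ completes the argument. The one delicate point is the openness of $U$, which is what converts the quasi-everywhere condition into the ``neighbourhood'' condition required in $\Kc(E)$. It is handled exactly by the relative-openness observation above, with the small exceptional open set $G$ absorbed through the auxiliary function $g$.
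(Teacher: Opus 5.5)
Your proof is correct and follows essentially the paper's route: quasi-continuity gives an open $G$ of small capacity, $\{\wt v<-1+\de\}$ is relatively open in $\Om\setminus G$, its union with $G$ is an open neighbourhood of $E$, and a competitor for $G$ patches the bad part. The only differences are cosmetic. You use $\min\{v/(1-\de),g\}$ with a near-minimizer $g\in\Kc(G)$ instead of the paper's sum $h+\Phi_G$ with the extremal function of Lemma~\ref{lem:extr-funciton-o}. You also discard the exceptional set by subadditivity, whereas the paper absorbs it into $G$ by outer regularity.
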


\begin{proof} Since $\Kc(E) \subset \Kc'(E)$, we have $\tc (E)$ is greater than the infimum. It remains 
to show the reverse  inequality. 
Let us consider $h\in  \Kc'(E)$ and fix a quasi-continuous modification $\wt h$ of $h$ (Theorem~\ref{thm:quasi-mod}). Given $0< \veps <<1$ we choose an open set $G:= G_\veps$ such that $\tc (G)<\veps$ and $\wt h$ restricted to $\Om\setminus G$ is continuous and $\wt h \leq -1$ on $E \cap (\Om\setminus G)$. Denote  by $u$ the function $\Phi_{G}$ of  the open set $G$ in Lemma~\ref{lem:extr-funciton-o}. Then
$$
	D_\veps = \{x\in \Om \setminus G: \wt h < -1+\veps \} \cup G
$$
is an open subset in $\Om$ and $E\subset D_\veps$. Moreover, $h + u \leq -1 + \veps$ a.e. on $D_\veps$ and it is negative in $\Om$.
Therefore,
$$\begin{aligned}
	\tc (E) &\leq \tc (D_\veps) \leq \frac{\|h+u\|_*^2}{(1-\veps)^2}  \\& \leq \frac{ (\|h\|_* +\|u\|_*)^2}{(1-\veps)^2} \\ &\leq \frac{ (\|h\|_* + \sqrt{\veps})^2}{(1-\veps)^2}.
\end{aligned}$$
By letting $\veps \searrow 0$, $\tc(E) \leq \|h\|_*^2$. Since $h$ is arbitrary,  we conclude  the reverse  inequality.
\end{proof}

We need to improve the a.e. convergence  to the q.e. one in order  to remove the openness condition in Lemma~\ref{lem:extr-funciton-o}. This is the most difficult step where we have to use the functional capacity in the Dirichlet spaces in Section~\ref{ss:dirichlet-cap}.

\begin{lem}\label{lem:qe-limit} Let $\{u_j\}_{j\geq 1} \subset \Kc'(E)$ be such that  $ \|u_j\|_*^2 \leq A$ for $j\geq 1$. Assume $u_j $ converges weakly in $W^{1,2}(\Om,\bR)$ to $u\in W^*(\Om)$. If $u_j = -1$ q.e. on $E$, then $u = -1$ q.e. on $E$.
\end{lem}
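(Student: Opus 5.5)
Since $u_j$ converges only weakly in $W^{1,2}(\Om)$, it converges only a.e. after averaging, and the issue is to promote this to quasi-everywhere information on $E$, which may have Lebesgue measure zero. The plan is to pass from weak convergence to strong convergence in $W^*$-norm of convex combinations, and then use the capacity estimate for sublevel sets to control exceptional sets.

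First, by Mazur's lemma, applied to the weakly convergent sequence $u_j \to u$ in $W^{1,2}(\Om)$, there are convex combinations $w_k = \sum_{j\ge k} \la^{(k)}_j u_j$ (finite sums) converging to $u$ strongly in $W^{1,2}(\Om)$. Each $w_k$ satisfies $\wt w_k = -1$ q.e. on $E$, since a finite convex combination of quasi-continuous representatives is quasi-continuous and a countable union of capacity-zero sets has capacity zero (Proposition~\ref{prop:basic}-(c)). By the triangle inequality, $\|w_k\|_* \le \sqrt A$. However, strong $W^{1,2}$ convergence does not give $\tc(\{|\wt w_k - \wt u|>\de\})\to 0$, because $\tc$ is controlled by the $W^*$-norm rather than the Sobolev norm, and $\|w_k - u\|_*$ need not tend to zero, as Proposition~\ref{prop:density} shows. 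This is the main obstacle.

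To get around it, I would use the Dirichlet-space capacity of Section~\ref{ss:dirichlet-cap}, as the remark before the lemma suggests. Fix a closed positive current $\te = [dd^c(\phi+\de|z|^2)]^{n-1}$ with $\phi$ bounded psh. The space of functions with $\int df\wed d^cf\wed\te + \int f^2\,dx<\infty$ is a Hilbert space $\mathcal H_\te$, and by the estimates of Section~\ref{sec:int} (Corollary~\ref{cor:L2-cap-general} together with the mass comparison, Lemma~\ref{lem:CP-mass}), $W^*(\Om)$ embeds boundedly into $\mathcal H_\te$ locally. In this Hilbert space the bounded sequence $u_j$ has Cesàro means (Banach--Saks), or convex combinations, converging \emph{strongly} to $u$. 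The corresponding Dirichlet capacity $c_\te$ then satisfies the standard Fukushima-type estimate $c_\te(\{|\wt f|>s\}) \le s^{-2}\|f\|_{\mathcal H_\te}^2$. Passing to a fast subsequence then shows that $\wt w_k \to \wt u$ outside a set $P_\te$ of $c_\te$-capacity zero, hence $\wt u=-1$ on $E\setminus(P_\te\cup N)$, where $N$ is the $\tc$-null set collecting the exceptional sets of all $w_k$.

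Finally, I need to show that $P=\{x\in E:\wt u(x)\neq -1\}$ has $\tc(P)=0$. The previous step shows that $P$ has zero $c_\te$-capacity for every admissible $\te$ (all $\phi$, $\de$). Then I invoke the characterization of pluripolar sets via this family of capacities, the analogue of Fornaess--Sibony/\cite{FO84} stated as Corollary~\ref{cor:FO-polar}: a set null for all $c_\te$ is pluripolar. Combined with Lemmas~\ref{lem:c-cap} and \ref{lem:cap-c}, this gives $\tc(P)=0$. Measurability issues are handled by outer regularity (Lemma~\ref{lem:outer-reg}), working locally on $D\subset\subset\Om$ and using Corollary~\ref{cor:dom-equiv-c} to pass between domains. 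The delicate point I expect is verifying that quasi-continuity with respect to $\tc$ implies quasi-continuity with respect to $c_\te$, so that the Dirichlet estimate applies to $\wt u$. I would obtain this from the domination of $c_\te$ by $cap(\cdot,\Om)$ through the $L^2((dd^c\phi)^n)$ bounds in Proposition~\ref{prop:L2-cap}.
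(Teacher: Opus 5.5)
Your proposal is correct and follows essentially the paper's route: embed $\chi u_j$ into the Dirichlet space $\cF$ (Theorem~\ref{thm:embedding}), take convex combinations (Mazur, or Banach--Saks) converging strongly in $\|\cdot\|_\te$, transfer quasi-continuity via the domination of $\tc_\te$ by $cap(\cdot,\Om)$ (Lemma~\ref{lem:FO-polar}-(b)), extract $\tc_\te$-q.e.\ convergence of a subsequence, and conclude with Corollary~\ref{cor:FO-polar}. The only difference is cosmetic: you show directly that the exceptional set is $\tc_\te$-null for every admissible $\te$, whereas the paper argues by contradiction with a single $\te$ chosen so that $\tc_\te(F)>0$.
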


\begin{proof} Without loss of generality we may assume $u_j$'s and $u$ are quasi-continuous and $-1\leq u_j, u \leq 0$. Define $F= \{u>-1\} \cap E$.  We wish to show that $\tc(F)=0$. By subadditivity it is enough to show that $\tc(F \cap K_j)=0$ for an exhaustive sequence of compact set $K_j \uparrow \Om$. In other words, we may assume $F\subset \subset \Om$ and $\Om$ is a strictly pseudoconvex domain.

Arguing by contradiction,  we suppose  that $\tc (F)>0$. Lemma~\ref{lem:c-cap} implies $cap (F)>0$. According to the characterization in Corollary~\ref{cor:FO-polar} there exists $\phi \in PSH(\Om)$, $-1\leq \phi \leq 0$ and  $0<\de \leq 1$ such that the capacity $\tc_\te$ associated with $\te = (dd^c \phi + \de\om)^{n-1}$ satisfying  \[\label{eq:cap-te-pos}\tc_\te (F)>0.\] This capacity is defined in \eqref{eq:cap-te} on the Dirichlet space $\cF$ (Definition~\ref{defn:O}), where $\cF$ is identified with the real Hilbert space $H_0^1(\Om, \te)$ (Lemma~\ref{lem:identification}). The latter space is equipped with the inner product $\lc \cdot, \cdot\rc_\te$  in \eqref{eq:inner-prod-te} and  the $\|\cdot\|_\te$-norms in \eqref{eq:norm-te}.

Let $\chi \in C^\infty_c(\Om)$ be such that $0\leq \chi \leq 1$ and $\chi =1$ on a neighborhood of $\bar E$. By Theorem~\ref{thm:embedding}, $v_j:=\chi u_j \in \cF$ and $v:= \chi u\in \cF$ whose $\|\cdot\|_\te$-norms  are uniformly bounded. Clearly, $v_j \to v$  weakly in $W^{1,2}(\Om, \bR)$. So, $\lc v_j, g\rc_\te \to \lc v, g\rc_\te$ for every $g\in C^\infty_c(\Om,\bR)$. 
Since the latter space is dense in $\cF$, it follows  that $v_j \to v$ weakly in $(\cF, \|\cdot\|_\te)$.
Mazur's lemma  implies there is a convex combination $\Phi_\ell= \sum_{j=1}^\ell a_j v_j$, where $a_j \geq 0$ and $\sum_{j=1}^\ell a_j=1$, converges  to $v$ in $\|\cdot\|_\te$-norm. 

We are ready to get a contradiction. By the assumption for $j\geq 1$, $v_j =-1$ on $E\setminus P_j$ where $P_j$ is pluripolar. Hence, $\Phi_\ell =-1$ on $E$ outside a pluripolar set $P'=
\cup_{j\geq 1} P_j$.
Thanks to Lemma~\ref{lem:FO-polar}-(b) each $\Phi_\ell$ is quasi-continuous with respect to  with respect to $\tc_\te(\cdot)$. By \cite[Theorem~2.1.4]{FOT94} for $\{\Phi_\ell\}_{\ell\geq 1}$, we infer that $\Phi_\ell \to v$ point-wise on $\Om\setminus P''$ where $\tc_\te(P'') =0$. 
Hence, $u=v=-1$ on $E\setminus ( P' \cup P'')$. By definition  $F \subset P' \cup P''$. Invoking Corollary~\ref{cor:FO-polar} once more we get  $\tc_\te(F) =0$. This is impossible by \eqref{eq:cap-te-pos}. 
\end{proof}

Thanks to the above lemma the function $\Phi_E$ obtained  in the proof of Lemma~\ref{lem:extr-funciton-o} belongs to $\Kc'(E)$ for every Borel set. Thus, Lemma~\ref{lem:defn-cap-mod} is now valid in general.

\begin{cor} \label{cor:extr-function} Let $E\subset\Om$ be a Borel  subset. There is a unique $\Phi_E \in W^*(\Om)$ such that 
$\tc (E) = \|\Phi_E\|_*^2$. It satisfies that $-1\leq \Phi_E \leq 0$ a.e. and $\Phi_E=-1$ q.e. on $E$.
\end{cor}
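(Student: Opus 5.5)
The plan is to repeat the argument of Lemma~\ref{lem:extr-funciton-o}, using $\Kc'(E)$ instead of $\Kc(E)$. Lemma~\ref{lem:qe-limit} then replaces the single step where openness of $E$ was needed.

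\emph{Existence.} By Lemma~\ref{lem:defn-cap-mod} we have $\tc(E)=\inf\{\|v\|_*^2 : v\in\Kc'(E)\}$; if $\tc(E)=+\infty$ there is nothing to prove, so assume it is finite. Pick a minimizing sequence $u_j\in\Kc'(E)$. Replacing $u_j$ by $\max\{u_j,-1\}$ (a cut at a constant) does not increase the norm, by \eqref{eq:norm-max-min}. Taking quasi-continuous modifications, we may therefore assume $-1\le \wt u_j\le 0$ q.e., so that $\wt u_j=-1$ q.e. on $E$. This uses the facts that the maximum of quasi-continuous functions is quasi-continuous and that a.e. inequalities upgrade to q.e. ones via Proposition~\ref{prop:ae-qe}. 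The norms are bounded, so Corollary~\ref{cor:w-compactness} gives a subsequence converging weakly in $W^{1,2}(\Om)$ to some $u\in W^*(\Om)$ with $\|u\|_*^2\le \liminf\|u_j\|_*^2=\tc(E)$. Proposition~\ref{prop:weak-conv-cp}, applied with $E=\Om$, gives $-1\le u\le 0$ a.e., and hence $-1\le\wt u\le 0$ q.e. by Proposition~\ref{prop:ae-qe}, since $\wt u$ is quasi-continuous on $\Om$. Lemma~\ref{lem:qe-limit} gives $\wt u=-1$ q.e. on $E$. Thus $u\in\Kc'(E)$, so $\|u\|_*^2\ge\tc(E)$ and equality holds. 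We set $\Phi_E:=u$.

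\emph{Uniqueness.} This is the convexity argument from Lemma~\ref{lem:extr-funciton-o}. Suppose $u_1,u_2$ are two minimizers with minimum currents $T_1,T_2$. Then $v=(u_1+u_2)/2$ lies in $\Kc'(E)$: quasi-continuity and the q.e. bounds are preserved under averaging, and a countable union of capacity-zero sets still has capacity zero by Proposition~\ref{prop:basic}(c). By \eqref{eq:basic-ineq} we have $dv\wed d^cv\le (T_1+T_2)/2$. Lemma~\ref{lem:defn-cap-mod} then gives
$$\tc(E)\le \|v\|_{L^2}^2+\tfrac12(\|T_1\|_\Om+\|T_2\|_\Om).$$
Subtracting $\tc(E)=\tfrac12(\|u_1\|_*^2+\|u_2\|_*^2)$ yields $\int_\Om (u_1-u_2)^2\,dx\le 0$, so $u_1=u_2$ a.e. Hence they agree as elements of $W^*(\Om)$, and their quasi-continuous modifications agree q.e. by Proposition~\ref{prop:ae-qe}.

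\emph{Main obstacle.} The delicate point is passing to the limit in the condition ``$=-1$ q.e. on $E$''. For a non-open Borel $E$, weak $W^{1,2}$ convergence only yields a.e. information, and that is useless on a Lebesgue-null $E$. This is exactly what Lemma~\ref{lem:qe-limit} supplies, so we must check that its hypotheses hold: the $u_j$ lie in $\Kc'(E)$, have uniformly bounded norms, converge weakly in $W^{1,2}$, and equal $-1$ q.e. on $E$ after truncation. It is also worth verifying that truncation preserves membership in $\Kc'(E)$, which holds because $\max\{\wt u_j,-1\}$ is a quasi-continuous modification of $\max\{u_j,-1\}$.
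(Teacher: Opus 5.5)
Your proof is correct and follows the paper's route. The paper also reruns the proof of Lemma~\ref{lem:extr-funciton-o}: Lemma~\ref{lem:qe-limit} puts the weak limit of a minimizing sequence into $\Kc'(E)$, Lemma~\ref{lem:defn-cap-mod} gives minimality, and uniqueness comes from the same convexity argument. The only cosmetic differences are that you take the minimizing sequence from $\Kc'(E)$ rather than $\Kc(E)$, and that your case $\tc(E)=+\infty$ never occurs, since $v\equiv -1$ gives $\tc(E)\le |\Om|$.
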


Now we prove  the main result of this section which is the content of Theorem~\ref{thm:intro-choquet-c}. The proof is more detail than the one in \cite[Theorem~30]{Vigny}.

\begin{thm} \label{thm:choquet-c} Let $\Om$ be a bounded open set in $\bC^n$. The set function on Borel sets $E\mapsto \tc (E)= \tc (E,\Om)$  satisfies:
\begin{itemize}
\item[(a)] $\tc (E_1) \leq \tc (E_2)$ for $E_1\subset E_2\subset \Om$.
\item[(b)] if $K_1 \subset K_2 \subset \cdots$ are compact sets in $\Om$ and $K = \cap K_j$, then $\tc (K) =\lim_{j\to \infty} \tc (K_j)$.
\item[(c)]  if $E_1\subset E_2 \subset \cdots \subset \Om$ are Borel sets and $E= \cup E_j$, then $\tc (E) = \lim_{j\to \infty} \tc (E_j)$.
\end{itemize}
\end{thm}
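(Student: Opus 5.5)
Parts (a) and (b) are already available. Monotonicity (a) is Proposition~\ref{prop:basic}-(a). For (b), the statement as typed has $K_1\subset K_2\subset\cdots$, but since $K=\cap K_j$ it is meant to be a decreasing sequence, so this is exactly Proposition~\ref{prop:basic}-(d). The real content is (c), continuity along increasing unions of Borel sets. By Proposition~\ref{prop:basic}-(b) we have $\lim_j\tc(E_j)\le\tc(E)$, and the limit exists by monotonicity. So only $\tc(E)\le\lim_j \tc(E_j)=:L$ remains, and we may assume $L<\infty$.

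The plan is to use the extremal functions of Corollary~\ref{cor:extr-function}. Let $\Phi_j:=\Phi_{E_j}$ be the unique function with $\tc(E_j)=\|\Phi_j\|_*^2$, $-1\le\Phi_j\le 0$ and $\Phi_j=-1$ q.e. on $E_j$; we take quasi-continuous representatives. Then $\|\Phi_j\|_*^2\le L$ for all $j$. By Corollary~\ref{cor:w-compactness}, after passing to a subsequence, $\Phi_j\to u$ weakly in $W^{1,2}(\Om)$ with $u\in W^*(\Om)$ and $\|u\|_*^2\le\liminf_j\|\Phi_j\|_*^2=L$. Weak limits preserve $-1\le u\le 0$ a.e., for instance via Proposition~\ref{prop:weak-conv-cp}; hence $\wt u\le 0$ q.e. by Proposition~\ref{prop:ae-qe}.

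The key step is to show that $\wt u=-1$ q.e. on each $E_k$. Fix $k$. For every $j\ge k$ we have $E_k\subset E_j$, so $\Phi_j=-1$ q.e. on $E_k$, and hence $\Phi_j\in\Kc'(E_k)$ with uniformly bounded norms. Applying Lemma~\ref{lem:qe-limit} to the tail sequence $\{\Phi_j\}_{j\ge k}$ with the set $E_k$ gives $u=-1$ q.e. on $E_k$. Since $E=\cup_k E_k$ and a countable union of sets of capacity zero has capacity zero (Proposition~\ref{prop:basic}-(c)), we get $u=-1$ q.e. on $E$. Thus $u\in\Kc'(E)$. By Lemma~\ref{lem:defn-cap-mod}, which holds for general Borel sets once Lemma~\ref{lem:qe-limit} is available, as remarked before Corollary~\ref{cor:extr-function}, this yields $\tc(E)\le\|u\|_*^2\le L$, which is the required inequality.

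I expect the main obstacle to be the passage from weak $W^{1,2}$-convergence to a quasi-everywhere statement. A.e. information alone is insufficient for non-open $E$, because $E_k$ may have Lebesgue measure zero while having positive capacity. This difficulty is entirely absorbed by Lemma~\ref{lem:qe-limit}, which relies on the Dirichlet-space machinery and Mazur's lemma. The remaining points are routine: checking that $u\le 0$ q.e., which holds since $u\le 0$ a.e. and $\wt u$ is quasi-continuous, and handling the subsequence extraction, which is harmless because the $\tc(E_j)$ increase to $L$.
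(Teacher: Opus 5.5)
Your proof is correct and follows essentially the same route as the paper: take the extremal functions of the $E_j$, extract a weakly convergent subsequence, upgrade to a quasi-everywhere statement via Lemma~\ref{lem:qe-limit}, and conclude with Lemma~\ref{lem:defn-cap-mod}. Your step of applying Lemma~\ref{lem:qe-limit} to the tails $\{\Phi_j\}_{j\ge k}$ on each fixed $E_k$ and then using countable subadditivity makes explicit a point the paper glosses over. The paper applies the lemma directly on $E$, although each $u_j$ is only known to equal $-1$ q.e.\ on $E_j$.
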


\begin{proof} The items (a) and (b) are proved in Proposition~\ref{prop:basic}, therefore  it remains only to prove (c). It is enough to show that $\tc (E) \leq \lim_{j\to \infty} \tc (E_j)$ as the other direction is obvious by (a).  By Lemma~\ref{lem:defn-cap-mod} there is  $u_j \in \Kc'(E_j)$  satisfying 
$$
	\tc (E_j) = \|u_j\|_*^2.
$$
We may assume $\|u_j\|_*$ is bounded uniformly in $j\geq 1$ (otherwise the conclusion is true). Thus, there exists a subsequence converging weakly to $u\in W^*(\Om)$. In particular, $u_j \to u$ weakly in $L^2(\Om)$. Proposition~\ref{prop:weak-conv-cp} implies that $u = -1$ a.e. on $E= \cup_j E_j$ and $u\leq 0$ in $\Om$. Lemma~\ref{lem:qe-limit} and Proposition~\ref{prop:ae-qe} yield $\wt u = -1$ q.e. on $E$ and $\wt u \leq 0$ in $\Om$, respectivly. So, $\tc (E) \leq \|u\|_*^2 \leq \liminf_{j\to \infty} \|u_j\|_*^2.$ 
\end{proof}

Since $\tc(\cdot,\Om)$ is a capacity in the sense of Choquet,  it is inner regular, i.e., for every Borel sets
$$
	\tc (E) = \sup \{ \tc (K) : K \subset E \text{ is compact}\}.
$$ 
This combined with 
 Lemma~\ref{lem:cap-c} and Lemma~\ref{lem:c-cap} give a useful observation.

\begin{cor}\label{cor:pluripolar-sets-c} Let $E\subset \Om$ be a Borel set. $E$ is pluripolar if and only if $\tc(E,\Om) =0$.
\end{cor}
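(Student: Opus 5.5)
The plan is to reduce both directions to the corresponding facts for the Bedford--Taylor capacity, using Lemmas~\ref{lem:c-cap}, \ref{lem:cap-c}, the inner regularity from Theorem~\ref{thm:choquet-c}, and countable subadditivity (Proposition~\ref{prop:basic}-(c)). Recall the classical result of Bedford and Taylor \cite{BT82}: for a strictly pseudoconvex $\Om'$, a Borel set $P\subset\Om'$ is pluripolar if and only if $cap(P,\Om')=0$, equivalently $cap(P\cap K,\Om')=0$ for every compact $K\subset \Om'$. Since $\Om$ here is only a bounded open set, the first step is to localize. Cover $\Om$ by countably many balls $B_i=B(a_i,r_i)$ with $B(a_i,2r_i)\subset\subset\Om$. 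Pluripolarity is a local property (Josefson's theorem), and $\tc$ is countably subadditive. Hence it suffices to relate $\tc(E\cap \bar B_i,\Om)$ to pluripolarity of $E\cap \bar B_i$.

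\emph{Pluripolar implies $\tc(E,\Om)=0$.} Set $\Om_i=B(a_i,2r_i)$, which is strictly pseudoconvex. If $E$ is pluripolar, then $cap(E\cap\bar B_i,\Om_i)=0$. Lemma~\ref{lem:c-cap} applies since $E\cap\bar B_i\subset\subset\Om_i$, and gives $\tc(E\cap \bar B_i,\Om_i)=0$. Passing from $\Om_i$ to $\Om$ is the delicate point: monotonicity in Proposition~\ref{prop:basic}-(a) goes the wrong way. However, Corollary~\ref{cor:dom-equiv-c} gives $\tc(\cdot,\Om)\le A[\tc(\cdot,\Om_i)]^{1/n}$ on subsets of $\bar B_i$. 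That corollary is stated for $\Om$ strictly pseudoconvex, so I instead apply it to $\Om_i\subset\Om_i'$, where $\Om_i'$ is a slightly larger ball still compactly contained in $\Om$. This yields $\tc(E\cap \bar B_i,\Om_i')=0$. To get to $\Om$ itself, I use the following direct argument. Take $v\in\Kc(E\cap\bar B_i)$ relative to $\Om_i'$ with $-1\le v\le0$ and small norm. Let $\chi$ be a cut-off equal to $1$ on $\Om_i$ and supported in $\Om_i'$. By the product estimate (Proposition on products in Section 2, with $\|dv\wed d^cv\|$ controlled), $\chi v$ extends by zero to an element of $\Kc(E\cap \bar B_i)$ relative to $\Om$. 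Its norm is bounded by $C(\|v\|_*+\|v\|_{L^2})$, using Lemma~\ref{lem:extra-grad}-type control of $v^2 d\chi\wed d^c\chi\le v^2 A\om$. So $\tc(E\cap\bar B_i,\Om)\le C\,\tc(E\cap \bar B_i,\Om_i')=0$. Summing over $i$ by subadditivity gives $\tc(E,\Om)=0$.

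\emph{$\tc(E,\Om)=0$ implies pluripolar.} By Proposition~\ref{prop:basic}-(a), $\tc(E\cap\bar B_i,\Om_i)\le \tc(E,\Om)=0$. Lemma~\ref{lem:cap-c}, applied with $D=B(a_i,\tfrac32 r_i)$, gives $cap(E\cap\bar B_i,\Om_i)=0$. Hence $E\cap \bar B_i$ is pluripolar by \cite{BT82}, and the countable union $E$ is pluripolar by Josefson. If the statement is meant with $\Om$ strictly pseudoconvex, the inner regularity furnished by Theorem~\ref{thm:choquet-c} lets one work directly with compact $K\subset E$, where both lemmas apply without localization. Then $\tc(E)=0$ if and only if $\tc(K)=0$ for all compact $K\subset E$, if and only if $cap(K,\Om)=0$ for all such $K$, if and only if $cap(E,\Om)=0$ by inner regularity of $cap$.

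The main obstacle is the transfer of smallness of $\tc$ from a subdomain to $\Om$, that is, the cut-off extension argument. I expect this to work because bounded $v$ with small $W^*$-norm times a fixed smooth cut-off stays small in $W^*(\Om)$.
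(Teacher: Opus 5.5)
\textbf{Gap in the direction ``pluripolar $\Rightarrow \tc(E,\Om)=0$''.} The problem is the cut-off step that transfers smallness from $\Om_i'$ to $\Om$; the steps before it are fine.

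To bound the $W^*(\Om)$-norm of $\chi v$ you need a \emph{closed} positive $(1,1)$-current on $\Om$ dominating $d(\chi v)\wed d^c(\chi v)\le 2\chi^2 T+2v^2 d\chi\wed d^c\chi$. Neither term has a closed majorant of small mass:
\begin{itemize}
\item $T$ lives only on $\Om_i'$, and $\chi^2T$ extended by zero is not closed.
\item $v^2\om$ is not closed. So the Lemma~\ref{lem:extra-grad}-type bound $v^2d\chi\wed d^c\chi\le Av^2\om$ controls only the $W^{1,2}$-norm.
\end{itemize}
The product proposition of Section~2 does not rescue this. It requires $v\in W^*(\Om)\cap L^\infty(\Om)$ on the same domain. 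Even then, its closed majorant has the form $2T_v+2\sup|v|^2\,T_\chi$. The mass of $T_\chi$ is fixed and $\sup|v|=1$, because $|v|=1$ near $E$; small $\|v\|_*$ gives only $L^2$-smallness of $v$, not sup-norm smallness. In fact, your inequality $\tc(E\cap\bar B_i,\Om)\le C\,\tc(E\cap \bar B_i,\Om_i')$ is exactly the linear comparability that Remark~\ref{rmk:c-equiv}(b) records as unknown. The missing cut-off principle in $W^*$ is precisely the obstruction.

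\textbf{The fix.} Use the domain monotonicity of Proposition~\ref{prop:basic}(a) in the direction it actually goes: enlarge the domain instead of shrinking it. Take a ball $B$ with $\bar\Om\subset B$, so $E\subset\subset B$. Then Proposition~\ref{prop:basic}(a), Lemma~\ref{lem:c-cap}, and the Bedford--Taylor fact that pluripolar Borel sets have zero capacity give $\tc(E,\Om)\le \tc(E,B)\le A\,[cap(E,B)]^{1/n}=0$. No localization is needed.

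\textbf{The converse direction.} Your argument (restrict to $\Om_i$, apply Lemma~\ref{lem:cap-c}, take a countable union of pluripolar sets) is correct. For strictly pseudoconvex $\Om$, your closing remark (inner regularity from Theorem~\ref{thm:choquet-c} plus Lemmas~\ref{lem:c-cap} and \ref{lem:cap-c}) is exactly the paper's proof. Your localized converse has one advantage over it: it does not use the Choquet property. That matters, because the proof of Lemma~\ref{lem:qe-limit}, on which Theorem~\ref{thm:choquet-c} rests, already treats $\tc$-null sets as pluripolar.
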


\section{Embedding into Dirichlet spaces}
\label{sec:dirichlet}

\subsection{Dirichlet spaces associated with a closed positive current}
\label{ss:currents}

In this section we show that $W^*(\Om)$ can be naturally embedded into a Dirichlet space associated with a positive closed $(n-1,n-1)$-current. These positive currents comes from the wedge product of $(1,1)$-currents associated with  bounded  psh functions.

Let $-1\leq \phi_1,...,\phi_{n-1} \leq 0$ be  psh functions in $\Om$. 
Then, \[\label{eq:the-bdd}\te = dd^c \phi_1 \wed \cdots \wed dd^c \phi_{n-1}\] is a closed positive current of bidegree $(n-1,n-1)$. Without loss of generality we consider the case all functions $\phi_k=\phi$.  We introduce a quadratic form  associated with  $\te$ on  $C^\infty_c(\Om, \bR)$ as follows. For $f,g \in C^\infty_c(\Om,\bR)$, 
$$
	\Ec_\te(f,  g) = \int df \wed d^c g \wed \te 
$$
Here and all integrals considered in the remaining part of this section are on $\Om$, so we omit writing $\Om$ in the integrations. Define
$$
	\Ec_\te(f) :=  \Ec_\te(f,f) = \int df \wed d^c f \wed \te.
$$
The trace measure of $\te$ is given by
\[\notag	\mu := \te \wed \om = \sum \tau_{j\bar j},
\]
which  is a positive Radon measure.
\begin{remark} Let us write $\te = \sum \te_{j\bar k} \; \wh{dz^j} \wed \wh { d \bar z^k}$, where $\te_{j\bar k}$ are complex Radon measures. They are absolutely continuous with respect to 
$\mu = \te \wed \om$. Hence, $\te_{j\bar k} = h_{j\bar k} \cdot \mu$ for some bounded (complex valued) Borel function $h_{j\bar k} \in L^\infty(\Om, d\mu)$. This gives
$$
 \Ec_\te(v) = \int_\Om \sum_{j,k=1}^n \d_j v \cdot \overline{\d_k v} \; \te_{j\bar k} = \int_\Om \sum_{j,k=1}^n \d_j v \cdot \overline{\d_k v} \; h_{j\bar k} \; d\mu,
$$
\end{remark}

The following result of Okada \cite{Okada82} is very useful. 
 
\begin{lem} \label{lem:O} Let $\{f_i\}_{i\geq 1} \subset C^\infty_c(\Om,\bR)$ be such that $f_i \to 0$ in $L^2(\Om, d\mu)$. Assume $\{f_i\}_{i\geq 1}$ is a Cauchy sequence in $\lc \cdot, \cdot \rc_\te$-norm, i.e.,
\[\label{eq:Cauchy-seq}
 \Ec_\te( f_j - f_k) =\int d(f_j -f_k) \wed d^c (f_j-f_k) \wed \te \to 0 \quad\text{as } j,k \to \infty.
\]
Then, $\lim_{j\to \infty} \Ec_\te(f_j) =0$.
\end{lem}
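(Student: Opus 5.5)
The plan is the classical closability argument: write $\Ec_\te(f_j)$ as a cross term with $f_j-f_k$ plus a cross term with $f_k$. The first is controlled by the Cauchy hypothesis \eqref{eq:Cauchy-seq}. The second is moved, by integration by parts, onto the smooth function $f_j$, where the hypothesis $f_k\to 0$ in $L^2(\Om,d\mu)$ can be used.

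\emph{Preliminaries.} Since $\te$ is a positive $(n-1,n-1)$-current and $df\wed d^c f\geq 0$ for real $f$, we have $\Ec_\te(f)\geq 0$. Moreover, for real $f,g$ one has $df\wed d^c g\wed\te = dg\wed d^c f\wed\te$ as measures, because only the $(1,1)$-part contributes against the $(n-1,n-1)$-current $\te$. Hence $\Ec_\te$ is a symmetric nonnegative bilinear form on $C^\infty_c(\Om,\bR)$, and the Cauchy--Schwarz inequality $|\Ec_\te(f,g)|\leq \Ec_\te(f)^{1/2}\Ec_\te(g)^{1/2}$ holds. Equivalently, it follows from \eqref{eq:basic-ineq-cs} integrated against $\te$ after scaling. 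By \eqref{eq:Cauchy-seq} and the triangle inequality for $\Ec_\te^{1/2}$, we get $M:=\sup_j \Ec_\te(f_j)<\infty$.

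\emph{Key step.} Fix $j$. Since $\te$ is closed and $f_k$ has compact support, Stokes' formula gives
$$\Ec_\te(f_j,f_k)=\int df_k\wed d^c f_j\wed\te = -\int f_k\, dd^c f_j\wed \te.$$
As $f_j$ is smooth with compact support, there is $C_j>0$ with $-C_j\om\leq dd^c f_j\leq C_j\om$. Wedging with the positive current $\te$ shows that the signed measure $dd^c f_j\wed\te$ has total variation at most $C_j\,\te\wed\om=C_j\mu$ on $K_j:=\supp f_j$. Therefore
$$|\Ec_\te(f_j,f_k)|\leq C_j\int_{K_j}|f_k|\,d\mu\leq C_j\,\mu(K_j)^{1/2}\,\|f_k\|_{L^2(\Om,d\mu)}.$$
Here $\mu(K_j)<\infty$ because $\mu$ is Radon and $K_j$ is compact. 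Consequently, $\lim_{k\to\infty}\Ec_\te(f_j,f_k)=0$ for each fixed $j$.

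\emph{Conclusion.} Write $\Ec_\te(f_j)=\Ec_\te(f_j,f_j-f_k)+\Ec_\te(f_j,f_k)$. By Cauchy--Schwarz,
$$\Ec_\te(f_j)\leq M^{1/2}\,\Ec_\te(f_j-f_k)^{1/2}+|\Ec_\te(f_j,f_k)|.$$
Given $\veps>0$, choose $N$ such that $\Ec_\te(f_j-f_k)<\veps^2$ for all $j,k\geq N$. For fixed $j\geq N$, let $k\to\infty$; the last term vanishes by the key step, so $\Ec_\te(f_j)\leq M^{1/2}\veps$. This proves the lemma.

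The only point needing care is the integration by parts and the total-variation bound for $dd^c f_j\wed\te$ when $\te=(dd^c\phi)^{n-1}$ with $\phi$ merely bounded psh. Both are standard: $\te$ is a well-defined closed positive current by Bedford--Taylor, and the test form $f_k\,d^c f_j$ is smooth with compact support. So I expect no real obstacle; the whole proof rests on the fact that the $L^2(\mu)$-convergence assumption is only used against the fixed smooth function $f_j$.
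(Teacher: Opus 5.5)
Your argument is correct and is essentially the paper's proof. The paper writes $[\Ec_\te(g)]^{1/2}$ as a supremum of $\int g\,dd^c\chi\wed\te$ over normalized test functions $\chi$, applies it to $g=f_j-f_k$, and then lets $k\to\infty$ using $|dd^c\chi\wed\te|\le A_\chi\,\mu$ together with $f_k\to 0$ in $L^2(\Om,d\mu)$; you instead insert the maximizer $\chi=f_j$ directly, which is why you need the harmless bound $M$ (dividing by $\Ec_\te(f_j)^{1/2}$ instead would give $\Ec_\te(f_j)^{1/2}\le\veps$, exactly as in the paper).
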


\begin{proof} We first show the following characterization.  For $g\in C^\infty_c(\Om)$,
$$
	[\Ec_\te(g)]^\frac{1}{2} = \sup \left\{  \int g dd^c \chi \wed \te : \chi \in C^\infty_c(\Om), \; \| \chi\|_\te \leq 1 \right\}.
$$
By integration by parts,
$$
	\int g dd^c \chi \wed \te = - \int dg \wed d^c \chi \wed \te.
$$
Here the integration by part holds by a simple approximation argument via $\te_\veps := \te *\eta_\veps$. Using Cauchy-Schwarz inequality for this we get that the supremum is  less than $[\Ec_\te(g)]^\frac{1}{2}$.  Moreover, for $\chi = -f/ [\Ec_\te(f)]^\frac{1}{2}$ the supremum is attained. The characterization is proved. 

Next, we  apply it for $h= f_j - f_k$.  Fix $\veps>0$. 
Since $\Ec_\te (f_j-f_k) \to 0$, there exists $N$ such that we have  for $j,k>N$ and for $\chi \in C^\infty_c(\Om)$,
$$
	\int (f_j -f_k) dd^c\chi \wed \te < \veps.
$$
Clearly the total variation $\| dd^c \chi \wed \te \| \leq A  \om \wed \te$ for some constant $A$ depending only on $\chi$. By letting $k\to \infty$ we get 
$$
	\int f_j dd^c\chi \wed\te <\veps.
$$
Taking supremum over $\chi$ and the characterization gives the conclusion.
\end{proof}

Since the space $W^*\subset W^{1,2}(\Om)$, we need an additional assumption on $\te$ so that $L^2(\Om, d\mu) \subset L^2(\Om)$. The simplest way is by considering $\phi := \phi + \de |z|^2$ for some $0< \de \leq 1$. Since $\om = dd^c|z|^2$, we have
\[\label{eq:add}
	\te  = \sum_{k=0}^{n-1} \binom{n-1}{k} \de^{n-k} \om^{n-1-k} \wed (dd^c\phi)^{k}  =: \sum_{k=0}^{n-1} \de^k \te_k,
\]
where $\te_k =  \binom{n-1}{k} \om^{n-k}\wed (dd^c\phi)^{k}$.
Hence, the trace measure  $\mu=\te\wed \om$ satisfies
\[\label{eq:trace-m}
	 \mu \geq c_0 \sum_{k=0}^{n-1} \om^{n-1-k}\wed (dd^c\phi)^k.
\]
In particular, $\mu = \te \wed \om \geq c_0 \om^n$ and 
$
	\|\na f\|_{L^2(\Om)}^2 \leq  \|f\|_\te^2 
$
for  $f\in C^\infty_c(\Om).$

Let us consider  the following inner product 
\[ \label{eq:inner-prod-te}
	\lc f, g \rc_\te =  \int f g \; \om^n + \Ec_\te( f, g),
\]
and the associated norm
\[ \label{eq:norm-te}
	\|f\|^2_\te  =  \|f\|_{L^2(\Om)}^2 +  \Ec_{\te}(f,f).
\]
The assumption \eqref{eq:add} makes this inner product  positive definite on $C^\infty_c(\Om, \bR)$. Then, we define  the real Hilbert space 
$$H^{1}_0(\Om, \te) := \overline{C^\infty_c(\Om,\bR)}$$  to be the completion of $C^\infty_c(\Om,\bR)$  with respect to  $\| \cdot\|_\te$. A priori this is an abstract functional space. However,  we are going to identify precisely this space.

First,  thanks to  the result of Fukushima and Okada \cite[Theorem~2.5]{FO87} that $\lc \cdot, \cdot\rc_\te$ is closable in $L^2(\Om)$. More precisely, we can replace the assumption on $L^2$-convergence  with respect to $\mu$ in Lemma~\ref{lem:O} by the one for the Lebesgue measure. 

\begin{thm} \label{lem:FO}
 Assume $\{f_i\}_{i\geq 1} \subset C^\infty_c(\Om,\bR)$ such that $f_i \to 0$ in $L^2(\Om)$ and $\{f_i\}_{i\geq 1}$ is a Cauchy sequence in $\lc \cdot, \cdot \rc_\te$-norm, i.e.,
\[\label{eq:Cauchy-seq}
 \Ec_\te(f_j- f_k) =\int d(f_j -f_k) \wed d^c (f_j-f_k) \wed \te \to 0 \quad\text{as } j,k \to \infty.
\]
Then, $\lim_{i\to \infty} \Ec_\te(f_i)  =0$.
\end{thm}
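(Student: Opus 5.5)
The plan is to reduce Theorem~\ref{lem:FO} to Okada's Lemma~\ref{lem:O}. That means showing that the hypothesis ``$f_i\to 0$ in $L^2(\Om)$'' together with the $\Ec_\te$-Cauchy property already forces $f_i\to 0$ in $L^2(\Om,d\mu)$, where $\mu=\te\wed\om$. Once this is established, Lemma~\ref{lem:O} gives $\Ec_\te(f_i)\to 0$ directly. Using the expansion \eqref{eq:add}, $\mu$ is, up to constants depending on $\de$, the sum of the measures
$$\mu_k:=(dd^c\phi)^k\wed\om^{n-k},\qquad 0\le k\le n-1 .$$
Conversely, each $\te_k$ in \eqref{eq:add} is dominated by $\de^{-k}\te$. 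Hence for every $g\in C^\infty_c(\Om)$ and every $k$ one has $\int dg\wed d^cg\wed(dd^c\phi)^{k}\wed\om^{n-1-k}\le C_\de\,\Ec_\te(g)$. I will therefore argue by induction on $k$, with the statement
$$P(k):\quad \int f_i^2\, d\mu_k\to 0 \quad\text{for every sequence as in the theorem.}$$
Here $P(0)$ is exactly the hypothesis, since $\mu_0=\om^n$.

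\emph{Step 1: $L^2(\mu)$-Cauchy.} I first rerun the proof of Lemma~\ref{lem:basic-grad} with $\ga$ replaced by the closed positive current $S_{k-1}=(dd^c\phi)^{k-1}\wed\om^{n-k}$. This uses only integration by parts against a closed positive current, the Cauchy--Schwarz inequality \eqref{eq:basic-ineq-cs}, and $-1\le\phi\le 0$. Since the $f_i$ have compact support, no cut-off is needed, and I expect
$$\int g^2\,d\mu_k\le 10\int dg\wed d^cg\wed S_{k-1}\le C_\de\,\Ec_\te(g).$$
Applied to $g=f_j-f_k$, this shows that $\{f_i\}$ is Cauchy in $L^2(\mu)$, so it has a limit $h\in L^2(\mu)$. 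It also shows that $\int f_i^2\,d\mu$ is uniformly bounded. It remains to prove $h=0$ $\mu$-a.e.

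\emph{Step 2: identifying the limit (main obstacle).} This is where Lebesgue convergence must be transported to the possibly singular measure $\dd\phi$-part of $\mu$. I will first reduce to bounded sequences by truncation. The normal contraction $f\mapsto \max\{\min\{f,M\},-M\}$ does not increase $\Ec_\te$ by \eqref{eq:grad-max} and Lemma~\ref{lem:ineq-max-min}, and, after mollifying, it preserves the hypotheses up to small errors. With $|f_i|\le M$, I will use the square-root trick from \eqref{eq:L2-1st-a} in the proof of Lemma~\ref{lem:wed-prod}:
$$\int\chi^2 f_i^2\,dd^c\phi\wed S_{k-1}\le 2\Big(\int f_i^2\, d(\chi f_i)\wed d^c(\chi f_i)\wed S_{k-1}\Big)^{1/2}\Big(\int\chi^2 d\phi\wed d^c\phi\wed S_{k-1}\Big)^{1/2}.$$
The last factor is bounded by the CLN inequality. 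In the first factor, the term $f_i^2\chi^2 df_i\wed d^cf_i\wed S_{k-1}$ is not small a priori. To handle it, I will use the $\Ec_\te$-Cauchy property to replace $df_i$ by $df_N$ for a fixed large $N$, at a cost of $\Ec_\te(f_i-f_N)^{1/2}$ times bounded quantities. For fixed $N$, the smooth form $df_N\wed d^cf_N$ is then integrated against $f_i^2 S_{k-1}$. Since $S_{k-1}\wed\om$ is comparable to $\mu_{k-1}$, this term tends to $0$ as $i\to\infty$ by the induction hypothesis $P(k-1)$. The term $f_i^2d\chi\wed d^c\chi$ is handled by $P(k-1)$ in the same way. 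Letting $i\to\infty$ and then $N\to\infty$ gives $P(k)$, and hence $f_i\to0$ in $L^2(\mu)$. If the replacement $df_i\leadsto df_N$ does not close cleanly, I will instead run the same argument on the limit $h$ tested against $dd^c\chi\wed\te$, as in the characterization in the proof of Lemma~\ref{lem:O}.

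\emph{Step 3: conclusion.} With $f_i\to0$ in $L^2(\Om,d\mu)$ and the $\Ec_\te$-Cauchy property, Lemma~\ref{lem:O} yields $\lim_i\Ec_\te(f_i)=0$.
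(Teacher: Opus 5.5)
\textbf{Overall assessment.} Your reduction to Lemma~\ref{lem:O} is sound in outline, but the reduction to bounded sequences has a gap (second paragraph). Your route also differs from the paper, which gives no argument for this statement and simply quotes \cite[Theorem~2.5]{FO87}. Step~1 is correct. Run the proof of Lemma~\ref{lem:basic-grad} with $S_{k-1}=(dd^c\phi)^{k-1}\wed\om^{n-k}$ in place of $\ga$, and use $S_{k-1}\le C_\de\,\te$. This shows that $\{f_i\}$ is Cauchy in $L^2(\mu)$, with some limit $h$. For a \emph{uniformly bounded} sequence, $|f_i|\le M$, your induction also closes. The error from replacing $df_i$ by $df_N$ is at most $2M^2C\,\Ec_\te(f_i-f_N)$, and the remaining term is at most $C_N\int_K f_i^2\,d\mu_{k-1}$. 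The cut-off only gives $\int\chi^2 f_i^2\,d\mu_k\to 0$. You recover the global statement from Step~1, because local vanishing forces $h=0$ $\mu_k$-a.e.

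\textbf{The gap.} What Step~2 uses about the truncated sequence is that it is still $\Ec_\te$-Cauchy. This does not follow from $\Ec_\te(T_Mf)\le \Ec_\te(f)$. In
$$d(T_Mf_i)-d(T_Mf_j)=\mathbf{1}_{\{|f_i|<M\}}\,d(f_i-f_j)+\big(\mathbf{1}_{\{|f_i|<M\}}-\mathbf{1}_{\{|f_j|<M\}}\big)\,df_j$$
the second term is not controlled by $\Ec_\te(f_i-f_j)$. There is also a definitional problem. $d(T_Mf_i)$ is only defined Lebesgue-a.e., whereas $\mu$ can charge real hypersurfaces (e.g.\ $dd^c\max\{\mathrm{Re}\,z_1,0\}$), hence possibly the level sets $\{|f_i|=M\}$. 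Moreover, $\te$ is not translation invariant, so ``mollifying up to small errors'' is not automatic.

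\textbf{A repair.} Take a smooth $\psi_M$ with $\psi_M(0)=0$, $0\le \psi_M'\le 1$, $\psi_M(t)=t$ for $|t|\le M$, and $|\psi_M|\le M+1$. Then $\psi_M(f_i)\in C^\infty_c(\Om)$ is bounded in $\|\cdot\|_\te$, tends to $0$ in $L^2(\Om)$, and tends to $\psi_M(h)$ in $L^2(\mu)$. By Banach--Saks in $H^1_0(\Om,\te)$, the Ces\`aro means of a subsequence are $\|\cdot\|_\te$-Cauchy, bounded by $M+1$, and have the same two limits. Your bounded argument then gives $\psi_M(h)=0$ $\mu$-a.e.\ for every $M$. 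Hence $h=0$, and Lemma~\ref{lem:O} finishes the proof.

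\textbf{Avoiding truncation altogether.} You can also make your vague fallback concrete. Test $h$ against $\psi\, dd^c\phi\wed S_{k-1}$, integrate by parts, and replace $\phi$ by $\phi*\eta_\veps$ in the term $\int\psi\, df_i\wed d^c\phi\wed S_{k-1}$. The error is bounded by a constant times
$$\Ec_\te(f_i)^{1/2}\Big(\int|\psi|\,d(\phi*\eta_\veps-\phi)\wed d^c(\phi*\eta_\veps-\phi)\wed S_{k-1}\Big)^{1/2},$$
which tends to $0$ as $\veps\to0$ by the Bedford--Taylor convergence theorem. The remaining terms are $O_\veps\big((\int_K f_i^2\,d\mu_{k-1})^{1/2}\big)$. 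Hence $\int h\psi\,d\mu_k=0$ for all $\psi\in C^\infty_c(\Om)$, using only bounded energy.
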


This leads to a natural functional space studied in \cite{Okada82}. 

\begin{defn} \label{defn:O} A function $f\in L^2(\Om)$ belongs to $\cF$ if there exists a sequence $w_j \in C^\infty_c(\Om)$ such that $w_j \to f $ in $L^2(\Om)$ and $\{w_j\}_{j\geq 1}$ is a Cauchy sequence with respect to the semi-norm $\Ec_\te (\cdot)$.
\end{defn}
This space $\cF$ together with the symmetric form $\Ec_\te$ becomes a regular Dirichlet space on on $L^2(\Om)$ by Theorem~\ref{lem:FO} (see also  \cite[Th\'eor\`em~4]{Okada82}). 
We state the promised identification for the abstract Hilbert space.

\begin{lem} \label{lem:identification}  $H^{1}_0(\Om,\te) = \cF$.
\end{lem}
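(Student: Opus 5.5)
We prove the two inclusions separately, treating both sides as subspaces of $L^2(\Om)$. There is a subtlety to settle first: $H^1_0(\Om,\te)$ is defined as an abstract completion, and we must view it concretely. Let $\{w_j\}\subset C^\infty_c(\Om,\bR)$ be Cauchy in $\|\cdot\|_\te$. Since $\|w\|_\te\geq \|w\|_{L^2(\Om)}$, the sequence is Cauchy in $L^2(\Om)$ and has a limit $f\in L^2(\Om)$. So there is a natural map
\[
\iota: H^1_0(\Om,\te)\to L^2(\Om),\qquad [\{w_j\}]\mapsto \lim_{j} w_j \text{ in } L^2(\Om).
\]
It is well defined on equivalence classes, because two $\|\cdot\|_\te$-equivalent Cauchy sequences differ by a sequence tending to $0$ in $L^2(\Om)$. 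The identification then reads: $\iota$ is injective, and its image is exactly $\cF$.

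\emph{Image of $\iota$ equals $\cF$.} If $\{w_j\}$ is $\|\cdot\|_\te$-Cauchy, then it is also $\Ec_\te$-Cauchy, because $\Ec_\te(w)\le \|w\|_\te^2$. It converges to $f$ in $L^2(\Om)$, so $f\in\cF$ by Definition~\ref{defn:O}. Conversely, let $f\in\cF$ with an approximating sequence $w_j$ as in Definition~\ref{defn:O}. Then $w_j\to f$ in $L^2(\Om)$ and $\{w_j\}$ is $\Ec_\te$-Cauchy. Since
\[
\|w_j-w_k\|_\te^2=\|w_j-w_k\|^2_{L^2(\Om)}+\Ec_\te(w_j-w_k),
\]
the sequence is $\|\cdot\|_\te$-Cauchy, and hence it defines an element of $H^1_0(\Om,\te)$ whose image under $\iota$ is $f$.

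\emph{Injectivity of $\iota$.} This is the key step and rests on the closability, Theorem~\ref{lem:FO}. Suppose $\iota([\{w_j\}])=0$, i.e.\ $w_j\to 0$ in $L^2(\Om)$ with $\{w_j\}$ Cauchy in $\|\cdot\|_\te$, hence Cauchy for $\Ec_\te$. Theorem~\ref{lem:FO} gives $\Ec_\te(w_j)\to0$. Therefore $\|w_j\|_\te\to0$, so the class $[\{w_j\}]$ is zero in the completion. Without closability, a nonzero abstract element could map to the zero function; this is exactly the obstacle, and it is handled by the cited result of Fukushima--Okada.

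Finally, we check that the structures agree. On $\cF$ define $\Ec_\te(f)=\lim_j\Ec_\te(w_j)$ for any approximating sequence. This limit exists because $\Ec_\te^{1/2}$ is a seminorm, so $\Ec_\te(w_j)^{1/2}$ is Cauchy in $\bR$. It is independent of the sequence: if $\{w_j\}$ and $\{w'_j\}$ both approximate $f$, then $\{w_j-w'_j\}$ tends to $0$ in $L^2(\Om)$ and is $\Ec_\te$-Cauchy, so Theorem~\ref{lem:FO} gives $\Ec_\te(w_j-w'_j)\to0$. With this definition, the norm $\|\cdot\|_\te$ on the completion coincides with $\|f\|^2_{L^2(\Om)}+\Ec_\te(f)$ on $\cF$. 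Consequently $\iota$ is an isometric bijection, and $H^1_0(\Om,\te)=\cF$ as Hilbert spaces.
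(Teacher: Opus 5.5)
Your proof is correct and follows the paper's argument. You extend the identity map $C^\infty_c(\Om,\bR)\to L^2(\Om)$ continuously to the completion, obtain injectivity from the closability result, Theorem~\ref{lem:FO}, and additionally spell out both inclusions between the image and $\cF$ and the well-definedness of $\Ec_\te$ on $\cF$, which the paper leaves implicit.
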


\begin{proof}   Consider identity map $\io : C^\infty_c(\Om,\bR) \to L^2(\Om)$.  Since $\|f\|_{L^2(\Om)} \leq \|f\|_\te$, this map is extended  continuously to $\wt\io: H^{1}_0(\Om, \te) \to L^2(\Om)$. Hence,  $\cF$ is contained in  the image of $H^1_0(\Om, \te)$ under $\wt\io$.
If  $\wt\io$ is injective, then the image can be identified with $\cF \subset L^2(\Om)$. In fact, let $f\in H_0^1(\Om,\te)$ such that $\wt\io (f) =0$. There exists $\{f_i\}_{i\geq 1} \subset C^\infty_c(\Om,\bR)$ which is a  Cauchy sequence representing $f$ with respect to the norm $\| \cdot\|_\te^2 = \| \cdot \|_{L^2}^2 + \Ec_\te(\cdot)$. In particular,  $\{f_i\}$ is $\Ec_\te$-Cauchy and 
$
	\|f_i\|_{L^2(\Om)} = \|\wt\io (f_i ) \|_{L^2(\Om)} \to 0.
$
Lemma~\ref{lem:FO} yields $\Ec_\te( f_j) \to 0$. Hence, 
$$
	\|f\|^2_{H^1_0(\Om,\te)}=	\lim_{i\to \infty} \| f_i \|^2_\te = \lim_{j\to \infty} \|f_i\|_{L^2}^2 + \lim_{j\to \infty} \Ec_\te( f_i, f_i) =0.
$$
 So, $f=0$ and the proof is complete.
\end{proof}

We are ready to state the embedding theorem.

\begin{thm} \label{thm:embedding} If $f \in W^*(\Om)$ and $\chi \in C^\infty_c(\Om,\bR)$, then $\chi  f \in \cF$.
\end{thm}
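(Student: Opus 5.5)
The plan is to approximate $\chi f$ by the smooth compactly supported functions $w_\veps := \chi f_\veps$, where $f_\veps = f*\eta_\veps$. We then show that $\{w_\veps\}$ is bounded in $\|\cdot\|_\te$ and converges to $\chi f$ in $L^2(\Om)$. From this, membership in $\cF$ follows by a weak-compactness argument in the Hilbert space $H^1_0(\Om,\te)$ together with the identification of Lemma~\ref{lem:identification}.

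For $\veps$ smaller than ${\rm dist}(\supp\chi,\d\Om)$, $w_\veps\in C^\infty_c(\Om,\bR)$. Since $f_\veps\to f$ in $W^{1,2}_{\rm loc}(\Om)$, we have $w_\veps\to\chi f$ in $L^2(\Om)$. For the energy, write
$$d w_\veps\wed d^c w_\veps\leq 2\chi^2 df_\veps\wed d^c f_\veps+2f_\veps^2 d\chi\wed d^c\chi .$$
By Proposition~\ref{prop:ineq-weak}, the first term is at most $2\chi^2 T_\veps$, where $T_\veps=T*\eta_\veps$ and $T$ is a minimum current of $f$. Hence
$$\Ec_\te(w_\veps)\leq 2\int\chi^2T_\veps\wed\te+2\int f_\veps^2\, d\chi\wed d^c\chi\wed\te .$$
We then use the decomposition \eqref{eq:add}, so that $\te$ is a combination of $\om^{n-1-k}\wed(dd^c\phi)^k$.

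For the first integral we use that $T_\veps$ is a smooth closed positive current. After modifying $\phi$ outside $\supp\chi$ to $\max\{\phi,A_0\rho\}$ (locally; we may shrink to a strictly pseudoconvex neighborhood, or simply use CLN), the mass comparison Lemma~\ref{lem:CP-mass} bounds $\int\chi^2T_\veps\wed(dd^c\phi)^k\wed\om^{n-1-k}$ by $A\|T_\veps\|_{\Om'}\leq A\|T\|_\Om$, uniformly in $\veps$.

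For the second integral we apply Lemma~\ref{lem:extra-grad} and then Corollary~\ref{cor:L2-cap-grad}-type estimates with smooth $f_\veps$. These reduce it to $\int\tilde\chi\, df_\veps\wed d^cf_\veps\wed(\cdots)$ plus $\int f_\veps^2\om^n$. By the same mass comparison these are bounded by $A\|f_\veps\|_{W^*(\Om')}^2\leq A\|f\|_*^2$, using Proposition~\ref{prop:smoothing}. One can equally bound $d\chi\wed d^c\chi\leq A\, dd^c\rho$ and invoke Proposition~\ref{prop:L2-cap} as in the proof of Corollary~\ref{cor:L1-cap-comparison-a}. Thus $\sup_\veps\|w_\veps\|_\te<\infty$. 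We expect this uniform energy bound to be the main obstacle, because $\phi$ is only bounded and psh. We handle it by first replacing $\phi$ by $\phi*\eta_\delta$ with $w_\veps$ fixed smooth, and passing to the limit $\delta\to0$ via the weak convergence $(dd^c\phi_\delta)^k\to(dd^c\phi)^k$ against smooth test forms. The bounds above are uniform in $\delta$.

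Finally, bounded sequences in the Hilbert space $H^1_0(\Om,\te)$ have weakly convergent subsequences, so $w_{\veps_j}\rightharpoonup h$ in $H^1_0(\Om,\te)$. By Mazur's lemma, convex combinations $\sum a_j w_{\veps_j}\in C^\infty_c$ converge to $h$ in $\|\cdot\|_\te$-norm; hence they are $\Ec_\te$-Cauchy and converge in $L^2(\Om)$ to $\wt\io(h)$. These same convex combinations converge in $L^2$ to $\chi f$, so $\wt\io(h)=\chi f$. By Definition~\ref{defn:O}, $\chi f\in\cF$, with the bound $\|\chi f\|_\te\leq A(\chi,\Om)\|f\|_*$ as a byproduct.
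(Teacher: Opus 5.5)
Your proposal is correct and follows essentially the same route as the paper: a uniform bound on $\|\chi f_\veps\|_\te$ obtained from $df_\veps\wed d^c f_\veps\le T_\veps$, Lemma~\ref{lem:extra-grad}, CLN (or mass comparison) and Proposition~\ref{prop:L2-cap}, then a weak limit in $H^1_0(\Om,\te)$, Mazur's lemma, and identification of that limit with $\chi f$ through the $L^2$ convergence. Your extra steps (smoothing $\phi$, modifying it near $\d\Om$, and checking that the Mazur combinations also converge to $\chi f$ in $L^2$) only make explicit details the paper leaves implicit.
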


\begin{proof} Assume $df \wed d^c f\leq T$ for a closed positive $(1,1)$-current $T$ of finite mass in $\Om$.  Let $f_k = f*\eta_{1/k}$ and $T_k = T*\eta_{1/k}$ be the convolutions with the family of smooth kernels. Denote $g_k = \chi f_k$ which is smooth in $\Om$ for $k$ large. We estimate
$$\begin{aligned}
	\int d g_k \wed d^c g_k \wed \te 
&\leq 2 \int \chi^2 df_k \wed d^c f_k \wed \te + 2 \int f^2_k d\chi \wed d^c \chi \wed \te \\
&\leq 10 \int \chi^2 T_k \wed \te + 4A_0 \int \chi f^2_k \, \om\wed\te\\
&\leq A,
\end{aligned}$$
where we used Lemma~\ref{lem:extra-grad} and then $df_k \wed d^c f_k \leq T_k$ for the second inequality. 
The finiteness in the last inequality follows from the Chern-Levine-Nirenberg inequality and weak convergence of $T_k$ to $T$ for the first integral and Proposition~\ref{prop:L2-cap} for the second integral. 
This implies that there exists a subsequence $\{g_k\}_{k\geq 1}$ converging weakly to $g \in H^1_0(\Om,\te) = \cF$. By passing to a subsequence  and Mazur's lemma \cite[Theorem~2, page 120]{Yo80}  we find a sequence $\{w_j\}_{j\geq 1} \subset C^\infty_c(\Om,\bR)$ defined by
$$
	w_j = \frac{1}{j} \sum_{\ell =1}^j g_{k_\ell} = \chi  \left( \frac{1}{j}\sum_{j=1}^\ell f_{k_\ell} \right)
$$
which converges in $\| \cdot \|_\te$-norm to $g$. Since $w_j \to \chi f $ in $L^2(\Om)$, we have $g = \chi f \in \cF$ by the definition of this space.
\end{proof}

\begin{cor} Let $f, g \in W^*(\Om)$ and $\chi \in C^\infty_c(\Om, \bR)$. Let $\te$ be a closed positive $(n-1,n-1)$-current as in \eqref{eq:the-bdd}. Then, the integrals
$$
	\int df \wed d^c \chi \wed \te \quad
\text{ and } \quad 
	\int \chi^2 df \wed d^c g \wed \te
$$
are well-defined.
\end{cor}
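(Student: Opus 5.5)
The plan is to interpret both integrals through the Dirichlet form $\Ec_\te$ on $\cF$, using Theorem~\ref{thm:embedding}. First I would reduce to the case where $\te$ has the form \eqref{eq:add}, that is, $\te = (dd^c(\phi+\de|z|^2))^{n-1}$ with $0<\de\le 1$. This reduction is harmless: the quantities of interest are linear in $\te$, and the general current \eqref{eq:the-bdd} is a polarization of such powers. Next, fix $\chi' \in C^\infty_c(\Om)$ with $\chi'=1$ on a neighborhood of $\supp\chi$. Theorem~\ref{thm:embedding} then gives $\chi' f,\ \chi' g \in \cF = H^1_0(\Om,\te)$ (Lemma~\ref{lem:identification}), and $\chi,\chi^2\chi' g \in \cF$ as well.

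For the first integral I would define
$$
\int df\wed d^c\chi\wed\te := \Ec_\te(\chi' f,\chi),
$$
which makes sense because the Hilbert-space inner product extends $\Ec_\te$ continuously. Well-definedness means independence of the choices, and I would check this by approximation. Take $f_k = f*\eta_{1/k}$. The proof of Theorem~\ref{thm:embedding} gives Cesàro means $w_j$ of $\chi' f_{k}$ that converge to $\chi' f$ in $\|\cdot\|_\te$. For smooth functions, $\Ec_\te(w_j,\chi)$ equals the classical integral, so the value is the limit of the classical integrals. Changing $\chi'$ changes $\chi' f$ only away from $\supp\chi$, and the locality of the Dirichlet form handles this: for smooth approximants the integrand vanishes off $\supp\chi$, so the limit does not depend on $\chi'$.

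For the second integral I would use polarization. By the basic inequality \eqref{eq:basic-ineq} and the bilinearity of $\Ec_\te$, it suffices to define
$$
\int \chi^2\, dh\wed d^c h\wed\te
$$
for $h=f\pm g\in W^*(\Om)$. I would set it equal to the limit of $\int \chi^2 dh_k\wed d^c h_k\wed\te$ along the smooth sequences $w_j$ representing $\chi' h$. Since $\chi^2 dw\wed d^c w\le dw\wed d^c w$ and $w_j$ is $\Ec_\te$-Cauchy, the estimate
$$
\Big|\int\chi^2 (dw_j\wed d^cw_j - dw_i\wed d^cw_i)\wed\te\Big| \le \Ec_\te(w_j-w_i)^{1/2}\big(\Ec_\te(w_j)^{1/2}+\Ec_\te(w_i)^{1/2}\big)
$$
follows from Cauchy–Schwarz \eqref{eq:basic-ineq-cs}. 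So the limit exists. Independence of the approximating sequence comes from Theorem~\ref{lem:FO}: the difference of two admissible sequences tends to $0$ in $L^2$ and is $\Ec_\te$-Cauchy, hence its energy tends to $0$.

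The main obstacle is independence of the representation, which amounts to locality and closability. Closability is exactly Theorem~\ref{lem:FO}, so I expect the delicate part to be showing that the value depends only on $f$ near $\supp\chi$ and not on $\chi'$. I would handle this by applying the same Cauchy–Schwarz estimate to $\chi'_1 f - \chi'_2 f$, whose smooth approximants vanish near $\supp\chi$, so the contribution against $\chi^2$ or $d\chi$ is identically zero.
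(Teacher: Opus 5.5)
Your proposal is correct. For the second integral it shares the paper's core: polarization, Theorem~\ref{thm:embedding}, and closability via Theorem~\ref{lem:FO}. It differs from the paper in two respects.

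\textbf{First integral.} The paper integrates by parts and sets $\int df\wed d^c\chi\wed\te:=-\int\wt f\,dd^c\chi\wed\te$. This uses the quasi-continuous representative and the convergence in capacity of $f*\eta_{1/k}$. It works directly for every $\te$ as in \eqref{eq:the-bdd}, with no $\de$-regularization. You instead read the integral as $\Ec_\te(\chi' f,\chi)$, i.e.\ as continuity of a bilinear form on $\cF$. Your route needs no capacity theory and puts both integrals in one Hilbert-space framework. The price is the reduction to \eqref{eq:add}.

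\textbf{Second integral.} The paper subtracts cross terms from the $\cF$-energy of $\chi f$ and then lets $\de\to0^+$ by monotonicity. Its printed formula drops the factors $\chi f$ and $f^2$. The genuine cross term $\int\chi f\,df\wed d^c\chi\wed\te$ also needs its own justification for unbounded $f$. Your localized energy, computed along strongly convergent smooth approximants of $\chi'h$ with $\chi'\equiv1$ near $\supp\chi$, never produces cross terms. Existence then follows from Cauchy--Schwarz, and independence of the sequence from closability.

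\textbf{Reduction to \eqref{eq:add}.} Your reduction is exact rather than a limit, but you should say precisely why, since ``polarization of such powers'' alone does not explain how the $\de=0$ current is reached. The map $\de\mapsto(dd^c\phi+\de\om)^{n-1}$ is a polynomial of degree $n-1$. So for distinct $\de_0,\dots,\de_{n-1}\in(0,1]$, Lagrange interpolation gives
\begin{equation*}
(dd^c\phi)^{n-1}=\sum_{i=0}^{n-1}\Big(\prod_{j\ne i}\frac{\de_j}{\de_j-\de_i}\Big)(dd^c\phi+\de_i\om)^{n-1}.
\end{equation*}
Mixed products in \eqref{eq:the-bdd} reduce to this case by multilinear polarization.

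\textbf{Two small justifications you still need.} Your extended quantities are defined through $\te$-dependent approximating sequences, so ``linear in $\te$'' needs one line. Run the proof of Theorem~\ref{thm:embedding} with $\sum_i(dd^c\phi+\de_i\om)^{n-1}$. This gives a single smooth sequence that is admissible for every $\de_i$ at once, and linearity passes to the limit. The same device justifies your claim that the approximants of $\chi_1'h$ and $\chi_2'h$ can be chosen to coincide near $\supp\chi$. Alternatively, multiplication by a cut-off is bounded on $\cF$ by Okada's Poincar\'e inequality, which gives the same conclusion.
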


\begin{proof} If $f$ is smooth, then by integration by parts 
\[\label{eq:defn-grad-genenal-1}
	\int df \wed d^c\chi \wed \te = - \int f dd^c \chi \wed \te. 
\]
Next, for a general $f\in W^*(\Om)$, we have $\{f_k\}_{k\geq 1} = \{f* \eta_{1/k}\}_{k\geq 1}$ is Cauchy with respect to the capacity. Passing to a subsequence we may assume  $\wt f_k \to \wt f$ in capacity and  $\wt f$ is quasi-continuous. Hence,  the right hand side above is defined by 
$$
	\int f dd^c \chi \wed \te := \int \wt f dd^c \chi \wed \te = \lim_{k\to \infty} \int \wt f_k dd^c \chi \wed \te.
$$
Thus, we take the limit, which is independent of subsequence, as the definition of the left hand side of \eqref{eq:defn-grad-genenal-1}.

Next, to define $\int \chi^2 df \wed d^c g \wed \te$, by polarization it is enough to define $\int \chi^2 df \wed d^c f\wed\te$ and $\int \chi^2 dg \wed d^c g\wed\te$. We first assume $\te = \te_\de$ satisfying \eqref{eq:add}. It follows from Theorem~\ref{thm:embedding} that 
$$
	\int d(\chi f) \wed d^c(\chi f) \wed \te_\de
$$
is well-defined. Therefore, one can define
$$
	\int \chi^2 df \wed d^c f \wed \te_\de := \int d(\chi f) \wed d^c(\chi f) \wed \te_\de - 2 \int df \wed d^c \chi \wed \te_\de - \int d\chi \wed d^c \chi \wed\te_\de,
$$
where each integral on the right hand side already makes sense. Of course,
this is an honest identity for $f$ being  smooth. By the weak convergence of $\chi f_j \to \chi f$ in the Hilbert space $\cF$,  it follows that 
$
	\int \chi^2 df \wed d^c f\wed \te_\de
$
is a non-negative  increasing sequence in $\de>0$. Finally, we put
$$\int \chi^2 df \wed d^c f\wed \te = \lim_{\de\to 0^+} \int \chi^2 df \wed d^c f\wed \te_\de.
$$
This completes the proof of the corollary.
\end{proof}

Thanks to this corollary and Definition~\ref{defn:L2-ma} we conclude that the integral estimates that obtained in Section~\ref{sec:int} now hold for a general $f\in W^*(\Om)$.

\subsection{Capacity in the Dirichlet spaces}
\label{ss:dirichlet-cap}

Let $\phi \in PSH(\Om)$ be such that $-1 \leq \phi \leq 0$. Let  $\de>0$.  By Theorem~\ref{lem:FO}, the pair $(\te, \mu)$ of the closed positive current 
\[\label{eq:current-te}
\te = [dd^c(\phi + \de |z|^2)]^{n-1}
\] and the measure $\mu = \te\wed \om$ is closable on $L^2(\Om, d\mu)$ and $\supp \mu = \Om$, i.e., it  is admissible in the sense of \cite[page 173]{FO87}. Hence, $\cF$ defined in Definition~\ref{defn:O} is a Dirichlet space on $L^2(\Om, d\mu)$ possessing $C^\infty_c(\Om)$ as its core, together with a Dirichlet form $\cE^\te$. In particular, it is regular \cite[page 6]{FOT94}.

By \cite[Th\'eor\`em~1]{Okada82} we have a Poincar\'e type inequality, for $f\in C^\infty_c(\Om)$,
$$
	\int f^2 d\mu = \int f^2 \te \wed dd^c |z|^2 \leq 8 \sup_\Om |z| \int df \wed d^c f \wed \te.
$$
It follows that $\cE_1^\te(f) = \| f  \|^2_{L^2(d\mu)} + \cE^\te(f)$ is equivalent to $\cE^\te(f)$. As noted in \cite[page 211]{FO84}  that potential theory of \cite[Chapter~2]{FOT94} can be formulated in terms of $\cE^\te$ instead of $\cE^\te_1$.  For an open  subset $E\subset \Om$ its capacity is given by
\[\label{eq:cap-te}
	\tc_\te (E) = \inf\{\cE(v) : v\in \cF, \; v\geq 1 \quad \mu-\text{\rm a.e.} \text{ on } E\}.
\]
For a general Borel set we let
$$
	\tc_\te(E) = \inf \{\tc_\te(G): E \subset G,  \quad G \text{ is open subset in }\Om\}.
$$
In particular, for a compact subset $K\subset \Om$,
$$
	\tc_\te(K) = \inf\{ \cE^\te(v): v\in \cF\cap C^0(\Om), \; v\geq 1 \text{ on }K\}.
$$
Thank to special feature of the functional capacity in a Hilbert space it is  a Choquet capcity \cite[Theorem~2.1.1]{FOT94}. 
Moreover,  we have a comparison between this capacity and the Bedford-Taylor capacity as follows.
\begin{lem}\label{lem:FO-polar} Let $\Om \subset \subset \bC^n$ be a strictly pseudoconvex domain and $0<\de \leq 1$.
\begin{itemize}
\item[(a)] For every compact subset $K\subset \Om$, 
$$
	cap (K) \leq 8 \tc_\te (K),
$$
where $\te= (dd^c u_K^* +\de \om)^{n-1}$.
\item[(b)] Let $D\subset \subset \Om$ be a subdomain. There exists a constant $A_0= A(D,\Om)$ such that  for every open subset $G\subset D$,
$$
	\tc_\te{(G)} \leq A_0 \left[cap (G)\right]^\frac{1}{n}.
$$ 
\end{itemize}
\end{lem}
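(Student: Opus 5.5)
For (a) I would test the definition of $\tc_\te(K)$ against continuous $v\in\cF$ with $v\ge 1$ on $K$, and bound $cap(K)$ by the energy of such $v$ via integration by parts against $u:=u_K^*$. Since both sides are unchanged when $v$ is replaced by $\min\{\max\{v,0\},1\}$ (a normal contraction, so $\cE^\te$ does not increase), I will assume $0\le v\le 1$, $v=1$ on $K$, and reduce by density of $C^\infty_c(\Om)$ in $\cF$ to $v$ smooth with compact support and $v\ge 1-\eps$ near $K$. Recall $cap(K)=\int_K (dd^c u)^n$, with $(dd^c u)^n$ supported on $K$ and $u=0$ on $\d\Om$. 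Then
$$
cap(K)\le (1-\eps)^{-2}\int v^2 (dd^c u)^n \le (1-\eps)^{-2}\int v^2\, dd^c u\wed (dd^c u+\de\om)^{n-1}.
$$
Now I apply the argument of Lemma~\ref{lem:basic-grad} with $\ga$ replaced by $\te=(dd^c u+\de\om)^{n-1}$, which is closed and positive. One integration by parts gives $-2\int v\,dv\wed d^c u\wed\te$. Cauchy--Schwarz then gives the bound
$$
2\Big(\int dv\wed d^cv\wed\te\Big)^{1/2}\Big(\int v^2 du\wed d^cu\wed\te\Big)^{1/2}.
$$
A second integration by parts, using $-1\le u\le 0$ and $u=0$ on $\d\Om$, bounds $\int v^2 du\wed d^c u\wed\te$ by a combination of $\int v^2 dd^cu\wed\te$ and the energy. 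Absorbing terms, as in Lemma~\ref{lem:basic-grad}, yields $\int v^2 dd^c u\wed\te\le 8\,\cE^\te(v)$, and letting $\eps\to0$ gives (a). The one step that needs care is justifying the integrations by parts for the nonsmooth $u$. I would handle it by regularizing $u$, replacing it by $\max\{u_j,A\rho\}$ with $u_j$ decreasing to $u$, and passing to the limit by Bedford--Taylor continuity along monotone sequences, with $v$ smooth throughout.

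For (b), let $G\subset D$ be open and $u=u_G$, so that $-1\le u\le0$, $u=-1$ on $G$, and $u\ge A_0\rho$ after modification near $\d\Om$. The function $w=-\chi u$, with $\chi$ a cutoff equal to $1$ on $D$, satisfies $w\ge1$ on $G$ and belongs to $\cF$ by Theorem~\ref{thm:embedding}, since bounded psh functions lie in $W^*$. Hence
$$
\tc_\te(G)\le \cE^\te(\chi u)\le 2\int \chi^2 du\wed d^cu\wed\te+2\int u^2 d\chi\wed d^c\chi\wed\te .
$$
Using $du\wed d^cu\le \frac12 dd^c(1+u)^2$ and expanding $\te$ as in \eqref{eq:add}, each resulting term has the form $\int dd^c u\wed (dd^c\phi)^k\wed\om^{n-1-k}$ or a term with $u^2$ in place of $dd^c u$. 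For the $dd^c u$ terms I would use Cegrell's inequality, as in Lemma~\ref{lem:c-cap}, for mixed Monge--Ampère masses of functions in $\cE_0$ with boundary value $0$ (here $\phi$ replaced by $\max\{\phi,A\rho\}$). This gives
$$
\int dd^c u\wed(dd^c\phi)^k\wed\om^{n-1-k}\le C\Big(\int (dd^c u)^n\Big)^{1/n} = C\,cap(G)^{1/n}.
$$
For the terms with $u^2$, I would use $u^2\le -u$ and one integration by parts, moving $dd^c$ onto $u$ in the same way as in Lemma~\ref{lem:c-cap}, to reduce to the same Cegrell bound.

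The main obstacle is making the integrations by parts rigorous in (a) with $\te$ built from the singular $u_K^*$, and getting the precise constant $8$. I would first try the computation with $u$ smooth, using a decreasing approximation of $u_K^*$ and the outer regularity of $cap$, and track the constants exactly as in Lemma~\ref{lem:basic-grad}.
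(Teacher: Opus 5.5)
Your proposal is correct and follows the paper's proof. In (a), the paper likewise bounds $cap(K)\le\int v^2\, dd^c u_K^*\wedge\te$, but then simply cites Okada's Poincar\'e-type inequality $\int v^2\, dd^c\phi\wedge\te\le 8\|\phi\|_{L^\infty}\,\cE^\te(v)$; your two integrations by parts with Cauchy--Schwarz reprove that inequality, and the optimal absorption gives exactly the constant $8$. In (b), the paper uses the same splitting of $\cE^\te(\chi u_G)$, the same replacement $\phi\mapsto\max\{\phi,A\rho\}$ and Cegrell's inequality; the only difference is that it bounds $\te\le[dd^c(\phi+\de\rho)]^{n-1}$ once rather than expanding $\te$ into mixed terms.
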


\begin{proof} (a)
Denote $\phi:= u_K^*$ the relative extremal function of $K$ in $\Om$. Then, $\phi \in PSH(\Om), -1\leq \phi \leq 0$ which satisfies $cap (K) = \int_K (dd^c \phi)^n$. For $v \in C^\infty_c(\Om)$ and $v \geq 1$ on $K$,
$$
	\int_K (dd^c\phi)^n \leq \int_\Om v^2 dd^c\phi \wed \te =:I,
$$ 
where $\te = (dd^c\phi + \de \om)^{n-1}$.  The Poincar\'e type inequality \cite[Th\'eor\`em~1] {Okada82} implies
$$
	I\leq   8\|\phi\|_{L^\infty(\Om)} \int_\Om d v \wed d^c v \wed \te.
$$
Taking infimum over all such $v$'s implies the desired inequality.

(b) Let $\chi \in C^\infty_c(\Om)$ such that $0\leq \chi \leq 1$ and $\chi =1$ in a neighborhood $\bar D$. Let   $\veps>0$ be small. Let $u_G^*= u_G$ be the relative extremal function associated with $G$. By Theorem~\ref{thm:embedding}, $v=\chi u_G \in \cF$. Since   $v =1$ on $G$ we have
\[\label{eq:polar-cap}
	\tc_\te(G) \leq	\int dv \wed d^c v \wed \te^{n-1}.
\]
Let $\rho$ be the strictly psh defining function and $dd^c\rho \geq \om$. By replacing $\phi:= \max\{\phi, A\rho\}$ for $A>0$ large we can assume $\phi = A \rho$ on  $\Om\setminus \supp\chi$. By \eqref{eq:basic-ineq},
$$
	d(\chi u_G) \wed d^c (\chi u_G) \leq 2(\chi^2 du_G \wed d^c u_G + u_G^2 d\chi \wed d^c \chi).
$$
Hence, 
\[\label{eq:polar-grad}\begin{aligned}
	\int dv \wed d^c v \wed \te &\leq 2\int du_G \wed d^c u_G \wed \te + 2 \int u_G^2 d\chi \wed d^c\chi \wed \te \\ &:=2 (I_1 + I_2).
\end{aligned}
\]
By integration by parts,
$$
	I_1 = \int (-u_G) dd^c u_G \wed \te \leq \int dd^c u_G \wed [dd^c (\phi+\de\rho)]^{n-1}
$$
Using the Cegrell's inequality \cite{Ce04} we get
\[\label{eq:polar-i1}
	I_1 \leq \left[\int (dd^c u_G)^n\right]^\frac{1}{n} \left[ \int [dd^c(\phi + \de \rho)]^n  \right]^\frac{1}{n}.
\]
Since $\phi + \de \rho \geq (A+1)\rho$ on $\Om$ and has zero value on $\d\Om$, it follows from the comparison principle 
$$
	\int [dd^c(\phi + \de \rho)]^n  \leq (A+1)^n \int (dd^c\rho)^n:= A_1
$$
which is independent of $G$.
Thus, $$I_1 \leq A_1 [cap (G)]^\frac{1}{n}.$$
The estimate for $I_2$ is very similar. Namely, using $d\chi \wed d^c\chi \leq A' dd^c \rho$ and integration by parts \cite[Proposition~2.1.3]{Blocki-lect-ma},
$$
	I_2 \leq A' \int (-u_G) dd^c \rho \wed \te = A' \int (-\rho) dd^cu_G \wed \te.
$$
Therefore, using again \cite{Ce04} we get for $A_2 = A' A_1 \|\rho\|_{L^\infty}$,
\[\label{eq:polar-i2}I_2 \leq A_2 [cap (G)]^\frac{1}{n}.
\] 
Combining \eqref{eq:polar-cap}, \eqref{eq:polar-grad}, \eqref{eq:polar-i1} and \eqref{eq:polar-i2} we get $\tc_\te (G) \leq A_3  [cap (G)]^\frac{1}{n}$ where $A_3$ is a uniform constant.
\end{proof}

We state now the characterization of pluripolar sets in terms of the family of capacity that was used in the proof of Lemma~\ref{lem:qe-limit}.  This result is due to Fukushima and Okada \cite{FO84} whose proof based on conformal martingale diffusion. Our proof uses only pluripotential theory.

\begin{cor} \label{cor:FO-polar}
Let $E\subset \Om$ be a Borel subset. Then, $cap (E) =0$ if  and only if $\tc_\te(E) =0$ for all $\te $ of the form \eqref{eq:current-te} with $0<\de \leq 1$.
\end{cor}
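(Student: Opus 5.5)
The plan is to prove the two implications separately. The direction ``$\Leftarrow$'' uses Lemma~\ref{lem:FO-polar}-(a) with a well-chosen $\phi$. The direction ``$\Rightarrow$'' uses Lemma~\ref{lem:FO-polar}-(b) together with outer regularity of both capacities.

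Throughout I first reduce to $\Om$ strictly pseudoconvex and $E$ relatively compact. Pluripolarity is a local property, so $cap(E,\Om)=0$ if and only if $cap(E\cap B, B')=0$ for all small balls $B\subset\subset B'\subset\subset\Om$ covering $\Om$. On the $\tc_\te$ side, $\tc_\te$ is a Choquet capacity \cite[Theorem~2.1.1]{FOT94}, hence countably subadditive. Moreover, restricting the test functions $v\in C^\infty_c(B')$ shows that vanishing on a small ball is inherited by the ambient domain, after arranging $\phi$ as in the proof of Lemma~\ref{lem:FO-polar}-(b). I expect this bookkeeping of domains to be the only delicate point. If it becomes troublesome, I would simply state and use the corollary for strictly pseudoconvex $\Om$, which is the setting in which it is invoked in Lemma~\ref{lem:qe-limit}.

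\emph{($\Rightarrow$)} Assume $cap(E)=0$ with $E\subset D\subset\subset\Om$, and fix $\te=[dd^c(\phi+\de|z|^2)]^{n-1}$. By outer regularity of the Bedford--Taylor capacity \cite{BT82}, for every $\veps>0$ there is an open set $G$ with $E\subset G\subset D$ and $cap(G)<\veps$. Lemma~\ref{lem:FO-polar}-(b) then gives
$$\tc_\te(E)\le \tc_\te(G)\le A_0\,\veps^{1/n}.$$
Here $A_0$ depends on $D$, $\Om$, and only on the bounds $-1\le\phi\le 0$ and $\de\le 1$, not on $G$. Letting $\veps\to0$ yields $\tc_\te(E)=0$. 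A general $E$ is handled by writing it as a countable union of pieces $E\cap D_j$ over an exhaustion $D_j\uparrow\Om$ and applying countable subadditivity of $\tc_\te$.

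\emph{($\Leftarrow$)} Suppose instead that $cap(E)>0$. Since $cap(\cdot,\Om)$ is inner regular on Borel sets \cite{BT82}, there is a compact $K\subset E$ with $cap(K)>0$. Choose $\phi=u_K^*$, so that $-1\le\phi\le0$ is psh, and take any $\de\in(0,1]$. Then $\te=[dd^c(u_K^*+\de|z|^2)]^{n-1}=(dd^c u_K^*+\de\om)^{n-1}$ is of the form \eqref{eq:current-te}. Lemma~\ref{lem:FO-polar}-(a) gives $0<cap(K)\le 8\,\tc_\te(K)$, and monotonicity gives $\tc_\te(E)\ge\tc_\te(K)>0$. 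Thus some admissible $\te$ has $\tc_\te(E)\neq0$, which proves the contrapositive.
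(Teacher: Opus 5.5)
Your proposal is correct and is essentially the paper's proof. You prove ($\Leftarrow$) via Lemma~\ref{lem:FO-polar}-(a) with $\phi=u_K^*$ for a compact $K\subset E$ of positive capacity, and ($\Rightarrow$) via outer regularity of $cap(\cdot,\Om)$ together with the $G$-independent bound of Lemma~\ref{lem:FO-polar}-(b). The only cosmetic difference is in ($\Rightarrow$): you apply outer regularity directly to $E$ and conclude by monotonicity of $\tc_\te$, whereas the paper first reduces to compact sets using that both capacities are Choquet capacities.
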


\begin{proof}  Since both capacities are Choquet's capacities  we may assume $E=K$ is compact. To prove the sufficient  condition  assume that $cap (K)>0$. Lemma~\ref{lem:FO-polar} implies immediately that $\tc_\te(K)>0$ where 
where $\te = (dd^cu_K^* + \de \om)^{n-1}$. 
To prove the necessary condition we assume $cap (K) =0$. 
Let   $\veps>0$ be small. By outer regularity there exists an open subset $G\subset D \subset\subset \Om$ 
such that $K\subset G$ and $cap (G) < \veps$. Using Lemma~\ref{lem:FO-polar}-(b) we infer  $\tc_\te (G) \leq A_0 \veps^\frac{1}{n}$ where $A_0$ is  independent of $\veps$.
 Let $\veps\to 0^+$, then $\tc_\te(G) \to \tc_\te(K)$ and therefore,  $\tc_\te(K) =0$.
\end{proof}

\section{Exponential integrability}

\subsection{Comparison between volume and capacity}

In this section we give a simplification of a quantitative version of  the exponential integrability in \cite{DMV}. Then, using this we obtain several equivalence statements on the comparison between the Lebesgue volume of a Borel set and its capacity.

Let $B_R:=B(0,R)$ be the ball of radius $R>0$ in $\bC^n$. Let $u \in PSH(B_R)$. Denote by $W^*(B_R, u)$ the set of  functions
$f \in W^*(B_R)$ satisfying
\[\label{eq:sobolev-potential} df \wed d^cf \leq dd^c u.\] 

We have the following basic results for smooth functions in this family.

\begin{lem} \label{lem:restriction} Suppose $f \in W^*(B_R, u) \cap C^\infty(B_R)$ and $u\in PSH\cap C^\infty(B_R)$. Then, for $\ze \in \bC^n$ with $|\ze| =R$,   the function $\ell (\la) = f(\la \ze)$ belongs to $W^*\cap C^\infty(\bD)$ where $\bD$ is the unit disc in $ \bC$. Moreover,
$$
	\left| \frac{\d \ell}{\d \la} \right|^2 \leq  \De_\la u(\la \ze)/4.
$$
\end{lem}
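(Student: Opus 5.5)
The plan is to pull back the inequality $df\wed d^c f\le dd^c u$ along the holomorphic map $h:\bD\to B_R$, $h(\la)=\la\ze$, and then read it off pointwise in one complex variable. The restriction $\ell=f\circ h$ is clearly smooth on $\bD$, since $|\la\ze|<R$ for $|\la|<1$. Membership in $W^*(\bD)$ will then follow from the gradient bound together with the $n=1$ case of Proposition~\ref{prop:density}(a), provided the right-hand side is integrable, which I discuss at the end.

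First, Lemma~\ref{lem:ineq-weak} applies here with everything smooth. The hypothesis $df\wed d^cf\le dd^cu$ is then the pointwise inequality of Hermitian forms
$$
\Big|\sum_j \d_j f(z)\,\ze^j\Big|^2=\sum_{j,k}\d_j f(z)\,\d_{\bar k}f(z)\,\ze^j\bar\ze^k\le \sum_{j,k}\d_j\d_{\bar k}u(z)\,\ze^j\bar\ze^k
$$
for every $z\in B_R$ and every $\ze\in\bC^n$. For the left-hand side I use that $f$ is real-valued, so $\d_{\bar k}f=\overline{\d_k f}$. On the right-hand side, the normalization $dd^c=\frac{\ii}{\pi}\d\dbar$ and the convention $T=\sum T_{j\bar k}\frac{\ii}{\pi}dz_j\wed d\bar z_k$ give exactly $T_{j\bar k}=\d_j\d_{\bar k}u$. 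The only point requiring care is this bookkeeping of constants, since $df\wed d^cf=\frac{\ii}{\pi}\d f\wed\dbar f$ has coefficients $\d_jf\,\d_{\bar k}f$.

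Next comes the chain rule. Since $h$ is holomorphic, $\d\ell/\d\la=\sum_j\d_jf(\la\ze)\ze^j$. Likewise $\d^2(u\circ h)/\d\la\d\bar\la=\sum_{j,k}\d_j\d_{\bar k}u(\la\ze)\ze^j\bar\ze^k$. The one-variable Laplacian satisfies $\De_\la=4\,\d^2/\d\la\d\bar\la$, so the right-hand side equals $\De_\la(u(\la\ze))/4$. Evaluating the displayed inequality at $z=\la\ze$ then gives $|\d\ell/\d\la|^2\le\De_\la u(\la\ze)/4$, which is the claim.

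Finally, for $\ell\in W^*(\bD)$ I argue as follows. In dimension one, $d\ell\wed d^c\ell=\frac{\ii}{\pi}|\d_\la\ell|^2d\la\wed d\bar\la$, and this is dominated by the closed positive current $dd^c(u\circ h)$. So $\ell\in W^*$ holds locally, that is, on $\bD_r$ for every $r<1$, where everything is bounded. If finiteness of the mass on all of $\bD$ is needed, I would either interpret the statement on relatively compact subdiscs, or note that $W^*=W^{1,2}$ in dimension one by Proposition~\ref{prop:density}(a), so that only the smooth bounded-gradient case matters. I expect this integrability-up-to-the-boundary subtlety to be the only real obstacle; the pointwise inequality itself is routine.
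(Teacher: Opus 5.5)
Your argument is the same as the paper's one-line proof: by the chain rule for the holomorphic map $\la\mapsto\la\ze$, you have $|\d\ell/\d\la|^2=\sum_{j,k}\d_jf\,\d_{\bar k}f\,\ze^j\bar\ze^k$, and the pointwise form of $df\wed d^cf\le dd^cu$ bounds this by $\sum_{j,k}\d_j\d_{\bar k}u\,\ze^j\bar\ze^k=\De_\la u(\la\ze)/4$. Your caution about $\ell\in W^*(\bD)$ is well placed: the paper does not address it, and membership on all of $\bD$ can genuinely fail (e.g.\ $f=(1-|z_1|^2)^{1/4}$ on the unit ball of $\bC^2$ with $\ze=(1,0)$ gives $\ell\notin W^{1,2}(\bD)$), so your second fallback via bounded gradients does not work globally, whereas your subdisc reading $\ell\in W^*(\bD_r)$ for $r<1$ is correct and is all that is used later, with $r<e^{-1/2}$.
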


\begin{proof} This follows from a simple calculation
$$
	\left|\frac{\d\ell }{\d \la}\right|^2 = \sum_{j,k} \frac{\d f}{\d z_j} \frac{\d f}{\d \bar z_k} \ze^j \bar \ze^k \leq \sum_{j,k}\frac{\d^2 u}{\d z_j \d \bar z_k} \ze^j \bar \ze^k = \De_\la u(\la \ze) /4,
$$
where the second inequality used the fact that $df\wed d^c f\leq dd^c u$.
\end{proof}

\begin{cor} \label{cor:grad-1d} Let $f(z), u(z)$  and $\ell (\la)$ be as in Lemma~\ref{lem:restriction}. Assume further  $u(0)=-M$ with $M>0$. Denote by $\bD_r:= \bD(0,r)$ the disc in $\bC$ for $0<r<e^{-\frac{1}{2}}$. Then, 
$$
	\|\na \ell \|^2_{L^2(\bD_r)} \leq \int_{\bD_r}  \De_\la u (\la \ze)/4 \leq \pi M.
$$
\end{cor}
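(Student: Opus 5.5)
The plan is to restrict everything to the complex line through $\ze$ and use one-variable potential theory. I will take $u\leq 0$ on $B_R$, as in the setting of $W^*(B_R,u)$ where negative potentials are used. Without a sign condition the bound by $M$ alone cannot hold. Put $v(\la):=u(\la\ze)$ for $\la\in\bD$. It is smooth and subharmonic on $\bD$, satisfies $v\leq 0$ and $v(0)=-M$, and $\De_\la v(\la)=\De_\la u(\la\ze)$.

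\emph{First inequality.} This is the pointwise bound of Lemma~\ref{lem:restriction} integrated over $\bD_r$. For the real-valued smooth function $\ell$, $|\na \ell|^2$ is a fixed multiple of $|\d\ell/\d\la|^2$. The constant must be matched to the normalisation of $d^c$ and of $\De_\la$ used in Lemma~\ref{lem:restriction}, so that one gets $|\na\ell|^2\leq \De_\la u(\la\ze)/4$ pointwise on $\bD_r$. Integrating over $\bD_r$ then gives $\|\na\ell\|_{L^2(\bD_r)}^2\leq \int_{\bD_r}\De_\la u(\la\ze)/4$. The only care needed here is keeping the factors of $\pi$ and $4$ consistent with the paper's conventions.

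\emph{Second inequality.} I would use the Riesz--Jensen formula for the smooth subharmonic function $v$. For $0<s<1$,
$$
\frac{1}{2\pi}\int_0^{2\pi} v(se^{i\te})\,d\te - v(0) = \frac{1}{2\pi}\int_{\bD_s}\log\frac{s}{|\la|}\,\De_\la v(\la)\, dA(\la).
$$
Since $v\leq 0$, the left side is at most $-v(0)=M$. Hence
$$
\int_{\bD_s}\log\frac{s}{|\la|}\,\De v\, dA\leq 2\pi M \quad\text{for every } s<1.
$$
Letting $s\uparrow 1$ by monotone convergence, the integrand is nonnegative and increasing in $s$, so
$$
\int_{\bD}\log\frac{1}{|\la|}\,\De v\, dA\leq 2\pi M.
$$
For $|\la|<r<e^{-1/2}$ we have $\log(1/|\la|)>\log(1/r)>1/2$. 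Therefore $\frac12\int_{\bD_r}\De v\,dA\leq 2\pi M$, which gives $\int_{\bD_r}\De_\la u(\la\ze)/4\leq \pi M$.

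The main obstacle is bookkeeping rather than ideas. One must check that the Laplacian normalisation in Lemma~\ref{lem:restriction} matches the one in the Jensen formula, so that the constants come out exactly as $1/4$ and $\pi M$. This is also where the threshold $e^{-1/2}$ enters: it is chosen precisely so that $\log(1/r)\geq 1/2$.
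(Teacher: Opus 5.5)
Your route is the paper's: integrate the pointwise bound of Lemma~\ref{lem:restriction} over $\bD_r$, then apply the Riesz (Jensen) representation to the subharmonic function $v(\la)=u(\la\ze)$ on $\bD$, and use $\log(1/|\la|)>1/2$ on $\bD_r$. Your proof of the second inequality is correct. You are also right that it needs $u\le 0$, so that $v\le 0$ on $\bD$; the paper leaves this hypothesis implicit.

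The constant step you deferred, however, does not come out the way you assert. The paper's convention is that $\na$ is the real gradient. For real-valued $\ell$ this gives $|\na\ell|^2=(\d_x\ell)^2+(\d_y\ell)^2=4|\d\ell/\d\la|^2$. So Lemma~\ref{lem:restriction} yields $|\na\ell|^2\le\De_\la u(\la\ze)$, not $\De_\la u(\la\ze)/4$.

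A concrete check: take $n=1$, $R=1$, $\ze=1$, $f(z)=\mathrm{Re}\,z$ and $u(z)=|z|^2/4-1$. Then $M=1$, $u\le 0$, and $df\wed d^cf=dd^cu$. One gets $\|\na\ell\|^2_{L^2(\bD_r)}=\pi r^2$, whereas $\int_{\bD_r}\De_\la u/4=\pi r^2/4$.

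What your argument, and the paper's one-line proof, actually gives is
$$\|\na\ell\|^2_{L^2(\bD_r)}\le\int_{\bD_r}\De_\la u(\la\ze)\le 4\pi M.$$
Even the end bound $\pi M$ fails for $r>e^{-2}$. To see this, take $n=1$ and let $u$ be the Green potential in $\bD$ of $|\na f|^2$, with $|\na f|^2$ supported in a thin annulus just inside $|\la|=r$. Then $\|\na\ell\|^2_{L^2(\bD_r)}\approx 2\pi M/\log(1/r)$, which exceeds $\pi M$ for such $r$.

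So the statement holds as written only if $|\na\ell|$ is read as $|\d\ell/\d\la|$. The slip is inherited from the statement and is harmless downstream: it only divides the exponent $\al_0$ in Lemma~\ref{lem:exp-est-key} by $4$. Still, you identified this bookkeeping as the main obstacle, so you should have carried it out rather than asserting that it produces exactly $1/4$ and $\pi M$.
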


\begin{proof} The first inequality follows from Lemma~\ref{lem:restriction}. The second inequality follows from the Riesz representation formula for the subharmonic function $L (\la) = u(\la\ze)/M$ in the unit disc $\bD$, see e.g., the proof of  \cite[Proposition~4.2.9]{Ho07}. 
\end{proof}

The next result is  crucial for proving uniform exponential integrability.

\begin{lem} \label{lem:exp-est-key} Let $u\in PSH \cap C^\infty (B(0,1))$ and $f\in W^*(u, B(0,\frac{3}{4}))\cap C^\infty(B(0,\frac{3}{4}))$ with $f\leq 0$. Define
$$
	\ka^2 = \int_{B(0,\frac{3}{4})} |f|^2 dx + \int_{B(0,\frac{3}{4})} dd^c u \wed \om^{n-1}.
$$ Assume $u(0)=-M \ka^2$. There exist uniform constants $A>0$ and $\al>0$ independent of $f$ and $u$ such that 
$$\int_{B(0,\frac{1}{2})} e^{ \frac{\al |f|^2}{M \ka^2}} dx  \leq A.$$ 
\end{lem}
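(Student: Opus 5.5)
The plan is to reduce to one complex variable by restricting to complex lines through points near the origin. Then we use the one-dimensional exponential integrability of $W^{1,2}$ functions on a disc (Trudinger--Moser), and integrate over directions in polar coordinates.

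Setup. Fix $a\in B(0,\frac14)$. For $\ze\in\bC^n$ with $|\ze|=\frac14$, set $\ell_{a,\ze}(\la)=f(a+\la\ze)$ for $\la\in\bD$. This is a smooth function on the unit disc with values in $B(0,\frac12)\subset B(0,\frac34)$. I will first apply Lemma~\ref{lem:restriction} and Corollary~\ref{cor:grad-1d} with the origin replaced by $a$. This gives
$$\int_{\bD_r}|\d_\la \ell_{a,\ze}|^2\,d\la \le \int_{\bD_r}\De_\la u(a+\la\ze)/4 .$$
To bound the right-hand side I will use the Riesz formula for the subharmonic function $\la\mapsto u(a+\la\ze)$. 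This needs a bound on $-u(a)$, whereas only $u(0)=-M\ka^2$ is known.

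The cleanest route is to take $a=0$, so the bound is exactly $\pi M\ka^2$, and to write $B(0,r_0)$ in polar coordinates as a union of complex lines through $0$:
$$\int_{B(0,r_0)} g\,dx = c_n\int_{|\ze|=1}\int_{\bD_{r_0}} g(\la\ze)|\la|^{2n-2}\,dA(\la)\,d\sigma(\ze).$$
The weight $|\la|^{2n-2}\le 1$ is harmless. Since $u\le 0$ after subtracting $\sup u$ (and $u$ is smooth on the larger ball $B(0,1)$), Corollary~\ref{cor:grad-1d} gives the uniform bound $\|\na \ell_\ze\|^2_{L^2(\bD_r)}\le \pi M\ka^2$ for every direction $\ze$ with $|\ze|<1$ and every fixed $r<e^{-1/2}$.

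Control of the value. Exponential integrability on a disc needs control of $\ell_\ze$ itself, not only of its gradient, and this is the main obstacle. A gradient bound alone gives $e^{\al|\ell-\bar\ell|^2/\|\na\ell\|^2}\in L^1$ by the Trudinger--Moser inequality on discs. Here $\bar\ell_\ze$ is the mean of $\ell_\ze$ over $\bD_r$, and it may be large or even unbounded in $\ze$. I would handle this by averaging: $\int_{|\ze|=1}|\bar\ell_\ze|^2\,d\sigma$ is comparable to $\int|f|^2$ over the annulus-like region swept by these discs. By Cauchy--Schwarz and polar coordinates with the weight $|\la|^{2n-2}$ this region is the ball, and near $\la=0$ the weight degenerates. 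To avoid the degeneracy, I would replace $\bar\ell_\ze$ by the average of $\ell_\ze$ over an annulus $\{r/2<|\la|<r\}$, where the weight is bounded below. This gives $\int_{|\ze|=1}|\bar\ell_\ze|^2\,d\sigma\le C\ka^2$.

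Then I use $|f|^2\le 2|\ell-\bar\ell|^2+2|\bar\ell|^2$ and Trudinger--Moser on each disc. The total is bounded by $\int_{|\ze|=1}e^{2\al|\bar\ell_\ze|^2/(M\ka^2)}d\sigma$, which is still not controlled by an $L^2$ bound on $\bar\ell_\ze$. So I would refine this step: write $f=\ell_\ze$ on the line, and use that $\ze\mapsto\bar\ell_\ze$ is itself the restriction of a $W^*$-type function to the sphere. Alternatively, and more simply, I would observe that $\bar\ell_\ze$ is an average of $f$ over a set of positive measure lying in a fixed region $\{r/2<|z|<r\}$. There one can repeat the argument with lines through other centers $a$, namely those points where $-u(a)\le 2M\ka^2$. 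By the sub-mean value property such centers form a set of positive measure, and a chaining/covering argument then controls the averages.

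Conclusion. Integrating the disc estimate over $\ze$ gives $\int_{B(0,1/2)}e^{\al|f|^2/(M\ka^2)}\le A$ after rescaling radii, with $\al,A$ depending only on $n$. The constant in Trudinger--Moser and the factor $\pi$ from Corollary~\ref{cor:grad-1d} determine $\al$. I expect the control of the line averages $\bar\ell_\ze$ to be the real difficulty. One may need to assume $M\ge1$, or use that $\ka^2\ge\int|f|^2$ on $B(0,\frac34)$ gives, by Chebyshev, many lines on which $|\bar\ell_\ze|\lesssim\ka$.
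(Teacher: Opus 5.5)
Your outline matches the paper's: restrict to complex lines through $0$, bound the Dirichlet energy of $\ell_\ze$ by $\pi M\ka^2$ via Corollary~\ref{cor:grad-1d}, apply Moser--Trudinger to $\ell_\ze-\bar\ell_\ze$ on $\bD_r$, and integrate in polar coordinates. But the proposal has a genuine gap exactly where you locate it. Nothing you write controls the line means $\bar\ell_\ze$ in the exponential sense required; you never bound $\int_{|\ze|=1}e^{C|\bar\ell_\ze|^2/(M\ka^2)}\,d\sigma(\ze)$. None of the remedies you list closes this:
\begin{itemize}
\item The annulus trick gives an $L^2(d\sigma)$ bound on $\bar\ell_\ze$, but that says nothing about its exponential moments.
\item Chebyshev only produces a large set of good directions. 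On the remaining directions $\int_{\bD_r}e^{\tau|\ell_\ze|^2}$ is uncontrolled.
\item Re-centering at points $a$ with $-u(a)\le 2M\ka^2$ reproduces the same problem for lines through $a$.
\item The map $\ze\mapsto\bar\ell_\ze$ is again a $W^*$-type function of $n$ complex variables, with potential $\ze\mapsto\intavg_A u(\la\ze)\,dA(\la)$. Its exponential integrability on a sphere is at least as hard as what you are trying to prove.
\end{itemize}
A minor point: subtracting $\sup u$ changes $u(0)$. So you really need $u\le 0$ on $B(0,\frac34)$ as a hypothesis, which the paper tacitly assumes.

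The paper's proof does not supply the missing idea either. It handles the mean with the ``elementary inequality'' $a^2\le(1+\de)(a-b)^2+b^2/2$ plus a symmetrization, and concludes $\int_{\bD_r}e^{\tau|\ell|^2}\le\int_{\bD_r}e^{2\tau(1+\de)|\ell-(\ell)_r|^2}$. The inequality fails at $a=b$, and the conclusion fails for every nonzero constant $\ell$.

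Your suspicion that a lower bound on $M$ is needed is also correct. Take $f\equiv-c$ and $u\equiv-b$. Then $\ka^2=c^2|B(0,\frac34)|$ and $M\ka^2=b$, so the integral equals $|B(0,\frac12)|\,e^{\al c^2/b}$, which tends to $\infty$ as $b\to0$. Such a hypothesis is harmless for Proposition~\ref{prop:DMV}, which only needs an upper bound on $M$: lowering $u$ by a constant increases $M$ and preserves the other hypotheses.

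Even with $M\ge 1$, an honest proof must convert the $L^2$ bound on $f$ in $\bC^n$ into exponential control of its averages over complex discs. That is what the induction on dimension and slicing in Dinh--Marinescu--Vu accomplish. Neither your proposal nor the paper's shortcut provides it.
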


\begin{proof} By considering $f:= f/\ka$ and $u:= u/\ka^2$, we may assume that $$\ka=1 \quad \text{and}\quad u(0)=- M.$$
We start with the estimate of the function restricted on each complex line whose domain is the corresponding disc. Namely, for $\ze \in \bC^n$ with $|\ze|=3/4$ let us consider $\ell(\la) = f(\la \ze)$ as in Lemma~\ref{lem:restriction}. Its average value on the disc $\bD(0,r)$, $0<r<1$, is given by
$$
	(\ell)_r = \intavg_{\bD_r} \ell(\la) dV_2(\la).
$$
By Corollary~\ref{cor:grad-1d} we have for $0<r<e^{-\frac{1}{2}}$,
$$\begin{aligned}
\int_{\bD_r} |\na \ell|^2dV_2  \leq \pi M.
\end{aligned}$$
Applying  Moser-Trudinger's inequality in  $\bD_r \subset \bC=\bR^2$ for the function $\ell(\la) - (\ell)_r$ of vanishing mean (see e.g. \cite{Le05}), we get
\[\label{eq:MT-mean}
	\int_{ \bD_r} e^{\al_0  |\ell (\la) -(\ell)_r |^2} dV_2 (\la) \leq c_0.
\]
where $ \al_0 = 2/M$. 

Let $0 <\tau < \al_0$ to be determined later.  Next, we have  for $\de = (1+\sqrt{17})/4$,
$$	a^2 \leq  (1+\de)(a-b)^2  + \frac{b^2}{2} \quad \forall \;a, b\geq 0
$$
by an elementary inequality. 
It follows that 
$$\begin{aligned} 
	\int_{\bD_r} e^{\tau |\ell(\la)|^2} dV_2 
&\leq \int_{\bD_r} e^{\tau (1+ \de) |\ell - (\ell)_r |^2} e^{\frac{\tau}{2} |(\ell)_r|^2} dV_2 \\
&\leq  \frac{1}{2} \int_{\bD_r} e^{2\tau (1+\de) |\ell - (\ell)_r|^2}  dV_2+ \frac{1}{2} \int_{\bD_r} e^{\tau |(\ell)_r|^2} dV_2,
\end{aligned}$$
where we used the Cauchy-Schwarz inequality for the integrand in the second inequality.  By symmetry,
$$
\int_{\bD_r} e^{\tau |(\ell)_r|^2} dV_2  \leq  \frac{1}{2} \int_{\bD_r} e^{2\tau (1+\de) |\ell - (\ell)_r|^2} dV_2 + \frac{1}{2} \int_{\bD_r} e^{\tau |\ell|^2} dV_2,
$$
The  two  inequalities above imply that
\[\label{eq:slice-mass-a}
	\int_{\bD_r} e^{\tau |\ell(\la)|^2} dV_2  \leq  \int_{\bD_r} e^{2\tau (1+\de) |\ell - (\ell)_r|^2} dV_2. 
\]
Now we can choose 
\[ \label{eq:tau-choice}
\tau = \frac{\al_0}{2(1+\de)} = \frac{4}{(5+ \sqrt{17})M} =: \frac{\al}{M}.
\]
Combining \eqref{eq:MT-mean},  \eqref{eq:slice-mass-a} and  \eqref{eq:tau-choice} we obtain the bound
\[\label{eq:slice-mass-b}
	\int_{\bD_r} e^{\tau |\ell(\la)|^2} dV_2  \leq c_0.
\]

Finally, we estimate the integral on $B(0,R)$ by using the polar coordinates.
$$\begin{aligned}
	\int_{B_R} e^{\tau |f|^2} dV_{2n} 
&= \int_{|\ze| =1} \int_{D_R} e^{\tau |f (\la \ze)|^2} |\la|^{2n-2} dV_2(\la) dS(\ze)\\
&\leq R^{2n-2}  \int_{|\ze| =1}  c_0 R^2 dS(\ze) \\
&\leq	 c_0R^{2n} \sig_{2n-1}.
\end{aligned}$$
This completes the proof as we can take $R=1/2 < e^{-1/2}$.
\end{proof}

We are ready to state the key component in the proof of  \cite[Theorem~1.2]{DMV}. By using Lemma~\ref{lem:exp-est-key} the proof  does not use neither the induction argument  nor  slicing theory for currents.
\begin{prop} \label{prop:DMV} Let $B(0,1) \subset \bC^n$ be the unit ball. There exist positive constants $\al$ and $A$  such that for all $f \in W^*(B(0,1))$ with $\|f\|_*\leq 1$, 
$$
	\int_{\bar B(0,\frac{1}{8})} e^{\al |f|^2} dx \leq A.
$$
\end{prop}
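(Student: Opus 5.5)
The strategy is to reduce to Lemma~\ref{lem:exp-est-key}. This requires producing, for a given $f$ with $\|f\|_*\le 1$, a smooth psh potential $u$ on a ball with $df\wed d^cf\le dd^c u$, controlled mass and controlled value $u(0)$. We then apply the lemma at every center $a\in\bar B(0,\frac18)$ after translating.

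\textbf{Step 1: reductions.} By Lemma~\ref{lem:norm-min-max}, $\|f^\pm\|_*\le\|f\|_*$, and $e^{\al|f|^2}\le e^{\al (f^+)^2}+e^{\al (f^-)^2}$. Hence we may assume $f\le 0$. Let $T$ be a minimum current of $f$, so $\|T\|_{B(0,1)}\le 1$.

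\textbf{Step 2: the potential.} On $B(0,\frac78)$ we solve $dd^c u = T$ with a controlled potential. Take $\chi$ a cut-off equal to $1$ on $B(0,\frac78)$ and supported in $B(0,1)$, and set $u=\chi T * K$, where $K$ is the standard Newton-type kernel (as in the usual construction of potentials of closed positive $(1,1)$-currents, cf.\ the solution of $dd^c$ on balls via Lelong's construction). Concretely, one writes
\[
u(z) = -c_n\int_{B(0,1)} \chi(y)\,|z-y|^{2-2n}\, T\wed\om^{n-1}(y)
\]
plus a pluriharmonic correction, which is bounded on $B(0,\frac78)$ by a multiple of $\|T\|\le1$. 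The resulting $u$ is psh on $B(0,\frac78)$, satisfies $dd^c u=T$ there, and its $L^1(B(0,\frac78))$ norm and $dd^c u$-mass are bounded by a uniform constant. Since the kernel is locally integrable, $|u(a)|$ need not be bounded pointwise. Instead we use the sub-mean value property together with the $L^1$ bound: for most centers we avoid poles by averaging. More cleanly, we replace $u$ by $u_\veps=u*\eta_\veps$ and $f$ by $f_\veps=f*\eta_\veps$. Proposition~\ref{prop:ineq-weak} gives $df_\veps\wed d^cf_\veps\le dd^c u_\veps$, and Proposition~\ref{prop:smoothing} keeps the norms bounded. The value $u_\veps(a)$ is only bounded below by $-C/\veps^{2n}$, so instead we subtract a constant $M_0$. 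This uses that the Lelong-type bound $\int_{B(a,r)} \Delta u\le C r^{2n-2}$ gives a uniform lower bound only on averages, not pointwise values.

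\textbf{Step 3 (main obstacle): controlling $u(a)$.} This is the genuine difficulty. Lemma~\ref{lem:exp-est-key} needs $u(0)=-M$ with $M$ uniform, but a psh potential of $T$ may be $-\infty$ at $a$. The fix I would try is to use the freedom to add $dd^c$ of something: the condition only requires $df\wed d^cf\le dd^c u$. We may therefore replace $u$ by $\max\{u,\,A(|z-a|^2-1)\}$? That destroys the inequality where the max switches. Instead, note that in the proof of Lemma~\ref{lem:exp-est-key} the quantity $u(0)$ enters only via Corollary~\ref{cor:grad-1d}, namely the bound $\int_{\bD_r}\Delta u(\la\ze)\le \pi M$ on each complex line. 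So I would bound this slice mass directly, in an averaged sense. Integrating over $\ze\in S^{2n-1}$ gives $\int_{S}\int_{\bD_r}\Delta u(\la\ze)\,dS\,dV_2 \lesssim \int_{B(a,r)}|z-a|^{2-2n}T\wed\om^{n-1}$, which is finite only when the Lelong-type potential is finite at $a$. To get around this, first average over the center $a$: for $a$ in a small ball, the function $a\mapsto\int |z-a|^{2-2n}dT$ has uniformly bounded integral by Fubini. We then run the one-dimensional Moser--Trudinger argument with the slice-wise bound $M(\ze)$ replaced by the actual slice mass, and use Jensen/H\"older to integrate $e^{\al|f|^2}$ over a translated family of polar coordinates. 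Overall, the bound follows by averaging the estimate of Lemma~\ref{lem:exp-est-key} over centers $a\in B(0,\frac18)$ and directions. This yields $\int_{B(0,\frac18)}e^{\al|f_\veps|^2}\le A$ uniformly in $\veps$. Finally, Fatou's lemma as $\veps\to0$ gives the claim.
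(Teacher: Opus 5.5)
\textbf{Verdict: there is a genuine gap, at the step you yourself call the crux.}

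Your Step~1 and Step~2 are fine. In particular, the Newton-potential construction gives $u\le 0$ with $dd^c u=T$ and a uniform bound on $\int|u|\,dx$ (by Fubini). That $L^1$ bound is all the paper uses from Skoda's Lemma~\ref{lem:skoda-lelong}.

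The gap is Step~3, which you do not resolve. The route through slice masses cannot close as described:
\begin{itemize}
\item On the slice through $a$ in direction $\ze$, the one-dimensional Moser--Trudinger inequality only controls $e^{c|\ell|^2/m(a,\ze)}$, where $m(a,\ze)=\int_{\bD_r}\De_\la u(a+\la\ze)/4$. With a fixed $\al$ it gives nothing on slices of large mass.
\item In polar coordinates centered at $a$, each point lies on essentially one slice. So at a fixed point there is nothing to average over directions, and Jensen or H\"older in $\ze$ cannot help.
\item The only thing that bounds all slice masses through $a$ at once is Corollary~\ref{cor:grad-1d}. That is Jensen's formula for $\la\mapsto u(a+\la\ze)\le 0$, giving $m(a,\ze)\le\pi M$ with $M=-u(a)$.
\end{itemize}
So you really need a center with a pointwise lower bound $u(a)\ge -M_0$. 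Subtracting a constant from $u$, as in your Step~2, goes the wrong way: it only increases $M$.

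The missing observation is that one good center suffices, and your $L^1$ bound produces it at no cost. In the smooth case, take $a\in\bar B(0,\frac18)$ with $u(a)=\max_{\bar B(0,\frac18)}u$. A maximum dominates the mean, so
$$M=-u(a)\le |B(0,\tfrac18)|^{-1}\int_{B(0,\frac18)}|u|\,dx\le M_0 .$$
This is uniform in the mollification parameter, since $\int_{B(0,\frac18)}|u_\veps|\le\int_{B(0,\frac18+\veps)}|u|$. Because $|a|\le\frac18$, we have $B(0,\frac18)\subset B(a,\frac14)$. One application of Lemma~\ref{lem:exp-est-key} to $g(z)=f(z+a)$, $v(z)=u(z+a)$ then gives $\int_{B(0,\frac18)}e^{\al_1|f|^2/M}\,dx\le A_1$, so $\al=\al_1/M_0$ works. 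Your mollification and Fatou step finishes exactly as in the paper, and this is the paper's argument.

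If you prefer to keep averaging over centers, it can be made rigorous without any slicing. Apply Lemma~\ref{lem:exp-est-key} at a.e.\ $a\in B(0,\frac18)$ to get $\int_{B(0,\frac18)}e^{\al_1|f|^2/(-u(a))}\,dx\le A_1$, then average in $a$. Finish with Jensen's inequality for the convex decreasing function $t\mapsto e^{c/t}$, together with $|B(0,\frac18)|^{-1}\int_{B(0,\frac18)}(-u)\,da\le M_0$. The directional averaging in your sketch should be dropped.
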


\begin{proof}
Let $T$ be a positive closed $(1,1)$-current in $B:=B(0,1)$ realizing $W^*$-norm of $f$, in particular, 
$$ 
df \wed d^c f\leq T
$$
and 
$$
 	\int_B |f|^2 dx+	\int_{B} T\wed \om^{n-1} \leq 1.
$$
 By a result of Skoda \cite{Sk72} in Lemma~\ref{lem:skoda-lelong}  of the appendix (see also \cite[Lemma~2.1]{Vu20}) we may choose a psh potential $u$ solving
 \[\notag\label{eq:T-psh-potential}
 	T = dd^c u
 \] such that $u \leq 0$ in $B(0, 3/4)$ and
\[\label{eq:skoda-lelong}
	\int_{\bar B(0,\frac{1}{8})} |u| \; dx \leq A_0.
\]
for a uniform constant $A_0$. Then,  $ f \in W^*(B(0,3/4), u)$.

 Assume first that $f,u\in C^\infty(B(0,3/4))$.
Set $-M := \sup_{\bar B(0,\frac{1}{8})} u < 0$. Since $u$ is subharmonic in $B$, $u(a)=-M$ at some  $a\in \d B(0,1/8)$. 
Clearly, 
\[
	0< M \leq \intavg_{B(0,\frac{1}{8})} |u| \;dx \leq \frac{A_0}{|B(0,1/8)|} =: M_0.
\]
Thus, $M$ is uniformly bounded from above by $M_0$.

Consider $v (z) = u(z+a)$ and $g(z) = f(z+a)$. 
Then, $dg \wed d^c g \leq dd^c v$ and $v(0) = -M$. By 
Lemma~\ref{lem:exp-est-key} there exist uniform constants $\al_1>0$ and $A_1>0$ such that
$$
 \int_{ \bar B(0,\frac{1}{4})}e^{\al_1 |g|^2/M} dx\leq A_1.
$$
Since $B(0,1/8) \subset B(a,1/4)$, 
$$	
	\int_{\bar B(0,\frac{1}{8})} e^{\al_1 |f|^2/M} dx \leq \int_{\bar B(a,\frac{1}{4})} e^{\al_1 |f|^2/M} dx =  \int_{ \bar B(0,\frac{1}{4})}e^{\al_1 |g|^2/M} dx\leq A_1.
$$
Set $\al = \al_1/M_0$, we complete the proof in the smooth case.

We now remove the smoothness assumptions on $f$ and $u$. Denote $f_\veps = f*\eta_\veps$ and $u_\veps = u*\eta_\veps$ for small $\veps>0$. Then, on $B_\veps = B(0,3/4-\veps)$, they satisfy
$$
	df_\veps \wed d^cf_\veps \leq dd^c u_\veps \quad \text{on } B_\veps.
$$
Moreover, for $B'= B(0, 5/8)$,
$$
	\int_{B'} |f_\veps|^2 dx + \int_{B'} dd^cu_\veps \wed \om^{n-1} \leq 1.
$$
The argument in the first part shows 
$
	\int_{\bar B(0,\frac{1}{8})} e^{\al |f_\veps|^2} dx \leq A_2.
$
As $f_\veps \to f$ in $L^1(B')$, we may assume the convergence is almost everywhere after passing to a subsequence. Hence,  the conclusion follows from Fatou's lemma.
\end{proof}

\begin{cor}\label{cor:exp-int}  
Let $K \subset \subset \Om$ be compact subset.   There exist positive constants $\al, A>0$ depending only on $K$ and $\Om$ such that 
$$
	V_{2n} (\{|f| \geq s\} \cap K) \leq A e^{-\al s^2}, \quad s>0
$$
for every $f\in W^*(\Om)$ and $\|f\|_* \leq 1$.
\end{cor}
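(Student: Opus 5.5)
The plan is to deduce the corollary from Proposition~\ref{prop:DMV} by a covering and rescaling argument, followed by Chebyshev's inequality. Let $r := \min\{1, {\rm dist}(K,\d\Om)\}/2 >0$. Cover $K$ by finitely many balls $B(a_i, r/8)$ with $a_i\in K$, $i=1,\dots,N$; the number $N$ depends only on $K$ and $r$. Each $B(a_i,r)\subset\subset\Om$.

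For a fixed $i$, set $g(w) := c\, f(a_i + r w)$ for $w\in B(0,1)$, where $c>0$ will be chosen. If $df\wed d^c f\leq T$ with $\|T\|_\Om$ close to the infimum, let $\Psi(w)=a_i+rw$. Then $dg\wed d^c g = c^2\,\Psi^*(df\wed d^c f)\leq c^2\,\Psi^*T$, and $\Psi^*T$ is a closed positive $(1,1)$-current on $B(0,1)$. Its mass is computed by the change of variables $\Psi^*\om = r^2\om$:
$$\int_{B(0,1)}\Psi^*T\wed\om^{n-1}=r^{-2(n-1)}\int_{B(a_i,r)} T\wed (r^2\om)^{n-1}\cdot r^{0}\,$$
Only the scaling matters here, and the mass is bounded by $r^{2-2n}\|T\|_\Om$. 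Similarly,
$$\int_{B(0,1)}|g|^2\,dx = c^2 r^{-2n}\int_{B(a_i,r)}|f|^2\,dx\leq c^2 r^{-2n}\|f\|_*^2.$$
Hence $\|g\|_{W^*(B(0,1))}^2\leq c^2 r^{-2n}\|f\|_*^2$ since $r\le1$. Choosing $c=r^{n}$ gives $\|g\|_*\leq 1$ whenever $\|f\|_*\leq1$.

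Proposition~\ref{prop:DMV} then gives $\int_{\bar B(0,1/8)} e^{\al_0|g|^2}dx\leq A_0$. Changing variables back yields
$$\int_{B(a_i,r/8)} e^{\al_0 r^{2n}|f|^2}dy\leq r^{2n}A_0.$$
Summing over $i$ gives $\int_K e^{\al|f|^2}\leq N r^{2n}A_0 =: A$ with $\al=\al_0 r^{2n}$.

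Chebyshev's inequality now finishes the proof: $e^{\al s^2}V_{2n}(\{|f|\geq s\}\cap K)\leq \int_K e^{\al|f|^2}\leq A$. No step is a real obstacle. The only point requiring care is checking that the pullback of the current under the affine map is a closed positive current with the stated mass scaling, and that $f$ may be taken as an arbitrary representative, since only Lebesgue integrals are involved.
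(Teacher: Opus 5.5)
Your proposal is correct and follows the same route as the paper: cover $K$ by finitely many small balls, rescale each one to the unit ball, apply Proposition~\ref{prop:DMV}, and finish with Chebyshev's inequality. You also make explicit the norm scaling $\|g\|_*^2\le c^2r^{-2n}\|f\|_*^2$, which the paper leaves to ``translation and dilation,'' but your displayed mass identity has a slip: it should read $\int_{B(0,1)}\Psi^*T\wedge\omega^{n-1}=r^{2-2n}\int_{B(a_i,r)}T\wedge\omega^{n-1}$, with no factor $(r^2\omega)^{n-1}$ on the right, and this corrected form gives exactly the bound $r^{2-2n}\|T\|_\Omega$ you then use.
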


\begin{proof} Since $K$ is compact, there is a finite covering by  balls $B(a_i,r_i)$ such that $B(a_i, 8 r_i) \subset \Om$. By translation and dilation it is enough to estimate the volume of $\{|f|>s\} \cap B(0, 1/8)$. It follows easily from Proposition~\ref{prop:DMV} that
$$\begin{aligned}
	V_{2n} \left(\{|f|\geq s\} \cap \bar B(0, 1/8)\right) 
\leq \int_{\bar B(0,1/8)} e^{\al (|f|^2 - s^2)} dx \leq A e^{-\al s^2}. 
\end{aligned}$$
This is the desired estimate.
\end{proof}

\begin{remark} The uniform constants $\al$ and $A$ can be computed explicitly in the proof of Proposition~\ref{prop:DMV}. So are the ones in the corollary. However, they are very far from optimal ones and thus we will not produce them here. It would be very interesting to get the optimal ones for the unit ball.
\end{remark}

We are in position to characterize this decay of volume.

\begin{proof}[Proof of Theorem~\ref{prop:equiv-est-intro}] By  Corollary~\ref{cor:exp-int} the proof will follows immediately from the list of equivalent statements below. 
\end{proof}

\begin{prop} \label{prop:equiv-est}  Recall  $\tc(E)=\tc(E,\Om)$ for a Borel set $E$.
Let $K\subset\subset \Om$ be a compact subset.  The following statements  are equivalent to each other.
\begin{itemize}
\item[(a)] 
There exist constants $A_1>0$ and $\al>0$ depending only on $K, \Om$ such that for every  Borel  set $E\subset K$ $$V_{2n}(E) \leq A_1 e^{-\al/\tc (E)}.$$

\item[(b)] There exist uniform constants $A_1>0$ and $\al>0$ depending only on $K, \Om$ such that for every  $f\in W^*(\Om)$ and $f\leq 0$,
 $$V_{2n}(\{f< -1\} \cap K) \leq A_1 e^{\frac{-\al}{\|f\|_*^{2}}}.$$
\item
[(c)]   There exist uniform constants $A_1>0$ and $\al>0$ depending only on $K, \Om$ such that for every $f\in W^*(\Om)$ and $f\leq 0$ whose norm $\|f\|_* \leq 
\ka$,
$$V_{2n} (\{f < -s\} \cap K) \leq A_1 e^{-\al s^2/\ka^2}, \quad \forall s>0.$$

\item 
[(d)]  There exist uniform constants $A_1>0$ and $\al>0$ depending only on $K, \Om$ such that for every $f\in W^*(\Om)$ and $f\leq 0$ with $\|f\|_* \leq 1$,
$$V_{2n} (\{f < -s\} \cap K) \leq A_1 e^{-\al s^2}, \quad \forall s>0.$$
\end{itemize}
\end{prop}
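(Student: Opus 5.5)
I will prove the cycle (a)$\Rightarrow$(b)$\Rightarrow$(c)$\Rightarrow$(d)$\Rightarrow$(a). Constants may change from line to line but depend only on $K$ and $\Om$.

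\emph{(a)$\Rightarrow$(b).} Let $f\le 0$ be in $W^*(\Om)$ and put $E=\{f<-1\}\cap K$. By (a), $V_{2n}(E)\le A_1e^{-\al/\tc(E)}$. So it suffices to show $\tc(E)\le \|f\|_*^2$, up to a small loss that is absorbed into the constants. I first pass to the quasi-continuous modification $\wt f$ from Theorem~\ref{thm:quasi-mod}; the sets defined by $f$ and by $\wt f$ differ only by a Lebesgue-null set, which does not change $V_{2n}$. Now $\max\{\wt f,-1\}\in\Kc'(E)$, because $\wt f\le -1$ on $E$ and $\wt f\le0$ q.e. by Proposition~\ref{prop:ae-qe}. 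By \eqref{eq:norm-max-min} its norm is at most $\|f\|_*$. Lemma~\ref{lem:defn-cap-mod}, which is valid for Borel sets by Corollary~\ref{cor:extr-function}, then gives $\tc(E)\le\|f\|_*^2$.

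There is one subtlety: $E$ is defined by a strict inequality, while elements of $\Kc$ only need to be $\le -1$. Since $\wt f\le-1$ holds on $E$, this is harmless.

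\emph{(b)$\Rightarrow$(c).} If $\|f\|_*\le\ka$, apply (b) to $f/s$. This gives
\[
V_{2n}(\{f<-s\}\cap K)\le A_1e^{-\al s^2/\|f\|_*^2}\le A_1e^{-\al s^2/\ka^2}.
\]

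\emph{(c)$\Rightarrow$(d).} Take $\ka=1$.

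\emph{(d)$\Rightarrow$(a).} This is the step needing the most care. Let $E\subset K$ be Borel. If $\tc(E)=0$, then $|E|=0$ by Corollary~\ref{cor:vol-c} and there is nothing to prove. If $\tc(E)$ is large, say $\ge c_K$, the bound is trivial after enlarging $A_1$, since $V_{2n}(E)\le V_{2n}(K)$. Otherwise, fix $\veps>0$ and pick $v\in\Kc(E)$ with $\|v\|_*^2\le \tc(E)+\veps$, $-1\le v\le0$, and $v\le-1$ a.e. on an open set $G\supset E$. Set $f=v/\|v\|_*$, so that $\|f\|_*=1$. Then $f\le -s:=-1/\|v\|_*$ a.e. on $G$. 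Applying (d) with a slightly smaller threshold, to handle the strict inequality, gives
\[
V_{2n}(E)\le V_{2n}(\{f<-s'\}\cap K)\le A_1e^{-\al s'^2}
\]
for every $s'<s$. Letting $s'\to s$ yields $V_{2n}(E)\le A_1e^{-\al/\|v\|_*^2}$, and then letting $\veps\to0$ gives (a).

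Finally, the fact that all four statements actually hold, as asserted in Theorem~\ref{prop:equiv-est-intro}, follows from Corollary~\ref{cor:exp-int}. Applied to $f\le0$ with $\|f\|_*\le1$, it gives (d) directly.
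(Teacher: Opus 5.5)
Your proof is correct. It follows the same cycle (a)$\Rightarrow$(b)$\Rightarrow$(c)$\Rightarrow$(d)$\Rightarrow$(a) as the paper, with the same scaling arguments for (b)$\Rightarrow$(c)$\Rightarrow$(d). The differences are confined to the two nontrivial implications.

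For (a)$\Rightarrow$(b), the paper cites Corollary~\ref{cor:cap-sublevel-gen}. You instead truncate to $\max\{\wt f,-1\}\in\Kc'(E)$ and apply Lemma~\ref{lem:defn-cap-mod}. Your route gives $\tc(E,\Om)\le\|f\|_*^2$ directly on $\Om$. Corollary~\ref{cor:cap-sublevel-gen} is stated for $\tc(\cdot,\Om')$ with $\Om'\subset\subset\Om$, so on its own it only bounds the smaller quantity $\tc(E,\Om')$.

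For (d)$\Rightarrow$(a), the paper reduces to open $E$ by outer regularity and tests (d) on the extremal function $\Phi_E$ of Lemma~\ref{lem:extr-funciton-o}. You test (d) on a near-minimizer $v\in\Kc(E)$ and then let $\veps\to0$. This is more elementary, since it does not need extremal functions to exist, and it handles Borel $E\subset K$ directly. Your separate treatment of $\tc(E)=0$ via Corollary~\ref{cor:vol-c} also covers a degenerate case that the paper's normalization $f/\sqrt{\tc(E)}$ tacitly excludes. The extra case split for large $\tc(E)$ is unnecessary but harmless.
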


\begin{proof} We shall prove 
$(a) \Rightarrow (b) \Rightarrow (c) \Rightarrow (d) \Rightarrow (a).$
We start with (a) $\Rightarrow$ (b). Let $\wt f$ be a quasi-continuous modification of $f$ from Theorem~\ref{thm:quasi-mod}.  Since $f = \wt f$ a.e, we have $V_{2n}(f<-1) = V_{2n}(\wt f<-1)$ and $\|f\|_* = \| \wt f\|_*$.  
Corollary~\ref{cor:cap-sublevel-gen} gives  $\tc (\{\wt f<-1\} \cap K) \leq \|f\|_*^2$. Applying (a) for $E = \{\wt f<-1\} \cap K $ we get
$$
	V_{2n} (\{f<-1\} \cap K) \leq A_3 e^{-\al/\|f\|_*^2}.
$$
To prove (b) $\Rightarrow$ (c) we can consider $g= f/s$. Then, $\|g\|_*^2 = \|f\|_*^2/s^2$ and the proof follows easily. Next, (c) $\Rightarrow$ (d) is obvious by considering $f:= f/\ka$. Finally, we  prove (d) $\Rightarrow$ (a). By outer regularity of both sides, it is enough to prove for open subsets $E$.  Let $f=\Phi_E$ be the extremal function for $E$ in Lemma~\ref{lem:extr-funciton-o}. In particular, $f\in W^*(\Om)$ and $f= -1$ a.e. on $E$. Notice that $\tc (E) = \|f\|_*^2$. Applying (d) for $g = f / \sqrt{\tc (E)}$ we have for $\veps>0$ small,
$$
	V_{2n}(E) \leq V_{2n} (g< (-1+\veps)/\sqrt{\tc (E)})  \leq A_2 e^{-\al (1-\veps)^2/\tc(E)}.
$$
Let $\veps\to 0$ we get the proof of (a). 
\end{proof}

\begin{remark}\label{rmk:sharp-DMV} The inequalities in Proposition~\ref{prop:equiv-est} are sharp as far as the exponents are concerned. In fact, it follows from the first inequality (a) and Lemma~\ref{lem:c-cap} that for a Borel set $E\subset \Om$, 
$$
	V_{2n} (E) \leq A_1 e^{-\al/ \tc (E)} \leq A_1 e^{-\al/ [cap(E, \Om)]^\frac{1}{n}}.
$$
We know from \cite[Theorem~A]{ACKPZ09} that the exponent $1/n$ in the inequality between the  volume and Bedford-Taylor capacity is sharp. So are the exponents in the inequalities of the proposition. Consequently, the exponent 2 in Proposition~\ref{prop:DMV} and \cite[Theorem~1.2]{DMV} is the optimal one for all $n\geq 1$.
\end{remark}

\begin{remark}\label{rmk:DMV-holder} Let $\Om$ be a bounded strictly psedocovex domain and $K\subset \subset \Om$ be a compact subset. Let $u_1,...,u_n$ be H\"older continuous psh functions in $\Om$. It follows from \cite[Proposition~2.4]{DKN} that the Radon measure 
$$\mu = {\bf 1}_K \, dd^c u_1 \wed \cdots \wed dd^c u_n$$
is $W^*(\Om)$-H\"older continuous. That means there exist positive constants $c>0$ and $\al>0$  such that for $f\in W^*(\Om)$ and $\|f\|_* \leq 1$,
$$
	\left| \int_\Om f d\mu \right| \leq  c \|f\|_{*}^\al.
$$
Combining this inequality and the arguments in the proof of \cite[Proposition~2.9]{Ng18} we get that there exist $\al_1>0$ and $c_1>0$ such that   for $f \in W^*(\Om)$, $f\leq 0$ and $\|f\|_* \leq 1$, 
$$
	\mu (f < -s) \leq c_1 e^{-\al_1 s^2},  \quad  \forall s>0.
$$
In other words, the equivalent inequalities in Proposition~\ref{prop:equiv-est} holds for a very large family of Monge-Amp\`ere measures associated with  H\"older continuous psh functions. This result is also equivalent to the statement of \cite[Theorem~1.2]{DMV}.
\end{remark}

\subsection{The Alexander-Taylor type inequality}
\label{ss:AT}
 In this section  we compare the new capacity $\tc (\cdot)$ and another classical capacity defined by means of extremal functions. 

Let $B_R = B(0,R)$ and $B_r:=B(0,r)$ be the balls in $\bC^n$ centered at the origin with radius $0<r< R$.  Let $K\subset B_r$ be a compact set, then  the Siciak-Zaharjuta extremal function $U_K^*$ is given by
$$	U_K (z) = \sup\left\{v(z) \rv v\in \Lc(\bC^n), \; v\leq 0 \text{ on } K\right \}.
$$ 
Here we recall that 
\[\label{eq:L-class}
	\Lc(\bC^n) = \{ v\in PSH(\bC^n): \limsup_{z\to \infty} [v(z) -\log|z|] \leq O(1)\}
\]
Denote $M_K := \sup_{B_R} U_K^*$. We define another capacity by
\[\label{eq:AT-cap}
	T_R(K) = e^{- M_K} = e^{-\sup_{B_R} U_K^*}.
\]

\begin{lem} \label{lem:AT-type} There exist positive constants $A_r$ and $A_R$ depending on $r,R$ respectively such that for every compact  set $K\subset B_r$,
$$ \exp(- A_R \; \tc (K,B_R)^{-1}) \leq  T_R(K) \leq 	\exp(- A_r\;\tc (K, B_R)^{-1/n}).
$$ 
Equivalently,
\[\label{eq:AT}
	\frac{1}{A_r} \frac{1}{M_K^n} \leq \tc (K,B_R) \leq \frac{A_R }{M_K}.
\]
\end{lem}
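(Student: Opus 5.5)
The plan is to prove the two sides of \eqref{eq:AT} separately. The lower bound comes from the capacity comparison already established combined with the classical Alexander--Taylor inequality. The upper bound comes from building an explicit competitor in $\Kc(K)$ out of the Siciak--Zaharjuta function $U_K^*$.

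\emph{Lower bound.} Since $K\subset B_r\subset\subset B_R$, Lemma~\ref{lem:cap-c} applied with $D=B_{r'}$ for some $r<r'<R$ gives $cap(K,B_R)\leq A'\,\tc(K,B_R)$. The classical Alexander--Taylor inequality \cite{AT84} says that for compact $K\subset B_r$
$$
\sup_{B_R}U_K^*\leq C_{r,R}\,[cap(K,B_R)]^{-1/n}, \quad\text{i.e.}\quad cap(K,B_R)\geq C_{r,R}^n M_K^{-n}.
$$
Combining the two inequalities gives $\tc(K,B_R)\geq \frac{1}{A_r} M_K^{-n}$.

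\emph{Upper bound.} If $M_K\leq 1$, the bound follows from $\tc(K,B_R)\leq \tc(\bar B_r,B_R)<\infty$ by \eqref{eq:c-compact}, after enlarging $A_R$. So assume $M_K\geq1$, write $U=U_K^*$ and $M=M_K$, and set
$$
w:=\frac{U-M}{M}.
$$
On $B_R$ we have $U\geq 0$, hence $-1\leq w\leq 0$ there, and $w=-1$ on $K$ outside a pluripolar set. To obtain a genuine element of $\Kc(K)$, which requires $w\leq-1$ on a neighborhood of $K$, I will first treat regular compacts. For $K_\veps=\{{\rm dist}(\cdot,K)\leq\veps\}$ the function $U_{K_\veps}$ is continuous and vanishes on $K_\veps$. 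Moreover $M_{K_\veps}\to M_K$, and Proposition~\ref{prop:basic}(d) gives $\tc(K_\veps)\to\tc(K)$. So it suffices to prove the estimate for continuous $U$. Alternatively, one can use Lemma~\ref{lem:defn-cap-mod} together with Lemma~\ref{lem:qe-limit} to accept competitors that equal $-1$ only q.e.

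It remains to bound $\|w\|_*^2\lesssim 1/M$, which splits into a gradient term and an $L^2$ term.

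\emph{Gradient term.} We have $dw\wed d^cw=M^{-2}\,dU\wed d^cU\leq \frac{1}{2M^2}dd^c U^2$, since $U\geq0$ is psh. Because $U\in\Lc(\bC^n)$ and $\sup_{B_R}U=M$, the comparison $U\leq M+\log^+(|z|/R)$ holds, so $0\leq U\leq M+\log2$ on $B_{2R}$. Then CLN on $B_{2R}$ gives
$$
\int_{B_R}dd^cU^2\wed\om^{n-1}\lesssim (M+1)\int_{B_{3R/2}}dd^cU\wed\om^{n-1}\lesssim M+1,
$$
where the last step uses that the mass of $dd^cv$ on a ball is uniformly bounded over $v\in\Lc$ (the Lelong-number bound at infinity). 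Hence the current term is $O(1/M)$.

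\emph{$L^2$ term.} Here $\int_{B_R}w^2=M^{-2}\int_{B_R}(M-U)^2$. The function $v=U-M-\log 2$ is psh and $\leq 0$ on $B_{2R}$, with $\sup_{B_R}v=-\log 2$. By compactness of such normalized families in $L^2_{\rm loc}$ (Hörmander), $\int_{B_R}v^2\leq C_R$. This gives $\int_{B_R}w^2\leq C_R/M^2\leq C_R/M$.

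Adding the two terms gives $\tc(K,B_R)\leq A_R/M_K$. Exponentiating both sides of \eqref{eq:AT} gives the equivalent form stated for $T_R(K)=e^{-M_K}$.

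The main obstacle is the upper bound. Specifically, one must obtain the $L^2$ control $\int_{B_R}(M-U)^2=O(1)$ uniformly in $K$, and justify the passage from $K$ to the regularized compacts $K_\veps$ so that the competitor is $\leq-1$ on a neighborhood of $K$. For the $L^2$ step I would try the compactness argument first, normalizing by the supremum on $B_R$.
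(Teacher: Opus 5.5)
\textbf{The gap is in the gradient term of the upper bound.} The rest is essentially the paper's argument, with two small remarks.

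\emph{Lower bound.} This is the paper's argument: Lemma~\ref{lem:cap-c} plus Alexander--Taylor. However, your displayed Alexander--Taylor inequality points the wrong way. It reads $M_K\geq c\,[cap(K,B_R)]^{-1/n}$, not $M_K\leq C\,[cap(K,B_R)]^{-1/n}$. The consequence you actually use, $cap(K,B_R)\geq c^nM_K^{-n}$, is the correct one.

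\emph{Upper bound, $L^2$ term and competitor.} Your strategy matches the paper's, and the $L^2$ term is fine; Hörmander compactness even gives $O(M^{-2})$. For the competitor, Lemma~\ref{lem:defn-cap-mod} alone suffices, as in the paper. Neither Lemma~\ref{lem:qe-limit} nor the regularization $K_\veps$ is needed.

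\emph{Gradient term.} You claim $\int_{B_R}dd^cU^2\wed\om^{n-1}\lesssim (M+1)\int_{B_{3R/2}}dd^cU\wed\om^{n-1}$ via CLN. This is not a Chern--Levine--Nirenberg estimate, and it is false for bounded nonnegative psh functions in general. For instance, $U=3R+\mathrm{Re}\,z_1$ on $B_{2R}$ makes the left side positive and the right side zero.

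What CLN actually gives is $\int_{B_R}dd^cU^2\wed\om^{n-1}\leq C\|U\|^2_{L^\infty(B_{2R})}=O(M^2)$. After the factor $M^{-2}$ this yields only $\|w\|_*^2=O(1)$. That loses exactly the $1/M$ decay which is the whole content of the upper bound.

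\emph{The fix.} Use that $U$ is $L^1$-close to the constant $c=M+\log 2$, and subtract $c$ before integrating by parts. Take a cut-off $0\leq\chi\leq 1$ equal to $1$ on $B_R$, supported in $B_{3R/2}$, with $|dd^c\chi\wed\om^{n-1}|\leq A_0\om^n$. Since $\int dd^c\chi\wed\om^{n-1}=0$ and $0\leq U\leq c$ on $B_{2R}$,
$$
\int_{B_R}dd^cU^2\wed\om^{n-1}\leq\int (U^2-c^2)\,dd^c\chi\wed\om^{n-1}\leq 2cA_0\int_{B_{3R/2}}|U-c|\,dx=O(M).
$$
Here $\int_{B_{3R/2}}|U-c|\,dx=O(1)$ follows from the same compactness bound for the negative psh function $U-c$ that you already use in the $L^2$ step. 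With this correction the uniform mass bound over $\Lc$ is not needed at all.

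This is precisely the paper's mechanism. It sets $u=(U_K^*-M-1)/(M+1)\in[-1,0]$ on $B(0,eR)$. It gets $\int|u|\leq A_2/M$ from the sub-mean-value inequality at a point $z_0\in\bar B_R$ where $U_K^*(z_0)=M$. It then bounds $\int_{B_R}dd^c(1+u)^2\wed\om^{n-1}$ by $\int(2u+u^2)\,dd^c\chi\wed\om^{n-1}=O(1/M)$.
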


\begin{proof}  
The Bedford-Taylor capacity satisfies $cap(K, B_R) \geq 1/M_K^n$ by \cite[Theorem~2.1]{AT84}. The first inequality in \eqref{eq:AT} follows readily from the fact in Lemma~\ref{lem:cap-c}  that
$cap(K,B_R) \leq A(r) \tc (K,B_R)$ for some constant $A(r)$. To prove the second inequality we follows closely \cite[Theorem~11.1]{De89}. Consider $B':= B(0, eR)  \supset\supset B_R$. Put  $M= \sup_{B_R} U_K$. We have $U_K^* \leq M+ 1$ on $B'$ by log-convexity. Define
 $$u := \frac{U_K^*-M-1}{M+1}.$$
Clearly, $-1\leq u \leq 0$ on $B'$ and $u =-1$  quasi-everywhere on $K$. 
By definition of capacity and Lemma~\ref{lem:defn-cap-mod},
$$
	\tc(K,B_R) \leq  \|  u\|_{W^*(B_R)}^2.
$$
It remains to estimate the norm $\|u\|_*$ on $B_R$. 

Now, there exists $z_0\in \bar B_R$ such that $U_K^*(z_0) =M$, hence $u(z_0) = -1/(M+1)$. 
As $u \leq 0$ in $B'$ the mean value inequality and Harnack inequality show that
$$
	u(z_0) \leq A_1 \int_{B'} u \om^n,
$$
where $A_1 = A(R)$. This implies 
$$
	\|u\|_{L^1(B')} \leq -\frac{u(z_0)}{A_1} \leq \frac{A_2}{M}.
$$
Using the fact $|u|\leq 1$ we get
$$
	 \int_{B_R} u^2 \om^n \leq \|u\|_{L^1(B')}\leq \frac{A_2}{M}.
$$
Let $\chi$ be a cut-off function such that $\chi \equiv 1$ on $B_R$ and $\supp\chi \subset B'$. Then,
$$\begin{aligned}
	\int_{B_R} dd^c (1+u)^2\wed \om^{n-1} 
&\leq  \int_{B'} \chi dd^c (1+u)^2\wed \om^{m-1} \\
&= \int_{B'} (1+ u)^2 dd^c \chi\wed \om^{n-1} \\
&= \int_{B'} [2u+u^2] dd^c \chi \wed\om^{n-1}.
\end{aligned}$$
The first term in the bracket is estimated  as follows:
$$
	\int_{B'} 2u dd^c \chi \wed \om^{n-1} \leq A_0 \int_{B'} |u| \om^n \leq \frac{A_0 A_2}{M}. 
$$
The second one is  less than
$$
	A_0 \int_{B'} u^2 \om^{n} \leq A_0 \int_{B'} |u| \om^n \leq \frac{A_0 A_2}{M}.
$$
Therefore,
$$
	\int_{B_R} dd^c (1+u)^2 \wed \om^{n-1}	\leq \frac{2A_0A_2}{M}.
$$
Since 
$
du \wed d^c u \leq dd^c (1+u)^2
$, it follows that $\tc (K,B_R) \leq 2(1+A_0) A_2/M.$
\end{proof}

Now we can conclude the sharpness of exponents of inequalities between capacities in the introduction.

\begin{remark}[Proof of the last statement in Theorem~\ref{thm:intro-cap-comp}]
\label{rmk:sharp-exp} The Alexander-Taylor inequality \cite{AT84} reads
$$
	\exp (-A_r/cap(K,\Om)) \leq T_R(K) \leq \exp(-2\pi/[cap(K,\Om)]^\frac{1}{n}),
$$
where the exponents in both inequalities are sharp (\cite[Remark~2]{AT84}).
Together with $cap(K) \leq A_r \tc(K)$ in Lemma~\ref{lem:cap-c}
and the first inequality in Lemma~\ref{lem:AT-type} we derive 
$$
	\exp (-A/cap(K,\Om)) \leq \exp (-A/\tc(K,\Om))    \leq \exp(-2\pi/[cap(K,\Om)]^\frac{1}{n})
$$
with the sharp exponents. So are the ones in Lemma~\ref{lem:c-cap} and Lemma~\ref{lem:cap-c}.
\end{remark}

\section{Lebesgue points} 

In this section we study the set of Lebesgue points of functions in $W^*(\Om)$.  We simplify the proof of \cite[Theorem~1.1]{VV24}. Moreover, we show  that the precise representation of $f$ in $W^{1,2}(\Om)$ is a quasi-continuous modification.

A point $x$ is called a Lebesgue point of $f\in L^1_{\rm loc}(\Om)$ (with respect to the Lebesgue measure) if 
\[\label{eq:LP-def}
	\lim_{r\to 0} \intavg_{B(x,r)} |f-f(x)| dy =0
\]
It follows from \eqref{eq:LP} that the limit exists almost everywhere. Hence,
we can define for $f\in L^1_{\rm loc}(\Om)$ its {\em precise representative} as follows.
\[\label{eq:rep} f^\star(x) \equiv
\begin{cases}
\lim_{r\to 0} \intavg_{B(x,r)} f dy &\quad \text{if this limit exits,} \\
=0 &\quad \text{otherwise}.
\end{cases}	
\] 
Note that if $f=g$ in $L^1_{\rm loc}(\Om)$, then $f^\star = g^\star$ for all points $x\in \Om$. Moreover, if $f\in W^*(\Om)$ and $f = g$ a.e., then $g$ is also in $W^*(\Om)$. Thus, to study the fine properties of $f$ we need to turn our attention to its precise representative $f^\star$.

 Thanks to Alexander-Taylor's type inequality in Section~\ref{ss:AT}  we construct a psh subextension for a given element in $W^*(\Om)$. This result is similar to  El Mir's theorem \cite{EM79} in pluripotential theory. Such a subextension has been obtained recently by Vigny and Vu \cite{VV24}. 
 Compared to \cite{VV24} our psh subextension has a smaller exponent, however, it is a function in the Lelong class \eqref{eq:L-class} (minimal growth at infinity)   which satisfies the inequality everywhere.  The proof given below is also somewhat simpler.

\begin{thm} \label{thm:subextension} 
Let $f \in W^*(B(0,1))$ be such that $f\leq -1$. Assume $\|f\|_*\leq 1$. Let $\veps>0$  and $0<s<1$. Then, there exists $u \in \cL(\bC^n)$ such that $u\leq -|f^\star|^{\frac{2}{n}-\veps}$ on $B(0,s)$ everywhere.
\end{thm}
\begin{proof} Assume first $f$ is continuous. We follows closely  the proof in \cite[Theorem~11.4]{De89} by Demailly.  
As $f\leq -1$, it follows that $\|f\|_*>0$. For $t\geq -1$, set $G_t = \{z\in B(0,s): f(z) <-t\}$ and let  $U_t^* \in\Lc(\bC^n)$ be the Siciak-Zaharjuta extremal function of $G_t$. Since $G_t$ is open, $U_t^* =0$ on $G_t$. Denote $M(t) = \sup_{B(0,1)} U_t^*$ and 
$$
	u(z) = \frac{1}{\veps} \int_1^\infty t^{-1-\veps} \left[U_t^* - M(t)\right] dt.
$$
We have from Lemma~\ref{lem:cap-sublevel-set} that
$
	\tc (G_t) \leq \|f\|_*^2/t^2.
$
Therefore, by the Alexander-Taylor type inequality \eqref{eq:AT},
$$
	M(t) \geq \frac{a_1}{\left[\tc (G_t)\right]^{1/n}} \geq a_1t^{2/n},
$$
where we used the fact $a_1/\|f\|_*^\frac{2}{n} \geq a_1>0$ in the second inequality. Here $a_1>0$ is possibly a small number.  Since $U_t^* - M(t) \leq 0$ on $B(0,1)$,  it follows from the logarithmic convexity that 
$U_t^* - M(t) \leq \log_+|z|$. Hence, 
$$
	u(z) \leq \log_+|z|.
$$
Now, for $z\in B(0,s) \cap G_t$, we have $U_t^*=0$. Hence,
$$
	u(z) \leq - \frac{1}{\veps} \int_1^{|f(z)|} t^{-1-\veps} M(t) dt \leq - \frac{a_1}{\veps} \int_1^{|f(z)|} t^{-1-\veps+2/n} dt. 
$$
A direct computation of the last integral yields
$$
	u(z) \leq -\frac{a_1}{\veps} \left(\frac{2}{n} -\veps \right) |f(z)|^{\frac{2}{n}-\veps} + \frac{a_1}{\veps} \left(\frac{2}{n} -\veps\right).
$$
Therefore, we may start with a smaller value of $\veps$ if necessary and subtracting a constant from  $u$ we will get that
$$
	u \leq -|f|^{\frac{2}{n} -\veps} \quad\text{ on } B(0,s).
$$
It remains to verify that $u$ is not identically $-\infty$. By the logarithmic convexity again, we have
$$
	\sup_{\bar B(0,1/2)} U_t^* \geq M(t) - \log 2.
$$
Therefore, there exists $z_0\in S(0,1/2)$ such that $U_t^*(z_0) - M(t) \geq -\log 2$. The Harnack inequality shows that
$$
	\frac{1-1/4}{(1+1/2)^{2n}} \int_{\d B(0,1)} (U_t^* - M(t)) d\sig(z) \geq U_t^*(z_0) - M(t) \geq -\log 2.
$$
Integrating this with respect to $t$ we get
$$
	\int_{\d B(0,1)} u(z) d\sig(z \geq -\frac{4}{3} (3/2)^{2n} \log 2.
$$

Now, assume $f \in W^*(B(0,1))$ in general. We consider the continuous approximation $f_r(x) =(f)_{x,r}$ defined in \eqref{eq:mean}. Choose $r>0$ small so that $B(0,s) \subset \subset B(0,1-r)$. The previous step gives  $u_r \in \Lc(\bC^n)$ satisfying  $u_r \leq - |f_r|^{\frac{1}{n} -\veps}$ on $B(0,s)$.
Define on $B(0,1)$ the function
$$
	u = \left(\limsup_{r\to 0} u_r \right)^*.
$$
By the Lebesgue differentiation theorem there exists a set $E\subset B$ whose $|E| =0$ such that $\lim_{r\to 0} (f)_{r,x} = f^\star(x)$. Thus,  for $x\in B\setminus E$, we have $u (x)  \leq - |f^\star (x)|^{\frac{1}{n}-\veps}.$ On the other hand,  if $x\in E$, then $u(x) \leq 0 = f^\star(x)$ by definition.
\end{proof}

We need later a slight improvement of quasi-continuous modification in Theorem~\ref{thm:quasi-mod}.

\begin{lem}\label{lem:quasi-fine} Let $\wt f$ be the quasi-continuous modification of $f\in W^*(\Om)$. Then, $\wt f$ is pluri-fine continuous quasi-everywhere. This means there exists a pluripolar set $F$ such that $\wt f$ is plurifine continuous on $\Om\setminus F$.
\end{lem}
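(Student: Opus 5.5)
The approach is the classical fact that a quasi-continuous function is finely continuous quasi-everywhere, transplanted to the plurifine topology. Its key input is that complements of sets of small capacity are plurifinely open near most points. The first step is to use quasi-continuity of $\wt f$ (Theorem~\ref{thm:quasi-mod}) to choose open sets $G_j\subset\Om$, decreasing after replacing $G_j$ by $\bigcup_{k\ge j}G_k$ and shrinking the capacity bounds, with $\tc(G_j)<2^{-j}$ and $\wt f|_{\Om\setminus G_j}$ continuous. By Lemma~\ref{lem:c-cap}, Lemma~\ref{lem:cap-c} and Theorem~\ref{thm:choquet-c}, working on relatively compact strictly pseudoconvex pieces, we also get $cap(G_j,\Om)\to 0$. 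We then set
$$F:=\bigcap_j G_j \cup \bigcup_j N_j,$$
where $N_j$ is the set of points of $\Om\setminus G_j$ at which $G_j$ is \emph{not} plurithin. Since $\tc(\bigcap_j G_j)=0$, Corollary~\ref{cor:pluripolar-sets-c} shows that $\bigcap_j G_j$ is pluripolar.

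The main step is to show that each $N_j$ is pluripolar. Here I would use the Bedford–Taylor relative extremal function $u_j:=u^*_{G_j}$. Since $G_j$ is open, $u_{G_j}=-1$ on $G_j$, while $u_j^*=u_j$ and $u_j>-1$ outside $G_j$ except on a pluripolar set. The negligible set $\{u_{G_j}<u^*_{G_j}\}$ is pluripolar by \cite{BT82}. At a point $x\notin G_j$ where $u_j(x)>-1$, the psh function $u_j$ satisfies $\limsup_{y\to x,\,y\in G_j}u_j(y)=-1<u_j(x)$, so $G_j$ is plurithin at $x$. Hence $N_j\subset\{x\notin G_j: u_j(x)=-1\}$. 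A cleaner alternative is the standard fact that for any set $E$ the points of $E$ at which $E$ is plurithin, together with the set where $u_E$ and $u_E^*$ differ, form a pluripolar set. Either way, the claim to establish is that $\{x\in\Om\setminus G_j: G_j \text{ not plurithin at } x\}$ is pluripolar. I expect this to be the main obstacle. I would try the following: for $x\notin G_j$ with $u_j(x)=-1$ and with $G_j$ plurithin at $x$ failing, one wants to contradict $cap(G_j)\to0$. More robustly, take $F$ to contain $P:=\{x: u_j(x)=-1 \text{ for all } j,\ x\notin \bigcap G_j\}$ and instead exploit the decrease of $u_j$. Since $\int(dd^c u_j)^n=cap(G_j)\to 0$, the increasing limit $(\lim u_j)^*=0$, so the set where $\lim_j u_j<0$ is pluripolar (a negligible set).

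Finally, fix $x\in\Om\setminus F$. Since $\lim_j u_j(x)=0$, there is $j$ with $x\notin G_j$ and $u_j(x)>-1$, so $G_j$ is plurithin at $x$. Thus there is a plurifine neighborhood $V$ of $x$ with $V\cap G_j=\emptyset$; indeed $V=\{u_j>-1+\eta\}$ for small $\eta$ works, since $u_j=-1$ on $G_j$. On $V\subset\Om\setminus G_j$ the function $\wt f$ is the restriction of a function continuous on $\Om\setminus G_j$. Because the plurifine topology is finer than the Euclidean one, $\wt f|_V$ is plurifine continuous at $x$. This gives plurifine continuity of $\wt f$ on $\Om\setminus F$, with $F$ pluripolar as a countable union of pluripolar sets.
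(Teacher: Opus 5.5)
Your closing argument is correct, but the $N_j$ part of the proposal is false and has to be discarded.

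\textbf{What is wrong.} The set $N_j$ of points of $\Om\setminus G_j$ at which $G_j$ is not plurithin is in general not pluripolar, and neither is $\{u_{G_j}=-1\}\setminus G_j$. Take $G_j=B(a,r)$ inside the ambient ball $B(a,R)$. Then $u_{G_j}=\max\{-1,\log(|z-a|/R)/\log(R/r)\}$ equals $-1$ on $\overline{B(a,r)}$. Moreover, $G_j$ is not plurithin at any point of $\partial B(a,r)$: it is not even thin in the classical sense there, since it contains a cone with vertex at each boundary point. A sphere is not pluripolar, so your $F=\bigcap_j G_j\cup\bigcup_j N_j$ need not be pluripolar. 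The ``standard fact'' you quote concerns the points \emph{of} $E$ at which $E$ is plurithin, which is a different set.

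\textbf{What works.} Your ``more robust'' step suffices on its own. Localize to a ball $B\subset\subset\Om$ and replace $G_j$ by $G_j\cap D$, where $D\subset\subset B$ is a concentric ball. Proposition~\ref{prop:basic}(a) and Lemma~\ref{lem:cap-c} then give $cap(G_j\cap D,B)\le A'\,\tc(G_j,\Om)\to0$, and the relative extremal functions $u_j:=u_{G_j\cap D}$ (with respect to $B$) increase. The claim $v:=(\lim_j u_j)^*\equiv 0$ needs a line of justification. One option: Bedford--Taylor monotone convergence gives $(dd^c v)^n=0$, while $v\ge u_D$ tends to $0$ at $\partial B$, so $v\equiv0$ by the comparison principle. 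Another option: exactly as in the proof of Lemma~\ref{lem:c-cap}, Cegrell's inequality gives $\|u_j\|_{L^1(B)}\le C\,[cap(G_j\cap D,B)]^{1/n}\to0$. Then set $F:=\{\lim_j u_j<0\}$. This set is negligible, hence pluripolar, and it already contains $\bigcap_j G_j\cap D$, so you do not even need Corollary~\ref{cor:pluripolar-sets-c}. With this $F$ your last paragraph goes through as written.

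\textbf{Comparison with the paper.} The paper proves the lemma in one line. It cites Fuglede's abstract theorem that a quasi-continuous function is finely continuous outside a set of zero capacity. It then converts $\tc(F)=0$ into pluripolarity via Corollary~\ref{cor:pluripolar-sets-c}, which depends on the Choquet property (Theorem~\ref{thm:choquet-c}) and hence on the Dirichlet-space machinery of Section~\ref{sec:dirichlet}. The identification of the fine topology attached to $\tc$ with the plurifine topology is left implicit there.

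Your argument is instead a direct proof of this principle in the plurifine setting. The relative extremal functions of the exceptional open sets give explicit plurifine neighbourhoods avoiding them, and the exceptional set is a Bedford--Taylor negligible set. It uses only Lemma~\ref{lem:cap-c} and classical pluripotential theory, and it makes explicit the link between small capacity and plurithinness that the citation takes for granted. The paper's route, by contrast, is shorter and more abstract.
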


\begin{proof} It follows from Fuglede \cite[Theorem~3]{Fu65} (see also \cite[Theorem~IV, 3.]{Brelot71}) that there is a Borel subset $F\subset \Om$ with  $\tc (F,\Om) =0$ such that $\wt f$ is pluri-fine continuous on $\Om\setminus F$ with respect to the capacity $\tc(\cdot)$. Thanks to Corollary~\ref{cor:pluripolar-sets-c}, $F$ is pluripolar. 
\end{proof}

We state now the main result of this section about the Lebesgue points due to Vigny and Vu \cite{VV24}. Such properties of this set in the usual Sobolev spaces hold for its corresponding Sobolev functional capacity (see e.g., \cite[Section~4.8]{EG92}). Now they are stated for the functions  in complex Sobolev space with respect to $\tc(\cdot)$. The crucial tool in \cite{EG92} is the maximal function which is  no longer available in the new space. Hence, the proofs we give here are very different. We state a slight improvement of \cite[Theorem~1.1]{VV24} whose proofs follows the strategy in \cite{VV24} but it is simpler.

\begin{thm} \label{thm:VV} Let $\Om \subset \subset \bC^n$ be an open subset and let $f\in W^*(\Om)$. There exists a Borel subset $E\subset \Om$ such that $\tc (E) =0$ and it satisfies 
\begin{itemize}
\item
[(i)] for each $x\in \Om\setminus E$,
$$
	\lim_{r\to 0} \intavg_{B(x,r)} f dy = f^\star(x).
$$
\item
[(ii)] Moreover, for each $x\in \Om\setminus E$,
$$
	\lim_{r\to 0} \intavg_{B(x,r)} |f-f^\star(x)|^2 dy =0.
$$
\end{itemize}
Consequently, the precise representative $f^\star$ is quasi-continuous.
\end{thm}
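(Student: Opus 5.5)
We compare the precise representative $f^\star$ with the quasi-continuous modification $\wt f$ from Theorem~\ref{thm:quasi-mod} and show that the averages $(f)_{x,r}$ converge to $\wt f(x)$ for all $x$ outside a set of capacity zero. That set will be $E$, and then $f^\star=\wt f$ on $\Om\setminus E$, which gives (i). The claim that $f^\star$ is quasi-continuous then follows at once, since it agrees with a quasi-continuous function off a set of zero capacity and $\tc$ is outer regular (Lemma~\ref{lem:outer-reg}).

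The reductions are as follows. By subadditivity (Proposition~\ref{prop:basic}(c)) and covering by balls, it suffices to work on $B(0,s)$ with $f\in W^*(B(0,1))$ and $\|f\|_*\le 1$. Splitting $f=f^+-f^-$ and truncating, we may assume $0\le f\le N$. The set where $f$ is large has small capacity: by Corollary~\ref{cor:cap-sublevel-gen}, $\tc(\{\wt f>N\})\le N^{-2}$. Theorem~\ref{thm:subextension} also controls the unbounded part pointwise, through a Lelong-class function $u$ with $u\le -|f^\star|^{2/n-\veps}$ everywhere. The gap between $f^\star$ and the truncations is then confined to the pluripolar set $\{u=-\infty\}$.

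For the bounded case, first use Remark~\ref{rmk:cauchy}: the family $f_r=f*\chi_r$ is Cauchy in capacity. Passing to a sequence $r_j=2^{-j}$ and arguing exactly as in Theorem~\ref{thm:quasi-mod}, $f_{r_j}\to\wt f$ outside a set of arbitrarily small capacity, uniformly on the complement. This handles only a dyadic sequence. For intermediate radii $r_{j+1}\le r\le r_j$, note that $|f_r(x)-f_{r_j}(x)|$ is bounded by a constant times $\intavg_{B(x,r_j)}|f-f_{r_j}(x)|\,dy$. It therefore suffices to prove (ii), applied with $\wt f(x)$ in place of $f^\star(x)$.

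Statement (ii) is the main obstacle. The plan is to write $(|f-\wt f(x)|^2)_{x,r}$, which by Lemma~\ref{lem:norm-min-max} and the product rule is again a $W^*$ function of the form $(g)_r$ where $g=(f-c)^2$, and to use the same Cauchy-in-capacity argument for $g_r=|f|^2*\chi_r$ combined with $f_r$. The goal is the limit $(|f|^2)_{x,r}\to\wt f(x)^2$ q.e., from which
$$\intavg_{B(x,r)}|f-\wt f(x)|^2dy=(f^2)_{x,r}-2\wt f(x)(f)_{x,r}+\wt f(x)^2\to 0.$$
Since $f^2$ is bounded and lies in $W^*$ with $\wt{(f^2)}=\wt f^{\,2}$ q.e., the dyadic convergence holds q.e. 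The delicate point is again passing from dyadic to all radii. There one uses monotonicity-type comparisons of averages over nested balls with bounded ratio, $(f^2)_{x,r}\le 2^{2n}(f^2)_{x,r_j}$, together with the lower bound obtained from Jensen, $(f^2)_{x,r}\ge (f)_{x,r}^2$. If this squeeze fails, the fallback is to replace the ball averages by the sequence $r_j=\theta^j$ with $\theta\to1$, taking a countable union of exceptional sets of capacity zero.
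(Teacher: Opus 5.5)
\textbf{There is a genuine gap at the central step.} You claim that, arguing as in Theorem~\ref{thm:quasi-mod}, $f_{r_j}\to\wt f$ quasi-everywhere along the full sequence $r_j=2^{-j}$. But Theorem~\ref{thm:cap-cauchy} and Remark~\ref{rmk:cauchy} only say the family $\{f_r\}$ is Cauchy in capacity, with no rate. The proof of Theorem~\ref{thm:quasi-mod} therefore has to pass to a subsequence $r_{j_k}$ chosen so that $\tc(\{|f_{r_{j_k}}-f_{r_{j_{k+1}}}|>2^{-k}\}\cap K)\le 2^{-k}$, and nothing controls the ratios $r_{j_k}/r_{j_{k+1}}$. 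Cauchy in capacity no more implies q.e.\ convergence of the whole sequence than convergence in measure implies a.e.\ convergence.

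Your comparison $(h)_{x,r}\le 2^{2n}(h)_{x,r_j}$ for $h\ge0$ and $r_j/2\le r\le r_j$ is correct. However, it needs q.e.\ convergence along radii with bounded ratios. Obtaining that amounts to a maximal-type estimate, $\tc(\{\sup_{r<r_0}|f_r-\wt f|>\de\}\cap K)\to0$, which is exactly what is unavailable in $W^*$, since no maximal-function technique works there. The same defect affects $(f^2)_{r_j}$. So your proof of (ii), and with it (i), rests on an unproved statement, and replacing $2^{-j}$ by $\theta^j$ changes nothing.

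The truncation step has a related gap. The bound $u\le-|f^\star|^{2/n-\veps}$ only tells you that $f^\star(x)$ is finite where $u(x)>-\infty$; it does not give $\intavg_{B(x,r)}(f-N)^+\,dy\to0$. For that you need two further facts:
\begin{itemize}
\item the set $\{u\le -N^{1/p}\}$, with $p=(2/n-\veps)^{-1}$ and $N^{1/p}>|u(x)|$, has Lebesgue density $0$ at $x$;
\item the averages $\intavg_{B(x,r)}|u|^q\,dy$ are bounded uniformly in $r$.
\end{itemize}
Upper semicontinuity only bounds $|u|$ from below near $x$. The second fact can come, for example, from uniform exponential integrability of the rescalings $u(x+r\,\cdot)$, which are bounded in $L^1$ by the sub-mean-value inequality. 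This is pluripotential input that your sketch does not contain.

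\textbf{The missing idea is the plurifine topology}, which lets the paper avoid sequences of radii altogether. The paper's argument runs as follows.
\begin{itemize}
\item By Lemma~\ref{lem:quasi-fine}, $\wt f$ is plurifinely continuous off a pluripolar set $F$.
\item By Bedford--Taylor, a plurifine neighbourhood of $x$ can be taken of the form $V=\{|z-x|<a\}\cap\{v>0\}$, with $v$ psh and $v(x)>0$.
\item The sub-mean-value argument of Lemma~\ref{lem:VV-key}(a) gives $|B(x,r)\cap\{v\le0\}|/|B(x,r)|\to0$.
\item For $x\notin E=P\cup F$, the paper splits $\intavg_{B(x,r)}|\wt f-\wt f(x)|\,dy$ into the part over $V$, which is small by fine continuity, and the part over $\{v\le0\}$, which is controlled by the domination $|\wt f|\le|u|^{2n}$ and Lemma~\ref{lem:VV-key}(b).
\end{itemize}
This yields $(f)_{x,r}\to\wt f(x)$ as $r\to0$ through all radii at once. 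For bounded $f$, the second part is at most $2\|f\|_\infty$ times a vanishing density, so no truncation or tail argument is needed. Your final step, deducing quasi-continuity of $f^\star$ via outer regularity, is fine once (i) is established.
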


\begin{proof} We observe that the statements (i) and (ii) are local ones. In fact, assume that  for each relatively compact open subset $\Om' \subset \subset \Om$, there exists a Borel set $E$ such that $\tc (E,\Om) =0$ and (i) and (ii) hold for $x\in \Om'\setminus E$. Take an exhaustive sequence $\Om_j \subset \subset \Om$ and $\Om = \cup_j \Om_j$ and sequence $E_j \subset \Om$ whose $\tc(E_j, \Om)=0$ such that (i) and (ii) hold on $\Om_j \setminus E_j$. By the subadditivity of capacity \eqref{prop:basic}-(c) we set
$$
	E = \cup_j E_j.
$$
Then the statements (i) and (ii) will hold on $\Om \setminus E$ where $\tc(E,\Om) =0$.

Next, since  $cap(\cdot,\Om)$ dominates $\tc(\cdot)$ (Lemma~\ref{lem:c-cap}), it is enough to find $E$ such that $cap(E,\Om) =0$. Using well-known properties of subadditivity and locally comparable of this capacity we may assume $\Om = B(0,1)$ and $f$ defined in a neibhorhood of $\bar B(0,1)$. According to Theorem~\ref{thm:subextension} there exists $u\in \Lc(\bC^n)$ such that 
\[ \label{eq:subext} u \leq - |f^\star|^\frac{1}{2n} \text{ on }\bar B(0,1).
\] Put 
\[ \label{eq:Leb-polar-pt}
P := \{z\in B(0,1): u = -\infty\}.
\]
This is a part of the set $E$ that we need to construct.  The following results are analogous to \cite[Lemmas~3.1, 3.2]{VV24}. 
\begin{lem}\label{lem:VV-key} Let $x\in B(0,1)\setminus P$ where $P$ is defined in \eqref{eq:Leb-polar-pt}. Let $u$ be the psh subextension in \eqref{eq:subext}. Assume  $v \in PSH(B(0,1))$  such that $v(x)>0$.  Then,
\begin{align}
\tag{a}
\label{eq:fine-set}	&\lim_{r\to 0} \frac{| B(x,r) \cap \{v\leq 0\} |}{|B(x,r)|} =0. \\
\tag{b} \label{eq:fine-psh}	&\lim_{r\to 0} \frac{1}{|B(x,r)|} \int_{B(x,r) \cap \{v \leq 0\}}  |u|^{2n} dy =0.
\end{align}
\end{lem}

\begin{proof} It is easy to see that the first identity is equivalent to
$$
	\lim_{r\to 0} \frac{1}{|B(x,r)|} \int_{B(x,r) \cap \{v>0\}} v(x) dy = v(x).
$$
To this end  let $\veps>0$. By upper semi-continuity of $v$ there exists $r_\veps>0$ such that $B(x,r_\veps) \subset \{v< v(x)+\veps\}$. Hence, for $0<r <r_\veps$,
$$
	  \int_{B(x,r) \cap \{v>0\}} (v-\veps) dy \leq  \int_{B(x,r) \cap \{v>0\}} v(x) dy.
$$
This implies 
$$
	\intavg_{B(x,r)} v dy -\veps \leq \frac{1}{|B(x,r)|}  \int_{B(x,r) \cap \{v>0\}} v (x) dy.
$$
Together with  the subharmonicity of $v$ we obtain
$$
	v(x) - \veps \leq \frac{1}{|B(x,r)|}  \int_{B(x,r) \cap \{v>0\}} v(x) dy \leq v(x).
$$
Letting $r\to 0$ and then $\veps \to 0$ we get the desired identity.	

Next, we prove the second identity. Let $0<\veps < - u(x)$. Since $u$ is upper semicontinuous and $u(x)>-\infty$, we have $B(x,r_\veps) \subset \{u < u(x)+\veps\}$ for some $r_\veps >0$ small. 
Therefore, for $0< r<r_\veps$,
$$\begin{aligned}
\frac{1}{|B(x,r)|}\int_{B(x,r) \cap \{v \leq 0\}} |u|^{2n} dy 
&\leq 
	\frac{(|u(x)|+\veps)^{2n}}{|B(x,r)|}\int_{B(x,r) \cap \{v \leq 0\}}  dy 
\end{aligned}$$
It follows from the first identity that the right hand side goes to zero as $r\to 0$.
\end{proof}

Now we follows closely \cite{VV24} for the remaining part of the proof.  Let $\wt f$ be a quasi-continuous modification of $f$. By Lemma~\ref{lem:quasi-fine} there is a pluripolar set $F$ such that $\wt f$ is pluri-fine continuous in $B(0,1)\setminus F$.
 Define
\[
	E : = P \cup F.
\]
We shall show that the pluripolar set $E$ satisfies the requirement of the theorem. In fact,  let $x\in B(0,1)\setminus E$. We claim that 
\[ \label{eq:fine-point-id} f^\star(x) = \wt f(x), \]
or equivalently 
$$
	\lim_{r\to 0} \intavg_{B(x,r)} f dy = \lim_{r\to 0} \intavg_{B(x,r)} \wt f dy = \wt f(x).
$$
Indeed, the first identity follows from the fact that $f = \wt f$ a.e in $B(0,1)$. To prove the second identity we use the Cartan's theorem relating the fine  and ordinary limits (\cite[Theorem~2.4]{BT87} and \cite[Theorem~III, 2.]{Brelot71}). To this end let $V$ be a pluri-fine open neighborhood of $x$. It follows from \cite[Theorem~2.3]{BT87} that  we can assume $$V = \{|z-x| < a\} \cap \{v>0\}$$ for some constant $a>0$ and $v$ is psh in $B(0,1)$ such that $v(x)>0$. Then, Cartan's theorem reads 
$$
	\lim_{ \\ \substack{z \to x \\ z\in V}} \wt f(z) = \wt f(x).
$$
By Lebesgue's theorem $f^\star = f$ a.e and $|f^\star| \leq |u|^{2n}$ in \eqref{eq:subext}. So, $|\wt f| \leq |u|^{2n}$ almost everywhere. Hence,  Lemma~\ref{lem:VV-key}-(b) implies
$$
	 \lim_{r\to 0} \frac{1}{|B(x,r)|} \int_{B(x,r) \cap \{v\leq 0\}} \wt f dy =0.
$$
These combined with Lemma~\ref{lem:VV-key}-\eqref{eq:fine-set} implies
$$\begin{aligned}
	 \lim_{r\to 0} \frac{1}{|B(x,r)|} \int_{B(x,r)} \wt f dy  &=  \lim_{r\to 0} \frac{1}{|B(x,r) \cap \{v>0\}|} \int_{B(x,r) \cap \{v>0\}} \wt f dy  \\
	&= \wt f(x),
\end{aligned}$$
Thus, \eqref{eq:fine-point-id} is proved.

Finally, we have  
$$\begin{aligned}
	\intavg_{B(x,r)} |f - f^\star(x)| \;dy 
&=  \intavg_{B(x,r)} |\wt f - f^\star(x)| \; dy \\
&= \frac{1}{|B(x,r)|} \int_{B(x,r) \cap V} |\wt f - f^\star(x)| \; dy \\ &\quad +  \frac{1}{|B(x,r)|} \int_{B(x,r) \setminus V} |\wt f - f^\star(x)| \;dy.
\end{aligned}$$
The first integral on the right hand side goes to zero as $r\to 0$ by the fine continuity of $\wt f$ at $x$ and $\wt f(x) = f^\star(x)$. The second integral is bound by
$$
\frac{1}{|B(x,r)|} \int_{B(x,r) \setminus V} (|u|^{2n} + |u(x)|^{2n}) dy
$$
which tends to zero as $r\to 0$ by  Lemma~\ref{lem:VV-key}.
This completes the proof of (i).

Next, to prove (ii) we note that for  $x\in B(0,1)\setminus E$,
$$\begin{aligned}
	\lim_{r\to 0} &\left( \intavg_{B(x,r)} |f- f^\star(x)|^2 dy \right)^\frac{1}{2} \\
&\leq \lim_{r\to 0} | (f)_{x,r} - f^\star(x)| + \lim_{r\to 0} \left( \intavg_{B(x,r)} |f- (f)_{x,r}|^2 dy\right)^\frac{1}{2} \\
&=0,
\end{aligned}$$
where we recall that $f_{r}(x) = (f)_{x,r} = \intavg_{B(x,r)} f dy$.

Lastly, we prove the quasi-continuity of $f^\star$. According to Remark~\ref{rmk:cauchy} the sequence $\{f_{r_j}\}_{r_j}$ with  $r_j =1/j$ is a Cauchy sequence in capacity. There exists a subsequence converges point-wise to a quasicontinuous function $g$. Moreover, given $\veps>0$ there is an open set $U\subset \Om$ with $\tc (U) < \veps$ (Theorem~\ref{thm:quasi-mod}) and the subsequence converges uniformly to $g$ on $\Om \setminus U$. Since $f_{r_j}(x) \to f^\star(x)$ on $\Om\setminus E$ by (i), it follows that $g(x)=f^\star(x)$ on $\Om\setminus E$. In other words, $f^\star$ is quasi-continuous. 
\end{proof}

\section{Appendix}

In this section we give details of the following result due to Skoda \cite{Sk72} which was needed in the proof of Proposition~\ref{prop:DMV}.  Let $T$ be a positive closed $(1,1)$-current in the unit ball $B(0,1)$. Define $\be=  \ddbar |z|^2$ and assume that 
$$
	\int_{B(0,1)} T \wed \be^{n-1} \leq 1.
$$

\begin{lem} \label{lem:skoda-lelong}  There exists a psh potential $U(z) \leq 0$ in $ B(0,3/4)$ such that
 $$
 	dd^c U = T
 $$
and 
moreover, there exist uniform constants $c_0, A>0$ independent of $U$ (which are explicit computable) satisfying
 $$
 	\int_{|z| \leq \frac{1}{8}} \exp \left[ - \frac{U(z)}{c_0} \right] dz \leq  A.
 $$
\end{lem}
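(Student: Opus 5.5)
The plan is to build $U$ from the Lelong--Skoda potential of $T$, localized by a cut-off, and then to get the exponential bound from Skoda's integrability theorem together with a bound on Lelong numbers.

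\emph{Construction.} Fix a cut-off $\chi\in C^\infty_c(B(0,1))$ with $0\le\chi\le 1$ and $\chi\equiv1$ on $B(0,7/8)$. Consider the trace measure $\sigma_T = T\wedge\be^{n-1}/(n-1)!$, whose mass on $B(0,1)$ is at most $1$ up to a dimensional constant. For $n\ge2$ define the Lelong--Skoda potential
$$
	V(z) = -c_n\int_{B(0,1)} \chi(\zeta)\, |z-\zeta|^{2-2n}\, d\sigma_T(\zeta) + (\text{correction terms}),
$$
that is, the standard kernel construction (Lelong; Skoda \cite{Sk72}) giving a function $V$ with $dd^c V = \chi T + R$. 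Here $R$ is a smooth real $(1,1)$-form on $B(0,3/4)$, because it only involves $d\chi$ and hence is supported where $|\zeta|\ge 7/8$, away from $z$. Its $C^2$ norm on $B(0,3/4)$ is bounded by a uniform constant times the mass of $T$. The case $n=1$ is the ordinary Riesz decomposition with the kernel $\log|z-\zeta|$.

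\emph{Correction step.} Write $R = dd^c h$ with $h$ smooth on $B(0,3/4)$ and $\|h\|_{C^0}\le C$. This is possible because the kernel construction gives $R$ explicitly as $dd^c$ of a smooth integral, or alternatively by solving $dd^c$ on the ball with estimates. Then
$$
	U := V - h - C'
$$
is psh on $B(0,3/4)$ and satisfies $dd^c U = T$ there. The kernel is negative and $h$ is uniformly bounded, so choosing the constant $C'$ large enough gives $U\le 0$.

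\emph{Exponential bound.} Skoda's theorem \cite{Sk72} states that $e^{-2U/\nu}$ is locally integrable whenever $\nu$ exceeds the Lelong numbers of $dd^cU$. We make this uniform in two steps.
\begin{itemize}
\item[(i)] By monotonicity of $r^{2-2n}\sigma_T(B(x,r))$, every Lelong number $\nu(T,x)$ with $|x|\le 1/4$ is bounded by a dimensional multiple of the total mass, hence by a uniform $\nu_0$.
\item[(ii)] We need a version of Skoda's estimate in which $\int_{|z|\le 1/8} e^{-U/c_0}$ is controlled in terms of the mass only. This follows from the explicit kernel representation: Jensen's inequality applied to the probability measure $\chi\,\sigma_T/\|\chi\,\sigma_T\|$ reduces the integral to $\int |z-\zeta|^{-(2n-2)\,c_n\|\sigma_T\|/c_0}\,dz$. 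For $c_0$ large this exponent is less than $2n$, so the integral is bounded uniformly in $\zeta$.
\end{itemize}

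The main obstacle is step (ii), namely making Skoda's bound quantitative and uniform. The Jensen convexity argument on the Newtonian-type kernel handles it directly. The only other step needing care is the uniform smooth bound on the correction $h$.
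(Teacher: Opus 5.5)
\textbf{There is a genuine gap in step (ii).} Your construction and correction steps are fine; they are essentially the paper's localized Lelong--Skoda potential. But step (ii), which is the whole content of the lemma, fails for $n\ge 2$. Jensen's inequality for the probability measure $d\lambda=\chi\,d\sigma_T/\|\chi\sigma_T\|$ gives
$$ e^{-V(z)/c_0}\le \int \exp\Big(\tfrac{c_n\|\chi\sigma_T\|}{c_0}\,|z-\zeta|^{2-2n}\Big)\,d\lambda(\zeta), $$
not $\int |z-\zeta|^{-a}\,d\lambda(\zeta)$. Jensen produces a power of $|z-\zeta|$ only when the kernel is $\log|z-\zeta|$, which happens only for $n=1$. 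For any $C>0$, $\exp(Cr^{2-2n})$ is not integrable at $r=0$ in $\bR^{2n}$. So after Fubini your right-hand side is $+\infty$ whenever $\sigma_T$ charges $\bar B(0,1/8)$. For example, take $T$ a small multiple of $[z_1=0]$: the conclusion holds there, since the potential behaves like $\log|z_1|$, but your bound is vacuous. Your step (i) does not help by itself either: Skoda's theorem as you quote it carries no uniform constant, and supplying uniformity is exactly what (ii) was meant to do.

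\textbf{What is missing is Skoda's identity \cite[Eq. (7.13)]{Sk72}, which the paper uses.} For $|z|\le 1/4$ it rewrites the potential as $p\int \log|x-z|^2\,\alpha^{n-1}(x-z)\wedge \eta T(x)$, with $\alpha=\ddbar\log|z|^2$, plus two terms involving $\partial\eta$ that are uniformly bounded there. This is a logarithmic kernel against a $z$-dependent projective measure. Jensen is then applied to $\alpha^{n-1}(z-x)\wedge T(x)/\mu(z)$, where $\mu(z)=\int_{|x|\le 1/2}\alpha^{n-1}(z-x)\wedge T(x)$. This yields $|z-x|^{-2\kappa(z)}$ with $\kappa(z)$ proportional to $\mu(z)/c$. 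Two uniform bounds are then required, and your outline provides neither.
\begin{enumerate}
\item An upper bound for $\mu(z)$ on $|z|\le 1/8$, from Lelong's monotonicity and Skoda's formula (1.17). This controls the Lelong number plus the projective mass of the punctured ball, which is more than your (i) gives. It lets one choose $c$ with $\kappa\le 1-\varepsilon$. Since the coefficients of $\alpha^{n-1}(z-x)$ are $O(|z-x|^{2-2n})$, the total singularity is then $|z-x|^{-2n+2\varepsilon}$, which is integrable.
\item A lower bound $\mu(z)\ge c_1>0$, needed to control the normalizing factor $1/\mu(z)$. The paper obtains it by first adding $|z|^2-1$ to the potential so that $T\ge\beta$. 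The bound holds at points of zero Lelong number, hence almost everywhere.
\end{enumerate}
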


\begin{proof}
The first statement in the lemma is classical which can be obtained by various means. However, the extra property of uniform exponential integrability need to be carefully examined in the construction given in Skoda \cite[Section~7]{Sk72}.  By adding to $U$ a strictly psh function $\rho (z) = |z|^2-1$  we may assume that 
 \[ \label{eq:red-post} T \geq \be \quad \text{in } B(0,1).
 \]
Assume $0\leq \eta \leq 1$ is a cut-off function such that $\eta =1$ on $\bar B(0, 3/4)$ and $\supp\eta \subset B(0,1)$. For simplicity we keep the notations in \cite{Sk72} by writing $p= n-1$ and define 
$$
	U (z) = - \int |z-x|^{-2p} \eta (x) \;  T(x) \wed \be^p.
$$ 
By computation in \cite[Section~3]{Sk72} or  \cite[Lemma~4.3.4]{Ho07} we have
$$
	T(z) = \ddbar U (z) \quad\text{on } B(0,3/4).
$$

Next, we  verify the uniformly exponential integrability on $\bar B(0,1/8)$ of $U(z)$. To this end we 
denote $\al = \ddbar \log |z|^2$ and $E (z) = \log|z|^2$.  The identity \cite[Eq. (7.13)]{Sk72} reads
$$\begin{aligned}
	U(z) 
&= p  \int E(x-z) \al^p \wed \eta (x) T(x) \\
&\quad - \int \ii\; \dbar_x E(x-z) \wed \al^{p-1} \wed \d_x \eta \wed T \\
&\quad - p  \int \ii E\, \dbar_x E \wed \al^{p-1} \wed \d_x \eta \wed T.
\end{aligned}$$
If $z \in \bar B(0,1/4)$, then the last two integrals are smooth functions as $\eta =1$ on $\bar B(0,3/4)$. More precisely, 
$$
	|\d_x E(x-z)| \leq \frac{A_n}{|x-z|}, \quad  |\d_x\bar\d_x E(x-z)| \leq \frac{A_n}{|x-z|^2}.
$$
Hence, for $z\in \bar B(0,1/4)$ these two integrals are uniformly bounded by an absolute constant $A_0$. It remains to estimate 
$$
	U_3(z) = \frac{p }{2^p \pi^p}\int_{\bC^n} \log|z-x|^2 \al^p(z-x) \wed \eta(x) T(x).
$$
We can yet replace $U_3(z)$ by 
$$
	U_4(z) =\frac{p }{2^p \pi^p} \int_{|x|\leq R} \log |z-x|^2 \al^p(z-x) \wed T(x)
$$
for $z\in \bar B(0,1/4)$ and $1/2 \leq R < 3/4$
because their difference is uniformly bounded on $\bar B(0,1/4)$. 
Put 
\[
	\mu(z) = \int_{|x| \leq R} \al^p(z-x) \wed T(x) \quad\text{and}\quad
	\ka(z) = \frac{\mu(z)}{2^p \pi^p} \frac{n}{c}.
\]
We rewrite
$$
	- \frac{n}{pc} U_4 (z) = \int_{|x|\leq R} \log [ |z-x|^{- \ka(z)}] \frac{\al^p(z-x) \wed T(x)}{ \mu(z)}.
$$
Using the Jensen inequality and concavity of the logarithmic function we get
\[
	- \frac{n}{pc} U_4(z) \leq \log \left[  \int_{|x| \leq R} |z-x|^{-2 \ka(z)} \frac{\al^p(z-x) \wed T(x)}{\mu(z)}  \right] , 
\]
Therefore, for $r = 1/8$ and  $R=1/2$ we have
\[ \label{eq:exp-U4} \begin{aligned}
 &\int_{|z| \leq \frac{1}{8}} \exp \left[ - \frac{n}{pc} U_4(z) \right] dz \\
&\qquad\qquad \leq \iint_{|z| \leq \frac{1}{8}, |x| \leq \frac{1}{2}} |z-x|^{-2\ka(z)} \frac{\al^p(z-x) \wed T(x)}{\mu(z)}.
\end{aligned}\]

We need to estimate $\mu(z)$ and $\ka(z)$ from both above and below. Recall that 
the trace measure and projective measures are given by
$$
	\sig = \frac{1}{2^p p! } T(x) \wed \be^{p}, \quad
	\nu = \frac{1}{2^p\pi^{p}}T(x) \wed \al^{p}.
$$
Define the function
$$
	\sig(r) = \int_{|x|<r} d\sig(x). \quad
$$
An important result of Lelong says that 
$r\mapsto \sig(B(z,r)) / r^{2p}$ is increasing in $[0, 1]$ (see, e.g., Demailly \cite[Chapter III, Consequence~5.8.]{D-book}). Hence, 
the limit
$$
	\nu(z) = \lim_{r \to 0} \frac{1}{r^{2p}} \int_{|x -z| < r}  T(x) \wed \be^p = \lim_{r\to 0} \frac{\sig( B(z,r) }{r^{2p} \pi^{p}/p!}
$$
exists and it is called the Lelong number of $T$ at $z$. By \cite[Eq. (1.17)]{Sk72} we have
\[\label{eq:sk-eq}
	\nu(z) + \int_{0< |x-z| <r} T(x) \wed \al^{n-1} (x-z) = \frac{1}{2^p r^p}\int_{|x-z|<r} T(x) \wed \be^{n-1}.
\]
This gives for $|z| \leq r$, 
$$\begin{aligned}
	(2\pi)^{-p} \mu(z) 
&\leq (2\pi)^{-p}  \int_{|z-x| \leq r+R} \al^p(z-x) \wed T (x) \\
&\leq \pi^{-p} p! (r+ R)^{-2p} \int_{|x-z| \leq r+R} d\sig(x) \\
&\leq \pi^{-p} p! (r+ R)^{-2p} \int_{|x| \leq 2r+R} d\sig(x).
\end{aligned}
$$
In other words, 
$$\begin{aligned}
	(2\pi)^{-p} \mu(z) 
&\leq  \left(\frac{2r+R}{r+R}\right)^{2p} \frac{\sig(2r +R)}{(2r+ R)^{2p}}.
\end{aligned}$$
Thus, we get for  $|z| \leq r=\frac{1}{8}$ and $R=\frac{1}{2}$,
\[\label{eq:mu-up-bound}
	(2\pi)^{-p} \mu(z) \leq 2^{2p} \sig (3/4) =: \frac{c (1-\veps)}{n},
\]
where $0<\veps<1/2$ is fixed.

Next, we estimate $\mu(z)$ from below. If $\nu (z) =0$,  \eqref{eq:sk-eq} implies that for $|z| \leq r$,
$$\begin{aligned}
	(2\pi)^{-p} \int_{|z-x| \leq R-r} \al^p(z-x) \wed T(x) 
&= \pi^{-p} p! (R-r)^{-2p} \int_{|x-z| \leq R-r} d\sig (x) \\
&\geq \pi^{-p} p! (R-r)^{-2p} \int_{|x| \leq R-2r} d\sig (x)  \\
& = (2\pi)^{-p}  (R-r)^{-2p} \int_{|x| \leq R - 2r}  T(x) \wed \be^{p}.
\end{aligned}$$
Plugging $r=\frac{1}{8}$ and $R= \frac{1}{2}$  we get that for  $|z|\leq \frac{1}{8}$ and $\nu(z) =0$,
\[\label{eq:mu-low-bound}
\begin{aligned}
	(2\pi)^{-p} \mu(z) 
&\geq (2\pi)^{-p} (3/8)^{-2p} \int_{|x| \leq \frac{1}{4}} \be^n \\
&=  (2\pi)^{-p} (3/8)^{-2p}  2^n\pi^n (1/4)^n \\
&	=:c_1.
\end{aligned}\]
By [Sk72, Lemme~7.1]  the set $\{z: \nu(z)>0\}$ has zero measure as $U(z)$ is locally integrable (or simply by Siu's theorem \cite{Siu}). Hence, \eqref{eq:mu-low-bound} holds a.e.

Combining \eqref{eq:exp-U4}, \eqref{eq:mu-up-bound} and \eqref{eq:mu-low-bound}  we obtain
$$\begin{aligned}
&	\int_{|z| \leq \frac{1}{8}} \exp \left[ - \frac{n}{pc} U_4(z) \right] dz \\
&\leq	 \frac{1}{(2\pi)^p c_1} \iint_{|z| \leq \frac{1}{8}, |x| \leq \frac{1}{2}} |z-x|^{- 2(n-p-\veps)} \al^p(z-x) \wed T(x).
\end{aligned}
$$
Since the coefficients of $\al^p(z-x)$ are dominated by
$
	A(n,p) |z-x|^{-2p},
$
we have 
$$\begin{aligned}
	\int_{|z| \leq \frac{1}{8}} \exp \left[ - \frac{n}{pc} U_4(z) \right] dz 
&\leq	 \frac{A(n,p)}{(2\pi)^p c_1} \iint_{|z| \leq \frac{1}{8}, |x| \leq \frac{1}{2}} |z-x|^{- 2n + 2\veps} d\sig(x) dz \\
&= A_1.
\end{aligned}
$$
The proof is completed with $c_0 =  pc/n$.
\end{proof}

\end{document}